\newcommand{\class}{\mathsf}
\newcommand{\logic}{\mathcal}
\newcommand{\alg}{\mathbf}
\newcommand{\structure}[1]{\mathbb{#1}}
\newcommand{\set}[2]{\{ #1 \mid #2 \}}
\newcommand{\pair}[2]{\langle #1, #2 \rangle}
\newcommand{\range}[1]{[#1]}
\newcommand{\card}[1]{| #1 |}
\newcommand{\assign}{\mathrel{:=}}
\newcommand{\seq}{\mathrel{\triangleright}}
\newcommand{\dual}{\partial}
\DeclareMathOperator{\Comp}{Comp}
\DeclareMathOperator{\Cons}{Cons}
\DeclareMathOperator{\Class}{Class}
\DeclareMathOperator{\Kalman}{Kalm}
\newcommand{\Fc}{F_{\mathrm{comp}}}
\newcommand{\Eps}{\mathrm{E}}
\newcommand{\FmAlg}{\alg{Fm}}
\newcommand{\Var}{\mathrm{Var}}
\newcommand{\bsubseteq}[1]{\subseteq_{#1}}
\newcommand{\fg}[2]{[#2]_{#1}}
\newcommand{\nonempty}[1]{P_{#1}}
\newcommand{\nonemptydm}[1]{Q_{#1}}
\newcommand{\HsS}{\mathbb{H}_{\mathrm{S}} \mathbb{S}}
\newcommand{\leqhs}{\leq_{\scriptscriptstyle\HsS}}
\newcommand{\True}{\mathsf{t}}
\newcommand{\False}{\mathsf{f}}
\newcommand{\Neither}{\mathsf{n}}
\newcommand{\Both}{\mathsf{b}}
\newcommand{\BA}[1]{\alg{B}_{#1}}
\newcommand{\DM}[1]{\alg{DM}_{#1}}
\newcommand{\FreeDMLat}[1]{\alg{F}_{\mathsf{DML}}(#1)}
\newcommand{\BAm}[1]{\structure{B}_{#1}}
\newcommand{\DMm}[1]{\structure{DM}_{#1}}
\newcommand{\Km}[1]{\structure{K}_{#1}}
\newcommand{\Pm}[1]{\structure{P}_{#1}}
\newcommand{\CL}[1]{\logic{CL}_{#1}}
\newcommand{\CLinfty}{\logic{CL}_{\infty}}
\newcommand{\BD}[1]{\logic{BD}_{#1}}
\newcommand{\BDinfty}{\logic{BD}_{\infty}}
\newcommand{\K}[1]{\logic{K}_{#1}}
\newcommand{\Kinfty}{\logic{K}_{\infty}}
\newcommand{\LP}[1]{\logic{LP}_{#1}}
\newcommand{\LPinfty}{\logic{LP}_{\infty}}
\newcommand{\KO}[1]{\logic{KO}_{#1}}
\newcommand{\KOinfty}{\logic{KO}_{\infty}}
\newcommand{\CLplain}{\logic{CL}}
\newcommand{\BDplain}{\logic{BD}}
\newcommand{\LPplain}{\logic{LP}}
\newcommand{\Kplain}{\logic{K}}
\newcommand{\TRIV}{\logic{TRIV}}
\newcommand{\ATRIV}{\logic{TRIV}_{-}}
\newcommand{\ETL}{\logic{ETL}}
\newcommand{\KOplain}{\logic{KO}}
\newcommand{\Kminus}{\logic{K}_{-}}
\newcommand{\ECQomega}{\logic{ECQ}_{\omega}}
\newcommand{\Amatrix}{\structure{A}_{1}}
\newcommand{\Mfour}{\structure{M}_{4}}
\newcommand{\Mseven}{\structure{M}_{7}}
\newcommand{\Meight}{\structure{M}_{8}}
\newcommand{\Mnine}{\structure{M}_{9}}
\newcommand{\Nseven}{\structure{N}_{7}}
\newcommand{\Neight}{\structure{N}_{8}}
\newcommand{\Nnine}{\structure{N}_{9}}
\newcommand{\Qfour}{\structure{Q}_{4}}
\newcommand{\Qseven}{\structure{Q}_{7}}
\newcommand{\Qeight}{\structure{Q}_{8}}
\newcommand{\Qnine}{\structure{Q}_{9}}
\newtheorem*{theorem*}{Theorem}
\newtheorem{theorem}{Theorem}[section]
\newtheorem{lemma}[theorem]{Lemma}
\newtheorem{fact}[theorem]{Fact}
\author{Adam P\v{r}enosil}
\address{Universit\`{a} degli Studi di Cagliari, Cagliari, Italy}
\email{adam.prenosil@unica.it}
\thanks{The author gratefully acknowledges the support of Fondazione di Sardegna within the project ``Resource sensitive reasoning and logic'', Cagliari, CUP: F72F20000410007. The author is also grateful to Yaroslav Shramko for sharing a draft of the paper~\cite{shramko20} with him, which provided the impetus for the present study of logics of upsets, and for a helpful discussion concerning the appropriate name for the logic of order of the three-element Kleene lattice.}
\title{Logics of upsets of De Morgan lattices}
\keywords{De~Morgan lattices, Belnap--Dunn logic, Kleene logic, Logic of Paradox, paraconsistent logic, abstract algebraic logic}
\begin{document}

\begin{abstract}
  We study logics determined by matrices consisting of a De~Morgan lattice with an upward closed set of designated values, such as the logic of non-falsity preservation in a given finite Boolean algebra and Shramko's logic of non-falsity preservation in the four-element subdirectly irreducible De~Morgan lattice. The key tool in the study of these logics is the lattice-theoretic notion of an $n$-filter. We study the logics of all (complete, consistent, and classical) $n$-filters on De~Morgan lattices, which are non-adjunctive generalizations of the four-valued logic of Belnap and Dunn (of the three-valued logics of Priest and Kleene, and of classical logic). We then show how to find a finite Hilbert-style axiomatization of any logic determined by a finite family of prime upsets of finite De~Morgan lattices and a finite Gentzen-style axiomatization of any logic determined by a finite family of filters on finite De~Morgan lattices. As an application, we axiomatize Shramko's logic of anything but falsehood.
\end{abstract}

  \maketitle

\section{Introduction}
\label{sec: intro}

  Classical propositional logic is the logic of truth preservation in the two-element Boolean algebra: if all the premises are true (i.e.\ take the top value $\True$), then so is the conclusion. Equivalently, it may be defined in terms of preserving non-falsity in the same Boolean algebra: if the conclusion is false (i.e.\ takes the \mbox{bottom} value~$\False$), then so is one of the premises. While these two consequence relations coincide for the two-element Boolean algebra, they come apart when truth values strictly between $\True$ and $\False$ come into play. This is amply illustrated already by the next simplest case: the four-element Boolean algebra. Strict truth and non-falsity in this algebra are indicated by the solid dots in the following diagrams:
\vskip 12pt
\begin{center}
\begin{tikzpicture}[scale=1, dot/.style={inner sep=2.5pt,outer sep=2.5pt}, solid/.style={circle,fill,inner sep=2pt,outer sep=2pt}, empty/.style={circle,draw,inner sep=2pt,outer sep=2pt}]
  \node (0) at (0,0) [empty] {};
  \node (a) at (-1,1) [empty] {};
  \node (b) at (1,1) [empty] {};
  \node (1) at (0,2) [solid] {};
  \draw[-] (0) -- (a) -- (1) -- (b) -- (0);
\end{tikzpicture}
\qquad
\qquad
\begin{tikzpicture}[scale=1, dot/.style={inner sep=2.5pt,outer sep=2.5pt}, solid/.style={circle,fill,inner sep=2pt,outer sep=2pt}, empty/.style={circle,draw,inner sep=2pt,outer sep=2pt}]
  \node (0) at (0,0) [empty] {};
  \node (a) at (-1,1) [solid] {};
  \node (b) at (1,1) [solid] {};
  \node (1) at (0,2) [solid] {};
  \draw[-] (0) -- (a) -- (1) -- (b) -- (0);
\end{tikzpicture}
\end{center}
\vskip 2pt
 Both $x$ and $\neg x$ may be non-false in this algebra, invalidating the principle of ex\-plosion ${x, \neg x \vdash y}$ and the disjunctive syllogism $x, \neg x \vee y \vdash y$. Moreover, both $x$ and $y$ may be non-false without $x \wedge y$ being non-false, invalidating the rule of ad\-junction $x, y \vdash x \wedge y$. On the other hand, classical logic is recovered by restricting to the \mbox{values} $\True$ and $\False$, thus each inference which is valid in this four-valued logic of non-falsity must also be valid in classical logic.

  The above idea of being as permissive as possible about truth, rather than as restrictive as possible, was first introduced in the 1940's by Ja\'{s}kowski~\cite{jaskowski69,jaskowski99}, one of the earliest pioneers of paraconsistent logic. In~his system of \mbox{discussive} logic (also called discursive logic), Ja\'{s}kowski imagines ``the theses \mbox{advanced} by several \mbox{participants} in a discourse [being] combined into a single \mbox{system}'' and drawing logical inferences in a way which allows for different participants to \mbox{advance} conflicting theses without falling into triviality. In~other words, one might \mbox{imagine} taking a non-empty finite set of \mbox{participants}, assigning to each atomic proposition the set of all \mbox{participants} who accept it, and extending this assignment to complex \mbox{propositions}. The discussive consequence relation would then count a formula $\varphi$ as a consequence of a set of formulas $\Gamma$ if whenever each $\gamma \in \Gamma$ is accepted by \emph{some} participant, then so is $\varphi$. By contrast, the \mbox{consequence} relation of classical logic counts $\varphi$ as a consequence of $\Gamma$ if whenever each $\gamma \in \Gamma$ is accepted by \emph{every} participant, then so is $\varphi$. These two approaches, of course, amount to studying the preservation of non-falsity and truth on finite Boolean algebras (or some related family of algebras, depending on the precise signature).\footnote{Ja\'{s}kowski's discussive logic, which he calls $D_{2}$ to indicate that it is a two-valued discussive system, in fact goes beyond the signature of Boolean algebras. The characteristic connective of his system is the discussive implication: essentially, $\Diamond x \rightarrow y$ in the language of the modal logic S5. One might also include the discussive conjunction $\Diamond x \wedge y$. Note that our perspective on discussive logic is somewhat anachronistic: Ja\'{s}kowski in fact viewed his system as a set of theorems rather than a consequence relation. However, the above con\-sequence relation directly corresponds to his discussive implication connective via the deduction theorem.}

  The same idea of preserving non-falsity rather than truth was advanced more recently by Shramko~\cite{shramko19abf} in a slightly different context. Instead of the four-element Boolean algebra, Shramko considered the four-element De \mbox{Morgan} lattice $\DM{1}$ which forms the algebraic semantics of the logic of Belnap and Dunn~\cite{dunn76,belnap77a,belnap77b}. The~only difference between the two algebras is in how they interpret negation: negation has two fixpoints in~the four-element De~Morgan lattice, which we call~$\Neither$ (for Neither True nor False) and~$\Both$ (for Both True and False). The top and bottom elements are still denoted $\True$ and $\False$. While Belnap–Dunn logic is defined in terms of preserving truth, which is interpreted in this context by the set $\{ \True, \Both \}$, recently Pietz \& Riveccio~\cite{pietz+rivieccio13} studied the logic defined in terms of preserving exact truth, which is interpreted in this context as the set $\{ \True \}$. Shramko's dual \mbox{proposal} is to consider the logic defined in terms of preserving anything but exact falsehood, i.e.\ preserving the set $\{ \True, \Neither, \Both \}$. The relationship between these last two logics thus mimics the relationship between the logic of truth and the logic of non-falsity over the four-element Boolean algebra, only the negation operator differs.

  These examples serve to illustrate that while non-classical logicians are largely concerned with systems which validate the adjunction rule $x, y \vdash x \wedge y$, there may be natural reasons to relax this rule and consider logics determined by upward closed sets (upsets) rather than lattice filters. In particular, the present paper will be concerned with logics determined by upsets of De~Morgan lattices, thus subsuming the two four-valued logics of non-falsity discussed above.

  The key tool at our disposal is the notion of an $n$-filter on a distributive lattice, developed by the present author in~\cite{prenosil22}. We~already observed that the set of non-zero elements of the four-element Boolean algebra does not form a lattice filter. It~does, however, form the next best thing: a \emph{$2$-filter}. This is an upset $F$ such that
\begin{align*}
  x \wedge y, y \wedge z, z \wedge x \in F & \implies x \wedge y \wedge z \in F.
\end{align*}
  More generally, consider the set $\nonempty{n}$ of non-zero elements of the finite Boolean lattice with $n$ atoms $\BA{n}$. This is an~\emph{$n$-filter}: an upset $F$ such that for each non-empty finite $X \subseteq F$ 
\begin{align*}
  \bigwedge Y \in F \text{ for each } Y \subseteq X \text{ with } 1 \leq \card{Y} \leq n & \implies \bigwedge X \in F.
\end{align*}
  Without loss of generality, we may restrict to $\card{X} = n+1$ in this definition.

  The $n$-filter $\nonempty{n}$ is \emph{prime} in the usual sense: $x \vee y \in \nonempty{n}$ implies that either $x \in \nonempty{n}$ or $y \in \nonempty{n}$. The relationship between $n$-filters on distributive lattices and the prime $n$-filter $\nonempty{n}$ on $\BA{n}$ is entirely analogous to the relationship between filters on distributive lattices and the prime filter $\nonempty{1} = \{ \True \}$ on $\BA{1}$.

\begin{theorem*}
  The $n$-filters on a distributive lattice are precisely the intersections of prime $n$-filters. The prime $n$-filters on a distributive lattice are precisely the homomorphic preimages of the prime $n$-filter $\nonempty{n} \subseteq \BA{n}$.
\end{theorem*}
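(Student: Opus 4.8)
The plan is to prove both claims by reducing the second one to the statement that a prime $n$-filter is the same thing as a union of at most $n$ prime filters. First observe that since $\BA{n} \cong \BA{1}^{n}$, a lattice homomorphism $h \colon \alg{L} \to \BA{n}$ is the same as an $n$-tuple $(h_{1}, \dots, h_{n})$ of homomorphisms $h_{i} \colon \alg{L} \to \BA{1}$, and each such $h_{i}$ is determined by the prime filter $G_{i} \assign h_{i}^{-1}[\{\True\}]$ (allowing the two improper filters $\emptyset$ and $L$). Since $\nonempty{n}$ consists of the non-zero elements of $\BA{n}$, an element lies in $h^{-1}[\nonempty{n}]$ exactly when some coordinate is non-zero, so $h^{-1}[\nonempty{n}] = G_{1} \cup \dots \cup G_{n}$. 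Thus the homomorphic preimages of $\nonempty{n}$ are precisely the unions of at most $n$ prime filters, and the second claim becomes: \emph{the prime $n$-filters are exactly the unions of at most $n$ prime filters.}

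The easy inclusions are routine. That $\nonempty{n}$ is itself a prime $n$-filter is a Helly-type fact: identifying $\BA{n}$ with the lattice of subsets of its $n$ atoms, if $n+1$ non-empty subsets are such that every $n$ of them have a common atom, then choosing a witness $p_{k}$ for each omitted index $k$ and applying the pigeonhole principle to the $n+1$ witnesses in an $n$-element set yields a common atom of all of them. Homomorphisms preserve meets, joins and order, so a homomorphic preimage of a prime $n$-filter is again a prime $n$-filter; and an intersection of $n$-filters is an $n$-filter because the defining implication is inherited by intersections. For the non-trivial inclusion of the \emph{first} claim, given an $n$-filter $F$ and $a \notin F$ one takes, by Zorn's lemma, an $n$-filter $F'$ maximal among those containing $F$ and omitting $a$; the maximality together with a description of the $n$-filter generated by adjoining an element (from \cite{prenosil22}) forces $F'$ to be prime, exactly as in the classical prime filter theorem. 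This is the main obstacle on the side of the first claim, as it relies on controlling the generated $n$-filter.

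It remains to prove the hard direction: every prime $n$-filter $F$ is a union of at most $n$ prime filters. The combinatorial heart is the following separation bound, which I would prove first. Call $a_{0}, \dots, a_{n} \in F$ \emph{pairwise separated} if $a_{i} \wedge a_{j} \notin F$ whenever $i \neq j$; I claim a prime $n$-filter admits no $n+1$ pairwise separated elements. Given such elements, set $y_{k} \assign \bigvee_{i \neq k} a_{i}$ for $0 \leq k \leq n$. Each proper submeet of $\{y_{0}, \dots, y_{n}\}$ omits some index $l$ and hence lies above $a_{l}$, so it belongs to $F$; but the full meet $\bigwedge_{k} y_{k}$ distributes as a join of terms $\bigwedge_{k} a_{f(k)}$ over fixed-point-free maps $f$, each of which is non-constant and so lies below some $a_{c} \wedge a_{c'}$ with $c \neq c'$, hence in the complementary ideal $L \setminus F$. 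As this ideal is closed under joins (by primeness of $F$), the full meet lies outside $F$, contradicting the $n$-filter condition.

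Given the bound, I would finish in the finite case and then lift by compactness. When $\alg{L}$ is finite the prime filters are exactly the up-sets $\uparrow p$ of join-irreducible elements $p$, and $F = \bigcup \{ \uparrow p \mid p \in F \text{ join-irreducible} \}$ since $F$ is prime; the minimal join-irreducibles lying in $F$ form an antichain and, by the argument above, are pairwise separated, so there are at most $n$ of them and their up-sets already cover $F$. For an arbitrary distributive lattice, the existence of a homomorphism $h \colon \alg{L} \to \BA{n}$ with $h^{-1}[\nonempty{n}] = F$ is a constraint over the finite algebra $\BA{n}$; every finitely generated sublattice is finite and inherits a prime $n$-filter, so the finite case supplies a partial solution, and compactness (equivalently Tychonoff on $\BA{n}^{L}$) assembles these into a global $h$. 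The genuine obstacle is this hard direction, and within it the passage from the numerical separation bound to an actual cover by prime filters; the finite case makes the bound bite through Birkhoff duality, and compactness removes the finiteness assumption.
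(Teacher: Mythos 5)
Your proposal is correct and, at the skeleton level, follows the same route as the paper: you reduce the homomorphic-preimage characterization to the claim that prime $n$-filters are exactly the unions of at most $n$ prime filters (this is the paper's Fact, obtained there by packaging the homomorphism lemma for $\BAm{1}$ through the dual power $(\BAm{1})^{\otimes n} = \BAm{n}$ --- your coordinatewise computation is the same argument in different clothing), and you prove the intersection characterization by the standard Zorn argument on a maximal $n$-filter omitting a point, deferring, exactly as the paper does, to the description of generated $n$-filters from~\cite{prenosil22}. Note that the step you wave at as ``exactly as in the classical prime filter theorem'' is precisely the paper's Lemma~\ref{lemma: fg cap}, $\fg{n}{U, x} \cap \fg{n}{U, y} = \fg{n}{U, x \vee y}$, which is \emph{not} automatic for $n > 1$ and is the one place where the generation lemma genuinely gets used; you at least correctly identify this as the obstacle. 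Where you add real content is the combinatorial core: the paper states the unions Fact without proof (it is imported from the unpublished manuscript~\cite{prenosil22}), whereas you supply a self-contained argument --- the bound that a prime $n$-filter admits no $n+1$ pairwise separated elements, then Birkhoff duality in the finite case, then a Tychonoff compactness lift. I checked the bound and it is sound: every proper submeet of the elements $y_{k} = \bigvee_{i \neq k} a_{i}$ omits an index $l$ and so lies above $a_{l} \in F$, while the full meet distributes over fixed-point-free (hence non-constant) choice functions into the join-closed complement of $F$; the passage from the bound to the cover is also correct, since if $p \neq q$ are minimal join-irreducibles of $F$ with $p \wedge q \in F$, primeness yields a join-irreducible of $F$ below both, contradicting minimality. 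Two small wrinkles deserve patching, though neither is a genuine gap: in the finite case your cover by upsets of minimal join-irreducibles misses the bottom element when $0 \in F$, i.e.\ when $F = L$ --- there $F = {\uparrow} 0$ is itself a single (improper) prime filter, matching the constant-$\True$ coordinate homomorphisms you rightly allow, but your argument as written does not notice this case; and in the separation bound you should remark that if the $y_{k}$ fail to be pairwise distinct the contradiction is immediate, since the full meet then coincides with a proper submeet already shown to lie in $F$.
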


  The first task that we undertake is to extend this theorem about $n$-filters on distributive lattices to De~Morgan lattices. In this setting, the two-element Boolean algebra $\BA{1}$ is replaced by the four-element subdirectly irreducible De~Morgan lattice $\DM{1}$ described above. The role of the prime filter $\{ \True \}$ on $\BA{1}$ is taken over by the prime filter $\{ \True, \Both \}$ on $\DM{1}$. It remains to find the analogue of the $n$-filter $\nonempty{n}$.

  The structures $\BAm{n} \assign \pair{\BA{n}}{\nonempty{n}}$ are what we call the \emph{dual powers} of the structure $\BAm{1} \assign \pair{\BA{1}}{\{ \True \}}$. The same construction applied to $\DMm{1} \assign \pair{\DM{1}}{\{ \True, \Both \}}$ yields the finite structures $\DMm{n} \assign \pair{\DM{n}}{\nonemptydm{n}}$, where
\begin{align*}
  & \DM{n} \assign (\DM{1})^{n}, & & \langle a_{1}, \dots, a_{n} \rangle \in Q_{n} \iff a_{i} \in \{ \True, \Both \} \text{ for some } 1 \leq i \leq n. 
\end{align*}
  Note the existential rather than universal quantifier in the definition of $\nonemptydm{n}$. Likewise, the dual powers of the three-element substructures $\Pm{1}$ and $\Km{1}$ of $\DMm{1}$ with the universe $\{ \True, \Both, \False \}$ and $\{ \True, \Neither, \False \}$ respctively will be denoted $\Pm{n}$ and $\Km{n}$.

  We shall call an upset $F$ of a De~Morgan lattice \emph{complete} if $x \vee \neg x \in F$ and moreover $x \in F$ implies $x \wedge (y \vee \neg y) \in F$. We call it \emph{consistent} if it is not total ($x \notin F$ for some $x$) and moreover $(x \wedge \neg x) \vee y \in F$ implies $y \in F$. Finally, we call it \emph{classical} if it is both complete and consistent.

  The following theorem now extends the above characterization of $n$-filters on distributive lattices to De~Morgan lattices (see Theorems~\ref{thm: dm preimages} and~\ref{thm: dm intersections}).

\begin{theorem*}
  The (complete, consistent, classical) $n$-filters on a De~Morgan lattice are precisely the intersections of (complete, consistent, classical) prime $n$-filters. The (complete, consistent, classical) prime $n$-filters on a De~Morgan lattice are precisely the homomorphic preimages of the designated set of $\DMm{n}$ ($\Pm{n}$, $\Km{n}$, $\BAm{n}$).
\end{theorem*}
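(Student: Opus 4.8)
The plan is to reduce everything to the theorem about $n$-filters on distributive lattices, which I may assume, by means of two devices. The first is a dictionary between prime filters of a De~Morgan lattice $\alg{A}$ and homomorphisms into $\DM{1}$: given a prime filter $P$ of the lattice reduct of $\alg{A}$, the map $h_{P} \colon \alg{A} \to \DM{1}$ sending $x$ to one of $\True, \Both, \Neither, \False$ according to whether $x \in P$ and whether $\neg x \in P$ is a De~Morgan homomorphism with $P = h_{P}^{-1}(\{ \True, \Both \})$, and conversely every homomorphism into $\DM{1}$ arises this way. Since a homomorphism into $\DM{n} = (\DM{1})^{n}$ is an $n$-tuple $\langle h_{1}, \dots, h_{n} \rangle$ of homomorphisms into $\DM{1}$, and since a tuple lies in $\nonemptydm{n}$ iff some coordinate lies in $\{ \True, \Both \}$, the preimage of $\nonemptydm{n}$ under such a tuple is exactly the union $P_{1} \cup \dots \cup P_{n}$ of the corresponding prime filters. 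Combined with the distributive-lattice theorem (prime $n$-filters are exactly unions of at most $n$ prime filters, equivalently preimages of $\nonempty{n} \subseteq \BA{n}$), this already settles the plain case of both statements.

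To handle the qualifiers I would first record, for a single prime filter $P$, that $P$ is complete iff $x \vee \neg x \in P$ for every $x$, which holds iff $h_{P}$ omits the value $\Neither$ (equivalently, factors through $\Pm{1}$); dually $P$ is consistent iff $h_{P}$ omits $\Both$ (factors through $\Km{1}$), non-totality of $P$ being automatic; and classical means both. The forward direction of the preimage statement then amounts to decomposing a complete (consistent, classical) prime $n$-filter as a union of at most $n$ complete (consistent, classical) prime filters. Writing it irredundantly as $P_{1} \cup \dots \cup P_{m}$ with witnesses $b_{k} \in P_{k} \setminus \bigcup_{j \neq k} P_{j}$, I would show each $P_{k}$ inherits the qualifier: if some $h_{k}$ took value $\Neither$ at $a$, then $b_{k} \wedge (a \vee \neg a)$ would fall outside every $P_{i}$, contradicting the second completeness clause at $b_{k}$; and if some $h_{k}$ took value $\Both$ at $a$, then for any $y$ outside the non-total filter, $(a \wedge \neg a) \vee y$ lands in $P_{k}$ while $y$ does not, contradicting consistency. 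Reassembling via the maps $h_{P_{i}}$ yields the homomorphism into $\Pm{n}$ ($\Km{n}$, $\BAm{n}$), padding with repeated components if $m < n$.

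For the intersection statement the easy inclusion is a direct check that each defining clause passes through intersections. For the converse I would reduce to the distributive case via quotient lattices. Let $\nabla$ be the lattice filter generated by $\{ x \vee \neg x \mid x \in \alg{A} \}$ and $\Delta$ the lattice ideal generated by $\{ x \wedge \neg x \mid x \in \alg{A} \}$. The key lemma is that the complete $n$-filters on $\alg{A}$ are exactly the pullbacks of the $n$-filters of the quotient lattice $\alg{A}/\nabla$, the consistent $n$-filters exactly the pullbacks of the proper $n$-filters of $\alg{A}/\Delta$, and the classical $n$-filters exactly the $n$-filters of the Boolean quotient $\alg{A}/(\nabla, \Delta)$, in which $\neg x$ becomes the complement of $x$. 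Granting this, the distributive-lattice theorem applied to each quotient expresses the given $n$-filter as an intersection of prime $n$-filters of the quotient; pulling back and invoking the dictionary, these become intersections of complete (consistent, classical) prime $n$-filters of $\alg{A}$.

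I expect the main work to lie in this key lemma, that is, in matching the two defining clauses with saturation with respect to the congruences collapsing $\nabla$ and $\Delta$. For completeness one checks that an $n$-filter $F$ satisfies the two clauses iff $\nabla \subseteq F$ and $F$ is $\equiv_{\nabla}$-saturated: the two clauses together give $\nabla \subseteq F$ and closure of $F$ under meeting with elements of $\nabla$, which for an upset is exactly saturation, since $x \equiv_{\nabla} x \wedge (y \vee \neg y)$. The consistency clause is dual, the one subtlety being that the inconsistency ideal $\Delta$ may exhaust $\alg{A}$ (as on $\DM{1}$ itself, where $\Neither \vee \Both = \True$ forces $\Delta = \alg{A}$), in which case $\alg{A}/\Delta$ is trivial and there are, correctly, no non-total consistent $n$-filters. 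The remaining steps are routine transfers of the distributive-lattice theorem together with the prime-filter dictionary. A final point worth flagging is that the qualifier must act componentwise: a union of at most $n$ prime filters can contain every $x \vee \neg x$ without any single component doing so, so it is precisely the second clause (respectively the quotient reduction) that forces completeness of each component rather than merely of the union.
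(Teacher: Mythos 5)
Your proposal is correct, and it splits naturally into a half that matches the paper and a half that does not. For the homomorphic-preimage statement your argument is essentially the paper's: Fact~\ref{fact: unions} likewise writes a qualified prime $n$-filter as an irredundant union $F_{1} \cup \dots \cup F_{k}$ of at most $n$ prime filters and uses irredundancy witnesses $c_{i} \in F_{i} \setminus \bigcup_{j \neq i} F_{j}$ (your $b_{k}$) to push the completeness and consistency clauses down to each component, after which the Pynko-style maps $h_{F_{i}}$ into $\Pm{1}$, $\Km{1}$, $\BAm{1}$ are packaged into a single strict homomorphism into the dual power, exactly as you do. For the intersection statement your route is genuinely different in the complete and consistent cases. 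The paper works intrinsically on the De~Morgan lattice: it describes the generated complete and consistent $n$-filters as $\fg{n}{\Comp U}$ and $\Cons \fg{n}{U}$, takes a \emph{maximal} complete (consistent) $n$-filter disjoint from a given ideal, and extracts primeness from Lemma~\ref{lemma: fg cap}; it passes to a quotient only in the classical and Kalman cases, and there via a De~Morgan congruence (Lemma~\ref{lemma: boolean theta}) whose quotient is an honest Boolean algebra or Kleene lattice. You instead reduce all three qualified cases uniformly to the distributive-lattice theorem through \emph{mere lattice} congruences, collapsing $\Fc$ (your $\nabla$) to a top and the dual ideal $\Delta$ to a bottom, with the correct saturation characterizations (note that negation does not descend along either congruence separately, only along their join, which is precisely the paper's Boolean congruence; your observation that this does not matter, since only lattice-theoretic facts are invoked on the quotient, is the real point of departure). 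Your method is more economical for this theorem and would also recover the separation statement of Theorem~\ref{thm: complete prime n filter separation} by pushing the ideal to the quotient, as the paper itself does classically; what the paper's heavier machinery buys is reuse, since the operators $\Comp$ and $\Cons$ and the generation descriptions feed directly into the later consequence-relation theorems for $\LP{n}$ and $\K{n}$. One boundary point to patch: the empty set is an $n$-filter of $\alg{A}/\nabla$ whose pullback is empty, hence not complete, so your key lemma should say ``pullbacks of \emph{non-empty} $n$-filters''; this is harmless, since the prime $n$-filters produced by separation all extend the non-empty image of $F$, and your parallel treatment of the consistent case (where the empty upset genuinely is consistent, and $\Delta$ may exhaust $\alg{A}$) is already handled correctly by your own remark.
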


  This immediately yields completeness theorems for the non-adjunctive analogues $\BD{n}$, $\LP{n}$, $\K{n}$, and $\CL{n}$ of the four-valued logic of Belnap and Dunn, the three-valued logics of Priest and Kleene and two-valued classical logic, where the rule of adjunction $x, y \vdash x \wedge y$ is replaced by the $n$-adjunction rule
\begin{align*}
  \set{\bigwedge_{j \neq i} x_{j}}{1 \leq i \leq n+1} \vdash x_{1} \wedge \dots \wedge x_{n+1}.
\end{align*}
  Here $\bigwedge_{j \neq i} x_{j}$ is the submeet of $x_{1} \wedge \dots \wedge x_{n+1}$ which omits $x_{i}$. A more complicated completeness theorem for the non-adjunctive analogues $\KO{n}$ of the logic of order of Kleene lattices can also be obtained in this way.

  The second problem that we consider is the following: given a finite set of \mbox{finite} matrices consisting of a De~Morgan lattice with a designated upset, how do we axiomatize the corresponding logic? In other words, how do we axiomatize a \emph{finitely generated} extension of the logic $\BDinfty$ of all upsets of De~Morgan lattices?

  We show how to do this in two special cases: if all of the upsets are prime, and if all of them are filters. The extensions of $\BDinfty$ which arise in this way may be characterized syntactically by means of certain meta-rules, i.e.\ implications between valid rules. One of these meta-rules is well known: a logic $\logic{L}$ is said to enjoy the \emph{proof by cases property (PCP)} in case
\begin{align*}
  \text{$\Gamma, \varphi_{1} \vee \varphi_{2} \vdash_{\logic{L}} \psi$ if $\Gamma, \varphi_{1} \vdash_{\logic{L}} \psi$ and $\Gamma, \varphi_{2} \vdash_{\logic{L}} \psi$.}
\end{align*}
  We extend this to what we call the \emph{$n$-proof by cases property ($n$-PCP)}. For example, the $2$-PCP states that
\begin{align*}
  \Gamma, \varphi_{1} \vee \varphi_{2} \vee \varphi_{3} \vdash_{\logic{L}} \psi \text{ if }\Gamma, \varphi_{1} \vee \varphi_{2} \vdash_{\logic{L}} \psi \text{ and } \Gamma, \varphi_{2} \vee \varphi_{3} \vdash_{\logic{L}} \psi \text{ and } \Gamma, \varphi_{3} \vee \varphi_{1} \vdash_{\logic{L}} \psi.
\end{align*}
  The Exactly True Logic of Pietz \& Rivieccio~\cite{pietz+rivieccio13} is a natural example of a logic which enjoys the $2$-PCP but not the PCP.

  The following theorems now describe the finitely generated extensions of $\BDinfty$ in the two special cases considered above (see Theorems~\ref{thm: bdinfty with pcp} and~\ref{thm: bd1 with n-pcp}).

\begin{theorem*}
  The following are equivalent for each extension $\logic{L}$ of $\BDinfty$:
\begin{enumerate}[(i)]
\item $\logic{L}$ is a finitary extension of $\BD{n}$ with the PCP,
\item $\logic{L}$ is complete with respect to some set of substructures of $\DMm{n}$,
\item $\logic{L}$ is complete with respect to some finite set of finite structures of the form $\pair{\alg{L}}{F}$ where $\alg{L}$ is a De~Morgan lattice and $F$ is a prime $n$-filter of $\alg{L}$.
\end{enumerate}
  Some such $n$ exists for each finitely generated extension of $\BDinfty$ with the PCP.
\end{theorem*}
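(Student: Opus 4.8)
The plan is to prove the cycle (ii) $\Leftrightarrow$ (iii), (ii) $\Rightarrow$ (i), (i) $\Rightarrow$ (ii), and then to treat the final claim separately; the heart of the matter is the implication (i) $\Rightarrow$ (ii), everything else being bookkeeping around the preimage characterization of prime $n$-filters stated above. For (iii) $\Rightarrow$ (ii), given a matrix $\pair{\alg{A}}{F}$ with $F$ a prime $n$-filter I would use that characterization to fix a homomorphism $h \colon \alg{A} \to \DM{n}$ with $F = h^{-1}[\nonemptydm{n}]$; writing $\alg{B} \leq \DM{n}$ for the image, every valuation into $\alg{A}$ composes with $h$ to a valuation into $\alg{B}$, and every valuation into $\alg{B}$ lifts along $h$ since $\FmAlg$ is free, so $\pair{\alg{A}}{F}$ determines the same logic as the substructure $\pair{\alg{B}}{\nonemptydm{n} \cap B}$ of $\DMm{n}$. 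For the converse (ii) $\Rightarrow$ (iii), each substructure $\pair{\alg{B}}{\nonemptydm{n} \cap B}$ has $\nonemptydm{n} \cap B$ a prime $n$-filter (it is the inclusion-preimage of $\nonemptydm{n}$, and primeness is inherited), and since $\DMm{n}$ is finite it has only finitely many substructures, so a set of substructures is automatically a finite family of the required form.

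For (ii) $\Rightarrow$ (i) I would check the three requirements in turn. Finitarity is free, as any logic determined by finitely many finite matrices is finitary. Each substructure is a submatrix of $\DMm{n}$ and hence validates every rule of $\Log(\DMm{n}) = \BD{n}$ (submatrices only validate more), so a logic complete with respect to a family of them extends $\BD{n}$. Finally, the designated set of each substructure is prime, and I would invoke the standard correspondence that a logic complete with respect to matrices with prime designated sets enjoys the PCP, which amounts to the observation that along any valuation sending $\varphi_{1} \vee \varphi_{2}$ into a prime filter, one of $\varphi_{1}, \varphi_{2}$ is already sent into it.

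The main work is (i) $\Rightarrow$ (ii). Starting from a finitary $\logic{L} \supseteq \BD{n}$ with the PCP, I would work with the reduced models of $\logic{L}$: their algebra reducts are De~Morgan lattices because $\logic{L}$ extends $\BDinfty$, and their designated sets are $n$-filters because every $\logic{L}$-filter is a $\BD{n}$-filter and these coincide with the $n$-filters by the completeness theorem for $\BD{n}$. The crucial input is the PCP, which I would use in the form that a finitary logic with the proof by cases property for $\vee$ is complete with respect to those of its models whose designated filter is $\vee$-prime; this is the standard prime-filter theorem for logics with a disjunction, obtained by extending filters to $\vee$-prime ones avoiding a point. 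Combining these two facts, $\logic{L}$ is complete with respect to matrices $\pair{\alg{A}}{F}$ in which $\alg{A}$ is a De~Morgan lattice and $F$ is simultaneously prime and an $n$-filter, that is, a prime $n$-filter. Applying the preimage reduction exactly as in (iii) $\Rightarrow$ (ii) replaces each such matrix by a substructure of $\DMm{n}$ with the same logic, and since $\DMm{n}$ has only finitely many substructures, $\logic{L}$ is complete with respect to a finite set of them. I expect the main obstacle to be the passage from the PCP to completeness with respect to $\vee$-prime models and, in tandem, verifying that the filters so produced are genuinely prime $n$-filters on De~Morgan lattices; once this is in place the preimage theorem and the finiteness of $\DMm{n}$ finish the argument mechanically.

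For the final sentence, let $\logic{L} = \Log\{ \pair{\alg{A}_{1}}{F_{1}}, \dots, \pair{\alg{A}_{k}}{F_{k}} \}$ be a finitely generated extension of $\BDinfty$ with the PCP, each $\alg{A}_{i}$ a finite De~Morgan lattice and each $F_{i}$ an upset. I would observe that any upset of a finite lattice is an $m$-filter as soon as $m$ bounds its cardinality, since then the $m$-filter premise already contains $\bigwedge X$ for every admissible $X$; hence taking $n \assign \max_{i} \card{A_{i}}$ makes every $F_{i}$ an $n$-filter. Each $\pair{\alg{A}_{i}}{F_{i}}$ is then among the matrices defining $\BD{n}$, so $\logic{L} \supseteq \BD{n}$; being determined by finitely many finite matrices it is finitary, and it has the PCP by hypothesis, so (i) holds for this $n$.
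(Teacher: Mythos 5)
Your proposal is correct and follows essentially the same route as the paper: (ii)$\Leftrightarrow$(iii) and the reduction of prime models to substructures of $\DMm{n}$ rest on the preimage characterization of prime $n$-filters exactly as in the paper, and your key step (i)$\Rightarrow$(ii) is the paper's Fact that a finitary extension of $\BDinfty$ has the PCP iff it is complete with respect to structures with prime designated upsets, which the paper proves via completely meet irreducible theories --- the same standard mechanism as your extension of a theory to one maximal among those avoiding a point, followed by the same Leibniz-reduction step (which preserves primeness under strict surjective images, the one routine detail your ``combining these two facts'' glosses over just as the paper does). The only cosmetic divergence is in the final claim, where you bound $n$ by the cardinality of the finite algebras while the paper uses the sharper antichain bound (an upset failing to be an $n$-filter must contain $n+1$ pairwise incomparable elements); both bounds are valid.
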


\begin{theorem*}
  The following are equivalent for each extension $\logic{L}$ of $\BD{1}$:
\begin{enumerate}[(i)]
\item $\logic{L}$ is a finitary and enjoys the $n$-PCP,
\item $\logic{L}$ is complete with respect to some set of substructures of $(\DMm{1})^{n}$,
\item $\logic{L}$ is complete with respect to some finite set of finite structures of the form $\pair{\alg{L}}{F}$ where $\alg{L}$ is a De~Morgan lattice and $F$ is an $n$-prime upset of $\alg{L}$.
\end{enumerate}
  Moreover, some such $n$ exists for each finitely generated extension of $\BD{1}$.
\end{theorem*}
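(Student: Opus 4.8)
The plan is to prove the cycle (ii)$\Rightarrow$(iii)$\Rightarrow$(i)$\Rightarrow$(ii) and then the \emph{Moreover} clause. Two observations recur. First, since $\logic{L}$ extends $\BD{1}$ it validates adjunction, so every $\logic{L}$-filter is a lattice filter. Second, the designated set $\{\True,\Both\}^{n}$ of $(\DMm{1})^{n}$ is an $n$-prime upset: if the join of $n+1$ elements is designated then in each of the $n$ coordinates some element is designated (primeness of $\{\True,\Both\}$ in $\DM{1}$), and by pigeonhole---$n$ coordinates against $n+1$ indices---one index can be dropped uniformly, leaving an $n$-fold subjoin that is still designated; being a universal implication, $n$-primeness passes to substructures. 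Now (ii)$\Rightarrow$(iii) is immediate: $(\DMm{1})^{n}$ is finite, hence has finitely many substructures up to isomorphism, each finite and each carrying an $n$-prime upset. For (iii)$\Rightarrow$(i), a logic of finitely many finite matrices is finitary, and the $n$-PCP transfers: given the premises $\Gamma, \bigvee_{j \in J} \varphi_{j} \vdash_{\logic{L}} \psi$ for all $n$-element $J$, any valuation $v$ into a structure of the family with $v[\Gamma] \subseteq F$ and $v(\varphi_{1} \vee \dots \vee \varphi_{n+1}) \in F$ satisfies, by $n$-primeness, $v(\bigvee_{j \in J}\varphi_{j}) \in F$ for some $J$, and the matching premise gives $v(\psi) \in F$.

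The substantial direction is (i)$\Rightarrow$(ii). As $\logic{L}$ is finitary, a Lindenbaum argument makes it complete with respect to its completely meet-irreducible theories: if $\Gamma \not\vdash_{\logic{L}} \varphi$, extend $\Gamma$ to a theory $T$ maximal among those omitting $\varphi$, which is completely meet-irreducible in the lattice of $\logic{L}$-filters on $\FmAlg$. The first step is that the $n$-PCP forces such $T$ to be $n$-prime. In filter-theoretic form the $n$-PCP reads $\Fi_{\logic{L}}(T, \varphi_{1} \vee \dots \vee \varphi_{n+1}) = \bigcap_{\card{J} = n} \Fi_{\logic{L}}(T, \bigvee_{j \in J} \varphi_{j})$; so if $\varphi_{1} \vee \dots \vee \varphi_{n+1} \in T$ then $T$ is the intersection of the finitely many filters $\Fi_{\logic{L}}(T, \bigvee_{j \in J} \varphi_{j}) \supseteq T$, and meet-irreducibility forces one of them to equal $T$, i.e.\ $\bigvee_{j \in J} \varphi_{j} \in T$ for some $J$. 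Together with adjunction this makes $T$ an $n$-prime lattice filter.

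The second step invokes the structure theory of $n$-prime upsets---the direct-power companion of the $n$-filter theory recalled above---according to which an $n$-prime filter on a De~Morgan lattice is an intersection of at most $n$ prime filters, each of the form $h^{-1}(\{\True,\Both\})$ for a homomorphism $h$ into $\DM{1}$. Writing $T = \bigcap_{i=1}^{n} h_{i}^{-1}(\{\True,\Both\})$ and setting $g \assign \langle h_{1}, \dots, h_{n} \rangle \colon \FmAlg \to (\DM{1})^{n}$ gives $g^{-1}(\{\True,\Both\}^{n}) = T$. Thus $g$ maps $\pair{\FmAlg}{T}$ onto $\mathbb{S} \assign \pair{g[\FmAlg]}{\{\True,\Both\}^{n} \cap g[\FmAlg]} \leq (\DMm{1})^{n}$ by a strict surjective homomorphism whose kernel is compatible with $T$; since the class of models of a logic is closed under such images, the designated set of $\mathbb{S}$ is an $\logic{L}$-filter and $\mathbb{S} \models \logic{L}$. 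As $\Gamma \subseteq T$ while $\varphi \notin T$, we have $g[\Gamma] \subseteq \{\True,\Both\}^{n}$ and $g(\varphi) \notin \{\True,\Both\}^{n}$, so the valuation $g$ refutes $\Gamma \vdash \varphi$ in $\mathbb{S}$; collecting all such $\mathbb{S}$ yields (ii).

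I expect the main obstacle to be the decomposition in the second step for the \emph{infinite} algebra $\FmAlg$: pointwise $n$-primeness readily bounds the size of the minimal ``unavoidable'' families of non-designated elements by $n$, but upgrading this local bound to a global representation as an intersection of $n$ prime filters needs a compactness argument, which is the technical heart of the matter. The \emph{Moreover} clause is then routine: a finitely generated extension of $\BD{1}$ is the logic of a finite family of finite matrices $\pair{\alg{A}_{i}}{F_{i}}$, each $F_{i}$ a lattice filter and hence (the finite case of the decomposition, where finitely many prime filters suffice) an intersection of $k_{i}$ prime filters; mapping each $\pair{\alg{A}_{i}}{F_{i}}$ strictly onto a substructure of $(\DMm{1})^{k_{i}}$ and padding coordinates up to $n \assign \max_{i} k_{i}$ shows $\logic{L}$ complete with respect to substructures of $(\DMm{1})^{n}$, whence the $n$-PCP by the equivalence just proved.
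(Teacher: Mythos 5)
Your proposal is correct and takes essentially the same route as the paper: the $n$-PCP makes completely meet-irreducible theories $n$-prime, and $n$-prime filters are then realized as preimages of the designated set of $(\DMm{1})^{n}$ by packaging the $n$ prime-filter homomorphisms of the Pynko homomorphism lemma, which is exactly the content of the paper's Fact on the $n$-PCP together with its Section~2 characterization of $n$-prime filters. The decomposition you flag as the ``technical heart'' is not a gap relative to the paper: it is the De~Morgan dual of the fact that prime $n$-filters on arbitrary (possibly infinite) distributive lattices are unions of at most $n$ prime filters, which the paper imports as known background from~\cite{prenosil22}.
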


  These theorems organize the finitely generated extensions of $\BDinfty$ with the PCP and the finitely generated extensions of $\BD{1}$ into two hierarchies, where each level consists of logics which are complete with respect to families of substructures of the finite structure $\DMm{n}$ or $(\DMm{1})^{n}$. This gives us a brute force but terminating algorithm for axiomatizing such extensions: to axiomatize such a logic, it suffices to find the level at which it lies and then separate it from other logics at this level by means of finitely many finitary rules.

  In the first case, we obtain a finite Hilbert-style axiomatization. In the second case, we characterize such a logic as the smallest extension of $\BDinfty$ which enjoys the $n$-PCP and moreover validates some finite set of finitary rules. This in effect yields a Gentzen-style axiomatization. It is worth noting that this provides a finite Gentzen-style axiomatization for some logics which do not have a finite Hilbert-style axiomatization, such as the logic of the matrix $\DMm{1} \times \BAm{1}$ (see~\cite{prenosil21}).

  We explicitly describe the first two levels of these two hierarchies. In particular, we axiomatize Shramko's logic of anything but falsehood~\cite{shramko19abf} (see Theorem~\ref{thm: completeness for abf}).

\begin{theorem*}
  The logic determined by the matrix $\pair{\DM{1}}{\{ \True, \Neither, \Both \}}$ is the extension of~$\BDinfty$ by the $2$-adjunction rule $x \wedge y, y \wedge z, z \wedge x \vdash x \wedge y \wedge z$, the law of the excluded middle $\emptyset \vdash x \vee \neg x$, and the rule $x \vee y, \neg x \vee y \vdash (x \wedge \neg x) \vee y$.
\end{theorem*}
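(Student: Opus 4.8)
The proof splits into a routine soundness direction and a completeness direction that leans on the hierarchy developed above. Throughout write $\mathsf{M} \assign \pair{\DM{1}}{\{\True, \Neither, \Both\}}$, whose designated set is exactly the set of non-false values, and let $\logic{L}'$ denote the extension in the statement.

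For soundness I would verify the three rules by inspection of $\DM{1}$. The $2$-adjunction rule holds because $\{\True,\Neither,\Both\}$ is a $2$-filter: the only pair of designated elements whose meet is false is $\{\Neither,\Both\}$, so no triple of designated elements can have all three pairwise meets designated and yet a false total meet. The law of the excluded middle holds because $a \vee \neg a$ is never false (it is $\True$, $\Neither$, or $\Both$ according as $a \in \{\True,\False\}$, $a = \Neither$, or $a = \Both$). For the third rule $r_{3}$ it suffices to treat the case $b = \False$, since otherwise $(a \wedge \neg a) \vee b$ is trivially non-false; but then the premises force $a \neq \False$ and $\neg a \neq \False$, so $a \in \{\Neither,\Both\}$, and for such $a$ we have $a \wedge \neg a = a \neq \False$. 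Together with the fact that every upset matrix validates $\BDinfty$, this gives $\logic{L}' \subseteq \Log(\mathsf{M})$.

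For completeness I would first locate $\mathsf{M}$ in the hierarchy. Let $\overline{\cdot}$ be the nontrivial automorphism of $\DM{1}$, swapping $\Neither$ and $\Both$ and fixing $\True,\False$. The map $x \mapsto \pair{x}{\overline{x}}$ embeds $\DM{1}$ into $(\DM{1})^{2} = \DM{2}$, and a direct computation shows that the preimage of $\nonemptydm{2}$ under it is precisely $\{\True,\Neither,\Both\}$; hence $\mathsf{M}$ is isomorphic to a substructure of $\DMm{2}$. By the characterization of finitely generated extensions of $\BDinfty$ with the PCP, $\Log(\mathsf{M})$ is therefore a finitary extension of $\BD{2}$ with the PCP. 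Since the $2$-adjunction rule axiomatizes $\BD{2}$ over $\BDinfty$, we have $\logic{L}' = \BD{2} + \{\mathrm{LEM}, r_{3}\}$, so both logics extend $\BD{2}$ and it remains only to prove $\Log(\mathsf{M}) \subseteq \logic{L}'$. The plan is to show that $\logic{L}'$ is itself complete with respect to substructures of $\DMm{2}$ and that $\mathsf{M}$ dominates all of them. Here LEM is an axiom and $r_{3}$ is already in $\vee$-parametrized form (every displayed formula carries the disjunct $y$), so extending $\BD{2}$ by them should preserve the PCP; the characterization theorem then yields that $\logic{L}'$ is complete with respect to the class $\mathcal{K}$ of all substructures of $\DMm{2}$ validating LEM and $r_{3}$ (every substructure of $\DMm{2}$ already validates $\BD{2}$). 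To conclude $\Log(\mathsf{M}) \subseteq \Log(\mathcal{K}) = \logic{L}'$ it then suffices to check that each $\mathsf{N} \in \mathcal{K}$ satisfies $\Log(\mathsf{N}) \supseteq \Log(\mathsf{M})$, which I would obtain by exhibiting each such $\mathsf{N}$ inside the closure of $\mathsf{M}$ under the operators that leave the logic of a matrix invariant ($\mathbb{S}$, $\mathbb{P}$, and strict surjections $\mathbb{H}_{\mathrm{S}}$).

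The main obstacle is precisely this last combinatorial step: one must classify, up to isomorphism, the subalgebras of $(\DM{1})^{2}$ whose matrices over $\nonemptydm{2}$ satisfy both LEM and $r_{3}$, and verify for each surviving $\mathsf{N}$ that its logic extends $\Log(\mathsf{M})$. The two rules are calibrated exactly to excise the substructures that would otherwise enlarge $\mathcal{K}$: LEM discards every structure containing an $a$ with $a \vee \neg a$ undesignated (e.g.\ the diagonal copy of $\DMm{1}$ and all of $\DMm{2}$, on which $\pair{\Neither}{\Neither}$ is such an element), while $r_{3}$ removes the remaining ones whose designated sets are too sparse. Confirming that what survives all lies below $\mathsf{M}$ in logical strength — so that no surviving matrix yields a logic strictly weaker than $\Log(\mathsf{M})$ — together with the claimed preservation of the PCP under the two added rules, is where the real work lies; the soundness computation and the twisted-diagonal embedding into $\DMm{2}$ are by comparison mechanical.
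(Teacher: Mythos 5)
Your skeleton is the paper's own strategy, reassembled blind: soundness by inspection, the identification of $\pair{\DM{1}}{\{\True,\Neither,\Both\}}$ with the substructure $\Qfour$ of $\DMm{2}$ (your twisted-diagonal embedding $x \mapsto \pair{x}{\overline{x}}$ is a correct, slightly more explicit route than the paper's direct listing in Figure~\ref{fig: sub2}), Theorem~\ref{thm: bdinfty with pcp} to place $\Log(\Qfour)$ at the second level of the PCP hierarchy, and the disjunctive-variants fact to get a Hilbert-style axiomatization. Two hand-waved steps are fixable: the claim that adding LEM and $r_{3}$ to $\BD{2}$ "should preserve the PCP" does follow from the fact that the smallest PCP-extension of $\BDinfty$ validating a set of finitary rules is the extension by their disjunctive variants, but only after you check that the disjunctive variant of $2$-adjunction is derivable from $2$-adjunction itself (by distributivity, $(\varphi \wedge \psi) \vee \chi \dashv\vdash (\varphi \vee \chi) \wedge (\psi \vee \chi)$ in $\BDinfty$), so that your $\logic{L}'$ really is an extension of $\BDinfty$ by disjunctive variants; and your identification of $\logic{L}'$ with the logic of the class $\mathcal{K}$ of rule-validating substructures uses finitarity of $\logic{L}'$, which holds because $\BDinfty$ is finitely axiomatized.

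The genuine gap is the step you yourself flag as "where the real work lies": you never classify the substructures of $\DMm{2}$ and never verify that every member of $\mathcal{K}$ has a logic extending $\Log(\Qfour)$. That verification is not a residual formality — it is the entire mathematical content of the completeness direction, and in the paper it occupies the whole preceding classification theorem: an enumeration of the substructures of $\DMm{2}$ up to isomorphism (Figures~\ref{fig: sub1} and~\ref{fig: sub2}), the computation of the $\HsS$-order on them (Figure~\ref{fig: si dmas}), and a separating rule for each structure (Table~\ref{tab: separating rules}), each verification being mechanical but unavoidable. Once that scaffolding exists, the theorem is a two-line corollary, and in a form sharper than your plan: one does not test which structures satisfy LEM and $r_{3}$ one by one, but reads off that the \emph{minimal} structures of the order not lying below $\Qfour$ are $\Amatrix$ and $\BAm{1} \otimes \BAm{1}$, whose separating rules are $\emptyset \vdash x \vee \neg x$ and $x, \neg x \vdash x \wedge \neg x$; the stated axioms are exactly the disjunctive variants of these. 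Concretely, what your plan still owes is the proof that the survivors of your two rules (here the downset $\{\Qfour, \Pm{1}, \BAm{1}\}$, the latter two being substructures of $\Qfour$) exhaust $\mathcal{K}$ up to logical equivalence — e.g.\ that $\Qseven$, $\Qeight$, $\Qnine$, $\Km{1}$, and the various dual products are each eliminated or dominated, which requires the $\HsS$-comparisons and not merely a classification of subalgebras of $(\DM{1})^{2}$ up to isomorphism. As it stands, your argument establishes $\logic{L}' \subseteq \Log(\Qfour)$ together with a correct but unexecuted plan for the converse inclusion.
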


  Finally, extensions of $\BDinfty$ can be classified according to their position in the Leibniz hierarchy of abstract algebraic logic. We show that classical logic is the only protoalgebraic extension of $\BDinfty$ and that $\BDinfty$ has a smallest truth-equational extension (or two minimal truth-equational extensions if we include the top and bottom constants in the signature). Classification in the Frege hierarchy proves to be more complicated.

  Before proceeding to fill in the details of the above narrative, let us remark that the present study can be thought as an extension of the investigation of so-called \emph{super-Belnap logics}, initiated by Rivieccio~\cite{rivieccio12} and further pursued in~\cite{albuquerque+prenosil+rivieccio17,prenosil21}. While super-Belnap logics in the strict sense of the term are defined as the extensions of the four-valued logic of Belnap and Dunn, in our notation $\BD{1}$, part of the appeal of broadening our perspective to cover all extensions of $\BDinfty$ is that doing so restores some measure of symmetry between paraconsistency and para\-completeness. For example, in addition to the family of logics axiomatized by increasingly stronger forms of the principle of explosion
\begin{align*}
  (x_{1} \wedge \neg x_{1}) \vee \dots \vee (x_{n} \wedge \neg x_{n}) \vdash y,
\end{align*}
  one might wish to study the family of logics axiomatized by increasingly stronger forms of the law of the excluded middle
\begin{align*}
  \emptyset \vdash (x_{1} \vee \neg x_{1}) \wedge \dots \wedge (x_{n} \wedge \neg x_{n}).
\end{align*}
  But in the presence of adjunction these rules all collapse into $\emptyset \vdash x \vee \neg x$! Relaxing the rule of adjunction therefore allows us to draw finer distinctions between logical principles, just like relaxing the principle of explosion does. (However, we do not pursue this line of thought in the present paper.)

  The above idea of shifting one's perspective on extensions of Belnap–Dunn logic in order to achieve a greater degree of symmetry among these extensions was first suggested by Shramko~\cite{shramko20}. The present paper can therefore partly be considered as an elaboration on Shramko's proposal. Where our paper differs from Shramko's is in how exactly we achieve this. Shramko in effect discards the adjunction rule by considering \textsc{Fmla}-\textsc{Fmla} consequence relations rather than \textsc{Set}-\textsc{Set} ones, i.e.\ consequence relations of the form $\gamma \vdash \varphi$ rather than $\Gamma \vdash \varphi$. However, one can also achieve the same goal by considering extensions of $\BDinfty$ rather than~$\BD{1}$. In a way, the logic $\BDinfty$ corresponds to the \textsc{Fmla}-\textsc{Fmla} fragment of $\BD{1}$: we have $\Gamma \vdash_{\BDinfty} \varphi$ if and only if ${\gamma \vdash_{\BD{1}} \varphi}$ for some $\gamma \in \Gamma$. Shramko's investigation of the duals of known extensions of $\BD{1}$ can therefore also be pursued within a \textsc{Set}-\textsc{Fmla} framework. This has the advantage of allowing us to rely on the well-developed framework of abstract algebraic logic.

\section{$n$-filters on De~Morgan lattices}

  We start by reviewing some basic results about $n$-filters on distributive lattices proved in~\cite{prenosil22}. Then we extend these results to (complete, consistent, classical, and Kalman) $n$-filters on De~Morgan lattices. As an immediate consequence, we obtain completeness theorems for logics of (complete, consistent, classical, and Kalman) $n$-filters on De~Morgan lattices.

\subsection{$n$-filters on distributive lattices}

  The notation $\range{n} \assign \{ 1, 2, \dots, n \}$ and $X \bsubseteq{n} Y$, meaning that $X \subseteq Y$ and $\card{X} \in \range{n}$, will be used throughout the paper. An \emph{$n$-filter} on a distributive lattice is defined for $n \geq 1$ as an upset $F$ such that for each non-empty finite set $Y \subseteq F$ (without loss of generality with $\card{Y} = n+1$)
\begin{align*}
  \text{if } \bigwedge X \in F \text{ for each } X \bsubseteq{n} Y, \text{ then } \bigwedge Y \in F.
\end{align*}
  A $1$-filter is a lattice filter in the usual sense of the word. Each $m$-filter is an $n$-filter for $m \leq n$, and each upset of a finite distributive lattice is an $n$-filter for some $n$.

  An upset $F$ of $\alg{L}$ will be called \emph{prime} in case $x \vee y \in F$ implies $x \in F$ or $y \in F$.\footnote{Observe that the empty set is prime according to this definition. This is because we wish to characterize prime $n$-filters as precisely the homomorphic preimages of a certain canonical prime $n$-filter. If we restricted to bounded distributive lattices and homomorphisms which preserve the lattice bounds, then it would be appropriate to further require that a prime upset by non-empty. This is in effect what we do in the case of Boolean algebras.} The canonical example of a prime $n$-filter is the set $\nonempty{n}$ of non-zero elements of the finite Boolean lattice over $n$ atoms $\BA{n}$. This is not an $m$-filter for any $m < n$.

  The relationship between prime $n$-filters on distributive lattices and the prime $n$-filter $\nonempty{n}$ extends the relationship between prime filters and the prime filter~$\{ \True \}$.

\begin{theorem}[Prime $n$-filters on distributive lattices]
  The prime $n$-filters on a distributive lattice are precisely the homo\-morphic preimages of $\nonempty{n} \subseteq \BA{n}$.
\end{theorem}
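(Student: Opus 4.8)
The plan is to prove both inclusions, first reducing the statement to a purely lattice-theoretic decomposition. Since $\BA{n}$ is (the lattice reduct of) the $n$-fold power $\alg{2}^{n}$ of the two-element lattice $\alg{2}$, a lattice homomorphism $h \colon \alg{L} \to \BA{n}$ is the same thing as an $n$-tuple $\pair{h_{1}}{\dots, h_{n}}$ of homomorphisms $h_{i} \colon \alg{L} \to \alg{2}$, and each $h_{i}$ is determined by the prime filter $P_{i} \assign h_{i}^{-1}[\{ \True \}]$ (allowing the degenerate cases $P_{i} = \emptyset$ and $P_{i} = L$, both of which count as prime filters here). As the bottom of $\BA{n}$ is the only element outside $\nonempty{n}$, we have $h(x) \in \nonempty{n}$ exactly when $h_{i}(x) = \True$ for some $i$, so that $h^{-1}[\nonempty{n}] = P_{1} \cup \dots \cup P_{n}$. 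The theorem is therefore equivalent to the claim that the prime $n$-filters on $\alg{L}$ are exactly the unions of at most $n$ prime filters.

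For the easy inclusion I would show that any union $F = P_{1} \cup \dots \cup P_{n}$ of prime filters is a prime $n$-filter. Primeness and the upset property are immediate, so the content is the $n$-filter condition. Given $Y = \{ y_{0}, \dots, y_{n} \} \subseteq F$ all of whose submeets of size at most $n$ lie in $F$, consider in particular the $n+1$ meets $m_{k} \assign \bigwedge_{i \neq k} y_{i}$ obtained by omitting a single element; each $m_{k}$ lies in some $P_{c(k)}$. As $k$ ranges over $n+1$ values while $c(k)$ ranges over only $n$ filters, the pigeonhole principle yields $k \neq k'$ with $m_{k}, m_{k'} \in P_{j}$ for a common $j$; since $P_{j}$ is a filter and $m_{k} \wedge m_{k'} = \bigwedge Y$, we conclude $\bigwedge Y \in P_{j} \subseteq F$. (Alternatively, this direction follows by checking that $\nonempty{n}$ is itself a prime $n$-filter on $\BA{n}$ and that homomorphic preimages of prime $n$-filters are again prime $n$-filters, the latter by splitting into the cases where $h$ is or is not injective on $Y$.)

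The substance of the theorem is the converse: every prime $n$-filter $F$ is a union of at most $n$ prime filters. First, because $F$ is a prime upset, its complement $I \assign L \setminus F$ is an ideal (a downset closed under joins). A standard Zorn's lemma argument then shows that every $a \in F$ lies in some prime filter contained in $F$: a filter maximal among those containing $a$ and contained in $F$ must be prime, using that $I$ is an ideal. Hence $F$ is covered by the prime filters it contains, and by prime avoidance any prime filter contained in a finite union $P_{1} \cup \dots \cup P_{m}$ of filters is already contained in one of the $P_{i}$ (take the meet of witnesses $a_{i}$ in $Q \setminus P_{i}$).

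The main obstacle is to cut this cover down to at most $n$ prime filters, and this is exactly where the $n$-filter property enters. I would argue by contradiction: if $F$ were not a union of $n$ prime filters, one extracts $n+1$ prime filters $M_{0}, \dots, M_{n} \subseteq F$ in ``general position'' — pairwise incomparable, whence by the dual prime-avoidance statement for prime ideals $M_{i} \not\supseteq \bigcap_{j \neq i} M_{j}$ for each $i$. Choosing witnesses $y_{i} \in \bigcap_{j \neq i} M_{j} \setminus M_{i}$, every proper submeet $\bigwedge_{i \in S} y_{i}$ (with $S$ missing some index $k$) lands in $M_{k} \subseteq F$, while the full meet satisfies $\bigwedge_{i} y_{i} \leq y_{i} \notin M_{i}$ and so escapes each $M_{i}$; provided it escapes $F$ altogether, this contradicts the $n$-filter property applied to $\{ y_{0}, \dots, y_{n} \}$. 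The delicate point, which I expect to be the crux and which is carried out in~\cite{prenosil22}, is precisely to choose the family $M_{0}, \dots, M_{n}$ (for example via a maximality argument on the associated prime ideals) so that the full meet avoids not merely the chosen filters but all of $F$. Once the bound is secured, the prime filters $P_{1}, \dots, P_{n}$ covering $F$ assemble into the desired homomorphism $h = \pair{h_{1}}{\dots, h_{n}} \colon \alg{L} \to \BA{n}$ with $h^{-1}[\nonempty{n}] = F$.
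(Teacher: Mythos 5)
Your proposal follows essentially the same route as the paper: you reduce the theorem, via the correspondence between homomorphisms $h\colon \alg{L} \to \BA{n}$ and $n$-tuples of homomorphisms into the two-element lattice (the paper's homomorphism lemma for $\BAm{1}$, packaged through the dual power $\BAm{n} = \BAm{1}^{\otimes n}$), to the Fact that prime $n$-filters are exactly the unions of at most $n$ prime filters, which is precisely the paper's derivation — the Fact itself being cited there from~\cite{prenosil22} without proof. Your pigeonhole argument for the easy direction is correct (and more detailed than the paper), and you correctly locate the crux of the hard direction — arranging the witnesses so that $\bigwedge_{i} y_{i}$ escapes all of $F$ rather than merely the chosen prime filters — in the companion manuscript, exactly where the paper leaves it.
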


  In order to understand how the set $\nonempty{n} \subseteq \BA{n}$ arises in this context, it will be helpful to review dual products and strict homomorphisms of structures. Throughout the paper, the structures that we consider are \emph{logical matrices}: they consist of an algebra $\alg{A}$ and a set $F \subseteq \alg{A}$, called the \emph{designated set} of the structure. The direct product of a family of structures $\pair{\alg{A}_{i}}{F_{i}}$ with $i \in I$ is the structure
\begin{align*}
  \prod_{i \in I} \pair{\alg{A}_{i}}{F} \assign \Big \langle {\prod_{i \in I} \alg{A}_{i}}, {\bigcap_{i \in I} \pi_{i}^{-1}[F_{i}]} \Big \rangle,
\end{align*}
  where $\pi_{i}\colon \alg{A} \to \alg{A}_{i}$ are the projection maps from $\alg{A} \assign \prod_{i \in I} \alg{A}_{i}$. The \emph{dual product} of the family of structures $\pair{\alg{A}_{i}}{F_{i}}$ with $i \in I$, on the other hand, is the structure
\begin{align*}
  \bigotimes_{i \in I} \pair{\alg{A}_{i}}{F_{i}} \assign \Big \langle {\prod_{i \in } \alg{A}_{i}}, {\bigcup_{i \in I} \pi_{i}^{-1}[F_{i}]} \Big \rangle.
\end{align*}
  In particular, observe that $\BAm{n} \assign \pair{\BA{n}}{\nonempty{n}}$ is the $n$-th dual power of the structure $\BAm{1} \assign \pair{\BA{1}}{\nonempty{1}} = \pair{\BA{1}}{\{ \True \}}$. We shall use the notation $\BAm{1}^{\otimes n}$ to indicate dual powers.

  The correspondence between prime filters on a distributive lattice $\alg{L}$ and homo\-morphisms into $\BA{1}$ is well known: each prime filter $F$ on $\alg{L}$ corresponds to a unique homomorphism $h\colon \alg{L} \to \BA{1}$ such that $F = h^{-1} \{ \True \}$. Let us phrase this in terms of structures. A \emph{strict homomorphism} of structures $h\colon \pair{\alg{A}}{F} \to \pair{\alg{B}}{B}$ is a homomorphism of algebras $h\colon \alg{A} \to \alg{B}$ such that $F = h^{-1}[G]$.

\begin{lemma}[Homomorphism lemma for $\BAm{1}$]
  Let $F$ be a prime \mbox{filter} on a distributive lattice $\alg{L}$. Then there is a strict homomorphism ${h_{F}\colon \pair{\alg{L}}{F} \to \BAm{1}}$, namely
\begin{align*}
  h_{F}(x) & \assign \True \text{ if } x \in F, & h_{F}(x) & \assign \False \text{ if } x \notin F.
\end{align*}
\end{lemma}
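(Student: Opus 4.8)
The plan is to verify that the map $h_F$ in the statement is a lattice homomorphism, since strictness comes for free: by definition $h_F(x) = \True$ exactly when $x \in F$, so $h_F^{-1}[\{\True\}] = F$ holds immediately. Because the signature of the distributive lattices under consideration consists of $\wedge$ and $\vee$ alone (with no bounds, in keeping with the convention that a prime filter may be empty), and the codomain $\BA{1}$ is the two-element lattice with $\False < \True$, it remains only to check that $h_F$ preserves meets and joins.

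First I would check meets. Since $\True \wedge \True = \True$ while every other meet in $\BA{1}$ equals $\False$, preservation amounts to the equivalence $x \wedge y \in F \iff (x \in F \text{ and } y \in F)$. The left-to-right direction uses that $F$ is an upset together with $x \wedge y \leq x$ and $x \wedge y \leq y$; the right-to-left direction is just closure of the filter $F$ (a $1$-filter) under binary meets.

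Next I would check joins. Dually, since $\False \vee \False = \False$ while every other join in $\BA{1}$ equals $\True$, preservation amounts to $x \vee y \in F \iff (x \in F \text{ or } y \in F)$. Here the right-to-left direction again uses only that $F$ is an upset, via $x \leq x \vee y$ and $y \leq x \vee y$; and the left-to-right direction is precisely the primeness of $F$. This is the one place where primeness enters, and it is the single nonroutine ingredient of the argument — so the ``main obstacle'' amounts to nothing more than the bookkeeping observation that join preservation is exactly what primeness was designed to supply. The lemma is thus the structural restatement of the classical correspondence between prime filters and homomorphisms onto $\BA{1}$, and the same verification handles the degenerate cases $F = \emptyset$ and $F = \alg{L}$ uniformly.
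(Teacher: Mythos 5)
Your proof is correct and is precisely the standard verification the paper has in mind: the paper states this lemma without proof, presenting it as the well-known correspondence between prime filters on a distributive lattice and homomorphisms into $\BA{1}$. Your decomposition (strictness immediate from the definition, meet preservation from the upset and filter properties, join preservation from the upset property plus primeness) is the canonical argument, and your observation that the unbounded signature makes the degenerate cases $F = \emptyset$ and $F = \alg{L}$ work uniformly matches the paper's own footnoted convention that the empty upset counts as prime.
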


  A family of strict homomorphisms $h_{i}\colon \pair{\alg{A}}{F_{i}} \to \pair{\alg{B}_{i}}{G_{i}}$ with $i \in I$ may be packaged into a single strict homo\-morphism of structures in two different ways:
\begin{align*}
  h_{\scriptscriptstyle\cap}\colon \pair{\alg{A}}{\bigcap_{i \in I} F_{i}} \to \prod_{i \in I} \pair{\alg{B}_{i}}{G_{i}}, & & h_{\scriptscriptstyle\cup}\colon \pair{\alg{A}}{\bigcup_{i \in I} F_{i}} \to \bigotimes_{i \in I} \pair{\alg{B}_{i}}{G_{i}}.
\end{align*}
  A finite family of prime filters $F_{i}$ on $\alg{L}$ for $i \in \range{n}$ thus yields a strict homo\-morphism $h\colon \pair{\alg{L}}{F} \to \BAm{n}$ for $F \assign F_{1} \cup \dots \cup F_{n}$. The characterization of prime $n$-filters as the homomorphic preimages of the canonical prime $n$-filter $\nonempty{n}$ on $\BA{n}$ is now an immediate consequence of the following fact. (Note that the analogous claim for arbitrary $n$-filters is far from true.)

\begin{fact}
  Prime $n$-filters on a distributive lattice are precisely the unions of families of at most $n$ prime filters.
\end{fact}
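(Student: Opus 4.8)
The plan is to prove the two inclusions separately, with essentially all the work in one direction. The easy direction is that any union $F = P_1 \cup \dots \cup P_n$ of at most $n$ prime filters is a prime $n$-filter. That $F$ is a prime upset is immediate, so the only point is the $n$-filter condition. Here I take $Y = \{y_1, \dots, y_{n+1}\} \subseteq F$ (the case $\card{Y} = n+1$ suffices) with all size-$n$ submeets $s_i := \bigwedge_{j \neq i} y_j$ lying in $F$. Each $s_i$ lies in some $P_{f(i)}$; since there are $n+1$ submeets but only $n$ filters, the pigeonhole principle yields $i \neq i'$ with $s_i, s_{i'}$ in a common $P_k$, and as $P_k$ is a filter we get $\bigwedge Y = s_i \wedge s_{i'} \in P_k \subseteq F$.

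For the converse---the substantive direction---the goal is, given a prime $n$-filter $F$ on $\alg{L}$, to produce at most $n$ prime filters whose union is $F$. I would first reduce to the finite case by a compactness argument. Decomposing $F$ into at most $n$ prime filters is equivalent to producing a single lattice homomorphism $c \colon \alg{L} \to \BA{n}$ with $F = c^{-1}[\nonempty{n}]$, whose $n$ components are the characteristic homomorphisms of the prime filters (cf.\ the Homomorphism lemma for $\BAm{1}$). Viewing such a $c$ as a point of the compact space $\BA{n}^{\alg{L}}$, the homomorphism and preimage conditions are clopen constraints, each involving only finitely many elements, so by Tychonoff it suffices to satisfy every finite subfamily, i.e.\ to solve the problem on each finitely generated---hence finite---sublattice $\alg{L}_0$, on which $F \cap \alg{L}_0$ is again a prime $n$-filter.

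In the finite case I would argue via join-irreducibles. The prime filters contained in $F$ are exactly the sets $\mathord{\uparrow} p$ for $p$ a join-irreducible lying in $F$, and the minimal such $p$ already cover $F$ (every $x \in F$ lies above a minimal join-irreducible of $F$, using that the complement of $F$ is closed under joins). It remains to bound the number of minimal join-irreducibles of $F$ by $n$, which is the crux. Suppose there were $n+1$ of them, $p_1, \dots, p_{n+1}$; being distinct minimal elements they are pairwise incomparable, and minimality together with primeness forces every meet of two or more of them out of $F$. I would then set $a_i := \bigvee_{j \neq i} p_j$ and check that $(a_1, \dots, a_{n+1})$ witnesses a failure of the $n$-filter condition: each $a_i \in F$ since $a_i \geq p_j$ for $j \neq i$; each size-$n$ submeet $\bigwedge_{i \neq k} a_i$ lies above $p_k$ and hence in $F$; yet the full meet $\bigwedge_i a_i$ lies outside $F$. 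The last point is where distributivity does the work: expanding the meet of joins over choice functions $i \mapsto j(i)$ with $j(i) \neq i$, each resulting term is a meet of the $p_m$ over an index set of size at least two, hence lies outside $F$, and since the complement of $F$ is closed under joins the whole expression avoids $F$. This contradicts the $n$-filter condition and yields the bound.

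The main obstacle is precisely this extremal bound on the minimal join-irreducibles: the naive tuple $(p_1, \dots, p_{n+1})$ does \emph{not} witness a failure of the $n$-filter condition, since its submeets already fall outside $F$, so one must pass to the dual tuple $a_i = \bigvee_{j \neq i} p_j$ and control the purely lattice-theoretic behaviour of $\bigwedge_i \bigvee_{j \neq i} p_j$. Once the finite case is in hand, the compactness step and the easy direction combine to give the Fact; feeding the resulting at-most-$n$ prime filters into the union-packaging construction $h_{\scriptscriptstyle\cup}$ then yields the homomorphism characterization of the preceding theorem.
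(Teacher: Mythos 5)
You should know at the outset that the paper itself does not prove this Fact: it is stated as imported background from the companion manuscript \cite{prenosil22}, so there is no internal proof to compare against, and your proposal has to stand on its own. Its architecture is sound. The pigeonhole argument for the easy direction is exactly right (with $n+1$ size-$n$ submeets and only $n$ prime filters, two submeets share a filter, and their meet is $\bigwedge Y$), the compactness reduction via clopen constraints in $\BA{n}^{L}$ is a standard and correct way to pass from finite sublattices to $\alg{L}$ (using that finitely generated distributive lattices are finite and that $F \cap \alg{L}_0$ is again a prime $n$-filter), and your identification of the crux is accurate: the naive tuple of minimal join-irreducibles does not violate the $n$-filter condition, whereas the dual tuple $a_i = \bigvee_{j \neq i} p_j$ does, by expanding $\bigwedge_i \bigvee_{j \neq i} p_j$ over choice functions $f$ with $f(i) \neq i$, whose images necessarily have size at least two. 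One small detail you should record explicitly: the $n$-filter condition concerns a \emph{set} $Y$ with $\card{Y} = n+1$, so the $a_i$ must be pairwise distinct; this does hold, since $a_i = a_{i'}$ for $i \neq i'$ would give $p_i \leq \bigvee_{j \neq i} p_j$, contradicting join-primeness of $p_i$ together with pairwise incomparability.

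There is, however, one genuine gap: the total upset $F = L$. This is a prime $n$-filter (both conditions hold vacuously), and it is within the scope of the Fact because homomorphisms here need not preserve bounds ($L$ is the preimage of $\nonempty{n}$ under the constant-top map; compare the paper's footnote admitting the \emph{empty} prime upset at the other extreme). For total $F$ on a finite lattice, two of your asserted steps fail: $0 \in F$ lies above no join-irreducible, so the covering claim breaks, and the meet of two distinct minimal join-irreducibles can lie in $F$ --- on $\BA{n+1}$ with every element designated, the minimal join-irreducibles of $F$ are the $n+1$ atoms and their pairwise meets are $0 \in F$, so the bound ``at most $n$ minimal join-irreducibles'' that you aim to prove is simply false in this case. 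The patch is one line: if $0 \in F$ then $F = \mathord{\uparrow} 0 = L$ is itself a (non-proper) prime filter, and the singleton family works. Your key lemma is then valid exactly when $0 \notin F$: if $p \wedge q \in F$ for distinct minimal join-irreducibles, then $p \wedge q \neq 0$, its (nonempty) decomposition as a join of join-irreducibles together with the primeness of $F$ (complement closed under nonempty finite joins) yields a join-irreducible $r \leq p \wedge q$ in $F$, and minimality forces $r = p = q$, a contradiction. You must also carry this case split through the compactness step, since $F \cap \alg{L}_0$ can be total on $\alg{L}_0$ even when $F$ is proper. With that degenerate case handled, the proof is complete and correct.
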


  The notion of an $n$-prime filter on a distributive lattice $\alg{L}$ is dual to that of a prime $n$-filter: a filter $F$ is \emph{$n$-prime} if its complement is an $n$-ideal (i.e.\ an $n$-filter in the order dual lattice). For example, a filter $F$ is $2$-prime if
\begin{align*}
  x \vee y \vee z \in F & \implies x \vee y \in F \text{ or } y \vee z \in F \text{ or } z \vee y \in F.
\end{align*}
  Because the $n$-prime filters on $\alg{L}$ are precisely the complements of prime $n$-filters on the order dual of $\alg{L}$, the characterization of prime $n$-filters as the homomorphic preimages of $\nonempty{n}$ yields a dual characterization of $n$-prime filters.\footnote{The reader may be tempted at this point to put the two definitions together and consider $m$-prime $n$-filters. Let us therefore pre-emptively warn the reader that in general it is \emph{not} appropriate to define $m$-prime $n$-filters as $n$-filters whose complement is an $m$-ideal. Rather, $m$-prime $n$-filters should be defined as the $m$-prime elements of the lattice of $n$-filters. These definitions coincide if $m=1$ or $n=1$, but a $2$-filter whose complement is a $2$-ideal need not be a $2$-prime $2$-filter.}

  This follows from the De~Morgan duality between direct and dual products:
\begin{align*}
  \overline{\pair{\alg{A}}{F} \otimes \pair{\alg{B}}{G}} = \overline{\pair{\alg{A}}{F}} \times \overline{\pair{\alg{B}}{G}}, \text{ where } \overline{\pair{\alg{A}}{F}} \assign \pair{\alg{A}}{\alg{A} \setminus F}.
\end{align*}
\begin{theorem}[$n$-prime filters on distributive lattices]
  The $n$-prime filters on a distributive lattice are precisely the homomorphic preimages of the $n$-prime filter $\{ \True \}$ on $\BA{n}$.
\end{theorem}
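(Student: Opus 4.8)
The plan is to deduce this statement from the already-established characterization of prime $n$-filters by passing to the order dual. The text records the basic duality we need: a filter $F$ on $\alg{L}$ is $n$-prime exactly when its complement $\alg{L} \setminus F$ is a prime $n$-filter on the order dual $\alg{L}^{\dual}$. Indeed, primeness of $\alg{L} \setminus F$ with respect to the join of $\alg{L}^{\dual}$, which is the meet of $\alg{L}$, is precisely meet-closedness of $F$, while the $n$-filter condition on $\alg{L}^{\dual}$ is by definition the $n$-ideal condition on $\alg{L}$; the upset condition is built into the notion of $n$-filter. So it suffices to match up the homomorphic preimages on the two sides.

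First I would set up the bijection between homomorphisms induced by Boolean complementation. Since $\neg \colon \BA{n} \to \BA{n}$ is a dual lattice isomorphism and is involutive, the assignment $h \mapsto \neg \circ h$ is a bijection between the lattice homomorphisms $\alg{L} \to \BA{n}$ and the lattice homomorphisms $\alg{L}^{\dual} \to \BA{n}$: if $h$ preserves $\wedge$ and $\vee$, then $\neg \circ h$ sends $\wedge$ to $\vee$ and vice versa, which is exactly a homomorphism out of $\alg{L}^{\dual}$. Writing $g \assign \neg \circ h$, the key bookkeeping is that $\neg$ swaps $\True$ and $\False$, so that $h^{-1}[\{\True\}] = g^{-1}[\{\False\}] = \alg{L} \setminus g^{-1}[\nonempty{n}]$, using $\BA{n} \setminus \nonempty{n} = \{\False\}$.

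Combining these two observations yields both inclusions at once. Given an $n$-prime filter $F$ on $\alg{L}$, its complement is a prime $n$-filter on $\alg{L}^{\dual}$, hence of the form $g^{-1}[\nonempty{n}]$ for some homomorphism $g \colon \alg{L}^{\dual} \to \BA{n}$ by the prime $n$-filter theorem; then $F = h^{-1}[\{\True\}]$ for $h \assign \neg \circ g$. Conversely, for any homomorphism $h \colon \alg{L} \to \BA{n}$ the set $(\neg \circ h)^{-1}[\nonempty{n}]$ is a prime $n$-filter on $\alg{L}^{\dual}$, so its complement $h^{-1}[\{\True\}]$ is an $n$-prime filter on $\alg{L}$. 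At the level of structures this is exactly the content of the displayed De~Morgan duality $\overline{\structure{B}_1^{\otimes n}} = \overline{\structure{B}_1}^{\times n}$ together with the self-duality of $\BA{n}$: complementing the designated set turns the dual power $\BAm{n} = \pair{\BA{n}}{\nonempty{n}}$ into the direct power $\pair{\BA{n}}{\{\False\}}$, which the complementation isomorphism identifies with $\pair{\BA{n}}{\{\True\}}$.

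The argument is essentially bookkeeping, so I do not expect a genuine obstacle; the one place to stay careful is the order-reversal. One must keep straight that the relevant homomorphisms land in $\BA{n}$ regarded with its given order on one side and with its dual on the other, and that Boolean complementation is precisely what converts between the two while interchanging the roles of the canonical prime $n$-filter $\nonempty{n}$ and the $n$-prime filter $\{\True\}$ (equivalently, interchanging $\{\False\}$ and $\{\True\}$). A secondary point is to check that the degenerate conventions line up—for instance that the improper filter $\alg{L}$ corresponds under complementation to the empty prime $n$-filter on $\alg{L}^{\dual}$, consistent with the convention that the empty upset counts as prime.
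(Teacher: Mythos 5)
Your proposal is correct and takes essentially the same route as the paper, which derives this theorem from the observation that $n$-prime filters are precisely the complements of prime $n$-filters on the order dual together with the De~Morgan duality $\overline{\pair{\alg{A}}{F} \otimes \pair{\alg{B}}{G}} = \overline{\pair{\alg{A}}{F}} \times \overline{\pair{\alg{B}}{G}}$. Your explicit use of the complementation bijection $h \mapsto \neg \circ h$, which interchanges $\nonempty{n}$ and $\{\True\}$ (equivalently $\{\False\}$ and $\{\True\}$), simply spells out the bookkeeping that the paper leaves implicit.
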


  The relationship between $n$-filters and prime $n$-filters on distributive lattices extends the relationship between filters and prime filters.

\begin{theorem}[$n$-filters on distributive lattices]
  The $n$-filters on a distributive lattice are precisely the intersections of prime $n$-filters.
\end{theorem}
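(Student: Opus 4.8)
The plan is to prove the two inclusions separately; the inclusion asserting that intersections of prime $n$-filters are $n$-filters is routine, while the converse carries the weight. For the easy direction I would simply observe that an arbitrary intersection of $n$-filters is again an $n$-filter: an intersection of upsets is an upset, and if $Y \subseteq \bigcap_{i} G_{i}$ is a non-empty finite set with $\bigwedge X \in \bigcap_{i} G_{i}$ for every $X \bsubseteq{n} Y$, then each $G_{i}$ is an $n$-filter containing all these submeets, so $\bigwedge Y \in G_{i}$ for every $i$ and hence $\bigwedge Y \in \bigcap_{i} G_{i}$. In particular this applies when the $G_{i}$ are prime $n$-filters.

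For the converse it suffices to establish the separation statement: for every $n$-filter $F$ on $\alg{L}$ and every $a \notin F$ there is a prime $n$-filter $G \supseteq F$ with $a \notin G$, for then $F$ is the intersection of all prime $n$-filters extending it. Since $F$ is an upset avoiding $a$, it is disjoint from $\downarrow a$, so each $x \in F$ satisfies $x \not\leq a$. Invoking the fact that prime $n$-filters are precisely the unions of at most $n$ prime filters (equivalently, the representation of prime $n$-filters as homomorphic preimages of $\nonempty{n} \subseteq \BA{n}$), the separation statement reduces to the following covering claim: there exist prime filters $P_{1}, \dots, P_{n}$, each avoiding $a$, with $F \subseteq P_{1} \cup \dots \cup P_{n}$.

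I would prove the covering claim first for a finite distributive lattice $\alg{L}$, where it becomes purely combinatorial. Here the prime filters avoiding $a$ are exactly the principal filters $\mathord{\uparrow} p$ with $p$ join-irreducible and $p \not\leq a$, and covering $F$ by $n$ of them amounts to finding a set $T$ of at most $n$ join-irreducibles, none below $a$, meeting (in the sense of lying below) every minimal element of $F$; each such minimal element has a non-empty trace of join-irreducibles below it and not below $a$, so $T$ is a transversal of the family of these traces. The crux, and the step I expect to be the main obstacle, is bounding a minimal transversal $T$ by $n$ using the $n$-filter property. Assuming $\card{T} \geq n+1$, pick $n+1$ elements $t_{0}, \dots, t_{n} \in T$; by minimality of $T$ each $t_{i}$ has a private minimal element $m_{i}$ of $F$ with $m_{i} \cap T = \{ t_{i} \}$. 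Setting $x_{i} \assign \bigcup_{k \neq i} m_{k}$, each $n$-element submeet $\bigwedge_{i \neq l} x_{i}$ lies above $m_{l}$ and so belongs to $F$, whereas the full meet $\bigwedge_{i} x_{i}$ contains no element of $T$ (no $t \in T$ lies below two distinct $m_{k}$) and hence, $T$ being a transversal, does not belong to $F$. This violates the $n$-filter condition for $F$, so $\card{T} \leq n$ and the covering claim holds in the finite case.

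Finally I would lift the covering claim to an arbitrary distributive lattice by a standard compactness argument. Writing $\alg{L}$ as the directed union of its finite sublattices containing $a$, the restriction of $F$ to each such sublattice is again an $n$-filter avoiding $a$, so each carries a finite solution; phrased via the representation theorem, each finite sublattice admits a homomorphism into $\BA{n}$ sending $a$ to the bottom element and $F$ into $\nonempty{n}$. These local homomorphisms can be amalgamated into a single homomorphism $h \colon \alg{L} \to \BA{n}$ with $h(a) = 0$ and $h[F] \subseteq \nonempty{n}$ (for instance by a routine ultraproduct argument, using that an ultrapower of the finite algebra $\BA{n}$ is again $\BA{n}$). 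By the representation of prime $n$-filters as homomorphic preimages of $\nonempty{n}$, the set $h^{-1}[\nonempty{n}]$ is then a prime $n$-filter containing $F$ and avoiding $a$, which completes the proof.
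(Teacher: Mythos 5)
Your proof is correct, but it follows a genuinely different route from the paper's. The paper derives the theorem from the theory of generated $n$-filters: it describes $\fg{n}{U}$ explicitly, isolates the key identity $\fg{n}{U, x} \cap \fg{n}{U, y} = \fg{n}{U, x \vee y}$ (Lemma~\ref{lemma: fg cap}), and then runs the standard maximal-element argument --- by Zorn's lemma take a maximal $n$-filter $G$ extending $F$ and disjoint from a given ideal $I$; if $x, y \notin G$, then $I$ meets both $\fg{n}{G, x}$ and $\fg{n}{G, y}$, hence meets $\fg{n}{G, x \vee y}$, so $x \vee y \notin G$ and $G$ is prime. You instead reduce separation from a point to a covering claim (using only the easy direction of the fact that a union of at most $n$ prime filters is a prime $n$-filter), settle the finite case combinatorially via Birkhoff's join-irreducible analysis, and lift by compactness. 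Your finite core is sound: writing $x_{i} \assign \bigvee_{k \neq i} m_{k}$ (your $\bigcup$ should be $\bigvee$), each $x_{i}$ lies in $F$, every submeet $\bigwedge X$ with $X \bsubseteq{n} \{ x_{0}, \dots, x_{n} \}$ lies above some private witness $m_{l} \in F$, and join-primeness of join-irreducibles in a finite distributive lattice together with privateness of the $m_{k}$ excludes every $t \in T$ from below the full meet, so the $n$-filter condition indeed forces $\card{T} \leq n$; the lift works because distributive lattices are locally finite (so the system of finite sublattices containing $a$ is directed) and an ultrafilter limit of homomorphisms into the finite structure $\BA{n}$ is again a homomorphism fixing $h(a) = 0$ and $h[F] \subseteq \nonempty{n}$, whence $h^{-1}[\nonempty{n}]$ is the desired prime $n$-filter. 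The trade-off: the paper's argument is shorter, needs no local finiteness, and directly yields the stronger statement it actually uses later, namely separation of an $n$-filter from an \emph{arbitrary} ideal (invoked in the De~Morgan section); your version separates only from principal ideals $\mathord{\downarrow} a$, which suffices for the present theorem and could be upgraded by replacing $\mathord{\downarrow} a$ with the trace of an ideal, which is principal in each finite sublattice. In exchange, your proof makes combinatorially transparent exactly why the bound $n$ arises: a minimal transversal of the traces of join-irreducibles not below $a$ can have at most $n$ elements precisely because of the $(n+1)$-ary adjunction condition.
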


  The proof of this theorem relies on understanding how $n$-filters are generated. Let $U$ be a subset of a distributive lattice $\alg{L}$. Because an arbitrary intersection of $n$-filters is an $n$-filter, there is a smallest $n$-filter which extends $U$. We call this the $n$-filter \emph{generated} by $U$ and denote it $\fg{n}{U}$. It suffices to describe $U$ in case $U$ is a non-empty upset, since $\fg{n}{\emptyset} = \emptyset$ and the $n$-filter generated by $U$ coincides with the $n$-filter generated by its upward closure.

\begin{lemma}[Generating $n$-filters]
  Let $U$ be a non-empty upset of a distributive lattice $\alg{L}$. Then
\begin{align*}
  a \in \fg{n}{U} \iff  & \text{there is some non-empty finite $X \subseteq U$ such that $\bigwedge X \leq a$} \\
  & \text{and $\bigwedge Y \in U$ for each non-empty $Y \subseteq X$ with $\card{Y} \leq n$.}
\end{align*}
\end{lemma}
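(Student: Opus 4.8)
The plan is to prove the equality $\fg{n}{U} = F$, where $F$ denotes the set described by the right-hand side, by two inclusions. That $U \subseteq F$ (witness $X = \{a\}$) and that $F$ is an upset (keep the witness $X$ when passing to a larger element) are immediate, so all the content lies in the inclusion $F \subseteq \fg{n}{U}$ together with the verification that $F$ is itself an $n$-filter; the latter then gives $\fg{n}{U} \subseteq F$ since $\fg{n}{U}$ is the least $n$-filter containing $U$.

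For $F \subseteq \fg{n}{U}$ I would first record the routine strengthening of the defining condition: in any $n$-filter $G$, if $X$ is finite and $\bigwedge Z \in G$ for every $Z \bsubseteq{n} X$, then $\bigwedge X \in G$. This is exactly the ``without loss of generality $\card{X} = n+1$'' form of the definition, extended to all finite $X$ by an easy induction on $\card{X}$. Now given $a \in F$ with witness $X \subseteq U$, all the relevant submeets of $X$ lie in $U \subseteq \fg{n}{U}$, so $\bigwedge X \in \fg{n}{U}$, and hence $a \in \fg{n}{U}$ because $\bigwedge X \le a$ and $\fg{n}{U}$ is an upset.

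The heart of the argument is that $F$ is an $n$-filter; by the same reduction it suffices to treat a set $Y = \{a_1, \dots, a_{n+1}\} \subseteq F$ all of whose proper submeets lie in $F$ and to deduce $\bigwedge Y \in F$. For each $i$ the maximal submeet $a^{(i)} := \bigwedge_{j \neq i} a_j$ lies in $F$, so fix a witness $X^{(i)} \subseteq U$ with $m_i := \bigwedge X^{(i)} \leq a^{(i)}$ whose submeets of size $\le n$ lie in $U$. The witness I propose for $\bigwedge Y$ is
\begin{align*}
  X^{*} := \Big\{ \bigvee_{i \in T} x_i \;:\; T \subseteq [n+1], \ \card{T} = n, \ x_i \in X^{(i)} \text{ for each } i \in T \Big\},
\end{align*}
the set of all joins obtained by deleting one index and choosing one element from each remaining witness; each such join dominates one of its $U$-members, so $X^{*} \subseteq U$. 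Its two nontrivial properties rest on distributivity. A short induction gives $\bigwedge_{(x_i) \in \prod_{i \in T} X^{(i)}} \bigvee_{i \in T} x_i = \bigvee_{i \in T} m_i$, whence $\bigwedge X^{*} = \bigwedge_{\card{T} = n} \bigvee_{i \in T} m_i$; a second lattice identity, proved by expanding the meet of joins and observing that a fixed-point-free choice function on $[n+1]$ must take at least two values, rewrites this as $\bigvee_{i < j}(m_i \wedge m_j)$. Since $m_i \wedge m_j \leq a^{(i)} \wedge a^{(j)} = \bigwedge Y$ for $i \neq j$, we obtain $\bigwedge X^{*} \leq \bigwedge Y$. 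For the submeet condition, let $W \subseteq X^{*}$ with $\card{W} \leq n$; each member of $W$ omits a single index, so the $\leq n$ omitted indices cannot exhaust $[n+1]$, and some $i^{*}$ lies in every index set occurring in $W$. Projecting each join in $W$ onto its $X^{(i^{*})}$-component then shows $\bigwedge W$ dominates a meet of at most $n$ elements of $X^{(i^{*})}$, which lies in $U$; as $U$ is an upset, $\bigwedge W \in U$. This exhibits $X^{*}$ as a witness for $\bigwedge Y$, so $\bigwedge Y \in F$.

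I expect the main obstacle to be discovering this construction rather than verifying it. The naive candidates each fail: the union $\bigcup_i X^{(i)}$ has meet below $\bigwedge Y$ but its cross-submeets need not stay in $U$, while the single full join $\bigvee_{i} x_i$ keeps all short meets in $U$ but its overall meet overshoots $\bigwedge Y$. Only the intermediate ``delete exactly one index'' construction simultaneously forces the total meet down to $\bigwedge Y$ and confines every $\le n$-element submeet to a single witness $X^{(i^{*})}$; once this is found, the two distributive identities and the pigeonhole count on omitted indices are routine.
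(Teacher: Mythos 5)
Your proof is correct, and there is in fact nothing in the paper to compare it against: the lemma appears in a subsection that explicitly \emph{reviews} results proved in the unpublished manuscript~\cite{prenosil22}, and the paper states it without proof. Judged on its own, your argument is complete. The routine parts ($U \subseteq F$, $F$ an upset, $F \subseteq \fg{n}{U}$ via the extension of the defining condition from $(n+1)$-element sets to all finite sets) are fine, and the substantive step — that $F$ is an $n$-filter — is carried by your witness
\begin{align*}
  X^{*} = \Big\{ \bigvee_{i \in T} x_i \;:\; T \subseteq \range{n+1},\ \card{T} = n,\ x_i \in X^{(i)} \Big\},
\end{align*}
which checks out on all three counts: $X^{*} \subseteq U$ since $U$ is an upset; the two distributivity computations correctly give $\bigwedge X^{*} = \bigwedge_{\card{T}=n} \bigvee_{i \in T} m_i = \bigvee_{i<j} (m_i \wedge m_j) \leq \bigwedge Y$ (a constant choice function would have a fixed point, and each pair $m_i \wedge m_j$ is realized by the function sending every $k \neq i$ to $i$ and $i$ to $j$); and the pigeonhole argument is sound, since at most $n$ members of $X^{*}$ omit at most $n$ indices of $\range{n+1}$, so a common index $i^{*}$ survives and $\bigwedge W$ dominates a meet of at most $n$ elements of $X^{(i^{*})}$, which lies in $U$. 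The one step you invoke without proof — that it suffices to verify the $n$-filter condition for sets of cardinality exactly $n+1$ — is exactly the ``without loss of generality'' clause built into the paper's definition, so leaning on it is legitimate; if spelled out, it is a finite induction on $\card{Y}$ using a merging trick (replace two elements by their meet), with only a trivial degenerate case when the merged element coincides with an existing one. As a sanity check, for $n=1$ your $X^{*}$ collapses to $X^{(1)} \cup X^{(2)}$ and the lemma reduces to the classical description of the generated lattice filter, as it should.
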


  The following lemma, where we use the notation $\fg{n}{U, x} \assign \fg{n}{U \cup \{ x \}}$, is a consequence of this description of $\fg{n}{U}$.

\begin{lemma} \label{lemma: fg cap}
  $\fg{n}{U, x} \cap \fg{n}{U, y} = \fg{n}{U, x \vee y}$.
\end{lemma}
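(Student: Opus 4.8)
The inclusion $\fg{n}{U, x \vee y} \subseteq \fg{n}{U, x} \cap \fg{n}{U, y}$ is the routine half. Since $\fg{n}{U, x}$ is an $n$-filter, it is an upset containing $U$ and $x$; as $x \vee y \geq x$ it therefore contains $U \cup \{ x \vee y \}$, and being an $n$-filter it contains the least $n$-filter over this set, namely $\fg{n}{U, x \vee y}$. Symmetrically $\fg{n}{U, x \vee y} \subseteq \fg{n}{U, y}$, and the two inclusions give the claim. No computation is needed here.

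For the reverse inclusion I would lean on the Generating Lemma. Since $\fg{n}{V}$ depends only on the upward closure of $V$, I may pass to the non-empty upsets $U_{x} \assign \uparrow U \cup \uparrow x$, $U_{y} \assign \uparrow U \cup \uparrow y$ and $U_{xy} \assign \uparrow U \cup \uparrow(x \vee y)$, for which $\fg{n}{U, x} = \fg{n}{U_{x}}$, $\fg{n}{U, y} = \fg{n}{U_{y}}$ and $\fg{n}{U, x \vee y} = \fg{n}{U_{xy}}$. The one fact to keep in view is that $\uparrow(x \vee y) = \uparrow x \cap \uparrow y$, so that a member of $U_{xy}$ is either above some element of $U$ or above both $x$ and $y$.

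Now take $a \in \fg{n}{U, x} \cap \fg{n}{U, y}$. The Generating Lemma supplies a non-empty finite $X \subseteq U_{x}$ with $\bigwedge X \leq a$ and $\bigwedge P \in U_{x}$ for every $P \bsubseteq{n} X$, together with a non-empty finite $X' \subseteq U_{y}$ with $\bigwedge X' \leq a$ and $\bigwedge Q \in U_{y}$ for every $Q \bsubseteq{n} X'$. My plan is to witness $a \in \fg{n}{U_{xy}}$ using the set of pairwise joins
\[
  Z \assign \{ p \vee q \mid p \in X, \ q \in X' \}.
\]
That $Z \subseteq U_{xy}$ is a quick case check: if $p \geq u \in U$ then $p \vee q \geq p \geq u$ lies in $\uparrow U$, symmetrically if $q$ is above an element of $U$, and otherwise $p \geq x$ and $q \geq y$ force $p \vee q \geq x \vee y$. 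The bound $\bigwedge Z \leq a$ falls out of the distributive law, which gives $\bigwedge Z = (\bigwedge X) \vee (\bigwedge X') \leq a \vee a = a$.

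The crux --- and the step I expect to require the most care --- is the submeet condition: I must check $\bigwedge W \in U_{xy}$ for every $W \bsubseteq{n} Z$. Writing such a $W$ as $\{ p_{1} \vee q_{1}, \dots, p_{m} \vee q_{m} \}$ with $m \leq n$, I set $P \assign \{ p_{1}, \dots, p_{m} \} \bsubseteq{n} X$ and $Q \assign \{ q_{1}, \dots, q_{m} \} \bsubseteq{n} X'$; since each $p_{i} \vee q_{i}$ dominates $p_{i}$ and $q_{i}$, monotonicity of meet yields $\bigwedge W \geq (\bigwedge P) \vee (\bigwedge Q)$. Goodness of $X$ and $X'$ gives $\bigwedge P \in U_{x} = \uparrow U \cup \uparrow x$ and $\bigwedge Q \in U_{y} = \uparrow U \cup \uparrow y$, and a three-way case split closes the argument: if $\bigwedge P$ or $\bigwedge Q$ lies in $\uparrow U$ then $\bigwedge W$ dominates a member of the upset $\uparrow U$; otherwise $\bigwedge P \geq x$ and $\bigwedge Q \geq y$, so $(\bigwedge P) \vee (\bigwedge Q) \geq x \vee y$ and hence $\bigwedge W \in \uparrow(x \vee y)$. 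In every case $\bigwedge W \in U_{xy}$, so the Generating Lemma delivers $a \in \fg{n}{U_{xy}} = \fg{n}{U, x \vee y}$, completing the reverse inclusion.
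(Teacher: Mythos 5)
Your proof is correct and follows exactly the route the paper indicates: the paper states this lemma as a consequence of the Generating Lemma for $n$-filters (deferring details to~\cite{prenosil22}), and your witness set of pairwise joins $Z = \set{p \vee q}{p \in X,\ q \in X'}$, with the distributivity computation $\bigwedge Z = (\bigwedge X) \vee (\bigwedge X')$ and the case analysis on submeets $W \bsubseteq{n} Z$, is the standard instantiation of that argument. All the delicate points — passing to the non-empty upsets $U_{x}$, $U_{y}$, $U_{xy}$ so the Generating Lemma applies even when $U = \emptyset$, and verifying the submeet condition via $\bigwedge W \geq (\bigwedge P) \vee (\bigwedge Q)$ — are handled correctly.
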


  The standard argument involving a maximal $n$-filter not containing some given element now immediately yields the above characterization of $n$-filters. In fact, $n$-filters may be separated from arbitrary ideals by prime $n$-filters.

\begin{theorem}[Prime $n$-filter separation]
  Let $F$ be an $n$-filter on a distributive lattice which is disjoint from an ideal $I$. Then $F$ extends to a prime $n$-filter which is disjoint from $I$.
\end{theorem}

\subsection{Prime $n$-filters on De~Morgan lattices as homomorphic preimages}

  The above characterization of $n$-filters and prime $n$-filters on distributive lattices extends to \emph{De~Morgan lattices}. These are distributive lattices equipped with an antitone involution $x \mapsto \neg x$, i.e.\ a unary operation satisfying the De~Morgan laws:
\begin{align*}
  \neg \neg x & = x, & \neg (x \vee y) & = \neg x \wedge \neg y, & \neg (x \wedge y) & = \neg x \vee \neg y.
\end{align*}  
  This will involve replacing the prime filter $\{ \True \}$ on $\BA{1}$ by the prime filter $\{ \True, \Both \}$ on the four-element subdirectly irreducible De~Morgan lattice $\DM{1}$ shown in Figure~\ref{fig: dm four}, i.e.\ replacing the structure $\BAm{1} \assign \langle \pair{\BA{1}}{\{ \True \}}$ by $\DMm{1} \assign \pair{\DM{1}}{\{ \True, \Both \}}$. The De~Morgan lattice $\DM{1}$ only differs from the Boolean algebra $\BA{1} \times \BA{1}$ in the interpretation of negation: in $\DM{1}$ we have $\neg \Neither = \Neither$ and $\neg \Both = \Both$.

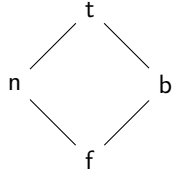
\begin{figure}
\caption{The algebra $\DM{1}$}
\label{fig: dm four}
\vskip 10pt
\begin{center}
\begin{tikzpicture}[scale=1, dot/.style={inner sep=3.5pt,outer sep=3.5pt}, solid/.style={circle,fill,inner sep=2.5pt,outer sep=2.5pt}]
  \node (DM4a) at (0,-1) {$\False$};
  \node (DM4b) at (-1,0) {$\Neither$};
  \node (DM4c) at (1,0) {$\Both$};
  \node (DM4d) at (0, 1) {$\True$};
  \draw[-] (DM4a) edge (DM4b);
  \draw[-] (DM4a) edge (DM4c);
  \draw[-] (DM4b) edge (DM4d);
  \draw[-] (DM4c) edge (DM4d);
\end{tikzpicture}
\end{center}
\end{figure}

  Unlike $\BAm{1}$, the structure $\DMm{1}$ has proper substructures. Besides the two singleton substructures with universes $\{ \Neither \}$ and $\{ \Both \}$, it has the two three-element substructures $\Pm{1}$ and $\Km{1}$ (for Priest and Kleene) with universes $\{ \True, \Both, \False \}$ and $\{ \True, \Neither, \False \}$ respectively, and the two-element substructure $\BAm{1}$ with the universe $\{ \True, \False \}$. The algebraic reducts of $\Pm{1}$ and $\Km{1}$ are \emph{Kleene lattices}: they satisfy $x \wedge \neg x \leq y \vee \neg y$.

  Corresponding to these three substructures, we define three types of upsets on De~Morgan lattices. An upset $F$ of a De~Morgan lattice is \emph{almost complete} if
\begin{align*}
  x \in F & \implies x \wedge (y \vee \neg y) \in F.
\end{align*}
  It is \emph{complete} if it is almost complete and non-empty, or equivalently if it is almost complete and $x \vee \neg x \in F$ for each $x$. It is \emph{almost consistent} if
\begin{align*}
  (x \wedge \neg x) \vee y \in F & \implies y \in F.
\end{align*}
  It is \emph{consistent} if it is almost consistent and not total, i.e.\ $x \notin F$ for some $x$. An \emph{(almost) classical} upset is both complete and (almost) consistent.\footnote{The asymmetry here arises from the fact that one can always talk about the complete upset generated by a given set, but not necessarily about the consistent upset generated by a given set.}

  The homomorphism lemma for $\BAm{1}$ now has an analogue for $\DMm{1}$ and its substructures~\cite[Proposition~3.2]{pynko99}.

\begin{lemma}[Homomorphism lemma for $\DMm{1}$]
  Let $F$ be a prime \mbox{filter} on a De~Morgan lattice $\alg{L}$. Then there is a strict homomorphism ${h_{F}\colon \pair{\alg{L}}{F} \to \DMm{1}}$, namely
\begin{align*}
  h_{F}(x) & \assign \True \text{ if } x \in F, \neg x \notin F, &
  h_{F}(x) & \assign \Neither \text{ if } x \notin F, \neg x \notin F, \\
  h_{F}(x) & \assign \Both \text{ if } x \in F, \neg x \in F, &
  h_{F}(x) & \assign \False \text{ if } x \notin F, \neg x \in F.
\end{align*}
  If $F$ is a complete (consistent, classical) prime filter, then $h_{F}$ is a strict homo\-morphism into~$\Pm{1}$ (into $\Km{1}$, into $\BAm{1}$).
\end{lemma}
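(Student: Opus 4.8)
The plan is to recognize $h_{F}$ as a coordinatewise membership test and to reduce each homomorphism condition to either the filter property or primeness of $F$. Concretely, I would identify $\DM{1}$ with the set of pairs $\{0,1\}^{2}$ in which the first coordinate carries the order of $\BA{1}$, the second coordinate carries the \emph{reversed} order (so the lattice reduct is $\BA{1} \times \BA{1}^{\dual}$, which is again the four-element diamond), and negation is the coordinate swap $\neg \pair{a}{b} = \pair{b}{a}$. Under this identification $\True = \pair{1}{0}$, $\Both = \pair{1}{1}$, $\Neither = \pair{0}{0}$, $\False = \pair{0}{1}$, and the stipulated map becomes $h_{F}(x) = \pair{c(x)}{c(\neg x)}$, where $c$ denotes the characteristic function of $F$ valued in $\{0,1\}$. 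In these terms the first coordinate records whether $x \in F$ and the second whether $\neg x \in F$.

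Strictness is then immediate: $h_{F}(x)$ lands in the designated set $\{ \True, \Both \}$ exactly when its first coordinate is $1$, i.e.\ exactly when $x \in F$, so $F = h_{F}^{-1}[\{ \True, \Both \}]$. Preservation of negation is also automatic, since $h_{F}(\neg x) = \pair{c(\neg x)}{c(x)}$ by the involutive law $\neg \neg x = x$, which is precisely the swap of the two coordinates of $h_{F}(x)$. The substance is preservation of $\wedge$ and $\vee$, which I would verify coordinate by coordinate. For the meet, the first coordinate requires $x \wedge y \in F$ iff $x \in F$ and $y \in F$, which is exactly the defining property of a filter; the second coordinate, using the De~Morgan law $\neg(x \wedge y) = \neg x \vee \neg y$, requires $\neg x \vee \neg y \in F$ iff $\neg x \in F$ or $\neg y \in F$, which is exactly primeness. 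For the join the two roles are exchanged: the first coordinate uses primeness and the second uses the De~Morgan law $\neg(x \vee y) = \neg x \wedge \neg y$ together with the filter property. Here one must keep track of the reversed second coordinate, so that meet in $\DM{1}$ acts as join on second coordinates and vice versa; this is the one bookkeeping point that needs care.

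It remains to refine the target when $F$ carries extra structure. If $F$ is complete, then $x \vee \neg x \in F$ for every $x$, so by primeness $x \in F$ or $\neg x \in F$; hence $h_{F}$ never outputs $\Neither = \pair{0}{0}$ and therefore factors through $\Pm{1}$. If $F$ is consistent, I would argue that $h_{F}$ never outputs $\Both = \pair{1}{1}$: were $x \in F$ and $\neg x \in F$, then $x \wedge \neg x \in F$ since $F$ is a filter, whence $(x \wedge \neg x) \vee y \in F$ for every $y$ by upward closure, and almost consistency would force $y \in F$ for every $y$, contradicting the non-totality of $F$. Thus $h_{F}$ factors through $\Km{1}$, and the classical case, being the conjunction of the two, yields a factorization through $\BAm{1}$.

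I expect no serious obstacle, as every condition dissolves into the filter or prime property once the coordinate picture is fixed; the only genuinely non-mechanical step is the consistency argument, where ruling out $\Both$ requires combining closure under meets, upward closure, almost consistency, and non-totality in the right order.
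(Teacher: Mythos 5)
Your proof is correct. Note that the paper itself gives no proof of this lemma: it is quoted as a known result, with a citation to Pynko (Proposition~3.2 of the cited paper), so there is nothing internal to compare against line by line. Your argument is essentially the standard direct verification, but packaged cleanly through the twist-structure representation: identifying $\DM{1}$ with $\BA{1} \times \BA{1}^{\dual}$ under the coordinate swap makes $h_{F}(x) = \pair{c(x)}{c(\neg x)}$ manifestly negation-preserving and strict, and reduces preservation of $\wedge$ and $\vee$ exactly to the two properties available — closure under meets and upward closure for one coordinate, primeness for the other — with the order reversal on the second coordinate correctly accounted for. The refinements are also right: completeness (via $x \vee \neg x \in F$ and primeness) excludes $\Neither$, and your consistency argument correctly chains meet-closure, upward closure, almost consistency, and non-totality to exclude $\Both$, with the classical case as the conjunction; in each case strictness into the substructure follows because the designated sets of $\Pm{1}$, $\Km{1}$, $\BAm{1}$ are the restrictions of $\{\True, \Both\}$. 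One pleasant by-product of your uniform coordinate formulation is that it handles the degenerate cases ($F$ empty or total, landing in the singleton substructures $\{\Neither\}$ or $\{\Both\}$) without any case split, which a naive four-case check might overlook.
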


  The dual powers of the structure $\DMm{1}$ will be denoted $\DMm{n}$:
\begin{align*}
  \DMm{n} = \pair{\DM{n}}{\nonemptydm{n}} \assign (\DMm{1})^{\otimes n} = \pair{\DM{1}}{\{ \True, \Both \}}^{\otimes n}.
\end{align*}
  Similarly, the dual powers of $\Pm{1}$ and $\Km{1}$ will be denoted $\Pm{n}$ and~$\Km{n}$. Thus
\begin{align*}
  \BAm{n} & \assign (\BAm{1})^{\otimes n}, & \Pm{n} & \assign (\Pm{1})^{\otimes n}, & \Km{n} & \assign (\Km{1})^{\otimes n}, & \DMm{n} & \assign (\DMm{1})^{\otimes n}.
\end{align*}

  Repeating the argument for distributive lattices shows that the prime $n$-filters on De~Morgan lattices are precisely the homomorphic preimages of the designated set of $\DMm{n}$. Some more work is needed, however, to obtain an analogous characterization of complete, consistent, and classical $n$-filters.

\begin{fact} \label{fact: unions}
  The (complete, consistent, classical) prime $n$-filters on a De~Morgan lattice are precisely the unions of non-empty families of at most $n$ (complete, consistent, classical) prime filters.
\end{fact}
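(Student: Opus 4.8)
The plan is to build on the already-established distributive-lattice Fact, which asserts that the prime $n$-filters on a distributive lattice are exactly the unions of families of at most $n$ prime filters. Throughout I would fix a prime $n$-filter $F$ together with an \emph{irredundant} decomposition $F = F_1 \cup \dots \cup F_k$ with $k \leq n$ into prime filters; irredundancy (no $F_i$ is contained in the union of the others) is arranged by discarding superfluous components, and it supplies for each $i$ a witness $a_i \in F_i \setminus \bigcup_{j \neq i} F_j$. The claim then splits into two inclusions for each of the three adjectives complete, consistent, and classical, the classical case being the conjunction of the other two.

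For the easy inclusion ($\Leftarrow$), suppose $F$ is the union of at most $n$ complete (resp. consistent) prime filters. That $F$ is a prime $n$-filter is immediate from the distributive Fact, so only the extra closure condition needs checking. If each $F_i$ is almost complete and $x \in F$, then $x \in F_i$ for some $i$, whence $x \wedge (y \vee \neg y) \in F_i \subseteq F$; since complete filters are non-empty, $F$ is complete. If each $F_i$ is consistent and $(x \wedge \neg x) \vee y \in F$, then this element lies in some $F_i$, and almost consistency of $F_i$ gives $y \in F_i \subseteq F$; moreover $x \wedge \neg x \notin F_i$ for every $i$ (the standard reformulation of consistency for a prime filter, via its almost consistency and primeness), so $x \wedge \neg x \notin F$ and $F$ is not total. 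Hence $F$ is consistent.

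The substantive inclusion is ($\Rightarrow$). For consistency this is still short: if some component had $x \wedge \neg x \in F_i \subseteq F$, then because $(x \wedge \neg x) \vee y \geq x \wedge \neg x$ lies in $F$ for every $y$, almost consistency of $F$ would force $y \in F$ for all $y$, i.e. $F$ total, contradicting consistency of $F$; hence each $F_i$ is consistent. The delicate case is completeness, and this is where I expect the main obstacle to lie, since a component $F_i$ of an arbitrary decomposition need not visibly be complete. Here the irredundancy witnesses do the work: from $a_i \in F_i$ and completeness of $F$ we get $a_i \wedge (y \vee \neg y) \in F$, hence $a_i \wedge (y \vee \neg y) \in F_j$ for some $j$; but $a_i \wedge (y \vee \neg y) \leq a_i$ forces $a_i \in F_j$, so $j = i$ by the choice of $a_i$, giving $a_i \wedge (y \vee \neg y) \in F_i$ for all $y$. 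To upgrade this from the single witness $a_i$ to an arbitrary $x \in F_i$, I would pass to the meet: $x \wedge a_i \wedge (y \vee \neg y) \in F_i$ lies below $x \wedge (y \vee \neg y)$, so the upset property yields $x \wedge (y \vee \neg y) \in F_i$. Thus $F_i$ is almost complete, and non-empty, hence complete. The classical case follows by applying both arguments to the same irredundant decomposition. In each case the decomposition can be taken non-empty, the only subtlety being the empty consistent $n$-filter, which is the union of the one-member family consisting of the empty prime filter.
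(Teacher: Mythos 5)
Your proof is correct and follows the paper's skeleton: the same easy direction (the implications defining completeness and consistency have at most one premise, so they pass to unions), and the same hard direction via an irredundant decomposition $F = F_1 \cup \dots \cup F_k$ with witnesses $a_i \in F_i \setminus \bigcup_{j \neq i} F_j$. Your completeness argument is the paper's, lightly reorganized: the paper meets an arbitrary $a_i \in F_i$ with the witness $c_i$ and then applies completeness of $F$ once, while you apply completeness of $F$ to the witness first and then meet in an arbitrary $x \in F_i$ using closure of $F_i$ under meets; these are interchangeable. The one genuine divergence is the consistency clause: the paper again routes through the witness, passing from $(x \wedge \neg x) \vee y \in F_i$ to $(x \wedge \neg x) \vee (y \wedge c_i) \in F$, extracting $y \wedge c_i \in F$ by almost consistency of $F$, and locating it in $F_i$ via $c_i$; you instead invoke the equivalence, valid for \emph{prime} filters, between consistency and omitting every element of the form $x \wedge \neg x$, so that consistency of each component follows from consistency of $F$ alone, with no appeal to irredundancy. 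Your variant is slightly cleaner in that it isolates irredundancy as needed only for the completeness case, at the cost of leaning on primeness of the components, which the paper's consistency computation never uses; both are sound, since the components are prime in any case. You are also more explicit than the paper on two points it passes over silently: that non-totality of a union of consistent prime filters requires noting that each component omits every $x \wedge \neg x$, and that the empty consistent prime $n$-filter is covered by the one-member family consisting of the empty prime filter.
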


\begin{proof}
  The union of any non-empty family of complete (consistent, classical) upsets is complete (consistent, classical), since the implications defining these conditions have at most one premise. Conversely, let $F$ be a non-empty prime $n$-filter on a De~Morgan lattice $\alg{L}$. We know that $F$ is an irredundant union of at most $n$ prime filters on $\alg{L}$, say $F = F_{1} \cup \dots \cup F_{k}$ for $k \leq n$. If $F$ is complete, consider therefore some $a_{i} \in F_{i}$. Because the union is non-redundant, there is some $c_{i} \in F_{i}$ with $c_{i} \notin F_{j}$ for $j \neq i$. Then $a_{i} \wedge c_{i} \in F_{i}$. Because $F$ is complete, $a_{i} \wedge c_{i} \wedge (b \vee \neg b) \in F$. It follows that $a_{i} \wedge c_{i} \wedge (b \vee \neg b) \in F_{i}$ and $a_{i} \wedge (b \vee \neg b) \in F_{i}$. Similarly, if $F$ is consistent and $(x \wedge \neg x) \vee y \in F_{i}$, then $(x \wedge \neg x) \vee (y \wedge c_{i}) \in F_{i}$ for the same $c_{i}$, hence $y \wedge c_{i} \in F$. But $c_{i} \notin F_{j}$ for $j \neq i$, hence $y \wedge c_{i} \in F_{i}$ and $y \in F_{i}$.  
\end{proof}

\begin{theorem}[Complete, consistent, and classical prime $n$-filters] \label{thm: dm preimages}
  The (complete, consistent, classical) prime $n$-filters are precisely the homomorphic preimages of the designated set of $\DMm{n}$ ($\Pm{n}$, $\Km{n}$, $\BAm{n}$).
\end{theorem}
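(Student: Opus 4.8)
The plan is to combine the Homomorphism lemma for $\DMm{1}$, Fact~\ref{fact: unions}, and the $h_{\scriptscriptstyle\cup}$ packaging of a family of strict homomorphisms into a dual product. I would prove the two inclusions separately and handle the plain, complete, consistent, and classical cases in parallel, reading $\DMm{n}$, $\Pm{n}$, $\Km{n}$, $\BAm{n}$ throughout as the dual powers $(\DMm{1})^{\otimes n}$, $(\Pm{1})^{\otimes n}$, $(\Km{1})^{\otimes n}$, $(\BAm{1})^{\otimes n}$.

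First I would show that every (complete, consistent, classical) prime $n$-filter $F$ on a De~Morgan lattice $\alg{L}$ arises as such a preimage. By Fact~\ref{fact: unions} I may write $F = F_1 \cup \dots \cup F_k$ with $k \leq n$, where each $F_i$ is a (complete, consistent, classical) prime filter. The Homomorphism lemma for $\DMm{1}$ then supplies strict homomorphisms $h_i \colon \pair{\alg{L}}{F_i} \to \DMm{1}$, landing in $\Pm{1}$ ($\Km{1}$, $\BAm{1}$) in the complete (consistent, classical) case. Padding the family out to length $n$ by repeating $h_1$ --- which leaves the union $F_1 \cup \dots \cup F_k = F$ unchanged --- and applying the $h_{\scriptscriptstyle\cup}$ packaging would yield a single strict homomorphism $h \colon \pair{\alg{L}}{F} \to (\DMm{1})^{\otimes n} = \DMm{n}$ (resp.\ into $\Pm{n}$, $\Km{n}$, $\BAm{n}$); strictness is precisely the statement $F = h^{-1}[\nonemptydm{n}]$.

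For the converse, given a homomorphism $h \colon \alg{L} \to \DM{n}$, I would use that $\nonemptydm{n} = \bigcup_{i=1}^{n} \pi_i^{-1}[\{\True, \Both\}]$ to rewrite $h^{-1}[\nonemptydm{n}] = \bigcup_{i=1}^{n} (\pi_i \circ h)^{-1}[\{\True, \Both\}]$. Each summand is the preimage of the prime filter $\{\True, \Both\}$ on $\DM{1}$ under the homomorphism $\pi_i \circ h$, hence a prime filter on $\alg{L}$, so by Fact~\ref{fact: unions} the union is a prime $n$-filter. When $h$ factors through $\Pm{n}$ ($\Km{n}$, $\BAm{n}$), each $\pi_i \circ h$ factors through $\Pm{1}$ ($\Km{1}$, $\BAm{1}$), whose designated set is a complete (consistent, classical) prime filter; since completeness and almost consistency are defined by implications with at most one premise, they transfer to homomorphic preimages, and the union then inherits the property again by Fact~\ref{fact: unions}.

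The one point I expect to require genuine care, rather than bookkeeping, is the \emph{non-totality} clause built into consistency: I must check that the preimage of the designated set $\{\True\}$ of $\Km{1}$ (or $\BAm{1}$) is a proper subset of $\alg{L}$. This should follow because a homomorphism $g \colon \alg{L} \to \Km{1}$ constant at $\True$ is impossible --- it would force $g(\neg x) = \neg g(x) = \False \neq \True$ --- so some element lies outside $g^{-1}[\{\True\}]$. Everything else is routine once Fact~\ref{fact: unions} is in hand, which is where the real content of the complete and consistent cases resides.
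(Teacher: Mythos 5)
Your proposal is correct and takes essentially the same route as the paper: Theorem~\ref{thm: dm preimages} is presented there as an immediate consequence of Fact~\ref{fact: unions} combined with the homomorphism lemma for $\DMm{1}$ and the $h_{\scriptscriptstyle\cup}$ packaging into dual powers, i.e.\ ``repeating the argument for distributive lattices,'' with Fact~\ref{fact: unions} supplying the extra work for the complete, consistent, and classical cases, which is precisely your plan. Your explicit check that a coordinate preimage of $\{\True\}$ cannot be total (since a homomorphism constant at $\True$ would contradict $\neg \True = \False$) is a detail the paper leaves tacit, and it is exactly the right point to verify.
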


  One further family of upsets merits our attention. We call an upset $F$ of a De~Morgan lattice a \emph{Kalman} upset if
\begin{align*}
  ((x \wedge \neg x) \wedge z) \vee u \in F & \implies ((y \vee \neg y) \wedge z) \vee u \in F,
\end{align*}
  or equivalently
\begin{align*}
  ((x \wedge \neg x) \vee z) \wedge u \in F & \implies ((y \vee \neg y) \vee z) \wedge u \in F.
\end{align*}
  In particular, each almost complete upset is Kalman, as is each almost consistent upset. We use this name to emphasize that these upsets are related to the logic of order of Kleene lattices, which, adding to the multiplicity of names under which it has been known, we call \emph{Kalman's logic of order} here.\footnote{This logic was called Kalman implication by Makinson~\cite{makinson73}, the Kalman consequence system by Dunn~\cite{dunn99}, and Kleene's logic of order by Rivieccio~\cite{rivieccio12}. While there is some logic behind calling filters associated with Kleene lattices Kleene filters, we choose to use Kalman's name here to avoid the ambiguity stemming from the fact that Kleene's name is already attached to several logics in the vicinity. It was Kalman~\cite{kalman58} who, to the best of our knowledge, first axiomatized the variety of what is now called Kleene lattices by what is in our notation the axiom $x \wedge \neg x \leq y \vee \neg y$. One could thus very well also call these Kalman lattices.\label{foot: kalman}} We now explain the purpose of this definition.

\begin{lemma} \label{lemma: kleene theta}
  The smallest congruence $\theta$ on a De~Morgan lattice $\alg{L}$ such that $\alg{L} / \theta$ is a Kleene lattice is the congruence $\theta$ such that $\pair{a}{b} \in \theta$ if and only if
\begin{align*}
  & a \wedge f = b \wedge f \text{ and } \neg f \vee a = \neg f \vee b \text{ for some } f \in \Fc,
\end{align*}
  where $\Fc$ is the filter generated by elements of the form $x \vee \neg x$.
\end{lemma}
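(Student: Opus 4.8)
The plan is to show that the relation $\theta$ defined by the displayed formula (i) is a congruence on $\alg{L}$, (ii) has a Kleene lattice as quotient, and (iii) is contained in every congruence $\psi$ for which $\alg{L}/\psi$ is a Kleene lattice; together these identify $\theta$ as the least congruence collapsing $\alg{L}$ into a Kleene lattice. Since the Kleene law $x \wedge \neg x \leq y \vee \neg y$ is equational, such a least congruence exists a priori; the content of the lemma is its explicit description.

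Verifying that $\theta$ is a congruence is largely routine. Reflexivity uses any witness $f = x \vee \neg x \in \Fc$, and symmetry is built into the definition. For transitivity, if $a \mathrel{\theta} b$ is witnessed by $f$ and $b \mathrel{\theta} c$ by $g$, then $f \wedge g \in \Fc$ witnesses $a \mathrel{\theta} c$: distributivity gives $a \wedge f \wedge g = c \wedge f \wedge g$, and, using $\neg(f \wedge g) = \neg f \vee \neg g$, also $\neg(f \wedge g) \vee a = \neg(f \wedge g) \vee c$. Compatibility with $\wedge$ and $\vee$ follows from distributivity (a single witness $f$ survives), while compatibility with $\neg$ follows by applying $\neg$ to both defining equations and invoking the De~Morgan laws, so that the same $f$ witnesses $\neg a \mathrel{\theta} \neg b$.

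For (ii) it suffices to check $[a \wedge \neg a]_{\theta} \leq [b \vee \neg b]_{\theta}$ for all $a, b$, i.e.\ that $(a \wedge \neg a) \vee (b \vee \neg b) \mathrel{\theta} (b \vee \neg b)$. I claim the witness $f \assign (a \vee \neg a) \wedge (b \vee \neg b) \in \Fc$ works. Writing $p \assign a \wedge \neg a$ and $q \assign b \vee \neg b$, the two required equations reduce via distributivity to the inclusions $p \wedge f \leq q$ and $p \leq \neg f \vee q$. The first holds since $p \wedge f = p \wedge (a \vee \neg a) \wedge (b \vee \neg b) = p \wedge (b \vee \neg b) \leq q$; the second holds since $\neg f = (a \wedge \neg a) \vee (b \wedge \neg b)$, whence $\neg f \vee q = p \vee q \geq p$. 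This is where the precise shape of $f$ matters: the naive choice $f = q$ would force $p \leq q$, i.e.\ the Kleene law itself.

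The heart of the proof is (iii). Fix a congruence $\psi$ with $M \assign \alg{L}/\psi$ a Kleene lattice, suppose $a \mathrel{\theta} b$ via $f \in \Fc$, and write $\bar{x}$ for the image of $x$ in $M$. The key purely lattice-theoretic observation is: in any distributive lattice, if $v \leq u$, $a \wedge u = b \wedge u$, and $a \vee v = b \vee v$, then $a = b$ (from $v \leq u$ one gets $a \wedge v = a \wedge u \wedge v = b \wedge u \wedge v = b \wedge v$, and then $a = (a \wedge b) \vee (a \wedge v) = (a \wedge b) \vee (b \wedge v) = b$). I would apply this in $M$ with $u \assign \bar{f}$ and $v \assign \neg \bar{f}$: the two defining equations of $\theta$ pass to $\bar{a} \wedge u = \bar{b} \wedge u$ and $\bar{a} \vee v = \bar{b} \vee v$, so it remains only to verify $\neg \bar{f} \leq \bar{f}$ in $M$. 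This is exactly where membership in $\Fc$ and the Kleene law combine: since $f \in \Fc$ we have $f \geq \bigwedge_{i} (x_i \vee \neg x_i)$ for finitely many $x_i$, hence $\neg \bar{f} \leq \bigvee_i (\bar{x}_i \wedge \neg \bar{x}_i)$; the Kleene law gives $\bar{x}_i \wedge \neg \bar{x}_i \leq \bar{x}_j \vee \neg \bar{x}_j$ for all $i, j$, so $\bigvee_i (\bar{x}_i \wedge \neg \bar{x}_i) \leq \bigwedge_j (\bar{x}_j \vee \neg \bar{x}_j) \leq \bar{f}$, yielding $\neg \bar{f} \leq \bar{f}$. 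The genuinely nontrivial idea, and the main obstacle, is precisely this passage from $f \in \Fc$ to $\neg \bar{f} \leq \bar{f}$ in the Kleene quotient; once it is secured, the cancellation lemma closes the argument and shows $\bar{a} = \bar{b}$, i.e.\ $\pair{a}{b} \in \psi$.
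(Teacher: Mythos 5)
Your proof is correct and complete. There is in fact nothing in the paper to compare it against: the lemma is stated there without proof (as is the parallel Lemma~\ref{lemma: boolean theta} for Boolean quotients), the explicit description of the least Kleene congruence being treated as routine, so your argument supplies exactly what the author leaves implicit. The delicate points all check out: transitivity with the witness $f \wedge g$, and $\neg$-compatibility with the same witness $f$, since negating $a \wedge f = b \wedge f$ and $\neg f \vee a = \neg f \vee b$ simply swaps the two defining equations for the pair $\pair{\neg a}{\neg b}$; the witness $f = (a \vee \neg a) \wedge (b \vee \neg b)$ in step (ii), where indeed $p \wedge f = p \wedge (b \vee \neg b) \leq q$ and $\neg f \vee q = (a \wedge \neg a) \vee (b \wedge \neg b) \vee q = p \vee q \geq p$, and your observation that the naive witness $f = b \vee \neg b$ would presuppose the Kleene law in $\alg{L}$ itself is accurate; the cancellation lemma in step (iii), whose proof is valid precisely because of the hypothesis $v \leq u$ (it fails without it, e.g.\ in a three-element chain with $u = 0$, $v = 1$); and the crux, the derivation of $\neg \bar{f} \leq \bar{f}$ in the quotient from $f \geq \bigwedge_{i} (x_{i} \vee \neg x_{i})$ together with the Kleene law. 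One pedantic remark: since $\Fc$ is generated by the nonempty set of elements $x \vee \neg x$, every $f \in \Fc$ does lie above a finite \emph{nonempty} meet of such elements, so the representation you invoke in step (iii) is always available; it would be worth one clause to say so.
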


\begin{fact}
  An upset ($n$-filter) of a De~Morgan lattice is Kalman if and only if it is the homomorphic preimage of an upset ($n$-filter) of a Kleene lattice.
\end{fact}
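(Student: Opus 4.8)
The plan is to prove the two implications separately, throughout using the explicit description of the smallest Kleene congruence $\theta$ provided by Lemma~\ref{lemma: kleene theta}.

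For the \emph{easy direction}, suppose $F = h^{-1}[G]$ for a homomorphism $h \colon \alg{L} \to \alg{K}$ into a Kleene lattice $\alg{K}$ and an upset (an $n$-filter) $G$ of $\alg{K}$. In any Kleene lattice the Kalman quasi-inequality in fact holds as a genuine lattice inequality: from $x \wedge \neg x \leq y \vee \neg y$ one obtains $((x \wedge \neg x) \wedge z) \vee u \leq ((y \vee \neg y) \wedge z) \vee u$ by monotonicity of $\wedge$ and $\vee$. Applying $h$ and using that $G$ is upward closed (as every $n$-filter is) then shows immediately that $F$ is Kalman. This direction handles upsets and $n$-filters uniformly, since it only uses that $G$ is an upset, and it only uses that homomorphic preimages of $n$-filters are $n$-filters.

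For the \emph{hard direction}, let $\pi \colon \alg{L} \to \alg{L}/\theta$ be the quotient by the smallest Kleene congruence, so that $\alg{L}/\theta$ is a Kleene lattice, and put $G \assign \pi[F]$. As $\pi$ is a surjective homomorphism and $F$ an upset, $G$ is an upset of $\alg{L}/\theta$, and I claim that $F = \pi^{-1}[G]$; granting this, $F$ is the desired preimage, and a routine lifting argument (choosing preimages in $F$ of a witnessing family and pulling the relevant meets back into $F$ via $F = \pi^{-1}[G]$) shows that $G$ is even an $n$-filter whenever $F$ is. Everything thus reduces to the \emph{saturation} claim $F = \pi^{-1}[G]$, equivalently: if $a \in F$ and $\pair{a}{b} \in \theta$, then $b \in F$. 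I expect this to be the main obstacle.

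To prove saturation I would first simplify the witness. By Lemma~\ref{lemma: kleene theta} there is $f \in \Fc$ with $a \wedge f = b \wedge f$ and $\neg f \vee a = \neg f \vee b$, and since $f$ dominates some $e \assign \bigwedge_{j=1}^{k}(x_{j} \vee \neg x_{j})$ (so that $\neg e = \bigvee_{j=1}^{k}(x_{j} \wedge \neg x_{j})$), one checks that $e$ is again a witness. Passing to $c \assign a \vee b$, the identities $a \wedge e = b \wedge e$ and $a \vee \neg e = b \vee \neg e$ give $\pair{a}{c}, \pair{b}{c} \in \theta$; as $a \leq c$ and $F$ is an upset, $c \in F$, so it suffices to treat the \emph{downward} step in which $b \leq c \in F$ with $\pair{b}{c} \in \theta$. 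Writing $r \assign c \wedge \neg e$, distributivity yields $c = b \vee r$, $r \leq \neg e$, and $r \wedge e \leq b$. Decomposing $r = \bigvee_{i} r_{i}$ with $r_{i} \assign r \wedge (x_{i} \wedge \neg x_{i}) \leq x_{i} \wedge \neg x_{i}$, the crucial tool is a \emph{one-step lemma}: if $v \vee s \in F$ and $s \leq x \wedge \neg x$, then $v \vee (s \wedge (y \vee \neg y)) \in F$ for every $y$. This is a direct instance of the Kalman condition with $s$ playing the role of $z$, using $s = (x \wedge \neg x) \wedge s$. Applying this lemma to a fixed sub-contradiction $r_{i}$ repeatedly, once for each conjunct $(x_{j} \vee \neg x_{j})$ of $e$ (and remaining below $x_{i} \wedge \neg x_{i}$ at each step), replaces $r_{i}$ by $r_{i} \wedge e$, which is then absorbed since $r_{i} \wedge e \leq r \wedge e \leq b$. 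Peeling off the $r_{i}$ one at a time leaves exactly $b \in F$, which establishes saturation and thereby completes the proof.
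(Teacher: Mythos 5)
Your proof is correct and takes essentially the same route as the paper: quotient by the smallest Kleene congruence of Lemma~\ref{lemma: kleene theta}, reduce everything to the saturation claim that $a \in F$ and $\pair{a}{b} \in \theta$ imply $b \in F$, and then trade contradiction-terms for tautology-terms via the Kalman condition. The only difference is one of detail: the paper compresses your reduction of the witness $f$ to $e = \bigwedge_{j}(x_{j} \vee \neg x_{j})$, the decomposition $r = \bigvee_{i} r_{i}$, and the iterated one-step lemma into the single phrase ``by the Kalman condition'' (passing from $a \wedge (\neg f \vee b) \in F$ to $a \wedge (f \vee b) \in F$), so your argument simply makes that step explicit.
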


\begin{proof}
  Each upset of a Kleene lattice is a Kalman upset, hence so are its homomorphic preimages. Conversely, let $F$ be an order-Kleene upset ($n$-filter) of a De~Morgan lattice $\alg{L}$. Let $\theta$ be the congruence given by the above lemma and let $\pi$ be the projection map $\pi\colon \alg{L} \to \alg{L} / \theta$. If we can show that $a \in F$ and $\pair{a}{b} \in \theta$ implies $b \in F$, then $\pi[F]$ is an upset ($n$-filter) of the Kleene lattice $\alg{L} / \theta$ and $F$ is its preimage via~$\pi$. Thus, suppose that $a \in F$ and $a \wedge f \leq b$ and $a \leq \neg f \vee b$ for some $f \in \Fc$. Then $a = a \wedge (\neg f \vee b) \in F$ and by the Kalman condition $a \wedge (f \vee b) \in F$, hence $a \wedge (f \vee b) \leq (a \wedge f) \vee b = b \in F$.
\end{proof}

  Classical upsets ($n$-filters) admit an analogous characterization.

\begin{lemma} \label{lemma: boolean theta}
  The smallest congruence $\theta$ on a De~Morgan lattice $\alg{L}$ such that $\alg{L} / \theta$ is a Boolean algebra is the congruence $\theta$ such that $\pair{a}{b} \in \theta$ if and only if
\begin{align*}
  & (\neg f \vee a) \wedge f = (\neg f \vee b) \wedge f \text{ for some } f \in \Fc,
\end{align*}
  where $\Fc$ is the filter generated by elements of the form $x \vee \neg x$.
\end{lemma}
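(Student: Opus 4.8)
The plan is to verify that the relation $R$ defined by the displayed condition is the smallest congruence with a Boolean quotient. Throughout I abbreviate $T_{f}(a) \assign (\neg f \vee a) \wedge f$, so that $a \mathrel{R} b$ means $T_{f}(a) = T_{f}(b)$ for some $f \in \Fc$. Reflexivity holds because $\Fc$ is non-empty (it contains $x \vee \neg x$), and symmetry is immediate. The engine for everything else is the single distributive-lattice identity
\[
T_{f \wedge g}(a) = (T_{f}(a) \vee (\neg g \wedge f)) \wedge g,
\]
obtained by rewriting $(\neg f \vee \neg g \vee a) \wedge f$ as $T_{f}(a) \vee (\neg g \wedge f)$ and then meeting with $g$. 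Since $\Fc$ is closed under meets, this shows that $T_{f}(a) = T_{f}(b)$ forces $T_{f \wedge g}(a) = T_{f \wedge g}(b)$; hence any two instances of $R$ can be refined to a common witness $f \wedge g$, and transitivity follows because equality of $T_{h}$-values is genuinely transitive for a fixed $h$.

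Next I would check that $R$ is compatible with the operations, again reducing to a common witness $f$. A short distributivity computation, using $T_{f}(a) \geq \neg f \wedge f$ and $T_{f}(b) = (\neg f \wedge f) \vee (b \wedge f)$, gives $T_{f}(a \vee b) = T_{f}(a) \vee T_{f}(b)$, and dually $T_{f}(a \wedge b) = T_{f}(a) \wedge T_{f}(b)$, so $R$ respects $\vee$ and $\wedge$. For negation, negating the defining equation turns $T_{f}(a) = T_{f}(b)$ into $(f \wedge \neg a) \vee \neg f = (f \wedge \neg b) \vee \neg f$; applying $T_{f}$ to both sides and simplifying by the absorption $(\neg f \vee f) \wedge f = f$ collapses $T_{f}((f \wedge \neg a) \vee \neg f)$ to $T_{f}(\neg a)$, yielding $T_{f}(\neg a) = T_{f}(\neg b)$. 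Thus $R$ is a congruence, with the same witness $f$ working throughout.

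To see that $\alg{L} / R$ is Boolean, the key move is to feed the \emph{generator} $f = a \vee \neg a$, for which $\neg f = a \wedge \neg a$, into $T_{f}$: then $T_{f}((a \wedge \neg a) \vee b) = (\neg f \vee b) \wedge f = T_{f}(b)$, so $(a \wedge \neg a) \vee b \mathrel{R} b$ and hence $[a \wedge \neg a] \leq [b]$ in the quotient for every $a, b$. Consequently all contradictions $a \wedge \neg a$ collapse to a single bottom element $0$, dually all instances of $a \vee \neg a$ to a single top $1$, and $\neg$ becomes genuine complementation; a complemented distributive lattice is a Boolean algebra.

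Finally, for minimality let $\theta$ be any congruence with $\alg{L} / \theta$ Boolean and let $\pi$ be the quotient map. Every generator $x \vee \neg x$ is sent to the top of $\alg{L} / \theta$, so each $f \in \Fc$ satisfies $\pi(f) = 1$ and $\pi(\neg f) = 0$; applying $\pi$ to $T_{f}(a) = T_{f}(b)$ therefore reduces it to $\pi(a) = \pi(b)$, giving $R \subseteq \theta$. Together with the previous paragraphs this identifies $R$ as the least congruence with a Boolean quotient. I expect the only real friction to be the bookkeeping of the congruence check — especially the negation clause and the reduction of two witnesses to a common one — but each step collapses to one of the three short identities above ($T_{f}$ distributing over $\vee$ and $\wedge$, the $T_{f}$-simplification after negation, and $\neg(a \vee \neg a) = a \wedge \neg a$).
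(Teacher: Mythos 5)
Your proof is correct. There is in fact nothing to compare it against: the paper states Lemma~\ref{lemma: boolean theta} without proof, treating it as a known fact about De~Morgan lattices, so your direct verification fills a genuine gap. Your argument is also the natural one: the map $T_{f}(a) = (\neg f \vee a) \wedge f$ is the lattice retraction onto the interval $[f \wedge \neg f, f]$, and all the computations check out --- the refinement identity $T_{f \wedge g}(a) = (T_{f}(a) \vee (\neg g \wedge f)) \wedge g$ does reduce two witnesses to a common one (giving transitivity and compatibility with a single witness), $T_{f}$ is a lattice homomorphism by distributivity, the negation clause works because $T_{f}(f \wedge \neg a) = T_{f}(\neg a)$ and $T_{f}(\neg f) = f \wedge \neg f \leq T_{f}(\neg a)$, the choice $f = a \vee \neg a$ with $\neg f = a \wedge \neg a$ makes the quotient a complemented distributive lattice (hence Boolean, with $\neg$ the complement by uniqueness of complements), and minimality is immediate since any homomorphism onto a Boolean algebra sends every $f \in \Fc$ to the top and $\neg f$ to the bottom, collapsing $T_{f}(a) = T_{f}(b)$ to $\pi(a) = \pi(b)$.
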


\begin{fact}
  An upset ($n$-filter) of a De~Morgan lattice is classical if and only if it is the homomorphic preimage of a non-empty upset ($n$-filter) of a Boolean lattice.
\end{fact}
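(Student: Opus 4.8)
The plan is to mirror the proof of the preceding Fact about Kalman upsets, now combining the completeness and consistency conditions in place of the single Kalman condition, and invoking Lemma~\ref{lemma: boolean theta} in place of Lemma~\ref{lemma: kleene theta}. The easy direction is that the homomorphic preimage of a non-empty upset ($n$-filter) $G$ of a Boolean lattice $\alg{B}$ is classical. This is because in $\alg{B}$ we have $y \vee \neg y = \top$ and $x \wedge \neg x = \bot$, so for a homomorphism $h\colon \alg{L} \to \alg{B}$ and $F \assign h^{-1}[G]$ the defining conditions degenerate: almost-completeness of $F$ reduces to $h(x \wedge (y \vee \neg y)) = h(x) \in G$; the excluded middle $x \vee \neg x \in F$ holds since $h(x \vee \neg x) = \top \in G$ (a non-empty upset of $\alg{B}$ contains $\top$), which also witnesses $F \neq \emptyset$; and almost-consistency reduces to $h((x \wedge \neg x) \vee y) = h(y) \in G$. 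Provided $G$ is proper (equivalently $F$ is not total), $F$ is therefore classical. Each of these conditions has at most one premise, so it is pulled back along $h$ without difficulty.

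For the converse, let $F$ be a classical upset ($n$-filter) of a De~Morgan lattice $\alg{L}$, let $\theta$ be the Boolean congruence supplied by Lemma~\ref{lemma: boolean theta}, and let $\pi\colon \alg{L} \to \alg{L}/\theta$ be the projection. Exactly as in the Kalman case, it suffices to prove that $F$ is saturated by $\theta$, i.e.\ that $a \in F$ and $\pair{a}{b} \in \theta$ together imply $b \in F$; then $\pi[F]$ is an upset ($n$-filter) of the Boolean lattice $\alg{L}/\theta$ with $F = \pi^{-1}[\pi[F]]$, and $\pi[F]$ is non-empty because completeness forces $F$ to be non-empty.

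So suppose $a \in F$ and $(\neg f \vee a) \wedge f = (\neg f \vee b) \wedge f$ for some $f \in \Fc$. The first observation is that completeness lets me absorb the factor $f$: writing $f \geq \bigwedge_{i}(x_{i} \vee \neg x_{i})$ and iterating almost-completeness gives $a \wedge \bigwedge_{i}(x_{i} \vee \neg x_{i}) \in F$, whence $a \wedge f \in F$ since $F$ is an upset. Because $a \wedge f \leq (\neg f \vee a) \wedge f = (\neg f \vee b) \wedge f$, the common value $(\neg f \vee b) \wedge f$ also lies in $F$. The crucial rewriting via distributivity is now
\begin{align*}
  (\neg f \vee b) \wedge f = (f \wedge \neg f) \vee (f \wedge b),
\end{align*}
which exhibits this element in precisely the shape to which the consistency condition applies, with $x \assign f$ and $y \assign f \wedge b$. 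Consistency then yields $f \wedge b \in F$, and $f \wedge b \leq b$ gives $b \in F$, as desired.

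The main obstacle is this final computation: one must locate the single manipulation that simultaneously triggers both defining conditions of a classical upset. Completeness is what allows $f$ to be introduced (turning $a$ into $a \wedge f$, and so the $\theta$-datum into a genuine member of $F$), while consistency is what allows the contradictory part $f \wedge \neg f$ to be discarded; the distributive identity above is exactly the bridge between the two. Everything else — that preimages of non-empty upsets are classical, and that a $\theta$-saturated $n$-filter descends to an $n$-filter on the quotient with $F = \pi^{-1}[\pi[F]]$ — is routine once this saturation step is in place.
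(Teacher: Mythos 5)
Your proof is correct and follows essentially the same route as the paper: both directions proceed by pulling back along the quotient map, with the converse reducing to showing that $F$ is saturated under the congruence $\theta$ of Lemma~\ref{lemma: boolean theta}, completeness being used to introduce the factor $f \in \Fc$ and consistency to discard the contradictory part. The only (cosmetic) difference is in the final manipulation: the paper goes $a \in F \Rightarrow \neg f \vee a \in F \Rightarrow (\neg f \vee a) \wedge f \in F \Rightarrow \neg f \vee b \in F$ and then applies consistency to strip $\neg f$, whereas you first obtain $a \wedge f \in F$, rewrite $(\neg f \vee b) \wedge f = (f \wedge \neg f) \vee (f \wedge b)$ by distributivity, and apply almost-consistency once with $x \assign f$ — an equally valid (indeed slightly more self-contained) sequencing of the same ideas.
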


\begin{proof}
  Each  non-empty upset of a Boolean algebra is classical, hence so are its homomorphic preimages. Conversely, let $F$ be a classical upset ($n$-filter) of a De Morgan lattice $\alg{L}$. Let $\theta$ be the congruence given by the above lemma and let $\pi$ be the projection map $\pi\colon \alg{L} \to \alg{L} / \theta$. If we can show that $a \in F$ and $\pair{a}{b} \in \theta$ implies $b \in F$, then $\pi[F]$ is an upset ($n$-filter) of the Boolean algebra $\alg{L} / \theta$ and $F$ is its preimage via $\pi$. Thus, suppose that $a \in F$ and $(\neg f \vee a) \wedge f \leq \neg f \vee b$. Then $\neg f \vee a \in F$ because $F$ is an upset and $(\neg f \vee a) \wedge f \in F$ because it is complete, hene $\neg f \vee b \in F$ and finally $b \in F$ because $F$ is consistent.
\end{proof}

  Unlike complete, consistent, and classical prime $n$-filters, which are described as the homomorphic preimages of a single prime $n$-filter, Kalman prime $n$-filters will be identified as the homomorphic preimages of a finite family of prime $n$-filters.

\begin{fact}
  The Kalman prime $n$-filters are precisely the unions of at most $n$ Kalman prime filters.
\end{fact}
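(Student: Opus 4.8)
The plan is to mirror the proof of Fact~\ref{fact: unions}, exploiting that the Kalman condition, like completeness and consistency, is an implication with a single premise. The right-to-left direction will be immediate: if $F = F_{1} \cup \dots \cup F_{k}$ with $k \leq n$ and each $F_{i}$ a Kalman prime filter, then $F$ is a prime $n$-filter by the fact that prime $n$-filters on a distributive lattice are exactly the unions of at most $n$ prime filters, and $F$ is Kalman because whenever the premise $((x \wedge \neg x) \wedge z) \vee u$ lies in $F$ it lies in some $F_{i}$, so the conclusion $((y \vee \neg y) \wedge z) \vee u$ lies in $F_{i} \subseteq F$. This is the same one-premise argument already used for completeness and consistency in Fact~\ref{fact: unions}.

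For the converse I would start from a Kalman prime $n$-filter $F$ and write it, using the same distributive-lattice fact, as an irredundant union $F = F_{1} \cup \dots \cup F_{k}$ of prime filters with $k \leq n$; the task is then to show each $F_{i}$ is Kalman. As in Fact~\ref{fact: unions}, irredundancy lets me pick $c_{i} \in F_{i}$ with $c_{i} \notin F_{j}$ for all $j \neq i$, and the strategy is to localize the Kalman implication to $F_{i}$ by meeting everything with $c_{i}$. Concretely, assuming $((x \wedge \neg x) \wedge z) \vee u \in F_{i}$, I would meet with $c_{i}$ to obtain $(((x \wedge \neg x) \wedge z) \vee u) \wedge c_{i} \in F_{i} \subseteq F$, apply the Kalman condition of $F$, and then use that the resulting element sits below $c_{i}$ to push it back into $F_{i}$ and finally, by upward closure, conclude $((y \vee \neg y) \wedge z) \vee u \in F_{i}$.

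The one point requiring care, and the only real obstacle, is the distributive bookkeeping that makes this localization go through. Meeting the premise with $c_{i}$ must again be a premise of the Kalman condition, and similarly for the conclusion: using distributivity one checks that $(((x \wedge \neg x) \wedge z) \vee u) \wedge c_{i} = ((x \wedge \neg x) \wedge (z \wedge c_{i})) \vee (u \wedge c_{i})$ and that the corresponding conclusion is $((y \vee \neg y) \wedge (z \wedge c_{i})) \vee (u \wedge c_{i}) = (((y \vee \neg y) \wedge z) \vee u) \wedge c_{i}$. Thus applying the Kalman condition of $F$ with $z, u$ replaced by $z \wedge c_{i}, u \wedge c_{i}$ yields exactly $(((y \vee \neg y) \wedge z) \vee u) \wedge c_{i} \in F$. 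Since this element lies below $c_{i}$, it lies in none of the $F_{j}$ with $j \neq i$ (each $F_{j}$ is an upset and $c_{i} \notin F_{j}$), hence in $F_{i}$; upward closure then gives $((y \vee \neg y) \wedge z) \vee u \in F_{i}$, so $F_{i}$ is Kalman. The empty case $F = \emptyset$ is the trivial union of the empty family of Kalman prime filters. Once the distributive identities above are noted, the whole argument reduces to the localization-by-$c_{i}$ pattern already established for complete and consistent $n$-filters.
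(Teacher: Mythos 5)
Your proof is correct, but it takes a genuinely different route from the paper's. The paper derives this fact in two lines from the machinery it has just set up: by the preceding fact (itself resting on Lemma~\ref{lemma: kleene theta}), a Kalman prime $n$-filter $F$ on $\alg{L}$ is the preimage $h^{-1}[G]$ of a prime $n$-filter $G$ on a Kleene lattice; decomposing $G = G_{1} \cup \dots \cup G_{k}$ with $k \leq n$ prime filters via the distributive-lattice fact and pulling back gives $F = h^{-1}[G_{1}] \cup \dots \cup h^{-1}[G_{k}]$, where each $h^{-1}[G_{i}]$ is automatically a Kalman prime filter, being the preimage of a prime filter on a Kleene lattice. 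You instead work directly inside the De~Morgan lattice, extending the localization-by-$c_{i}$ pattern of Fact~\ref{fact: unions} to the two-parameter Kalman condition; the key observation that meeting with $c_{i}$ can be absorbed into the parameters, i.e.\ $(((x \wedge \neg x) \wedge z) \vee u) \wedge c_{i} = ((x \wedge \neg x) \wedge (z \wedge c_{i})) \vee (u \wedge c_{i})$ and likewise for the conclusion, is exactly the distributive bookkeeping needed, and your handling of the empty filter and of the easy direction (single-premise implications are preserved by unions) is also fine. What each approach buys: yours is self-contained and avoids both the congruence description of the Kleene quotient and the preimage characterization of Kalman upsets, showing that the Kalman case genuinely fits the same elementary template as the complete and consistent cases; the paper's is shorter given the machinery already in place and hands you the component filters as preimages of prime filters on a Kleene lattice, which feeds directly into the subsequent identification of Kalman prime $n$-filters with preimages of the designated sets of the structures $\Pm{i} \otimes \Km{j}$.
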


\begin{proof}
  Each Kalman prime $n$-filter is a homorphic image of a prime $n$-filter on a Kleene lattice, which is a union of at most $n$ prime filters. Their homomorphic preimages are the required Kalman prime filters.
\end{proof}

\begin{fact}
  Each Kalman prime filter is either complete or consistent.
\end{fact}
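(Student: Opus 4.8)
The plan is to reduce the statement to a single incompatibility between two kinds of witnesses and then dispatch that incompatibility by a one-line application of the Kalman condition. First I would record two characterizations that hold specifically for \emph{prime} filters. Using primeness together with closure of a filter under meets, one checks that a prime filter $F$ is complete precisely when $x \vee \neg x \in F$ for every $x$ (equivalently, $h_F$ never takes the value $\Neither$), and that $F$ is consistent precisely when $x \wedge \neg x \notin F$ for every $x$ (equivalently, $h_F$ never takes the value $\Both$). Indeed, $x \vee \neg x \notin F$ says exactly that $x, \neg x \notin F$, while $x \wedge \neg x \in F$ says exactly that $x, \neg x \in F$. Granting these, the assertion ``$F$ is complete or consistent'' becomes ``$h_F$ does not take \emph{both} values $\Both$ and $\Neither$'', i.e.\ the image of $h_F$ is one of the Kleene sublattices underlying $\Pm{1}$ or $\Km{1}$ of $\DM{1}$ rather than all of $\DM{1}$.

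It then remains to prove this incompatibility, and I would do so by contradiction. Suppose there were some $a$ with $a \wedge \neg a \in F$ and some $b$ with $b \vee \neg b \notin F$. Applying the second form of the Kalman condition with $x \assign a$, $y \assign b$, $z \assign b$, and $u \assign a \wedge \neg a$, the premise becomes $((a \wedge \neg a) \vee b) \wedge (a \wedge \neg a) = a \wedge \neg a$ by absorption, which lies in $F$ by assumption. The Kalman condition would then force $((b \vee \neg b) \vee b) \wedge (a \wedge \neg a) = (b \vee \neg b) \wedge (a \wedge \neg a) \in F$; since $F$ is a filter, both meetands must lie in $F$, giving $b \vee \neg b \in F$, contrary to the choice of $b$. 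Hence no such pair $a, b$ exists, so either $a \wedge \neg a \notin F$ for all $a$ (whence $F$ is consistent) or $b \vee \neg b \in F$ for all $b$ (whence $F$ is complete).

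I do not expect a serious obstacle here; the entire content is in choosing the right instance of the Kalman condition, and the only mild care needed is to avoid assuming bounds in $\alg{L}$. This is precisely why I feed $a \wedge \neg a$ itself into the slot $u$ rather than an arbitrary designated element: the premise then collapses to $a \wedge \neg a$ by absorption, so its membership in $F$ is immediate. The degenerate filters are harmless and already covered by the characterizations above (the total filter is complete, and the empty prime filter, if admitted, is vacuously consistent). One could alternatively route the whole argument through the homomorphism lemma for $\DMm{1}$, noting that a Kalman prime filter is the preimage of an upset of a Kleene lattice so that $h_F$ factors through a Kleene subalgebra of $\DM{1}$; but the direct computation above seems shortest.
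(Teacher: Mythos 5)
Your proof is correct, but it takes a genuinely different route from the paper's. You work directly inside the De~Morgan lattice: your prime-filter characterizations (complete iff $x \vee \neg x \in F$ for all $x$; consistent iff $x \wedge \neg x \notin F$ for all $x$) are right, and your instance $x \assign a$, $y \assign b$, $z \assign b$, $u \assign a \wedge \neg a$ of the second form of the Kalman condition does what you claim: the premise collapses to $a \wedge \neg a$ by absorption, the conclusion gives $(b \vee \neg b) \wedge (a \wedge \neg a) \in F$, and upward closure alone yields $b \vee \neg b \in F$, the desired contradiction. The paper argues instead through its preceding fact that Kalman upsets are exactly the strict homomorphic preimages of upsets of Kleene lattices: for a prime filter on a Kleene lattice, the homomorphism lemma gives a strict homomorphism into $\DMm{1}$ whose image, being a Kleene subalgebra, cannot contain both $\Neither$ and $\Both$, so the homomorphism factors through $\Pm{1}$ or $\Km{1}$, making the filter complete or consistent respectively, and both properties pull back along strict preimages --- which is precisely the alternative you sketch in your closing sentence. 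What each approach buys: yours is more elementary and self-contained, bypassing the congruence lemma behind the Kleene-quotient characterization and Pynko's homomorphism lemma, and it handles unbounded lattices by design (that is the role of the parameter $u$, as you observe); the paper's route, in exchange, is uniform with the surrounding development, since the same preimage technique powers the adjacent facts on unions of Kalman prime filters and the Kalman prime $n$-filter theorem, where preservation under strict preimages is exactly what gets reused. Two harmless inaccuracies in your write-up: primeness is not actually needed for the completeness characterization (closure of a filter under binary meets suffices; primeness enters only in the consistency half and in the $h_F$ reformulations), and in the final step the upset property alone suffices --- one need not note that both meetands lie in $F$.
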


\begin{proof}
  For prime filter $F$ on a Kleene lattice $\alg{L}$ there is a strict homomorphism $h\colon \pair{\alg{L}}{F} \to \DMm{1}$. Because $\alg{L}$ is Kleene, so is the image of $h$. Thus we either have a strict homomorphism $h\colon \pair{\alg{L}}{F} \to \Pm{1}$ or a strict homomorphism $h\colon \pair{\alg{L}}{F} \to \Km{1}$. In the former case, $F$ is complete. In the latter case, it is consistent. The same therefore holds for each homomorphic preimage of $F$.
\end{proof}

\begin{theorem}[Kalman prime $n$-filters]
  The Kalman prime $n$-filters on a De Morgan lattice are precisely the homomorphic preimages of the designated sets of structures of the form $\Pm{i} \otimes \Km{j}$ for $i + j = n$, where $0 \leq i \leq n$ and $0 \leq j \leq n$.
\end{theorem}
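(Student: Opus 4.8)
The plan is to prove the two inclusions separately, assembling them from the three preceding facts: that a Kalman prime $n$-filter is exactly a union of at most $n$ Kalman prime filters, that each Kalman prime filter is either complete or consistent, and that Kalmanness (like primeness and the $n$-filter property) is preserved under homomorphic preimages.

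For the inclusion ``$\supseteq$'', I would first note that the algebraic reduct of $\Pm{i} \otimes \Km{j}$ is a product of the Kleene lattices underlying $\Pm{1}$ and $\Km{1}$, hence itself a Kleene lattice, since $x \wedge \neg x \leq y \vee \neg y$ is an equation and Kleene lattices form a variety. Its designated set $\bigcup_{k} \pi_{k}^{-1}[G_{k}]$ is a union of $n = i + j$ prime filters: each of the $i$ coordinates carrying a $\Pm{1}$-factor contributes a complete prime filter $\pi_{k}^{-1}[\{ \True, \Both \}]$ and each of the $j$ coordinates carrying a $\Km{1}$-factor contributes a consistent prime filter $\pi_{k}^{-1}[\{ \True \}]$. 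By the Fact for distributive lattices this union is a prime $n$-filter, and it is Kalman because a union of (complete or consistent, hence Kalman) upsets is Kalman. So the designated set is a Kalman prime $n$-filter on a Kleene lattice, and since homomorphic preimages preserve primeness, the $n$-filter property, and Kalmanness, every homomorphic preimage of it is again a Kalman prime $n$-filter.

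For the inclusion ``$\subseteq$'', start with an arbitrary Kalman prime $n$-filter $F$ and use the Fact to write $F = F_{1} \cup \dots \cup F_{k}$ with $k \leq n$ and each $F_{m}$ a Kalman prime filter (reading the empty filter, if it occurs, as the single consistent prime filter $\emptyset$). Each $F_{m}$ is complete or consistent, so the homomorphism lemma for $\DMm{1}$ supplies a strict homomorphism $h_{m}$ of $\pair{\alg{L}}{F_{m}}$ into $\Pm{1}$ (complete case) or $\Km{1}$ (consistent case); a classical $F_{m}$ may be routed into either factor, since $\BAm{1}$ is a substructure of both. Packaging these via the $h_{\scriptscriptstyle\cup}$ construction yields a strict homomorphism $\pair{\alg{L}}{F} \to \bigotimes_{m} \pair{\alg{B}_{m}}{G_{m}}$ whose codomain, after permuting coordinates (the dual product being commutative up to isomorphism), is $\Pm{i'} \otimes \Km{j'}$ with $i' + j' = k$.

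The one genuinely new point is to reach exactly $i + j = n$ rather than merely $i' + j' = k \leq n$, and I would handle it by \emph{padding}. Since $F_{m} \cup F_{m} = F_{m}$, appending further copies of one of the $h_{m}$ to the $h_{\scriptscriptstyle\cup}$ packaging leaves the preimage $F$ unchanged while adding factors; repeating until the dual product has $n$ coordinates converts $\Pm{i'} \otimes \Km{j'}$ into some $\Pm{i} \otimes \Km{j}$ with $i + j = n$. If every $F_{m}$ is complete we can only add $\Pm{1}$-factors and land in $\Pm{n}$, dually for the consistent case, and the empty filter lands in $\Km{n}$ via constant $\Neither$ maps; but since the statement only asks for membership for \emph{some} pair $(i,j)$ with $i + j = n$, this is harmless. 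This bookkeeping, together with the coordinate-permutation remark, is the only non-routine step; everything else is direct assembly from the cited facts.
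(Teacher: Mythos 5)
Your proof is correct and takes essentially the same route as the paper's, which derives the theorem in a single sentence from the same two facts: the decomposition of a Kalman prime $n$-filter into at most $n$ Kalman prime filters, each complete or consistent and hence routed into $\Pm{1}$ or $\Km{1}$ by the homomorphism lemma, then packaged via $h_{\scriptscriptstyle\cup}$ into a dual product. Your padding argument for reaching $i + j = n$ exactly and the constant-$\Neither$ treatment of the empty filter just make explicit the bookkeeping that the paper's ``follows immediately'' leaves implicit.
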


\begin{proof}
  This follows immediately from the previous two facts, taking into account that the complete (consistent) prime filters are precisely the homomorphic pre\-images of the designated set of $\Pm{1}$ ($\Km{1}$).
\end{proof}

  The above results may be dualized in exactly the same way as the corresponding results about distributive lattices. Given an upset $F$ of a De~Morgan lattice $\alg{L}$, by its \emph{De~Morgan dual} we mean its complement $L \setminus F$ as an upset of the order-dual $\alg{L}^{\dual}$ of $\alg{L}$. By extension, the De~Morgan dual of the structure $\pair{\alg{L}}{F}$ is the structure $\dual \pair{\alg{L}}{F} \assign \pair{\alg{L}^{\dual}}{L \setminus F}$. Notice that $\dual \dual \pair{\alg{L}}{F} = \pair{\alg{L}}{F}$.

\begin{fact}
  The De~Morgan duals of complete (consistent, classical, Kalman) prime $n$-filters are precisely the consistent (complete, classical, Kalman) $n$-prime filters.
\end{fact}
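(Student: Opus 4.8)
The plan is to exploit the fact that the De~Morgan dual $\dual\pair{\alg{L}}{F} = \pair{\alg{L}^{\dual}}{L \setminus F}$ is an involution on structures ($\dual\dual\pair{\alg{L}}{F} = \pair{\alg{L}}{F}$) and to separate its two independent effects: what it does to the underlying prime $n$-filter structure, and what it does to the four adjectives. Since $\dual$ is involutive, it suffices to check that it carries complete (consistent, classical, Kalman) prime $n$-filters \emph{into} consistent (complete, classical, Kalman) $n$-prime filters; the word ``precisely'' then follows by applying the same argument to the dual of an arbitrary target filter, using that $\dual$ is its own inverse.

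First I would dispose of the plain $n$-filter structure. The defining conditions of an $n$-filter and of primeness mention only $\wedge$, $\vee$, and $\leq$, so the prime $n$-filters on $\alg{L}$ coincide with those on its lattice reduct and the negation plays no role here. Hence the distributive-lattice duality already recorded --- that the $n$-prime filters on a lattice are exactly the complements of the prime $n$-filters on its order dual --- applies verbatim: $F$ is a prime $n$-filter on $\alg{L}$ if and only if $L \setminus F$ is an $n$-prime filter on $\alg{L}^{\dual}$.

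The heart of the argument is the adjective bookkeeping, which is pure definition chasing once one recalls that passing to $\alg{L}^{\dual}$ interchanges $\wedge$ and $\vee$ while fixing $\neg$. Thus the almost-consistency of $L \setminus F$ in $\alg{L}^{\dual}$, namely $(x \wedge^{\dual} \neg x) \vee^{\dual} y \in L \setminus F \implies y \in L \setminus F$, becomes upon the translation $\wedge^{\dual} = \vee$, $\vee^{\dual} = \wedge$ and contraposition precisely the almost-completeness $x \in F \implies x \wedge (y \vee \neg y) \in F$ of $F$. The two global side conditions also line up, since $F$ is non-empty exactly when $L \setminus F$ is not total; hence \emph{complete} $F$ corresponds to \emph{consistent} $L \setminus F$ and, symmetrically, \emph{consistent} $F$ to \emph{complete} $L \setminus F$. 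The classical case is the conjunction of these two, so it is self-dual. For the Kalman case I would use the two equivalent forms of the Kalman condition stated above: translating the first form into $\alg{L}^{\dual}$ and contraposing turns it, after the harmless renaming $x \leftrightarrow y$, into exactly the second form for $L \setminus F$, so the Kalman condition is self-dual.

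I expect the only delicate point to be the matching of the \emph{two-part} definitions --- pairing each ``almost'' implication with its correct global side condition (the law of the excluded middle, respectively non-totality) --- and confirming that the distributive duality is legitimately invoked on the De~Morgan reduct. As an alternative route one could instead dualize the homomorphic-preimage characterizations of Theorem~\ref{thm: dm preimages} and the Kalman theorem: since $\dual$ sends strict homomorphisms to strict homomorphisms and exchanges direct products with dual products, one computes $\dual\Pm{1} \cong \Km{1}$, $\dual\Km{1} \cong \Pm{1}$, and $\dual\BAm{1} \cong \BAm{1}$, whence $\dual\Pm{n}$ is the $n$-th direct power of $\Km{1}$ (and symmetrically for the others, with $\dual(\Pm{i} \otimes \Km{j})$ becoming the direct product of $i$ copies of $\Km{1}$ and $j$ copies of $\Pm{1}$); but the direct definition-chase above is shorter.
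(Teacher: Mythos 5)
Your proof is correct and is essentially the paper's own argument: the paper states this fact without proof, remarking only that the De~Morgan results ``may be dualized in exactly the same way as the corresponding results about distributive lattices,'' and your definition-chase --- primeness and the $n$-filter condition are negation-free so the distributive duality applies to the lattice reduct, the ``almost'' implications swap under $\wedge \leftrightarrow \vee$ plus complementation, the side conditions (non-emptiness versus non-totality) swap accordingly, and the two stated forms of the Kalman condition exchange under dualization --- is exactly that implicit dualization spelled out, with the involutivity $\dual\dual\pair{\alg{L}}{F} = \pair{\alg{L}}{F}$ correctly supplying the ``precisely.''
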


  The following theorem is now an immediate consequence of the characterization of prime $n$-filters, given that each strict homomorphism $h\colon \pair{\alg{A}}{F} \to \pair{\alg{B}}{G}$ can also be viewed a strict homomorphism $h\colon \dual \pair{\alg{A}}{F} \to \dual \pair{\alg{B}}{G}$.

\begin{theorem}[Complete, consistent, and classical $n$-prime filters]
  The (complete, consistent, classical) $n$-prime filters are precisely the homomorphic preimages of the designated set of $(\DMm{1})^{n}$ ($(\Pm{1})^{n}$, $(\Km{1})^{n}$, $(\BAm{1})^{n}$).
\end{theorem}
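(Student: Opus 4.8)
The plan is to deduce this theorem from the already-established characterization of prime $n$-filters (Theorem~\ref{thm: dm preimages}) by transporting everything across De~Morgan duality. The two ingredients that make this transport work are already in place. First, the preceding Fact records that taking De~Morgan duals exchanges \emph{complete} and \emph{consistent}, fixes \emph{classical}, and simultaneously exchanges prime $n$-filters with $n$-prime filters. Second, as noted just above the statement, an algebra homomorphism $h\colon\alg{A}\to\alg{B}$ is at the same time a homomorphism $\alg{A}^{\dual}\to\alg{B}^{\dual}$, and $F=h^{-1}[G]$ holds if and only if $A\setminus F=h^{-1}[B\setminus G]$; thus one and the same map is a strict homomorphism $h\colon\pair{\alg{A}}{F}\to\pair{\alg{B}}{G}$ exactly when it is a strict homomorphism $\dual\pair{\alg{A}}{F}\to\dual\pair{\alg{B}}{G}$. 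Since $\dual$ is an involution on structures, this sets up a bijection between strict homomorphisms into a structure $\structure{M}$ and strict homomorphisms into $\dual\structure{M}$.

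Concretely I would argue as follows for the complete case, the consistent and classical cases being identical up to relabelling. Let $F$ be a complete $n$-prime filter on a De~Morgan lattice $\alg{L}$. By the preceding Fact its De~Morgan dual $L\setminus F$ is a consistent prime $n$-filter on $\alg{L}^{\dual}$, so Theorem~\ref{thm: dm preimages} furnishes a strict homomorphism $g\colon\pair{\alg{L}^{\dual}}{L\setminus F}\to\Km{n}$, exhibiting $L\setminus F$ as the $g$-preimage of the designated set of $\Km{n}$. Viewing $g$ as a strict homomorphism between the De~Morgan duals and using $\dual\dual=\mathrm{id}$, the same map becomes a strict homomorphism $g\colon\pair{\alg{L}}{F}\to\dual\Km{n}$ exhibiting $F$ as the $g$-preimage of the designated set of $\dual\Km{n}$. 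It then only remains to identify $\dual\Km{n}$: the De~Morgan duality between direct and dual products, combined with the fact that order-dualizing a product is the same as taking the product of order-duals, yields $\dual(\structure{M}^{\otimes n})=(\dual\structure{M})^{n}$, whence $\dual\Km{n}=\dual((\Km{1})^{\otimes n})=(\dual\Km{1})^{n}$. A direct inspection of the three-element structures gives $\dual\Km{1}\iso\Pm{1}$ (and symmetrically $\dual\Pm{1}\iso\Km{1}$): reversing the order of the Kleene chain $\Km{1}$ makes $\False$ the new top, and complementing its designated set $\{\True\}$ designates both this new top and the negation fixpoint, which is exactly the Priest pattern $\{\True,\Both\}$; that $\dual\BAm{1}\iso\BAm{1}$ is immediate since the two-element Boolean matrix is self-dual. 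Thus $F$ is a homomorphic preimage of the designated set of $(\Pm{1})^{n}$, and every such preimage yields, by reversing these steps, a complete $n$-prime filter. The consistent case then uses $\dual\Pm{1}\iso\Km{1}$ and the classical case uses $\dual\BAm{1}\iso\BAm{1}$.

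The only genuinely non-formal points are the base-case computations $\dual\Km{1}\iso\Pm{1}$, $\dual\Pm{1}\iso\Km{1}$, and $\dual\BAm{1}\iso\BAm{1}$, together with the bookkeeping of the complete/consistent swap: one must check that the exchange of $\Pm{1}$ and $\Km{1}$ under $\dual$ precisely mirrors the exchange of completeness and consistency supplied by the preceding Fact, so that the two swaps cancel and the labelling in the statement comes out as written (complete matching $(\Pm{1})^{n}$, consistent matching $(\Km{1})^{n}$). Everything else is a formal consequence of the involutivity of $\dual$ on structures and the product-duality identity $\dual(\structure{M}^{\otimes n})=(\dual\structure{M})^{n}$, so once these small verifications are in hand the theorem is indeed immediate.
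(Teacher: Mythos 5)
Your proposal is correct and takes essentially the same route as the paper, which obtains the theorem as an immediate consequence of Theorem~\ref{thm: dm preimages} via the preceding Fact (De~Morgan duality swaps complete and consistent and exchanges prime $n$-filters with $n$-prime filters), the observation that a strict homomorphism $h\colon \pair{\alg{A}}{F} \to \pair{\alg{B}}{G}$ is equally a strict homomorphism between the De~Morgan duals, and the duality turning dual products into direct products. Your explicit verifications $\dual\Km{1} \iso \Pm{1}$, $\dual\Pm{1} \iso \Km{1}$, and $\dual\BAm{1} \iso \BAm{1}$ simply spell out what the paper leaves implicit.
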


\begin{theorem}[Kalman $n$-prime filters]
  The Kalman $n$-prime filters on a De Morgan lattice are precisely the homomorphic preimages of the designated sets of structures of the form $(\Pm{1})^{i} \times (\Km{1})^{j}$ for $i + j = n$, where $0 \leq i, j \leq n$.
\end{theorem}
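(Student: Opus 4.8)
The plan is to obtain this theorem from the characterization of Kalman prime $n$-filters by exactly the De~Morgan duality argument used for the preceding theorem on complete, consistent, and classical $n$-prime filters. Recall two facts already in hand: the De~Morgan dual of a Kalman prime $n$-filter is a Kalman $n$-prime filter and conversely, and every strict homomorphism $h\colon \pair{\alg{A}}{F} \to \pair{\alg{B}}{G}$ is simultaneously a strict homomorphism $h\colon \dual\pair{\alg{A}}{F} \to \dual\pair{\alg{B}}{G}$. Consequently $F$ is a Kalman $n$-prime filter on $\alg{L}$ if and only if its De~Morgan dual is a Kalman prime $n$-filter on $\alg{L}^{\dual}$, which by the previous theorem holds if and only if that dual is the homomorphic preimage of the designated set of some $\Pm{i} \otimes \Km{j}$ with $i + j = n$. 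Dualizing the witnessing strict homomorphism back then exhibits $F$ as the homomorphic preimage of the designated set of $\dual(\Pm{i} \otimes \Km{j})$, so the whole theorem reduces to identifying these dual structures.

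First I would record that $\dual$ converts dual products into direct products. Unwinding the definitions, the underlying algebra of $\dual(\pair{\alg{A}}{F} \otimes \pair{\alg{B}}{G})$ is the order-dual of $\alg{A} \times \alg{B}$, which is $\alg{A}^{\dual} \times \alg{B}^{\dual}$, while the set-theoretic De~Morgan laws turn the complement of a union of projection-preimages into the intersection of the complements. This gives $\dual(\pair{\alg{A}}{F} \otimes \pair{\alg{B}}{G}) = \dual\pair{\alg{A}}{F} \times \dual\pair{\alg{B}}{G}$, using along the way that an antitone involution on $\alg{L}$ remains an antitone involution on $\alg{L}^{\dual}$. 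Iterating, $\dual(\Pm{i} \otimes \Km{j}) = (\dual\Pm{1})^{i} \times (\dual\Km{1})^{j}$, now a genuine direct product.

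The crux of the argument is the computation $\dual\Pm{1} \iso \Km{1}$ and $\dual\Km{1} \iso \Pm{1}$. The structure $\Pm{1}$ is the three-element chain $\False < \Both < \True$ whose middle element $\Both$ is the negation fixpoint and whose designated set $\{ \True, \Both \}$ is the principal upset of the fixpoint. Dualizing reverses the order and replaces the designated set by its complement, yielding a three-element chain with the negation fixpoint in the middle and designated set equal to the singleton top of the new order; this is exactly the isomorphism type of $\Km{1}$, whose designated set $\{ \True \}$ is the top of the chain $\False < \Neither < \True$ with fixpoint $\Neither$. The symmetric computation gives $\dual\Km{1} \iso \Pm{1}$, the dualized designated set now being the principal upset of the fixpoint. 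This swap is the only nonroutine step, and it encodes the familiar duality between the paraconsistent (glut-admitting) Priest structure and the paracomplete (gap-admitting) Kleene structure.

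Combining the last two paragraphs yields $\dual(\Pm{i} \otimes \Km{j}) \iso (\Km{1})^{i} \times (\Pm{1})^{j}$. As the pair $(i,j)$ ranges over all decompositions $i + j = n$, so does the swapped pair $(j,i)$, so these structures are precisely the $(\Pm{1})^{i'} \times (\Km{1})^{j'}$ with $i' + j' = n$. Feeding this back into the reduction of the first paragraph shows that the Kalman $n$-prime filters are exactly the homomorphic preimages of the designated sets of the structures $(\Pm{1})^{i} \times (\Km{1})^{j}$ with $i + j = n$, as claimed.
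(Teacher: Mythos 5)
Your proposal is correct and follows exactly the route the paper intends: the theorem is obtained by De~Morgan dualization of the Kalman prime $n$-filter theorem, using the fact that duals of Kalman prime $n$-filters are precisely the Kalman $n$-prime filters, that strict homomorphisms dualize, and that $\dual$ turns dual products into direct products. You merely make explicit the computations the paper leaves implicit --- in particular $\dual \Pm{1} \iso \Km{1}$, $\dual \Km{1} \iso \Pm{1}$, and the harmless re-indexing $(i,j) \mapsto (j,i)$ --- and all of these check out.
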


\subsection{Prime $n$-filter separation}

  Having described the prime $n$-filters of various kinds as the homomorphic preimage of certain upsets, we now describe arbitrary $n$-filters of these kinds as intersections of prime ones.

  Let $U$ be an upset of a De~Morgan lattice $\alg{L}$. Recall that $\fg{n}{U}$ denotes the $n$-filter generated by $U$. Let $\Fc$ be the filter generated by the elements of the form $x \vee \neg x$. We introduce the upsets $\Comp U$, $\Cons U$, $\Class U$, and $\Kalman U$ for non-empty $U$ as follows: $x \in \Comp U$ if and only if
\begin{align*}
  a \wedge f \leq x \text{ for some } a \in U \text{ and } f \in \Fc,
\end{align*}
  $x \in \Cons U$ if and only if
\begin{align*}
  \neg f \vee x \in U \text{ for some } f \in \Fc,
\end{align*}
  $x \in \Class U$ if and only if
\begin{align*}
  a \wedge f \leq \neg f \vee x \text{ for some } a \in U \text{ and } f \in \Fc,
\end{align*}
  and $x \in \Kalman U$ if and only if
\begin{align*}
  a \wedge f \leq x \text{ and } a \leq \neg f \vee x \text{ for some } a \in U \text{ and } f \in \Fc.
\end{align*}
  We define $\Cons \emptyset \assign \emptyset$, $\Kalman \emptyset \assign \emptyset$, $\Comp \emptyset \assign \Fc$ and $\Class \emptyset \assign \Cons \Fc$.

  Observe that $\Class U = \Cons \Comp U$. On the other hand, it need not be the case that $\Kalman U = \Comp U \cap \Cons U$ (consider the designated set of $\Pm{1} \otimes \Km{1}$).

\begin{lemma}
  $\Cons \fg{n}{U}$ is an $n$-filter. $\fg{n}{\Comp U}$ is complete.
\end{lemma}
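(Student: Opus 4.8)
The plan is to treat both halves uniformly. In each case the operator acts by ``applying'' an element of the filter $\Fc$ through a map that is monotone and commutes with finite meets by distributivity: for $\Cons(-)$ the relevant map is $x \mapsto \neg f \vee x$, and for $\Comp(-)$ it is $x \mapsto x \wedge (y \vee \neg y)$. The two recurring moves are therefore (a) to collapse the several witnesses drawn from $\Fc$ into a single one, using that $\Fc$ is closed under finite meets, and (b) to push the closure condition (the $n$-filter property, resp.\ almost completeness) through that map via the distributive law.

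For the first assertion I would write $G \assign \fg{n}{U}$, which is an $n$-filter by construction, and recall that $x \in \Cons G$ iff $\neg f \vee x \in G$ for some $f \in \Fc$ (the degenerate case $\Cons \emptyset = \emptyset$ being trivially an $n$-filter). First, $\Cons G$ is an upset, since $x \leq z$ together with $\neg f \vee x \in G$ gives $\neg f \vee z \in G$. For the $n$-filter condition I take $Y = \{y_1, \dots, y_{n+1}\} \subseteq \Cons G$ with $\bigwedge X \in \Cons G$ for every $X \bsubseteq{n} Y$. Each $y_i$ has a witness $f_i \in \Fc$ and each of the finitely many sets $X \bsubseteq{n} Y$ has a witness $g_X \in \Fc$; letting $h$ be the meet of all of these, $h \in \Fc$ and $\neg h \geq \neg f_i, \neg g_X$, so (as $G$ is an upset) both $\neg h \vee y_i \in G$ and $\neg h \vee \bigwedge X \in G$. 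I then apply the $n$-filter property of $G$ to $Y' \assign \{\neg h \vee y_i \mid 1 \leq i \leq n+1\} \subseteq G$: by distributivity each submeet of $Y'$ of size at most $n$ equals $\neg h \vee \bigwedge X \in G$ for the corresponding $X$, whence $\bigwedge Y' = \neg h \vee \bigwedge Y \in G$, i.e.\ $\bigwedge Y \in \Cons G$.

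For the second assertion I would set $V \assign \Comp U$ and first check that $V$ is complete: it is non-empty (it contains $U$ when $U \neq \emptyset$, and $\Comp \emptyset = \Fc \neq \emptyset$), and it is almost complete because $a \wedge f \leq x$ yields $a \wedge (f \wedge (y \vee \neg y)) \leq x \wedge (y \vee \neg y)$ with $f \wedge (y \vee \neg y) \in \Fc$. It then suffices to prove the general fact that $\fg{n}{V}$ is complete whenever $V$ is; since $\fg{n}{V} \supseteq V$ is non-empty, only almost completeness requires work. Using the description of $\fg{n}{V}$ from the generation lemma, if $a \in \fg{n}{V}$ is witnessed by a finite $X \subseteq V$ with $\bigwedge X \leq a$ and $\bigwedge W \in V$ for all non-empty $W \subseteq X$ with $|W| \leq n$, then $X' \assign \{x \wedge (y \vee \neg y) \mid x \in X\} \subseteq V$ (by almost completeness of $V$) witnesses $a \wedge (y \vee \neg y) \in \fg{n}{V}$: one has $\bigwedge X' = (\bigwedge X) \wedge (y \vee \neg y) \leq a \wedge (y \vee \neg y)$, and every submeet of $X'$ of size at most $n$ equals $(\bigwedge W) \wedge (y \vee \neg y) \in V$ for a suitable $W \subseteq X$.

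The point that I expect to need genuine care, and the only real obstacle, is the cardinality bookkeeping hidden in the $n$-filter definition: the maps $x \mapsto \neg h \vee x$ and $x \mapsto x \wedge (y \vee \neg y)$ need not be injective, so the sets $Y'$ and $X'$ may have strictly fewer than $n+1$ (resp.\ $|X|$) elements. This is harmless, since a meet is insensitive to repetitions, so I can choose representatives to realize each submeet of $Y'$ (resp.\ $X'$) as $\neg h \vee \bigwedge X$ (resp.\ $(\bigwedge W) \wedge (y \vee \neg y)$) for an honest subset of the original index set of the correct size; everything else is the distributive law together with the defining closure properties of $\Fc$ and of $V$.
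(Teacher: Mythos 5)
Your proof is correct and takes essentially the same route as the paper's: collapse the finitely many witnesses into a single $f \in \Fc$ using closure of $\Fc$ under meets, commute $\neg f \vee {-}$ (respectively ${-} \wedge (y \vee \neg y)$) past the relevant meets by distributivity, and transform the witnessing set from the generation lemma to establish the $n$-filter condition for $\Cons \fg{n}{U}$ and almost completeness of $\fg{n}{\Comp U}$. The additional bookkeeping you supply (upset checks, the empty cases, and the non-injectivity of the maps $x \mapsto \neg f \vee x$ and $x \mapsto x \wedge (y \vee \neg y)$) is correct and is simply left implicit in the paper's more terse argument.
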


\begin{proof}
  Consider $x_{1}, \dots, x_{n+1}$ such that that $\bigwedge_{j \neq i} x_{j} \in \Cons {\fg{n}{U}}$ for $i \in \range{n+1}$. Then for each $i \in \range{n+1}$ there is $f \in \Fc$ such that $\neg f \vee \bigwedge_{j \neq i} x_{j} \in \fg{n}{U}$. We may choose the same $f$ in each case. Thus $\bigwedge_{j \neq i} (\neg f \vee x_{j}) \in \fg{n}{U}$ for each $i$, hence $\neg f \vee \bigwedge_{i \in I} x_{i} = \bigwedge_{i \in I} (\neg f \vee x_{i}) \in \fg{n}{U}$ and $\bigwedge_{i \in I} x_{i} \in \Cons {\fg{n}{U}}$.

  If $a \in \fg{n}{\Comp U}$, then there is a non-empty finite set $X \subseteq \Comp U$ such that $\bigwedge X \leq a$ and $\bigwedge Y \in \Comp U$ for each $Y \bsubseteq{n} X$. Then $a \wedge f \in \fg{n}{\Comp U}$ is witnessed by the set $Z \assign \set{x \wedge f}{x \in X \text{ and } f \in \Fc} \subseteq \Comp U$.
\end{proof}

\begin{theorem}[Generating $n$-filters on De~Morgan lattices]
  Let $U$ be a non-empty upset of a De~Morgan lattice. Then:
\begin{enumerate}[(i)]
\item the complete upset generated by $U$ is $\Comp U$,
\item the almost consistent upset generated by $U$ is $\Cons U$,
\item the almost classical upset generated by $U$ is $\Class U$,
\item the Kalman upset generated by $U$ is $\Kalman U$,
\item the complete $n$-filter generated by $U$ is $\fg{n}{\Comp U}$,
\item the almost consistent $n$-filter generated by $U$ is $\Cons {\fg{n}{U}}$,
\item the almost classical $n$-filter generated by $U$ is $\fg{n}{\Class U}$,
\item the Kalman $n$-filter generated by $U$ is $\fg{n}{\Kalman U}$.
\end{enumerate}
\end{theorem}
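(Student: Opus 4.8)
The plan is to treat all eight items by the same two-step schema: for the displayed set $E$ I show (a) that $E$ is an upset (resp.\ $n$-filter) of the prescribed kind which contains $U$, and (b) that $E$ is contained in every upset (resp.\ $n$-filter) of that kind containing $U$. Step (b) is uniform and essentially formal: each of the operators $\Comp$, $\Cons$, $\fg{n}{\cdot}$ (and the composite $\Class = \Cons \Comp$) is a closure operator returning the \emph{smallest} upset of its kind above its argument, so (b) reduces to monotonicity of these operators together with the minimality statements contained in the items already proved. Thus the real content of every item is step (a), the assertion that $E$ simultaneously carries two closure properties.

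For the upset items (i)--(iv) this is direct. In (i) one checks that $\Comp U$ is an upset closed under meeting with elements of $\Fc$ (hence almost complete) and nonempty, minimality being the observation that any complete upset above $U$ is closed under $\wedge f$ for $f \in \Fc$. In (ii) one verifies almost consistency of $\Cons U$ by absorbing a contradiction $x \wedge \neg x$ into $\Fc$ via $f \mapsto f \wedge (x \vee \neg x)$, and minimality by peeling $\neg f \leq \bigvee_i (y_i \wedge \neg y_i)$ off with repeated almost consistency. Item (iii) is then immediate from $\Class U = \Cons \Comp U$: since $\Cons$ preserves completeness, $\Class U$ is complete and almost consistent, and minimality follows by composing (i) and (ii). For (iv) the clean route is to invoke Lemma~\ref{lemma: kleene theta}: writing $\pi \colon \alg{L} \to \alg{L}/\theta$ for the projection onto the Kleene quotient, a short computation with the description of $\theta$ shows $\Kalman U = \pi^{-1}[{\uparrow}\pi[U]]$, whence $\Kalman U$ is Kalman by the earlier Fact, and minimality follows since every Kalman upset above $U$ is a $\pi$-preimage of an upset above $\pi[U]$.

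Items (v) and (vi) are short. For (v), $\fg{n}{\Comp U}$ is an $n$-filter by construction and complete by the preceding lemma, and minimality combines (i) with minimality of $\fg{n}{\cdot}$; for (vi), $\Cons \fg{n}{U}$ is an $n$-filter by the preceding lemma and almost consistent by (ii), minimality again combining the two. In (vii), $\fg{n}{\Class U}$ is an $n$-filter, and it is complete because $\Class U$ is already complete, so $\Comp \Class U = \Class U$ and the preceding lemma gives completeness of $\fg{n}{\Class U} = \fg{n}{\Comp \Class U}$. The one property not handed to us for free is almost consistency of $\fg{n}{\Class U}$; likewise in (viii) the only nontrivial point is that $\fg{n}{\Kalman U}$ is Kalman, its $n$-filterhood and minimality being formal via (iv).

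These two residual points are where I expect the real difficulty, and they are instances of a single question: \emph{does passing to the generated $n$-filter preserve almost consistency (resp.\ the Kalman property)?} My plan is to answer this through the congruence descriptions. Almost classical upsets are exactly the preimages of upsets of the Boolean quotient of Lemma~\ref{lemma: boolean theta}, and Kalman upsets the preimages of upsets of the Kleene quotient of Lemma~\ref{lemma: kleene theta}, so it suffices to show that $n$-filter generation commutes with the projection, $\fg{n}{\pi^{-1}[S]} = \pi^{-1}[\fg{n}{S}]$, for $S$ an upset of the quotient. The inclusion $\subseteq$ is easy, since the preimage of an $n$-filter under a homomorphism is again an $n$-filter containing $\pi^{-1}[S]$. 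The reverse inclusion is the crux: given $\pi(x) \in \fg{n}{S}$ with a witnessing family $\bar X \subseteq S$ satisfying $\bigwedge \bar X \leq \pi(x)$, one must lift $\bar X$ to a witnessing family inside $\pi^{-1}[S]$ whose meet lies below $x$ itself. The naive lift overshoots --- joining chosen preimages with $x$ yields a family whose meet is $\geq x$ rather than $\leq x$ --- and closing this gap is exactly where one must spend the contradiction terms $f \wedge \neg f$ (for $f \in \Fc$) that survive the lift: in the classical case by using completeness of the generated $n$-filter to absorb them, and in the Kalman case by using the defining inequality $x \wedge \neg x \leq y \vee \neg y$ of Kleene lattices holding in the quotient. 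I expect this lifting step to be the main obstacle of the proof.
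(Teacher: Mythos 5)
Your plan reproduces the paper's proof almost item for item: (i)--(ii) are handled directly, (v)--(vi) come from the preceding lemma ($\Cons{\fg{n}{U}}$ is an $n$-filter, $\fg{n}{\Comp U}$ is complete), and (iii), (vii), (iv), (viii) are routed through Lemmas~\ref{lemma: boolean theta} and~\ref{lemma: kleene theta}, reduced to the commutation identity $\fg{n}{h^{-1}[S]} = h^{-1}[\fg{n}{S}]$ for the quotient projections; your treatment of (iii) via $\Class U = \Cons \Comp U$ is a harmless variant of the paper's route, which itself records that identity just before the theorem. The one substantive divergence is in how much weight the commutation identity is allowed to carry. The paper asserts it without proof (``it suffices to observe\dots'') for \emph{every} surjective homomorphism of De~Morgan lattices and \emph{every} upset $S$ of the codomain, whereas you flag the reverse inclusion $h^{-1}[\fg{n}{S}] \subseteq \fg{n}{h^{-1}[S]}$ as the main obstacle and plan to prove it only for the minimal Boolean and Kleene projections, using the $f \in \Fc$ witnesses of the two lemmas. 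Your caution is vindicated: the paper's blanket claim is in fact false. Adjoin a new bottom $0$ and a new top $1$ to $\DM{1}$, with $\neg 0 = 1$; collapsing $\{0, \False\}$ and $\{\True, 1\}$ is a De~Morgan congruence whose quotient is $\DM{1}$, and for the projection $h$ and the upset $S = \{\Neither, \Both, \True\}$ we get $\fg{1}{S} = \DM{1}$ (since $\Neither \wedge \Both = \False$), so $h^{-1}[\fg{1}{S}]$ is the whole six-element algebra, while $\fg{1}{h^{-1}[S]}$ is the principal upset of the old bottom $\False$ and omits $0$. Note that $h^{-1}[S]$ is even $\theta$-saturated here, so saturation alone does not rescue the identity; what makes it true in the paper's application is precisely that the congruences are the \emph{minimal} ones of Lemmas~\ref{lemma: boolean theta} and~\ref{lemma: kleene theta} (the quotient above is neither Kleene nor Boolean) --- exactly the structure your plan proposes to exploit.

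The flip side is that, as submitted, your proposal is incomplete at the one genuinely hard point: the lifting argument is a strategy sketch (``I expect this lifting step to be the main obstacle''), not a proof. To be fair, the paper leaves the same step unproven --- indeed, stated more strongly than is true --- so your proposal is not worse off than the published argument, and the resources you earmark are the right ones: correcting a lifted witness family by terms built from an $f \in \Fc$ witnessing the relevant congruence pair brings the total meet down to the target element, and the residual verification that the corrected submeets remain in $h^{-1}[S]$ is where the completeness absorption of $f$-terms (Boolean case) and the Kleene inequality in the quotient (Kalman case) must be spent. Carrying this computation out explicitly would complete your proof and would simultaneously repair the over-general ``observation'' in the paper.
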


\begin{proof}
  Cases (i) and (ii) are obvious. Cases (iii) and (vii) follow from the fact that almost classical upsets ($n$-filters) are precisely the homomorphic preimages of non-empty upsets ($n$-filters) of Boolean algebras and the description of the smallest congruence $\theta$ on $\alg{L}$ such that $\alg{L} / \theta$ is a Boolean algebra (Lemma~\ref{lemma: boolean theta}). It suffices to observe that if $h\colon \alg{A} \to \alg{B}$ is a surjective homomorphism of De~Morgan lattices and $U$ is an upset of $\alg{B}$, then $\fg{n}{h^{-1}[U]} = h^{-1}[\fg{n}{U}]$. Similarly, cases (iv) and (viii) follow from the fact that Kalman upsets ($n$-filters) are precisely the homomorphic preimages of upsets ($n$-filters) of Kleene lattices and the description of the smallest congruence $\theta$ on $\alg{L}$ such that $\alg{L} / \theta$ is a Kleene lattice (Lemma~\ref{lemma: kleene theta}). Finally, cases (v) and (vi) follow from the above lemma.
\end{proof}

\begin{theorem}[Prime separation for $n$-filters on De~Morgan lattices] \label{thm: complete prime n filter separation}
  Let $F$ be a (complete, consistent, classical, Kalman) \mbox{$n$-filter} on a De~Morgan lattice which is disjoint from a non-empty ideal $I$. Then $F$ extends to a complete (consistent, classical, Kalman) prime $n$-filter which is disjoint from $I$.
\end{theorem}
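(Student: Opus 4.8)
The plan is to prove all four cases by adapting the proof of the distributive‑lattice separation theorem, splitting them into two groups according to the technique used. For the \emph{complete} and \emph{consistent} cases I would argue directly: by Zorn's lemma choose a complete (consistent) $n$-filter $G \supseteq F$ maximal among those disjoint from $I$. Such a maximal element exists because these properties are defined by implications with finitely many premises (the $n$-filter condition together with the single-premise completeness/consistency conditions), so they survive directed unions; moreover $G$ stays non-empty (as $G \supseteq F$) and, being disjoint from the non-empty ideal $I$, proper, which maintains completeness and consistency respectively. It then suffices to show $G$ is prime. Suppose $a \vee b \in G$ while $a, b \notin G$. By maximality the complete (consistent) $n$-filter generated by $G \cup \{a\}$ meets $I$ in some $p$, and $G \cup \{b\}$ gives some $q \in I$; then $p \vee q$ lies in both generated $n$-filters (they are upsets) and in $I$ (an ideal), so everything reduces to the distribution identity that the intersection of the two generated $n$-filters equals $G$, which would force $p \vee q \in G \cap I$, a contradiction.

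For the consistent case this identity is quick: writing the almost consistent $n$-filter generated by $U$ as $\Cons \fg{n}{U}$, one checks that $\Cons$ commutes with intersections of upsets (if $\neg f \vee x$ and $\neg g \vee x$ lie in two upsets, so does $\neg f \vee \neg g \vee x$, with $f \wedge g \in \Fc$). Hence the intersection equals $\Cons(\fg{n}{G \cup \{a\}} \cap \fg{n}{G \cup \{b\}}) = \Cons \fg{n}{G \cup \{a \vee b\}} = \Cons G = G$, the middle step being Lemma~\ref{lemma: fg cap} and the penultimate one using $a \vee b \in G$. For the complete case, where the generated $n$-filter is $\fg{n}{\Comp U}$, I would note that $\Comp(G \cup \{a\})$ is the upward closure of $G \cup A$ with $A := \set{a \wedge f}{f \in \Fc}$ downward directed (since $\Fc$ is a filter). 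Using the finite character of $n$-filter generation together with this directedness, $\fg{n}{\Comp(G \cup \{a\})} = \bigcup_{\alpha \in A} \fg{n}{G \cup \{\alpha\}}$, and similarly for $b$. Given $x$ in the intersection, Lemma~\ref{lemma: fg cap} then supplies $\alpha = a \wedge f$ and $\beta = b \wedge g$ with $x \in \fg{n}{G \cup \{\alpha \vee \beta\}}$; finally $\alpha \vee \beta \geq (a \vee b) \wedge (f \wedge g) \in G$ by completeness of $G$ and $a \vee b \in G$, so $\alpha \vee \beta \in G$ and $x \in G$.

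For the \emph{classical} and \emph{Kalman} cases I would bypass the direct argument and reduce to the distributive separation theorem via the quotient maps of Lemmas~\ref{lemma: boolean theta} and~\ref{lemma: kleene theta}. Let $\pi \colon \alg{L} \to \alg{L}/\theta$ be the projection onto the Boolean (respectively Kleene) quotient; then a classical (Kalman) $n$-filter is exactly $\pi^{-1}[\bar{F}]$ for a non-empty $n$-filter (an $n$-filter) $\bar{F}$ on $\alg{L}/\theta$. The image ideal $\bar{I} := {\downarrow}\pi[I]$ is disjoint from $\bar{F}$ — if $\bar{u} \in \bar{F}$ with $\bar{u} \leq \pi(x)$ and $x \in I$, then $x \in \pi^{-1}[\bar{F}] \cap I$, which is empty — so the prime $n$-filter separation theorem for distributive lattices yields a prime $n$-filter $\bar{P} \supseteq \bar{F}$ disjoint from $\bar{I}$. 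Pulling back, $P := \pi^{-1}[\bar{P}]$ is a classical (Kalman) prime $n$-filter extending $F$ and disjoint from $I$: primeness follows from surjectivity of $\pi$ and primeness of $\bar{P}$, and in the classical case $\bar{P}$ is non-empty because $\bar{F}$ is.

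The main obstacle I expect is the distribution identity in the complete case, where the closure operators $\Comp$ and $\fg{n}{-}$ do not obviously commute; the resolution is to exploit the downward directedness of $A$ to push everything through the single-element identity of Lemma~\ref{lemma: fg cap}. The consistent case is easier because $\Cons$ genuinely distributes over intersection, while the classical and Kalman cases become routine once one recognizes that they transport wholesale to the distributive (Boolean/Kleene) quotient, where separation is already available.
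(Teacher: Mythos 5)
Your proposal is correct and follows essentially the same route as the paper's own proof: for the complete and consistent cases the paper likewise takes a maximal complete (consistent) $n$-filter $G$ disjoint from $I$ and uses Lemma~\ref{lemma: fg cap} after reducing membership in the generated filter to $\fg{n}{(\Comp G),\, x \wedge f}$ for a single $f \in \Fc$ (your directed-union argument is just a more explicit justification of that step, and your $\Cons$-distribution observation is the content the paper invokes for the consistent case), while for the classical and Kalman cases the paper performs exactly your reduction: pass to the Boolean (Kleene) quotient via Lemma~\ref{lemma: boolean theta} (Lemma~\ref{lemma: kleene theta}), separate there using the distributive-lattice theorem with the ideal generated by the image of $I$, and pull the resulting prime $n$-filter back.
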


\begin{proof}
  The claim for general $n$-filters is a special case of the corresponding theorem for distributive lattices. For complete $n$-filters, let $G$ be the maximal complete $n$-filter disjoint from $I$ which extends $F$. If $x, y \notin G$, then $\fg{n}{\Comp (G, x)}$ and $\fg{n}{\Comp (G, y)}$ intersect $I$, hence there $i \in \fg{n}{\Comp (G, x)} \cap \fg{n}{\Comp (G, y)}$ for some $i \in I$. That is, $i \in \fg{n}{(\Comp G), x \wedge f}$ and $i \in \fg{n}{(\Comp G), y \wedge f }$ for some $f \in \Fc$. But then $i \in \fg{n}{(\Comp G), (x \vee y) \wedge f \}}$ by Lemma~\ref{lemma: fg cap}, so the complete $n$-filter generated by $G \cup \{ x \vee y \}$ intersects $I$ at $i$. Thus $x \vee y \notin G$.

  For consistent $n$-filters, let $G$ be the maximal consistent $n$-filter disjoint from $I$ which extends $F$. If $x, y \notin G$, then $\Cons {\fg{n}{G, x}}$ and $\Cons {\fg{n}{G, y}}$ intersect $I$. Then $i \in \Cons \fg{n}{G, x} \cap \Cons \fg{n}{G, y} = \Cons (\fg{n}{G, x} \cap \fg{n}{G, y}) = \Cons \fg{n}{G, x \vee y}$ for some $i \in I$ by the previous lemma. The consistent $n$-filter generated by $G \cup \{ x \vee y \}$ thus intersects $I$ at $i$, and $x \vee y \notin G$.

  In the Kalman case (the classical case) there is a surjective homomorphism $h\colon \alg{L} \to \alg{K}$ onto a Kleene lattice (Boolean algebra) $\alg{K}$ such that $F = h^{-1}[G]$ for some $n$-filter $G$ of $\alg{K}$. The downset $J \subseteq \alg{K}$ generated by $h[I]$ is an ideal on $\alg{K}$ which is disjoint from $G$: if $x \in J \cap G$, then $x \leq h(i)$ for some $i \in I$, so $h(i) \in G$ and $i \in F = h^{-1}[G]$, contradicting $F \cap I = \emptyset$. It follows that $G$ extends to a prime $n$-filter on $\alg{K}$ disjoint from $J$, and its preimage with respect to $h$ is a Kalman (classical) prime $n$-filter on $\alg{L}$ disjoint from $I$.
\end{proof}

\begin{theorem}[$n$-filters on De~Morgan lattices] \label{thm: dm intersections}
  The (complete, consistent, classical, Kalman) $n$-filters on a De~Morgan lattice are precisely the intersections of non-empty families of (complete, consistent, classical, Kalman) prime $n$-filters.
\end{theorem}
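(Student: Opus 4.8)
The plan is to prove the two inclusions separately, mirroring the proof in the distributive case, with the real content supplied by the prime separation theorem (Theorem~\ref{thm: complete prime n filter separation}).

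For the direction from right to left, I would show that an intersection of a non-empty family $\{ F_{i} \}_{i \in I}$ of prime $n$-filters of a given kind is an $n$-filter of that same kind. Since an arbitrary intersection of $n$-filters is an $n$-filter, it remains only to check that the relevant closure condition survives intersection. Each of the conditions \emph{almost complete}, \emph{almost consistent}, and \emph{Kalman} is an implication with a single membership premise, so exactly as in Fact~\ref{fact: unions} (where the same observation was used for unions) it is inherited by $\bigcap_{i \in I} F_{i}$: if the premise holds of the intersection it holds of each $F_{i}$, hence so does the conclusion. For completeness I additionally note that each $F_{i}$ contains every $x \vee \neg x$, so $\bigcap_{i \in I} F_{i}$ does too; in particular the intersection is non-empty. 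For consistency I use that $I$ is non-empty, so picking any $F_{i}$, which is not total, already forces $\bigcap_{i \in I} F_{i}$ to be not total. Combining these observations covers the classical and Kalman cases as well.

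For the converse direction, let $F$ be a (complete, consistent, classical, Kalman) $n$-filter. For each $a \notin F$ the principal ideal $\set{b}{b \leq a}$ is a non-empty ideal disjoint from $F$, since $F$ is an upset and $a \notin F$. By Theorem~\ref{thm: complete prime n filter separation}, $F$ then extends to a prime $n$-filter $G_{a}$ of the same kind which is disjoint from this ideal, so that $F \subseteq G_{a}$ and $a \notin G_{a}$. It follows that $F = \bigcap_{a \notin F} G_{a}$: the inclusion $\subseteq$ is clear, and conversely every $b \notin F$ is excluded by the factor $G_{b}$.

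The one point requiring care, and the closest thing to an obstacle, is ensuring that the indexing family is non-empty as the statement demands. If $F \neq L$ its complement is non-empty and the family $\{ G_{a} \}_{a \notin F}$ does the job. The complement is empty only when $F = L$, which can happen solely in the complete and Kalman cases, since a consistent (hence also a classical) $n$-filter is by definition not total; in that case $L$ is itself a complete (respectively Kalman) prime $n$-filter and $F = \bigcap \{ L \}$ writes it as an intersection over the singleton family. Beyond this bookkeeping the argument is routine, the genuine difficulty having already been absorbed into the separation theorem.
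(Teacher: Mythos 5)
Your proof is correct and takes essentially the approach the paper intends: Theorem~\ref{thm: dm intersections} is stated there without explicit proof as the immediate standard corollary of the separation theorem (Theorem~\ref{thm: complete prime n filter separation}), which is exactly your argument via principal ideals $\set{b}{b \leq a}$ for $a \notin F$, combined with the easy observation that each kind of $n$-filter is closed under non-empty intersections. Your bookkeeping for the total upset $F = L$ (writing it as the intersection of the singleton family $\{ L \}$, which also covers the plain $n$-filter case you did not name explicitly) correctly handles the one edge case the paper leaves implicit.
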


\subsection{Consequence in logics of upsets}

  The above representation of various types on $n$-filters on De~Morgan lattices as the intersections of homomorphic pre\-images of some canonical $n$-filters immediately yields completeness theorems for non-adjunctive generalizations of Belnap--Dunn logic, Kleene's strong three-valued logic, Priest's Logic of Paradox, Kalman's logic of order, and classical logic. Likewise, the above results describing the generation of $n$-filters of various types immediately yield theorems relating the consequence relations of these logics.

  We shall assume in the following that the reader is familiar with the basic notions of abstract algebraic logic: the notion of a (finitary) logic, a (reduced) model of a logic, an extension of a logic, and a (Hilbert-style) axiomatization of a logic. If not, the reader may consult the textbook~\cite{font16} for the relevant definitions.

  The logics considered here will all be axiomatized relative to the base logic $\BDinfty$, defined as the logic determined by all structures of the form $\pair{\alg{A}}{F}$ where $\alg{A}$ is a De~Morgan lattice and $F$ is an upset of $\alg{L}$. A finite Hilbert-style axiomatization for this logic was found by Shramko~\cite{shramko19fde}. More precisely, Shramko was concerned with axiomatizing the \textsc{Fmla}-\textsc{Fmla} fragment of Belnap--Dunn logic, but this is equivalent to axiomatizing $\BDinfty$ in view of Theorem~\ref{thm: consequence in bd}. The choice of $\BDinfty$ as a base logic is appropriate given that extensions of $\BDinfty$ are precisely the logics complete with respect with respect to some class of structures of the above form.

  Let $\LPinfty$ be the extension of $\BDinfty$ by the rules
\begin{align*}
  & \emptyset \vdash x \vee \neg x, & & x \vdash x \wedge (y \vee \neg y),
\end{align*}
  $\Kinfty$ be the extension of $\BDinfty$ by the rule
\begin{align*}
  (x \wedge \neg x) \vee y \vdash y,
\end{align*}
  $\CLinfty$ be the extension of $\BDinfty$ by all three of above rules, and $\KOinfty$ be the extension of $\BDinfty$ by the rule
\begin{align*}
  ((x \wedge \neg x) \wedge z) \vee u \vdash ((y \vee \neg y) \wedge z) \vee u,
\end{align*}
  or equivalently by the rule
\begin{align*}
  ((x \wedge \neg x) \vee z) \wedge u \vdash ((y \vee \neg y) \vee z) \wedge u.
\end{align*}
  Furthermore, let $\BD{n}$ ($\LP{n}$, $\K{n}$, $\KO{n}$, $\CL{n}$) be the extension of $\BDinfty$ ($\LPinfty$, $\Kinfty$, $\KOinfty$, $\CLinfty$) by the $n$-adjunction rule
\begin{align*}
  \set{\bigwedge_{j \neq i} x_{j}}{i \in \range{n+1}} \vdash x_{1} \wedge \dots \wedge x_{n+1},
\end{align*}
  where $\bigwedge_{j \neq i} x_{j}$ is the submeet of $x_{1} \wedge \dots \wedge x_{n+1}$ omitting $x_{i}$.

\begin{theorem}[Completeness for logics of $n$-filters] \label{thm: completeness for n-filters}
  $\BD{n}$ ($\LP{n}$, $\K{n}$, $\CL{n}$) is complete with respect to the structure $\DMm{n}$ ($\Pm{n}$, $\Km{n}$, $\BAm{n}$). $\KO{n}$ is complete with respect to the family of structures of the form $\Pm{i} \otimes \Km{j}$ for $i+j = n$ ($0 \leq i, j \leq n$).
\end{theorem}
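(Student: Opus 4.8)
The plan is to reduce each completeness claim to the characterizations of the various $n$-filters as homomorphic preimages (Theorem~\ref{thm: dm preimages} and the Kalman variant) together with the description of how $n$-filters are generated (the generating lemmas and Theorem~\ref{thm: dm intersections}). The general pattern of abstract algebraic logic is that a logic axiomatized by a set of rules over $\BDinfty$ is complete with respect to a class of matrices precisely when the $\logic{L}$-filters on the formula algebra (and more generally on arbitrary De~Morgan lattices) coincide with the designated sets arising from that class. So the real content is to identify, for each of the logics $\BD{n}$, $\LP{n}$, $\K{n}$, $\CL{n}$, and $\KO{n}$, the lattice filters of the logic with the correspondingly-typed $n$-filters on De~Morgan lattices.

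First I would observe that the $\BDinfty$-filters on a De~Morgan lattice $\alg{L}$ are exactly the upsets of $\alg{L}$: this is essentially the definition of $\BDinfty$ as the logic of all upsets, and it identifies the base level. The key step is then to check, rule by rule, that adding each defining rule of the stronger logic cuts the class of $\logic{L}$-filters down to exactly the intended subclass of upsets. Concretely: the two $\LPinfty$-rules $\emptyset \vdash x \vee \neg x$ and $x \vdash x \wedge (y \vee \neg y)$ force an $\logic{L}$-filter to be complete (this is literally the definition of a complete upset given in the text); the $\Kinfty$-rule $(x \wedge \neg x) \vee y \vdash y$ forces almost consistency, and together with non-totality (automatic for the filter of a consistent logic) yields consistency; the two $\KOinfty$-rules correspond exactly to the Kalman condition. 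Adding the $n$-adjunction rule on top of an upset-level rule forces the upset to be an $n$-filter, by the definition of $n$-filter in terms of submeets of size $\leq n$. Thus the $\BD{n}$-filters are precisely the $n$-filters, the $\LP{n}$-filters the complete $n$-filters, the $\K{n}$-filters the consistent $n$-filters, the $\CL{n}$-filters the classical $n$-filters, and the $\KO{n}$-filters the Kalman $n$-filters.

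With this filter correspondence in hand, completeness with respect to the designated canonical structures follows from the preimage theorems. Soundness is the easy direction: each $\DMm{n}$ (resp.\ $\Pm{n}$, $\Km{n}$, $\BAm{n}$, or $\Pm{i} \otimes \Km{j}$) has as its designated set a prime $n$-filter of the appropriate type, and the designated sets of submatrices and preimages inherit the type, so all the defining rules are validated. For completeness one uses that if $\Gamma \nvdash_{\logic{L}} \varphi$ then $\varphi$ lies outside the $\logic{L}$-filter generated by $\Gamma$ on the formula algebra; by the filter correspondence this is the $n$-filter (of the relevant type) generated by $\Gamma$, which by Theorem~\ref{thm: dm intersections} is an intersection of prime $n$-filters of that type, so some prime $n$-filter of that type contains $\Gamma$ but not $\varphi$. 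By Theorem~\ref{thm: dm preimages} (and the Kalman preimage theorem) that prime $n$-filter is the preimage of the designated set of $\DMm{n}$ (resp.\ the others, or some $\Pm{i} \otimes \Km{j}$), giving a homomorphism into the canonical matrix that refutes the inference.

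The main obstacle I anticipate is the Kalman case, where the canonical semantics is not a single matrix but the finite family $\{ \Pm{i} \otimes \Km{j} \mid i+j = n \}$. Here one must be careful that the $\KO{n}$-filter generated by $\Gamma$ is an intersection of \emph{Kalman} prime $n$-filters, each of which is the preimage of the designated set of \emph{some} $\Pm{i} \otimes \Km{j}$ rather than of a fixed one; the relevant $i, j$ may vary with the prime $n$-filter chosen. This requires invoking the Kalman prime $n$-filter characterization (the fact that a Kalman prime $n$-filter is a union of at most $n$ prime filters, each complete or consistent) to sort the filter into its $\Pm{i} \otimes \Km{j}$ form. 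The complete and consistent cases each also demand the slightly delicate point that the generating construction ($\fg{n}{\Comp U}$ and $\Cons \fg{n}{U}$) really produces a filter of the correct type, but this is supplied by the preceding lemma on generation, so the argument there is routine once the filter correspondence is set up.
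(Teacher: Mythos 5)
Your proposal is correct and takes essentially the same route as the paper: the paper's proof likewise reduces everything to identifying the models of each logic with De~Morgan lattices carrying the correspondingly typed $n$-filters (which it does via the reduced models of $\BDinfty$ rather than your rule-by-rule filter correspondence, but these amount to the same thing) and then applies Theorems~\ref{thm: dm preimages} and~\ref{thm: dm intersections}, its embedding of each model into a product of strict homomorphic preimages of $\DMm{n}$ (resp.\ $\Pm{n}$, $\Km{n}$, $\BAm{n}$, the family $\Pm{i} \otimes \Km{j}$) being just the model-level packaging of your Lindenbaum-style separation argument, including your correct anticipation that in the Kalman case the witnessing structure $\Pm{i} \otimes \Km{j}$ varies with the separating prime $n$-filter. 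One parenthetical slip: non-totality is not ``automatic'' for $\K{n}$-filters (the total upset is a filter of every logic, so the $\K{n}$-filters are the \emph{almost} consistent $n$-filters), but this is harmless, since the generated filter excluding $\varphi$ is non-total, which is exactly what the separation step for consistent prime $n$-filters requires.
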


\begin{proof}
  Each reduced model of $\BDinfty$ has the form $\pair{\alg{A}}{F}$ where $\alg{A}$ is a De~Morgan lattice and $F$ is an upset of $\alg{A}$ (because each reduced model of a logic generated by structures of the form $\pair{\alg{A}}{F}$ for $\alg{A}$ in some variety $\class{K}$ has the form $\pair{\alg{B}}{G}$ for some $\alg{B} \in \class{K}$). Thus $\BD{n}$ is complete with respect to the class of all structures the form $\pair{\alg{A}}{F}$ where $\alg{A}$ is a De~Morgan lattice and $F$ is an upset which validates $n$-adjunction, i.e.\ an $n$-filter on $\alg{A}$. But then $F$ is an intersection of a family of prime $n$-filters $F_{i}$, and each of these is a homomorphic preimage of $\nonemptydm{n} \subseteq \DM{n}$. Thus $\pair{\alg{A}}{F}$ embeds into some product of strict homomorphic preimages of the structure $\DMm{n}$. It follows that each De~Morgan lattice with an $n$-filter is a model of the logic determined by $\DMm{n}$. Conversely, $\DMm{n}$ is a De~Morgan lattice with an $n$-filter. An analogous argument applies in the other cases.
\end{proof}

  Note that $\KOinfty < \LPinfty \cap \Kinfty$ and $\KO{n} < \LP{n} \cap \K{n}$ for each $n \geq 2$ even though $\KO{1} = \LP{1} \cap \K{1}$. For example, the rule $x, y \wedge \neg y \vdash x \wedge (z \vee \neg z)$ holds in $\LPinfty \cap \Kinfty$, hence in $\LP{n} \cap \K{n}$ for each~$n$, but it fails in $\KO{2}$ because it fails in $\Pm{1} \otimes \Km{1}$.

  We say that $\logic{L}$ is \emph{complete as a finitary logic} with respect to some class of structures $\class{K}$ if each \emph{finitary} rule $\Gamma \vdash \varphi$ is valid in $\logic{L}$ if and only if in holds in each structure in $\class{K}$. Note that a logic can be finitary and complete as a finitary logic with respect to $\class{K}$ without being complete with respect to $\class{K}$.\footnote{This is analogous to the distinction between a quasivariety generated by a class of algebras $\class{K}$ as a quasivariety and a quasivariety being generated by $\class{K}$ as a prevariety.} We shall see momentarily that this is in fact the case for the logics covered by the following theorem.

\begin{theorem}[Completeness for logics of upsets] \label{thm: completeness for upsets}
  $\BDinfty$ ($\LPinfty$, $\Kinfty$, $\KOinfty$, $\CLinfty$) is complete as a finitary logic with respect to the family of structures $\DMm{n}$ ($\Pm{n}$, $\Km{n}$, $\Pm{i} \otimes \Km{j}$, $\BAm{n}$).
\end{theorem}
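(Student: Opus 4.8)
The plan is to prove both directions of finitary completeness, reducing the substantive one to the completeness of the adjunctive logics $\BD{n}$ ($\LP{n}$, $\K{n}$, $\KO{n}$, $\CL{n}$) with respect to $\DMm{n}$ ($\Pm{n}$, $\Km{n}$, the family $\Pm{i} \otimes \Km{j}$ with $i+j=n$, $\BAm{n}$) already recorded in Theorem~\ref{thm: completeness for n-filters}. I would write up $\BDinfty$ in full; the other four cases run along identical lines, differing only in which of the closure operators $\Comp$, $\Cons$, $\Class$, $\Kalman$ is invoked. The easy direction is immediate: each $\DMm{n}$ is a De~Morgan lattice equipped with the upset $\nonemptydm{n}$, hence a model of $\BDinfty$, so every rule valid in $\BDinfty$ holds in each $\DMm{n}$. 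Likewise $\Pm{n}$, $\Km{n}$, $\Pm{i} \otimes \Km{j}$, and $\BAm{n}$ carry complete, consistent, Kalman, and classical upsets, so they are models of $\LPinfty$, $\Kinfty$, $\KOinfty$, and $\CLinfty$ respectively.

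For the hard direction, suppose a \emph{finitary} rule $\Gamma \vdash \varphi$ holds in every $\DMm{n}$, and set $m \assign \max(\card{\Gamma}, 1)$. By Theorem~\ref{thm: completeness for n-filters} the rule then holds in $\BD{n}$ for every $n$, in particular $\Gamma \vdash_{\BD{m}} \varphi$; I claim this already forces $\Gamma \vdash_{\BDinfty} \varphi$. Arguing contrapositively, suppose the rule fails in some model $\pair{\alg{A}}{F}$ of $\BDinfty$ under a homomorphism $h \colon \FmAlg \to \alg{A}$, so that $h[\Gamma] \subseteq F$ while $h(\varphi) \notin F$. Let $U$ be the upset generated by the at most $m$ elements of $h[\Gamma]$. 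Since $F$ is an upset containing $h[\Gamma]$, we have $U \subseteq F$, whence $h(\varphi) \notin U$. Everything then hinges on the fact that $U$ is already an $m$-filter, so that $\pair{\alg{A}}{U}$ is a model of $\BD{m}$ refuting the rule, a contradiction.

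This combinatorial fact is the heart of the argument and is where I expect the real work to lie. Write $U$ as the upset generated by $a_{1}, \dots, a_{m}$ and take $Y = \{ y_{0}, \dots, y_{m} \} \subseteq U$ with $\card{Y} = m+1$ such that every $m$-element submeet lies in $U$. Each submeet $\bigwedge_{i \neq j} y_{i}$ then dominates some generator $a_{s_{j}}$; since there are $m+1$ indices $s_{j}$ but only $m$ generators, the pigeonhole principle supplies $p \neq q$ with $s_{p} = s_{q} = s$. As the index sets $\range{m+1} \setminus \{ p \}$ and $\range{m+1} \setminus \{ q \}$ jointly cover $\range{m+1}$, we obtain
\[
  \textstyle \bigwedge Y = \bigl( \bigwedge_{i \neq p} y_{i} \bigr) \wedge \bigl( \bigwedge_{i \neq q} y_{i} \bigr) \geq a_{s},
\]
so $\bigwedge Y \in U$ and $U$ is an $m$-filter.

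For the remaining four logics I would replace $U$ by the appropriate generated upset: $\Comp U$ for $\LPinfty$, $\Cons U$ for $\Kinfty$, $\Class U$ for $\CLinfty$, and $\Kalman U$ for $\KOinfty$. Each is contained in $F$ because $F$ is respectively a complete, consistent, classical, or Kalman upset containing $h[\Gamma]$, so again $h(\varphi)$ lies outside it. The same pigeonhole computation, now carried out modulo a single common element $f \assign \bigwedge_{j} f_{j} \in \Fc$ and using the distributive identity $(\neg f \vee s) \wedge (\neg f \vee t) = \neg f \vee (s \wedge t)$, shows each of these upsets is an $m$-filter; for the consistent case one may instead quote the earlier lemma that $\Cons \fg{m}{U}$ is an $m$-filter together with $\fg{m}{U} = U$. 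Being non-total, these upsets are genuinely consistent, classical, or Kalman where required, and they furnish models of $\LP{m}$, $\K{m}$, $\CL{m}$, $\KO{m}$ refuting the rule, against $\Gamma \vdash_{\LP{m}} \varphi$, $\Gamma \vdash_{\K{m}} \varphi$, $\Gamma \vdash_{\CL{m}} \varphi$, $\Gamma \vdash_{\KO{m}} \varphi$. This completes all five cases.
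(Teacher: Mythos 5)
Your proposal is correct, and it reaches the theorem by a genuinely different route than the paper does. The paper's own proof is three lines: since only finitary rules are at stake, a failing rule already fails in a finitely generated submodel; finitely generated De~Morgan lattices are finite; and every upset (complete, almost consistent, Kalman, almost classical upset) of a finite lattice is an $n$-filter for some $n$ --- hence $\BDinfty$ is the intersection of the logics $\BD{n}$ (and analogously in the other four cases), and the claim drops out of Theorem~\ref{thm: completeness for n-filters}. You avoid this appeal to local finiteness entirely: your pigeonhole observation --- that the upset generated by at most $m$ elements, and likewise $\Comp U$, $\Cons U$, $\Class U$, and $\Kalman U$ for such a $U$, is automatically an $m$-filter --- substitutes for the paper's finiteness argument, and it buys a quantitative sharpening the paper's proof does not give: a rule with $\card{\Gamma} = m$ premises that is valid in $\DMm{m}$ \emph{alone} (rather than in all $\DMm{n}$) is already valid in $\BDinfty$, and similarly at the other four columns of the statement. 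The price is that the $m$-filter property must be verified separately for each closure operator, where the paper handles all five cases uniformly; your verification is sound --- passing to the common witness $f \assign \bigwedge_{j} f_{j} \in \Fc$ and using $(\neg f \vee s) \wedge (\neg f \vee t) = \neg f \vee (s \wedge t)$ is exactly right, as is the shortcut via the lemma that $\Cons \fg{m}{U}$ is an $m$-filter together with $\fg{m}{U} = U$. Two small points to make explicit in a write-up: when $\Gamma = \emptyset$ your $U$ is empty, so one must invoke the conventions $\Comp \emptyset \assign \Fc$ and $\Class \emptyset \assign \Cons \Fc$ (both are $m$-filters, $\Fc$ being a filter, and both are contained in any complete, respectively classical-type, counter-model), while $\Cons \emptyset = \Kalman \emptyset = \emptyset$ are vacuously models of the relevant logics --- your $m \assign \max(\card{\Gamma}, 1)$ device covers this, but the check deserves a sentence; and non-totality is not actually needed for $\pair{\alg{A}}{\Cons U}$ to be a model of $\K{m}$ (almost consistency suffices, and non-totality is automatic anyway since $\Cons U \subseteq F$ misses $h(\varphi)$).
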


\begin{proof}
  $\BDinfty$ is complete as a finitary logic with respect to the class of all finitely generated (i.e.\ finite) De~Morgan lattices equipped with an upset. But each such upset is an $n$-filter for some $n$. Thus $\BDinfty$ is the intersection of the logics $\BD{n}$. An analogous argument applies in the other cases.
\end{proof}

  We can describe consequence in $\BDinfty$, $\KOinfty$, and $\CLinfty$ in terms of equational validity in De~Morgan lattices, Kleene lattices, and Boolean algebras.

\begin{theorem}[Consequence in $\BD{n}$, $\BDinfty$ and $\KO{n}$, $\KOinfty$] \label{thm: consequence in bd}
  Let $ \Gamma \vdash \varphi$ be a logical rule in the signature of De~Morgan lattices. Then:
\begin{enumerate}[(i)]
\item $\gamma \vdash_{\BDinfty} \varphi$ if and only if $\gamma \leq \varphi$ holds in all De~Morgan lattices.
\item $\Gamma \vdash_{\BDinfty} \varphi$ if and only if $\gamma \vdash_{\BDinfty} \varphi$ for some $\gamma \in \Gamma$.
\item $\Gamma \vdash_{\BD{n}} \varphi$ if and only if there is some non-empty finite set of terms $\Phi$ such that $\Gamma \vdash_{\BDinfty} \bigwedge \Delta$ for each $\Delta \bsubseteq{n} \Phi$ and $\bigwedge \Phi \vdash_{\BDinfty} \varphi$.
\end{enumerate}
  The same equivalence relates Kleene lattices, $\KOinfty$, and $\KO{n}$.
\end{theorem}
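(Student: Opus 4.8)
The plan is to prove the three equivalences in turn, phrasing everything in terms of upsets of the free De~Morgan lattice, and then to transfer the whole argument to the Kleene clause. Throughout, let $\alg{A}$ denote the free De~Morgan lattice on the variables occurring in the rule under consideration (countably many always suffice) and let $[\psi]$ be the value of the term $\psi$ under the identity valuation, so that $[\psi] \leq [\chi]$ in $\alg{A}$ if and only if $\psi \leq \chi$ holds in every De~Morgan lattice.

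For part (i) the right-to-left implication is soundness: if $\gamma \leq \varphi$ holds identically, then $v(\gamma) \leq v(\varphi)$ in every De~Morgan lattice under every $v$, so $v(\gamma)$ designated forces $v(\varphi)$ designated whenever the designated set is an upset, which is exactly the defining class of $\BDinfty$. For the converse I would pass to the model $\pair{\alg{A}}{{\uparrow}[\gamma]}$, whose designated set is a principal upset and hence a model of $\BDinfty$; since $[\gamma]$ is designated there, $\gamma \vdash_{\BDinfty} \varphi$ forces $[\varphi] \in {\uparrow}[\gamma]$, i.e.\ $[\gamma] \leq [\varphi]$. Part (ii) is handled the same way: right-to-left is monotonicity, and for left-to-right I would use the model $\pair{\alg{A}}{F}$ with $F \assign {\uparrow}\set{[\gamma]}{\gamma \in \Gamma}$, again an upset. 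Every $\gamma \in \Gamma$ is designated, so $\Gamma \vdash_{\BDinfty} \varphi$ forces $[\varphi] \in F$, which by the definition of $F$ means $[\gamma] \leq [\varphi]$ for some single $\gamma \in \Gamma$, i.e.\ $\gamma \vdash_{\BDinfty} \varphi$ by part (i). This argument needs neither finitarity nor the axiomatisation.

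Part (iii) is where the $n$-filter machinery enters. For right-to-left I would argue semantically, using the fact (established in the proof of Theorem~\ref{thm: completeness for n-filters}) that $\BD{n}$ is complete with respect to all De~Morgan lattices carrying an $n$-filter. Fix such a model $\pair{\alg{B}}{F}$ and a valuation $v$ with $v[\Gamma] \subseteq F$. Since each $\bigwedge \Delta$ with $\Delta \bsubseteq{n} \Phi$ (in particular each singleton $\psi \in \Phi$) is a $\BDinfty$-consequence of $\Gamma$, hence a $\BD{n}$-consequence, the set $Y \assign \set{v(\psi)}{\psi \in \Phi}$ lies in $F$ and every $\bigwedge X$ with $X \bsubseteq{n} Y$ equals some $v(\bigwedge \Delta) \in F$; the defining property of the $n$-filter $F$ then yields $v(\bigwedge \Phi) = \bigwedge Y \in F$, and $\bigwedge \Phi \vdash_{\BDinfty} \varphi$ finishes the step. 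For left-to-right I would run the canonical construction: let $F \assign \fg{n}{U}$ be the $n$-filter generated by $U \assign {\uparrow}\set{[\gamma]}{\gamma \in \Gamma}$ in $\alg{A}$, so that $\pair{\alg{A}}{F}$ is a $\BD{n}$-model in which all premises are designated. Then $\Gamma \vdash_{\BD{n}} \varphi$ gives $[\varphi] \in \fg{n}{U}$, and the Generating $n$-filters lemma supplies a finite set $X \subseteq U$ with $\bigwedge X \leq [\varphi]$ and $\bigwedge Y \in U$ for each non-empty $Y \subseteq X$ with $\card{Y} \leq n$; choosing one representing term per element of $X$ yields the desired $\Phi$.

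The main obstacle is the bookkeeping in this last direction: one must check that $\Phi$ can be taken in bijection with $X$ (so that subsets $\Delta \bsubseteq{n} \Phi$ correspond to $Y \bsubseteq{n} X$ despite possible collisions of values), and that membership ``$\bigwedge Y \in U$'' unwinds to $[\gamma] \leq \bigwedge Y = [\bigwedge \Delta]$ for some $\gamma \in \Gamma$, which by part (i) and monotonicity is exactly $\Gamma \vdash_{\BDinfty} \bigwedge \Delta$. Finally, for the Kleene clause I would repeat everything verbatim with $\alg{A}$ the free \emph{Kleene} lattice and with $\BDinfty$, $\BD{n}$ replaced by $\KOinfty$, $\KO{n}$. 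The only extra ingredient is the observation that every upset of a Kleene lattice automatically satisfies the Kalman condition, since $x \wedge \neg x \leq y \vee \neg y$ holds there; consequently on the free Kleene lattice every $n$-filter is already a Kalman $n$-filter, so the Generating $n$-filters lemma for ordinary $n$-filters on the distributive reduct still applies, and I may invoke completeness of $\KO{n}$ with respect to De~Morgan lattices carrying a Kalman $n$-filter.
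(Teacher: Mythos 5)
Your treatment of the $\BDinfty$/$\BD{n}$ clauses (i)--(iii) is correct, and it is essentially the paper's intended argument: the paper simply defers to the distributive-lattice analogue (Fact~4.5 of~\cite{prenosil22}), which is exactly your free-algebra construction --- evaluate under the identity valuation in the free De~Morgan lattice, designate the upset (respectively the $n$-filter $\fg{n}{U}$) generated by the values of the premises, and unwind membership via the Generating $n$-filters lemma. Your bookkeeping point about taking $\Phi$ in bijection with $X$ (and, in the converse direction, choosing one preimage per element of a subset of values, so that collisions can only shrink cardinalities) is the right way to handle it, and the right-to-left direction of (iii) via completeness of $\BD{n}$ with respect to $n$-filter models, established in the proof of Theorem~\ref{thm: completeness for n-filters}, is sound.

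The Kleene clause, however, has a genuine gap. You assert that the only extra ingredient is that every upset of a Kleene lattice is Kalman. That observation yields only the \emph{completeness} half of the transfer: it makes $\pair{\alg{A}}{\fg{n}{U}}$, for $\alg{A}$ the free Kleene lattice, a model of $\KO{n}$, so that $\Gamma \vdash_{\KO{n}} \varphi$ forces $[\varphi] \in \fg{n}{U}$. But the \emph{soundness} half --- the Kleene analogue of your part (i), right to left --- does not repeat verbatim: $\KOinfty$ is defined as the extension of $\BDinfty$ by the Kalman rule, so its models are De~Morgan lattices with Kalman upsets, not Kleene lattices with upsets, and ``the defining class of $\KOinfty$'' is not what your verbatim argument needs it to be. To pass from $[\gamma] \leq [\bigwedge \Delta]$ in the free Kleene lattice to $\gamma \vdash_{\KOinfty} \bigwedge \Delta$ (and likewise from $\bigwedge X \leq [\varphi]$ to $\bigwedge \Phi \vdash_{\KOinfty} \varphi$), you need the \emph{converse} bridge: every Kalman upset validates every inequality holding in all Kleene lattices. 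This is exactly the Fact following Lemma~\ref{lemma: kleene theta}, that Kalman upsets ($n$-filters) are precisely the homomorphic preimages of upsets ($n$-filters) of Kleene lattices; alternatively one may cite Theorem~\ref{thm: completeness for upsets}, by which $\KOinfty$ is complete as a finitary logic with respect to the structures $\Pm{i} \otimes \Km{j}$, whose underlying algebras are Kleene lattices. Note that the one semantic fact you do invoke, completeness of $\KO{n}$ with respect to Kalman $n$-filter models, cannot substitute for this: it runs in the wrong direction for soundness, and it would at best deliver $\KO{n}$-derivabilities where clause (iii$'$) demands $\KOinfty$-consequences. With the preimage Fact added, the rest of your transfer goes through as written.
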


\begin{proof}
  The proof is identical to the proof of the analogous fact for distributive lattices, see~\cite[Fact~4.5]{prenosil22}.
\end{proof}

  For Boolean algebras, the statement of the fact needs to be modified slightly, since $\CLinfty$ is the logic of \emph{non-empty} upsets.

\begin{theorem}[Consequence in $\CL{n}$ and $\CLinfty$]
  Let $\Gamma \vdash \varphi$ be a logical rule in the signature of Boolean algebras. Then:
\begin{enumerate}[(i)]
\item $\emptyset \vdash_{\CLinfty} \varphi$ if and only if $\True \leq \varphi$ holds in all Boolean algebras.
\item $\gamma \vdash_{\CLinfty} \varphi$ if and only if $\gamma \leq \varphi$ holds in all Boolean algebras.
\item $\Gamma \vdash_{\CLinfty} \varphi$ for $\Gamma$ non-empty if and only if $\gamma \vdash_{\CLinfty} \varphi$ for some $\gamma \in \Gamma$.
\item $\Gamma \vdash_{\CL{n}} \varphi$ if and only if there is some non-empty finite set of terms $\Phi$ such that $\Gamma \vdash_{\CLinfty} \bigwedge \Delta$ for each $\Delta \bsubseteq{n} \Phi$ and $\bigwedge \Phi \vdash_{\CLinfty} \varphi$.
\end{enumerate}
\end{theorem}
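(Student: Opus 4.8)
The plan is to mirror the proof of Theorem~\ref{thm: consequence in bd} for the distributive case, keeping track of the two features that distinguish the classical setting: that $\CLinfty$ is the logic of \emph{non-empty} upsets, and that the classical-closure operators collapse over a Boolean algebra. The first observation I would record is that in a Boolean algebra the filter $\Fc$ generated by the elements of the form $x \vee \neg x$ equals $\{ \True \}$, since $x \vee \neg x = \True$ there. Consequently $\Comp U$, $\Cons U$, and $\Class U$ all reduce to the upward closure of $U$, so that the classical $n$-filters on a Boolean algebra are precisely its non-empty $n$-filters. Combined with the fact that classical $n$-filters on a De~Morgan lattice are the homomorphic preimages of non-empty $n$-filters of Boolean algebras, and with the completeness of $\CL{n}$ with respect to $\BAm{n}$ (Theorem~\ref{thm: completeness for n-filters}), this lets me regard the theories of $\CL{n}$ on $\FmAlg$ as classical $n$-filters and transfer the whole computation to $n$-filters on the free Boolean algebra. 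Since the total upset validates every rule and may be discarded, $\CLinfty$ and $\CL{n}$ are exactly the logics of non-empty upsets and non-empty $n$-filters respectively.

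For item~(i) I would use that the smallest non-empty upset of any Boolean algebra is the principal filter $\{ \True \}$ of the top element. Thus $\emptyset \vdash_{\CLinfty} \varphi$ holds iff $\varphi$ is designated in every non-empty upset, iff $\varphi$ evaluates to $\True$ under every homomorphism into a Boolean algebra, which is exactly $\True \leq \varphi$ in all Boolean algebras. Items~(ii) and~(iii) are then the verbatim analogue of the distributive argument: for~(ii), if $\gamma \leq \varphi$ holds everywhere, the upset property gives $\gamma \vdash_{\CLinfty} \varphi$, while the converse follows by designating the principal non-empty upset $\mathord\uparrow \gamma$ in the free Boolean algebra; for~(iii), a non-empty $\Gamma$ acts through the non-empty upset it generates, namely the union of the principal upsets $\mathord\uparrow \gamma$, so that membership of $\varphi$ forces $\gamma \leq \varphi$ for a single $\gamma$.

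For item~(iv) I would pass to the free Boolean algebra via the quotient map $\pi \colon \FmAlg \to \FmAlg / \theta$, where $\theta$ is the smallest congruence making the quotient Boolean (Lemma~\ref{lemma: boolean theta}). Because classical $n$-filters are the preimages of non-empty $n$-filters of Boolean algebras and generation commutes with surjective homomorphic preimages, the assertion $\Gamma \vdash_{\CL{n}} \varphi$ amounts to $\pi(\varphi)$ lying in the non-empty $n$-filter generated by the upward closure $U$ of $\pi[\Gamma]$. The lemma on generating $n$-filters on distributive lattices then supplies a finite non-empty $X \subseteq U$ with $\bigwedge X \leq \pi(\varphi)$ and $\bigwedge Y \in U$ for each $Y \bsubseteq{n} X$. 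Choosing formulas $\Phi$ with $\pi[\Phi] = X$ and translating the membership statements back through~(i)--(iii) --- $\bigwedge \Delta \in U$ becomes $\Gamma \vdash_{\CLinfty} \bigwedge \Delta$, while $\bigwedge X \leq \pi(\varphi)$ becomes $\bigwedge \Phi \vdash_{\CLinfty} \varphi$ --- yields exactly the stated criterion, with the singleton instances $\Delta = \{ x \}$ recovering $\pi[\Phi] \subseteq U$.

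The main obstacle I anticipate is the boundary case $\Gamma = \emptyset$, which is precisely where the Boolean story diverges from the De~Morgan one: with the empty upset forbidden, the empty premise set is no longer a universal countermodel, and one must check that items~(i) and~(iv) remain compatible. Concretely, when $\Gamma = \emptyset$ the criterion in~(iv) demands that every $\bigwedge \Delta$ with $\Delta \bsubseteq{n} \Phi$ be a tautology; since the singleton instances force every term of $\Phi$ to equal $\True$, one gets $\bigwedge \Phi = \True \leq \varphi$, matching the fact that $\emptyset \vdash_{\CL{n}} \varphi$ iff $\varphi$ lies in the least non-empty $n$-filter $\{ \True \}$, i.e.\ iff $\emptyset \vdash_{\CLinfty} \varphi$ by~(i). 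Verifying this collapse carefully, together with the reduction of classical $n$-filters to non-empty $n$-filters over Boolean algebras, is the only place where genuine (if light) work beyond transcription of the distributive argument is required.
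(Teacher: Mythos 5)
Your argument is correct, but note that the paper does not prove this theorem in-house at all: its entire proof is the citation ``This is (a special case of) Theorem~4.6 of~\cite{prenosil22}.'' What you have produced is therefore a self-contained reconstruction from the paper's internal machinery, and it is sound. The key ingredients all check out: the collapse $\Fc = \{ \True \}$ over Boolean algebras, so that $\Comp U$, $\Cons U$, and $\Class U$ all reduce to upward closure and the classical $n$-filters become exactly the non-empty $n$-filters (with the total filter harmless, since it validates every rule); the completeness of $\CL{n}$ with respect to $\BAm{n}$ together with the characterization of classical $n$-filters as preimages of non-empty $n$-filters on Boolean algebras; the transfer along $\pi \colon \FmAlg \to \FmAlg / \theta$ via Lemma~\ref{lemma: boolean theta} and the identity $\fg{n}{h^{-1}[U]} = h^{-1}[\fg{n}{U}]$; and the distributive generation lemma, whose finite witness $X$ lifts to a set of formulas $\Phi$ by surjectivity of $\pi$, with the singleton sets $\Delta$ recovering $\pi[\Phi] \subseteq U$. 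Since every step in your chain for (iv) is a biconditional, both directions are covered (alternatively, right-to-left is immediate from $n$-adjunction plus cut). Your handling of the one genuinely new point relative to Theorem~\ref{thm: consequence in bd} --- that $\CLinfty$ is the logic of \emph{non-empty} upsets, so that $\Gamma = \emptyset$ must be routed through the least non-empty ($n$-)filter $\{ \True \}$, reconciling item (i) with the $\Gamma = \emptyset$ instance of (iv) --- is exactly right, and is presumably the content that the cited Theorem~4.6 adds to Fact~4.5 of~\cite{prenosil22}. Two cosmetic caveats: the phrase ``theories of $\CL{n}$ on $\FmAlg$ are classical $n$-filters'' is only heuristic, since $\FmAlg$ is not a De~Morgan lattice --- your actual computation correctly takes place in the free Boolean algebra $\FmAlg / \theta$, where the statement is literally true; and in (ii)--(iii) one needs the free Boolean algebra over all variables occurring so that the canonical valuation is generic, which you use implicitly. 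Compared with the paper's one-line citation, your route costs a page of bookkeeping but buys a proof readable without access to the unpublished manuscript, and it makes visible precisely where non-emptiness enters the classical case.
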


\begin{proof}
  This is (a special case of) Theorem~4.6 of~\cite{prenosil22}.
\end{proof}

  We can then relate consequence in $\LP{n}$, $\K{n}$, $\KO{n}$, and $\CL{n}$ to consequence in~$\BD{n}$, and likewise for the corresponding logics of upsets. For this purpose, it will be convenient to introduce the abbreviation
\begin{align*}
  \alpha(\psi_{1}, \dots, \psi_{k}) \assign (\psi_{1} \vee \neg \psi_{1}) \wedge \dots \wedge (\psi_{k} \vee \neg \psi_{k}).
\end{align*}
  The following theorems are direct consequences of the corresponding theorems concerning the generation of $n$-filters and upsets of the appropriate types. We only prove the first one explicitly.

\begin{theorem}[Consequence in $\LP{n}$ and $\LPinfty$]
  $\Gamma \vdash_{\LP{n}} \varphi$ if and only if
\begin{align*}
  \alpha(\psi_{1}, \dots, \psi_{k}), \set{\gamma \wedge \alpha(\psi_{1}, \dots, \psi_{k})}{\gamma \in \Gamma} \vdash_{\BD{n}} \varphi \text{ for some } \psi_{1}, \dots, \psi_{k}.
\end{align*}
  The same equivalence relates $\LPinfty$ and $\BDinfty$.
\end{theorem}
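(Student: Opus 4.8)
The plan is to prove the two directions separately. The right-to-left direction is a short syntactic argument using only the defining rules of $\LP{n}$, while the left-to-right direction rests on the explicit description of the complete $n$-filter generated by a set, i.e.\ on part~(v) of the Generation theorem together with the formula for $\Comp U$. Throughout I write $\alpha \assign \alpha(\psi_{1}, \dots, \psi_{k})$ and $[\chi]$ for the value of a formula $\chi$ in the free De~Morgan lattice $\FreeDMLat{\Var}$; recall from part~(i) of Theorem~\ref{thm: consequence in bd} that $\gamma \vdash_{\BDinfty} \varphi$ holds exactly when $[\gamma] \leq [\varphi]$.

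For the easy direction, suppose $\alpha, \set{\gamma \wedge \alpha}{\gamma \in \Gamma} \vdash_{\BD{n}} \varphi$. Since $\BD{n}$ is contained in $\LP{n}$, it suffices to derive each premise of this rule from $\Gamma$ in $\LP{n}$ and then cut. The rule $\emptyset \vdash x \vee \neg x$ gives $\emptyset \vdash_{\LP{n}} \psi_{i} \vee \neg \psi_{i}$, and repeated application of $x \vdash x \wedge (y \vee \neg y)$ conjoins these into $\emptyset \vdash_{\LP{n}} \alpha$; the same rule applied to a $\gamma \in \Gamma$ yields $\gamma \vdash_{\LP{n}} \gamma \wedge \alpha$. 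Hence $\Gamma \vdash_{\LP{n}} \alpha$ and $\Gamma \vdash_{\LP{n}} \gamma \wedge \alpha$ for each $\gamma$, and cutting against the assumed $\BD{n}$-rule gives $\Gamma \vdash_{\LP{n}} \varphi$.

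For the reverse direction, I would first translate provability into membership in a generated complete $n$-filter. By completeness of $\LP{n}$ with respect to De~Morgan lattices carrying a complete $n$-filter (the intermediate class in the proof of Theorem~\ref{thm: completeness for n-filters}), together with freeness of $\FreeDMLat{\Var}$ and the fact that homomorphic preimages of complete $n$-filters are complete $n$-filters, one checks that $\Gamma \vdash_{\LP{n}} \varphi$ holds if and only if $[\varphi]$ lies in the complete $n$-filter of $\FreeDMLat{\Var}$ generated by the upset $U$ of $\set{[\gamma]}{\gamma \in \Gamma}$. By part~(v) of the Generation theorem this $n$-filter is $\fg{n}{\Comp U}$, so the Generating-$n$-filters lemma supplies a non-empty finite $X \subseteq \Comp U$ with $\bigwedge X \leq [\varphi]$ and $\bigwedge Y \in \Comp U$ for every $Y \bsubseteq{n} X$. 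Unfolding the definition of $\Comp U$ and absorbing each witness down to some $[\gamma]$ (as $U$ is the upset of the $[\gamma]$), for each such $Y$ there are $\gamma_{Y} \in \Gamma$ and $f_{Y} \in \Fc$ with $[\gamma_{Y}] \wedge f_{Y} \leq \bigwedge Y$. Because $\Fc$ is generated by the elements $[\psi \vee \neg \psi]$, each $f_{Y}$ dominates some $[\alpha(\vec{\psi}_{Y})]$, and concatenating the finitely many tuples $\vec{\psi}_{Y}$ produces a single $\alpha$ with $[\alpha] \leq f_{Y}$ for all $Y$ at once. Choosing formulas $\phi_{x}$ with $[\phi_{x}] = x$ for $x \in X$ and setting $\Phi \assign \set{\phi_{x}}{x \in X}$, we get $\bigwedge \Phi \vdash_{\BDinfty} \varphi$ since $[\bigwedge \Phi] = \bigwedge X \leq [\varphi]$, while for each $\Delta \bsubseteq{n} \Phi$, writing $Y$ for the corresponding subset of $X$, the premise $\gamma_{Y} \wedge \alpha$ satisfies $[\gamma_{Y} \wedge \alpha] = [\gamma_{Y}] \wedge [\alpha] \leq [\gamma_{Y}] \wedge f_{Y} \leq \bigwedge Y = [\bigwedge \Delta]$, so $\alpha, \set{\gamma \wedge \alpha}{\gamma \in \Gamma} \vdash_{\BDinfty} \bigwedge \Delta$. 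By part~(iii) of Theorem~\ref{thm: consequence in bd} these two facts yield exactly $\alpha, \set{\gamma \wedge \alpha}{\gamma \in \Gamma} \vdash_{\BD{n}} \varphi$. The $\LPinfty$/$\BDinfty$ case is the same argument with $\fg{n}{\Comp U}$ replaced by the complete upset $\Comp U$ (part~(i) of the Generation theorem), so $X$ collapses to a single witness and no auxiliary $\Phi$ is needed; the degenerate case $\Gamma = \emptyset$ is handled by $\Comp \emptyset = \Fc$.

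I expect the only real friction to be the bookkeeping in the bridge at the start of the reverse direction, namely confirming that computing $\LP{n}$-consequence coincides with membership in the complete $n$-filter generated in $\FreeDMLat{\Var}$, since $\FmAlg$ is not itself a lattice and one must route through freeness and the preimage-stability of complete $n$-filters rather than speak of ``$n$-filters on $\FmAlg$'' directly. Everything downstream is a direct unfolding of the generation formulas, and once the identification is in place the common choice of $\alpha$ and of $\Phi \assign \set{\phi_{x}}{x \in X}$ makes both clauses of the $\BD{n}$-consequence criterion fall out immediately.
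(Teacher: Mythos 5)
Your proof is correct and takes essentially the same route as the paper: both reduce $\LP{n}$-consequence to membership of $\pi(\varphi)$ in the complete $n$-filter $\fg{n}{\Comp \pi[\Gamma]}$ of the free De~Morgan lattice and then unfold the description of $\Comp$, the paper merely compressing your explicit appeal to the generating lemma and part~(iii) of Theorem~\ref{thm: consequence in bd} into the single observation that $\Gamma \vdash_{\BD{n}} \varphi$ iff $\pi(\varphi) \in \fg{n}{\pi[\Gamma]}$. If anything, you are more careful than the paper's terse argument about unifying the finitely many witnesses $f_{Y} \in \Fc$ into a single $\alpha$ and about the degenerate case $\Gamma = \emptyset$ (which is exactly what the standalone premise $\alpha(\psi_{1}, \dots, \psi_{k})$ in the statement is there for).
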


\begin{proof}
  The right-to-left direction is trivial. Conversely, suppose that ${\Gamma \vdash_{\LP{n}} \varphi}$. Let $\FreeDMLat{\Var}$ be the free De~Morgan lattice generated by the set of variables $\Var$ and let $\pi\colon \FmAlg(\Var) \to \FreeDMLat{\Var}$ be the unique homomorphism which restricts to the identity on $\Var$. Observe that $\Gamma \vdash_{\BD{n}} \varphi$ if and only if $\pi(\varphi) \in \fg{n}{\pi[\Gamma]}$.

  If $\Gamma \vdash_{\LP{n}} \varphi$, then $\pi(\varphi)$ is in the complete $n$-filter $F$ on $\FreeDMLat{\Var}$ generated by $\pi[\Gamma]$. That is, $\pi(\varphi) \in \fg{n}{\Comp \pi[\Gamma]}$. Because the map $\pi$ is surjective, this yields formulas $\psi_{1}, \dots, \psi_{k}$ such that $\pi(\varphi) \in \fg{n}{\set{\pi(\gamma \wedge \alpha(\psi_{1}, \dots, \psi_{k}))}{\gamma \in \Gamma}}$, hence $\set{\gamma \wedge \alpha(\psi_{1}, \dots, \psi_{k})}{\gamma \in \Gamma} \vdash_{\BD{n}} \varphi$.
\end{proof}

\begin{theorem}[Consequence in $\K{n}$ and $\Kinfty$]
  $\Gamma \vdash_{\K{n}} \varphi$ if and only if
\begin{align*}
  \Gamma \vdash_{\BD{n}} \neg \alpha(\psi_{1}, \dots, \psi_{k}) \vee \varphi \text{ for some } \psi_{1}, \dots, \psi_{k}.
\end{align*}
  The same equivalence relates $\Kinfty$ and~$\BDinfty$.
\end{theorem}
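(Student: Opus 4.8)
The plan is to follow the template of the preceding proof for $\LP{n}$, replacing the complete $n$-filter operator $\Comp$ by the almost consistent operator $\Cons$. As there, I work in the free De~Morgan lattice $\FreeDMLat{\Var}$ together with the canonical surjection $\pi\colon \FmAlg(\Var) \to \FreeDMLat{\Var}$ restricting to the identity on $\Var$, and I rely on two facts: (a) $\Gamma \vdash_{\BD{n}} \varphi$ holds if and only if $\pi(\varphi) \in \fg{n}{\pi[\Gamma]}$ (the observation used in the $\LP{n}$ proof), and (b) the almost consistent $n$-filter generated by a set $U$ is $\Cons \fg{n}{U}$, which is case (vi) of the generation theorem. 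By the same reduced-model argument as in Theorem~\ref{thm: completeness for n-filters}, $\K{n}$ is the logic of De~Morgan lattices equipped with an almost consistent $n$-filter (an $n$-filter validating the consistency rule $(x \wedge \neg x) \vee y \vdash y$), so the Lindenbaum--Tarski argument yields the key equivalence $\Gamma \vdash_{\K{n}} \varphi$ if and only if $\pi(\varphi) \in \Cons \fg{n}{\pi[\Gamma]}$.

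It then remains to unwind the definition of $\Cons$. By definition $\pi(\varphi) \in \Cons \fg{n}{\pi[\Gamma]}$ iff $\neg f \vee \pi(\varphi) \in \fg{n}{\pi[\Gamma]}$ for some $f$ in the filter $\Fc$ generated by the elements of the form $a \vee \neg a$. The bridge to the displayed statement is that the terms $\alpha(\psi_{1}, \dots, \psi_{k})$ are cofinal (from below) among the members of $\Fc$: by surjectivity of $\pi$ and the description of $\Fc$, every $f \in \Fc$ satisfies $f \geq \pi(\alpha(\psi_{1}, \dots, \psi_{k}))$ for suitable formulas $\psi_{i}$, while conversely each $\pi(\alpha(\psi_{1}, \dots, \psi_{k}))$ lies in $\Fc$. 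Using that $\neg$ is antitone and that $\fg{n}{\pi[\Gamma]}$ is an upset, $\neg f \vee \pi(\varphi) \in \fg{n}{\pi[\Gamma]}$ with $f \geq \pi(\alpha)$ forces $\neg f \vee \pi(\varphi) \leq \pi(\neg \alpha(\psi_{1}, \dots, \psi_{k}) \vee \varphi)$, hence $\pi(\neg \alpha(\psi_{1}, \dots, \psi_{k}) \vee \varphi) \in \fg{n}{\pi[\Gamma]}$; and the reverse passage, taking $f \assign \pi(\alpha)$, is immediate. Thus $\pi(\varphi) \in \Cons \fg{n}{\pi[\Gamma]}$ iff $\pi(\neg \alpha(\psi_{1}, \dots, \psi_{k}) \vee \varphi) \in \fg{n}{\pi[\Gamma]}$ for some $\psi_{1}, \dots, \psi_{k}$, which by fact (a) is precisely $\Gamma \vdash_{\BD{n}} \neg \alpha(\psi_{1}, \dots, \psi_{k}) \vee \varphi$ for some $\psi_{1}, \dots, \psi_{k}$.

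If one prefers to avoid the filter bookkeeping on the right-to-left direction, it can be argued directly: from $\Gamma \vdash_{\BD{n}} \neg \alpha(\psi_{1}, \dots, \psi_{k}) \vee \varphi$ one gets $\Gamma \vdash_{\K{n}} \neg \alpha(\psi_{1}, \dots, \psi_{k}) \vee \varphi$ since $\K{n}$ extends $\BD{n}$, and then, noting $\neg \alpha(\psi_{1}, \dots, \psi_{k}) = \bigvee_{i} (\psi_{i} \wedge \neg \psi_{i})$, one strips the disjuncts one at a time by $k$ applications of the consistency rule $(x \wedge \neg x) \vee y \vdash y$ to reach $\varphi$. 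The genuinely new content lies entirely in the left-to-right direction, namely translating $\Gamma \vdash_{\K{n}} \varphi$ into membership in $\Cons \fg{n}{\pi[\Gamma]}$ and extracting concrete witnesses $\psi_{i}$ from an abstract $f \in \Fc$. I therefore expect the main (and only) point requiring care to be this cofinality bookkeeping and the handling of the degenerate case $\Gamma = \emptyset$, where $\Cons \emptyset = \emptyset$ and both sides fail vacuously; no difficulty arises beyond what already appears in the $\LP{n}$ proof.
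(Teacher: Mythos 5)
Your proposal is correct and follows exactly the route the paper intends: the paper leaves this theorem unproved, declaring it a direct consequence of the generation theorem (case (vi), the almost consistent $n$-filter $\Cons\fg{n}{U}$) via the same Lindenbaum--Tarski template spelled out for $\LP{n}$, and your argument is precisely that template with $\Comp$ replaced by $\Cons$. The details you supply --- cofinality of the elements $\pi(\alpha(\psi_{1},\dots,\psi_{k}))$ in $\Fc$, the antitonicity step, and the degenerate case $\Gamma = \emptyset$ --- are the correct filling-in of what the paper omits.
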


\begin{theorem}[Consequence in $\KOinfty$]
  $\Gamma \vdash_{\KOinfty} \varphi$ if and only if
\begin{align*}
  \alpha(\psi_{1}, \dots, \psi_{k}) \wedge \gamma \vdash_{\BDinfty} \varphi \text{ and } \gamma \vdash_{\BDinfty} \neg \alpha(\psi_{1}, \dots, \psi_{k}) \vee \varphi
\end{align*}
  for some $\gamma \in \Gamma$ and some $\psi_{1}, \dots, \psi_{k}$.
\end{theorem}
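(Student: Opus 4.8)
The plan is to mirror the proof given above for $\LPinfty$, replacing the complete upset generated by a set with the Kalman upset generated by it. First I would pass to the free De~Morgan lattice $\FreeDMLat{\Var}$ via the canonical surjection $\pi\colon \FmAlg(\Var) \to \FreeDMLat{\Var}$ which restricts to the identity on variables, and observe that $\Gamma \vdash_{\KOinfty} \varphi$ holds if and only if $\pi(\varphi)$ lies in the Kalman upset generated by $\pi[\Gamma]$, i.e.\ $\pi(\varphi) \in \Kalman \pi[\Gamma]$. This is the $\KOinfty$-analogue of the reduction $\Gamma \vdash_{\BD{n}} \varphi \iff \pi(\varphi) \in \fg{n}{\pi[\Gamma]}$ used above, and it follows from the fact that $\KOinfty$ is the logic of Kalman upsets of De~Morgan lattices together with the fact that a union of Kalman upsets is again Kalman, so that $\langle \FreeDMLat{\Var}, \Kalman \pi[\Gamma] \rangle$ is a model of $\KOinfty$ furnishing the canonical countermodel whenever $\pi(\varphi) \notin \Kalman \pi[\Gamma]$.

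Next I would unfold the explicit description of $\Kalman \pi[\Gamma]$ supplied by the theorem on generating upsets on De~Morgan lattices (case (iv)): $\pi(\varphi) \in \Kalman \pi[\Gamma]$ precisely when there are $a \in \pi[\Gamma]$ and $f \in \Fc$ with $a \wedge f \leq \pi(\varphi)$ and $a \leq \neg f \vee \pi(\varphi)$, where $\Fc$ is the filter generated by the elements $x \vee \neg x$. Writing $a = \pi(\gamma)$ for some $\gamma \in \Gamma$, the remaining task is to normalize $f$. The key observation is that both inequalities improve as $f$ shrinks: a smaller $f$ only weakens $a \wedge f \leq \pi(\varphi)$, and since negation is antitone a smaller $f$ enlarges $\neg f \vee \pi(\varphi)$ and hence only weakens $a \leq \neg f \vee \pi(\varphi)$ as well. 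Consequently $f$ may be replaced by any finite meet of generators below it, i.e.\ by $\pi(\alpha(\psi_{1}, \dots, \psi_{k}))$ for suitable formulas $\psi_{1}, \dots, \psi_{k}$; conversely any such meet of generators lies in $\Fc$ and is a legitimate witness.

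Finally I would translate the two lattice inequalities back into consequences of $\BDinfty$. Using $\pi(\gamma) \wedge \pi(\alpha) = \pi(\gamma \wedge \alpha)$ and $\neg \pi(\alpha) \vee \pi(\varphi) = \pi(\neg \alpha \vee \varphi)$ together with part (i) of Theorem~\ref{thm: consequence in bd}, which says that $\pi(s) \leq \pi(t)$ in the free De~Morgan lattice is equivalent to $s \vdash_{\BDinfty} t$, the two inequalities become exactly $\alpha(\psi_{1}, \dots, \psi_{k}) \wedge \gamma \vdash_{\BDinfty} \varphi$ and $\gamma \vdash_{\BDinfty} \neg \alpha(\psi_{1}, \dots, \psi_{k}) \vee \varphi$, as desired; the right-to-left direction is immediate by reading the same chain in reverse. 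I expect the only genuinely delicate point to be the justification of the free-algebra reduction in the first paragraph, since it packages the soundness and completeness of $\KOinfty$ with respect to Kalman upsets; the normalization of $f$ and the final translation are then routine given the generation theorem and the characterization of $\BDinfty$.
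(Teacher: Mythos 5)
Your proposal is correct and takes essentially the same route as the paper: the paper proves the $\LP{n}$/$\LPinfty$ theorem explicitly (pass to the free De~Morgan lattice via $\pi$, identify derivability with membership in the generated upset of the appropriate kind, and use surjectivity of $\pi$ to extract the witnesses $\psi_{1}, \dots, \psi_{k}$) and declares the $\KOinfty$ case a direct consequence of case (iv) of the generation theorem --- which is exactly your $\Kalman \pi[\Gamma]$ argument, including the normalization of $f \in \Fc$ to a meet $\pi(\alpha(\psi_{1}, \dots, \psi_{k}))$ of generators and the translation of the two inequalities via Theorem~\ref{thm: consequence in bd}(i). One cosmetic remark: what makes $\pair{\FreeDMLat{\Var}}{\Kalman \pi[\Gamma]}$ a model of $\KOinfty$ is that $\Kalman \pi[\Gamma]$ is itself a Kalman upset containing $\pi[\Gamma]$ (the content of the generation theorem, with closure under intersections guaranteeing that a smallest such upset exists), not the closure of Kalman upsets under unions that you cite --- but nothing downstream in your argument depends on that misattribution.
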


\begin{theorem}[Consequence in $\CL{n}$]
  $\Gamma \vdash_{\CL{n}} \varphi$ if and only if
\begin{align*}
  \alpha(\psi_{1}, \dots, \psi_{k}), \set{\gamma \wedge \alpha(\psi_{1}, \dots, \psi_{k})}{\gamma \in \Gamma} \vdash_{\BD{n}} \neg \alpha(\psi_{1}, \dots, \psi_{k}) \vee \varphi
\end{align*}
  for some $\psi_{1}, \dots, \psi_{k}$. The same equivalence relates $\CLinfty$ and $\BDinfty$.
\end{theorem}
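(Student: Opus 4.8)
The plan is to mirror the explicit proof of the $\LP{n}$ case, replacing the complete $n$-filter operator $\Comp$ by the classical one $\Class = \Cons \Comp$ and invoking part~(vii) of the generating theorem in place of part~(v). Let $\pi\colon \FmAlg(\Var) \to \FreeDMLat{\Var}$ be the canonical surjection. Exactly as $\LP{n}$-consequence corresponds to complete $n$-filters on $\FreeDMLat{\Var}$, the logic $\CL{n}$ corresponds to almost classical $n$-filters (its reduced models being De~Morgan lattices equipped with a classical $n$-filter, by Theorem~\ref{thm: completeness for n-filters}), so that $\Gamma \vdash_{\CL{n}} \varphi$ holds if and only if $\pi(\varphi)$ lies in the almost classical $n$-filter generated by $\pi[\Gamma]$, which by part~(vii) equals $\fg{n}{\Class \pi[\Gamma]}$.

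For the forward direction I would use the single-parameter form $x \in \Class \pi[\Gamma]$ iff $\pi(\gamma) \wedge f \leq \neg f \vee x$ for some $\gamma \in \Gamma$ and $f \in \Fc$. Feeding $\pi(\varphi) \in \fg{n}{\Class \pi[\Gamma]}$ into the Generating $n$-filters lemma yields a finite witnessing set $X \subseteq \Class \pi[\Gamma]$, each of whose relevant submeets carries its own parameter $f$. The key simplification is that $\Fc$ is a filter, so the finitely many parameters may be replaced by their meet; shrinking $f$ preserves each inequality $\pi(\gamma) \wedge f \leq \neg f \vee x$ (a smaller $f$ weakens the left side and strengthens the right), so one single $f \in \Fc$ serves all of $X$, and since every element of $\Fc$ sits above some $\pi(\alpha(\psi_1, \dots, \psi_k))$ I may take $f = \pi(\alpha(\psi_1, \dots, \psi_k))$.

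With this fixed $f$, applying $x \mapsto \neg f \vee x$ transports the whole configuration: by distributivity $\neg f \vee \bigwedge Y = \bigwedge_{y \in Y}(\neg f \vee y)$, so the set $\set{\neg f \vee x}{x \in X}$ witnesses, via the same lemma, that $\neg f \vee \pi(\varphi) \in \fg{n}{\set{\pi(\gamma) \wedge f}{\gamma \in \Gamma}}$. As $\pi(\gamma) \wedge f = \pi(\gamma \wedge \alpha)$ and $\neg f \vee \pi(\varphi) = \pi(\neg \alpha \vee \varphi)$, this reads off as $\set{\gamma \wedge \alpha(\psi_1, \dots, \psi_k)}{\gamma \in \Gamma} \vdash_{\BD{n}} \neg \alpha(\psi_1, \dots, \psi_k) \vee \varphi$, and weakening in the harmless premise $\alpha$ gives the displayed rule. (The degenerate case $\Gamma = \emptyset$, where $\Class \emptyset = \Cons \Fc$, collapses to the lone premise $\alpha$, which is precisely why $\alpha$ must be listed on the left.)

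For the converse I would argue inside $\CL{n}$, and this is where the only genuine subtlety lies: $\CL{n}$ is non-adjunctive, so binary meets cannot be formed at will. The completeness rule $x \vdash x \wedge (y \vee \neg y)$, however, lets one append any conjunct $\psi_i \vee \neg \psi_i$, and iterating it yields $\emptyset \vdash_{\CL{n}} \alpha$ and $\gamma \vdash_{\CL{n}} \gamma \wedge \alpha$; thus from $\Gamma$ one derives every premise of the $\BD{n}$-rule and, since $\CL{n}$ extends $\BD{n}$, obtains $\Gamma \vdash_{\CL{n}} \neg \alpha \vee \varphi$. The same completeness rule then produces $(\neg \alpha \vee \varphi) \wedge \alpha$, which by distributivity lies below $(\alpha \wedge \neg \alpha) \vee \varphi$, and the consistency rule $(x \wedge \neg x) \vee y \vdash y$ delivers $\varphi$. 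The $\BDinfty$/$\CLinfty$ version runs identically, using the classical \emph{upset} $\Class U$ of the generating theorem instead of $\fg{n}{\Class U}$ and discarding all appeals to the $n$-filter generation lemma. The main obstacle throughout is the forward-direction bookkeeping: collapsing the two filter parameters and the finitely many submeet witnesses into a single $\alpha$, while correctly threading the $\neg \alpha \vee {}$ shift contributed by $\Cons$ past the $\wedge \alpha$ shift contributed by $\Comp$.
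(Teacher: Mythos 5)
Your proposal is correct and takes essentially the same route as the paper: the paper proves only the $\LP{n}$ case explicitly and declares the $\CL{n}$/$\CLinfty$ case a direct consequence of the generation theorem, and your argument is exactly that intended transfer, using $\Class U = \Cons \Comp U$ and clause (vii) (resp.\ (iii)) in place of $\Comp$ and clause (v). The details you fill in --- uniformizing the finitely many parameters $f \in \Fc$ into a single $\pi(\alpha(\psi_{1}, \dots, \psi_{k}))$, transporting the witnessing set along $x \mapsto \neg f \vee x$, the $\Gamma = \emptyset$ case via $\Class \emptyset = \Cons \Fc$, and the converse via the completeness and consistency rules of $\CL{n}$ --- are all sound and match what the paper leaves implicit.
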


  The logic $\CLinfty$ is not complete with respect to the family of structures $\BAm{n}$. As observed in~\cite{prenosil22}, all of the structures $\BAm{n}$ satisfy the following logical rule, which fails in the Boolean algebra $(\BA{1})^{\omega}$ with every non-zero element designated:
\begin{align*}
  \set{(x_{i} \wedge \neg x_{j}) \vee y}{i, j \in \omega \text{ and } i < j} & \vdash y.
\end{align*} 
  Thanks to the connection between $\CL{n}$ and $\BD{n}$, we obtain an infinitary rule which is valid in $\BD{n}$ for each $n$ but not in $\BDinfty$, or indeed in $\CLinfty$. Because $\BD{n} \leq \LP{n}$ and $\LPinfty \leq \CLinfty$, and likewise for the other logics considered above, we can infer that $\LPinfty$ is not complete with respect to the structures $\Pm{n}$, $\Kinfty$ is not complete with respect to the strutures $\Km{n}$, and $\KOinfty$ is not complete with respect to the structures $\Pm{i} \otimes \Km{j}$. In other words, it is indeed necessary to distinguish between completeness \emph{simpliciter} and completeness as a finitary logic in Theorem~\ref{thm: completeness for upsets}.

  Finally, let us remark that the results about logics proved in this section are in fact corollaries of results about what we called filter classes in~\cite{prenosil22}. If we ab\-stract away from limitations imposed by the cardinality of the set of variables and allow for a proper class of variables, a class of structures arises as the class of all models of a logic if and only if it is a \emph{logical class} of structures: a class closed under sub\-structures, products of structures, strict homomorphic images, and strict homomorphic preimages. A \emph{filter class}, on the other hand, is only required to be closed under the first three constructions.

   Since our representation of various types of $n$-filters as intersections of homomorphic preimages of a certain given $n$-filter or family of $n$-filters does not involve taking strict homomorphic images, we in fact showed that e.g.\ the class of all $n$-filters on De~Morgan lattices is generated \emph{as a filter class} by the structure $\DMm{n}$. Syntactically, moving from logical classes to filter classes corresponds to allowing the use of equalities in the premises of logical rules, i.e.\ allowing for rules of the form $\Eps, \Gamma \vdash \varphi$ where $\Eps $ is a set of equations, $\Gamma$ is a set of formulas, and $\varphi$ is a formula. For example, the rule
\begin{align*}
  x \approx \neg x, y \approx \neg y, z \approx \neg z, x \wedge y, x \wedge z \vdash x \wedge y \wedge z
\end{align*}
  holds in a structure $\pair{\alg{L}}{F}$ in case for each $x, y, z \in \alg{L}$
\begin{align*}
  x = \neg x ~ \& ~ y = \neg y ~ \& ~ z = \neg z ~ \& ~ x \wedge y \in F ~ \& ~ x \wedge z \in F \implies x \wedge y \wedge z \in F.
\end{align*}
  In particular, it holds in the structure $\Km{2}$, therefore by the analogue of Theorem~\ref{thm: completeness for n-filters} it must hold in every consistent $2$-filter on a De~Morgan lattice.

  The relationships established in this section between logical rules valid in $\BDinfty$, logical rules valid in its various extensions and equations valid in De~Morgan lattices (or in some cases Kleene lattices or Boolean algebras) then extend to analogous relationships involving rules of this more general type and quasi-equations, replacing the rule $\Gamma \vdash \varphi$ by $\Eps, \Gamma \vdash \varphi$, the rule $\gamma \vdash \varphi$ by $\Eps, \gamma \vdash \varphi$, and the inequality $\gamma \leq \varphi$ by the implication $\Eps \implies \gamma \leq \varphi$, in a manner entirely analogous to \cite[Fact~4.5]{prenosil22}. We refer the interested reader to~\cite{prenosil22} for more details.

\section{Finitely generated extensions of $\BDinfty$}
\label{sec: extensions}

  In the previous section, we obtained completeness theorems for some prominent logics of upsets of De~Morgan lattices. In this section, we shall be concerned with the general problem of axiomatizing logics determined by a finite family of upsets of finite De~Morgan lattices. We show that such an axiomatization can be found in an entirely mechanical matter in two special cases, namely if all of the upsets are filters or if they are all prime, with the caveat that in the former case the axiomatization involves a certain meta-rule (i.e.\ it is a Gentzen-style axiomatization).

\subsection{Two hierarchies of finitely generated extensions}

  The algorithm for axiomatizing such logics rests of the following simple observation. Given a finite family of prime upsets $F_{i}$ of finite De~Morgan lattices $\alg{A}_{i}$ for $i \in I$, there is some $n$ such that each $F_{i}$ is a prime $n$-filter (because if an upset fails to be an $n$-filter, it must contain at least $n+1$ pairwise incomparable elements). Dually, given a finite family of filters $F_{i}$ of finite De~Morgan lattices $\alg{A}_{i}$ for $i \in I$, there is some $n$ such that each $F_{i}$ is an $n$-prime filter. But each prime $n$-filter is a homomorphic preimage of the designated set of $\DMm{n}$, and each $n$-prime filter is a homomorphic preimage of the designated set of $(\DMm{1})^{n}$, i.e.\ there is a strict homomorphism $h\colon \pair{\alg{A}_{i}}{F_{i}} \to \DMm{n}$ or $h\colon \pair{\alg{A}_{i}}{F_{i}} \to (\DMm{1})^{n}$. The structure $\pair{\alg{A}_{i}}{F_{i}}$ thus determines the same logic as the substructure of $\DMm{n}$ or $(\DMm{1})^{n}$ given by the image of $h$.

  The logics determined by a finite family of prime upsets of finite De~Morgan lattices are therefore precisely the logics determined by some family of substructures of the finite structure $\DMm{n}$ for some $n$. Similarly, the logics determined by a finite family of $n$-prime filters on finite De~Morgan lattices are precisely those determined by some family of substructures of the finite structure $(\DMm{1})^{n}$ for some $n$.

  To obtain a satisfactory theorem, it remains to find a syn\-tactic description of the finitely generated logics determined by prime upsets and by $n$-prime filters. An extension $\logic{L}$ of $\BDinfty$ will be said to enjoy the \emph{proof by cases property (PCP)} in case
\begin{align*}
  \text{$\Gamma, \varphi_{1} \vee \varphi_{2} \vdash_{\logic{L}} \psi$ if $\Gamma, \varphi_{1} \vdash_{\logic{L}} \psi$ and $\Gamma, \varphi_{2} \vdash_{\logic{L}} \psi$.}
\end{align*}
  The converse implication always holds, since $\varphi_{1} \vdash_{\logic{L}} \varphi_{1} \vee \varphi_{2}$ and $\varphi_{2} \vdash_{\logic{L}} \varphi_{1} \vee \varphi_{2}$. The proof by cases property was studied in detail (and at a greater level of abstraction) by Czelakowski~\cite[Section~2.5]{czelakowski01} and Cintula \& Noguera~\cite{cintula+noguera13}. More generally, we say that $\logic{L}$ has the \emph{$n$-proof by cases property ($n$-PCP)} in case
\begin{align*}
  \Gamma, \varphi_{1} \vee \dots \vee \varphi_{n+1} \vdash_{\logic{L}} \psi \text{ if }\Gamma, \bigvee_{j \neq i} \! \varphi_{j} \vdash_{\logic{L}} \psi \text{ for each } i \in \range{n+1}.
\end{align*}
  For $n = 1$ this property reduces to the PCP.\footnote{The $n$-PCP looks superficially similar to the PCP, but it is important to keep in mind that the PCP is an intrinsic property of a logic, while the $n$-PCP is depends on our choice of the disjunction connective. That is, if a logic has a disjunction connective which obeys the PCP, it is unique up to logical equivalence (interderivability), but this is not true for the $n$-PCP.}

\begin{fact}
  Let $\logic{L}$ be a finitary extension of $\BDinfty$. Then $\logic{L}$ enjoys the PCP if and only if it is complete with respect to some class of structures of the form $\pair{\alg{L}}{F}$ where $\alg{L}$ is a De Morgan lattice and $F$ is a prime upset of $\alg{L}$.
\end{fact}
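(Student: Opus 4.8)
The plan is to prove both directions by connecting the PCP with the prime-upset condition on reduced models of $\logic{L}$. The key structural fact I would exploit is that an upset $F$ of a De~Morgan lattice is prime precisely when the join-irreducibility condition $x \vee y \in F \implies x \in F \text{ or } y \in F$ holds, and that the PCP is the syntactic reflection of this condition on the designated sets of reduced models.

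For the right-to-left direction, suppose $\logic{L}$ is complete with respect to a class $\class{K}$ of structures $\pair{\alg{A}}{F}$ with each $F$ a prime upset. I would verify the PCP directly on such structures. Assume $\Gamma, \varphi_{1} \vdash_{\logic{L}} \psi$ and $\Gamma, \varphi_{2} \vdash_{\logic{L}} \psi$; to show $\Gamma, \varphi_{1} \vee \varphi_{2} \vdash_{\logic{L}} \psi$, take any $\pair{\alg{A}}{F} \in \class{K}$ and any homomorphism $v \colon \FmAlg \to \alg{A}$ satisfying $v[\Gamma] \subseteq F$ and $v(\varphi_{1} \vee \varphi_{2}) = v(\varphi_{1}) \vee v(\varphi_{2}) \in F$. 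Since $F$ is prime, either $v(\varphi_{1}) \in F$ or $v(\varphi_{2}) \in F$; in the first case the first hypothesis gives $v(\psi) \in F$, in the second case the second hypothesis does. Hence the rule holds in every structure in $\class{K}$, and by completeness it is valid in $\logic{L}$.

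For the left-to-right direction, suppose $\logic{L}$ is a finitary extension of $\BDinfty$ with the PCP. Since $\logic{L}$ extends $\BDinfty$, each reduced model has the form $\pair{\alg{A}}{F}$ with $\alg{A}$ a De~Morgan lattice and $F$ an upset; moreover $\logic{L}$ is complete with respect to the class of its reduced models. It therefore suffices to show that the designated filter $F$ of each reduced model is prime. The heart of the argument is to transfer the PCP, a property of the consequence relation, into a closure property of the $\logic{L}$-filters, and then into the concrete designated sets of reduced models. I would use the standard abstract-algebraic-logic machinery: a logic with the PCP has the property that its filters are closed under the relevant join operation, and in particular the designated set of any reduced model is prime with respect to $\vee$. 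Concretely, if $a \vee b \in F$ but $a \notin F$ and $b \notin F$, I would find formulas and a valuation witnessing a failure of the PCP, contradicting the assumption.

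The main obstacle is this last transfer step: turning the schematic PCP, which quantifies over $\Gamma, \varphi_1, \varphi_2, \psi$, into the pointwise primeness of a single designated set $F$ in a reduced model. The delicate point is that primeness of $F$ must be extracted from the PCP using finitarity and the fact that reduced models are, up to the Leibniz congruence, faithful representations of the Lindenbaum--Tarski $\logic{L}$-filters. I would expect to invoke the characterization of reduced models of $\BDinfty$ (each has a De~Morgan lattice reduct with an upset as designated set), together with the general result that for a finitary logic enjoying the PCP the designated set of every reduced model is prime, which is exactly the content established by Czelakowski and by Cintula \& Noguera in the abstract setting cited above. Assembling these, every reduced model of $\logic{L}$ is a De~Morgan lattice with a prime upset, and completeness of $\logic{L}$ with respect to its reduced models yields the desired class of structures.
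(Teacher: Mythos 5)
Your right-to-left direction is correct and coincides with the paper's (which dismisses it as straightforward). The left-to-right direction, however, rests on a false lemma. You reduce the problem to showing that \emph{every} reduced model of a finitary logic with the PCP has a prime designated set, and you attribute this to the abstract results of Czelakowski and Cintula--Noguera. But the claim is simply not true: classical logic $\CL{1}$ is a finitary extension of $\BDinfty$ enjoying the PCP, and the matrix consisting of the four-element Boolean algebra $\BA{2}$ with only its top element designated is a reduced model of $\CL{1}$ (the only congruence compatible with the designated set is the identity), yet its designated set is not prime, since $a \vee \neg a$ is the top element for every $a$ while neither $a$ nor $\neg a$ need be designated. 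What the cited abstract results actually yield is completeness with respect to the subclass of matrices arising from (relatively, completely) meet-irreducible theories or filters, not primeness across all reduced models; your proposal elides exactly this distinction. Correspondingly, your ``concrete'' fallback --- from $a \vee b \in F$ with $a, b \notin F$ in an arbitrary reduced model, extract a violation of the PCP --- cannot succeed, because in an arbitrary reduced model there is no formula $\psi$ that witnesses non-membership in $F$ as derivability, which is the ingredient the PCP needs in order to bite.

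The paper's proof supplies precisely this missing selection step, and it is also where finitarity does real work (in your outline finitarity plays no identifiable role). If a rule fails in $\logic{L}$, it fails in a Lindenbaum matrix $\pair{\FmAlg(\Var)}{T}$ for some theory $T$; by finitarity every theory is an intersection of completely meet-irreducible theories, so the rule already fails at some completely meet-irreducible theory $U$. For such a $U$ there is a fixed $\psi$ with $\varphi \notin U$ if and only if $U, \varphi \vdash_{\logic{L}} \psi$, and now the PCP applies pointwise: if $\varphi_{1} \notin U$ and $\varphi_{2} \notin U$, then $U, \varphi_{1} \vee \varphi_{2} \vdash_{\logic{L}} \psi$, so $\varphi_{1} \vee \varphi_{2} \notin U$, i.e.\ $U$ is prime. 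Finally, one passes to the strict surjective homomorphic image of $\pair{\FmAlg(\Var)}{U}$ given by the Leibniz reduction: since $\logic{L}$ extends $\BDinfty$, the resulting algebra is a De~Morgan lattice; strictness of the homomorphism transfers primeness of $U$ to the image upset $F$; and the failing rule still fails in the image. Completeness is thus obtained with respect to this special class of prime-upset matrices rather than the class of all reduced models. If you replace your false lemma by this meet-irreducibility argument (or by the correctly stated form of the Czelakowski/Cintula--Noguera results), the rest of your plan goes through.
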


\begin{proof}
  The right-to-left implication is straightforward. Conversely, suppose that $\logic{L}$ has the PCP. If a rule fails in $\logic{L}$, then it fails in some structure of the form $\pair{\FmAlg(\Var)}{T}$, where $T$ is a theory of $\logic{L}$. By the finitarity of $\logic{L}$, each $\logic{L}$-theory is an inter\-section of completely meet irreducible $\logic{L}$-theories, therefore the rule fails in some structure of the form $\pair{\FmAlg(\Var)}{U}$ where $U$ is a completely meet irreducible \mbox{$\logic{L}$-theory}. For each such $U$ there is some $\psi$ such that $\varphi \notin U$ if and only if $U, \varphi \vdash \psi$. By~the PCP, $\varphi_{1} \vee \varphi_{2} \in U$ now implies that $\varphi_{1} \in U$ or $\varphi_{2} \in U$. Then there is a strict surjective homomorphism from $\pair{\FmAlg(\Var)}{U}$ onto a structure $\pair{\alg{L}}{F}$ such that $\alg{L}$ is a De Morgan lattice. But then $\pair{\alg{L}}{F}$ is a model of $\logic{L}$, $F$ is a prime upset on $\alg{L}$, and the rule in question fails in $\pair{\alg{L}}{F}$.
\end{proof}

\begin{fact}
  Let $\logic{L}$ be a finitary extension of $\BD{1}$. Then $\logic{L}$ enjoys the $n$-PCP if and only if it is complete with respect to some set of structures of the form $\pair{\alg{L}}{F}$ where $\alg{L}$ is a De Morgan lattice and $F$ is an $n$-prime filter of $\alg{L}$.
\end{fact}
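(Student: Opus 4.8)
The plan is to adapt the proof of the preceding Fact almost verbatim, substituting the $n$-PCP for the PCP, $n$-prime filters for prime upsets, and exploiting the hypothesis $\logic{L} \geq \BD{1}$ to ensure that the designated sets we produce are genuine lattice filters rather than mere upsets. Two ingredients do the new work: the $n$-PCP forces a certain $n$-primeness condition on completely meet irreducible theories, while the presence of (ordinary) adjunction, i.e.\ of $\BD{1}$, forces the corresponding quotient designated set to be closed under meets.

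For the easy right-to-left direction I would check that every structure $\pair{\alg{L}}{F}$ with $F$ an $n$-prime filter validates the $n$-PCP, so that any logic complete with respect to a class of such structures inherits it. Assume $\Gamma, \bigvee_{j \neq i} \varphi_{j} \vdash_{\logic{L}} \psi$ for each $i \in \range{n+1}$, and fix such a structure together with a valuation $v$ with $v[\Gamma] \subseteq F$ and $v(\varphi_{1} \vee \dots \vee \varphi_{n+1}) \in F$. Since $v(\varphi_{1}) \vee \dots \vee v(\varphi_{n+1}) \in F$ and $F$ is $n$-prime, some $v(\bigvee_{j \neq i} \varphi_{j}) = \bigvee_{j \neq i} v(\varphi_{j})$ lies in $F$; as the structure is a model of $\logic{L}$ and $v[\Gamma] \subseteq F$, the $i$-th premise rule then forces $v(\psi) \in F$. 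As this holds in every structure of the class, completeness yields $\Gamma, \varphi_{1} \vee \dots \vee \varphi_{n+1} \vdash_{\logic{L}} \psi$.

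For the hard left-to-right direction, assume $\logic{L}$ enjoys the $n$-PCP and that a rule fails in $\logic{L}$. Exactly as before, finitarity reduces the failure to one in $\pair{\FmAlg(\Var)}{U}$ for a completely meet irreducible $\logic{L}$-theory $U$ with its witness $\psi$, so that $\varphi \notin U$ iff $U, \varphi \vdash_{\logic{L}} \psi$; taking $\varphi = \psi$ shows in particular that $\psi \notin U$. The crucial new step is that $U$ is $n$-prime as a theory: if $\varphi_{1} \vee \dots \vee \varphi_{n+1} \in U$ while $\bigvee_{j \neq i} \varphi_{j} \notin U$ for every $i$, then $U, \bigvee_{j \neq i} \varphi_{j} \vdash_{\logic{L}} \psi$ for each $i$, so the $n$-PCP with $\Gamma = U$ gives $U, \varphi_{1} \vee \dots \vee \varphi_{n+1} \vdash_{\logic{L}} \psi$; since $\varphi_{1} \vee \dots \vee \varphi_{n+1} \in U$ this forces $\psi \in U$, contradicting $\psi \notin U$. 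Hence some $\bigvee_{j \neq i} \varphi_{j} \in U$.

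Finally I would pass to the reduced model: quotienting $\pair{\FmAlg(\Var)}{U}$ by its Leibniz congruence gives a strict surjective homomorphism $h$ onto a model $\pair{\alg{L}}{F}$ of $\logic{L}$, with $\alg{L}$ a De~Morgan lattice (as $\logic{L} \geq \BDinfty$) and $F$ a filter (as $\logic{L} \geq \BD{1}$, so $U$ is closed under adjunction). Strictness ($U = h^{-1}[F]$) together with surjectivity transports $n$-primeness from $U$ to $F$: any $a_{1}, \dots, a_{n+1} \in \alg{L}$ with $a_{1} \vee \dots \vee a_{n+1} \in F$ lift to formulas, whence the previous paragraph yields $\bigvee_{j \neq i} a_{j} \in F$ for some $i$. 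Reading $h$ as a valuation then witnesses that the original rule still fails in $\pair{\alg{L}}{F}$, giving a countermodel of the required form for each invalid rule, hence completeness. I expect the only real friction to be bookkeeping: verifying that surjectivity is genuinely needed and used to lift disjunctions when transferring $n$-primeness across $h$, and confirming that the witness $\psi$ satisfies $\psi \notin U$ so that the contradiction establishing $n$-primeness of $U$ is not vacuous. Everything else is a routine transcription of the PCP argument.
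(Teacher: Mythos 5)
Your proposal is correct and is exactly the argument the paper intends: its proof of this Fact is literally ``the proof is entirely analogous'' to the preceding PCP/prime-upset Fact, and your transcription --- $n$-primeness of completely meet irreducible theories via the $n$-PCP with $\Gamma = U$, adjunction from $\BD{1}$ making the quotient designated set a filter, and transfer of $n$-primeness through the strict surjective homomorphism --- is precisely that adaptation, with the details (e.g.\ $\psi \notin U$, lifting joins along $h$) checked correctly.
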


\begin{proof}
  The proof is entirely analogous.
\end{proof}

  We have therefore established the following theorems. Note that in the Boolean case each substructure of $\BAm{n}$ has the form $\BAm{m}$ and each substructure of $(\BAm{1})^{n}$ has the form $(\BAm{1})^{m}$ for some $m \leq n$, but this is far from true for $\DMm{n}$ and $(\DMm{1})^{n}$.

\begin{theorem}[Finitely generated extensions of $\BD{1}$] \label{thm: bd1 with n-pcp}
  The following are equivalent for each extension $\logic{L}$ of $\BD{1}$:
\begin{enumerate}[(i)]
\item $\logic{L}$ is a finitary and enjoys the $n$-PCP,
\item $\logic{L}$ is complete with respect to some set of substructures of $(\DMm{1})^{n}$,
\item $\logic{L}$ is complete with respect to some finite set of finite structures of the form $\pair{\alg{L}}{F}$ where $\alg{L}$ is a De~Morgan lattice and $F$ is an $n$-prime upset of $\alg{L}$.
\end{enumerate}
  Moreover, some such $n$ exists for each finitely generated extension of $\BD{1}$.
\end{theorem}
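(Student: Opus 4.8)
The plan is to prove the cycle (i) $\Rightarrow$ (ii) $\Rightarrow$ (iii) $\Rightarrow$ (i), treating the preceding Fact (that a finitary extension of $\BD{1}$ enjoys the $n$-PCP exactly when it is complete with respect to some class of structures whose designated set is an $n$-prime filter) as the main analytic input, and exploiting the \emph{finiteness} of $(\DMm{1})^{n}$ to turn ``some class of structures'' into ``a finite set of finite structures''. Throughout I use that, since $\logic{L}$ extends $\BD{1}$, every model of $\logic{L}$ validates adjunction, so its designated set is a filter; hence ``$n$-prime upset'' and ``$n$-prime filter'' coincide for the structures appearing in (iii), matching the formulation of the Fact.

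For (i) $\Rightarrow$ (ii): by the Fact, $\logic{L}$ is complete with respect to some (a priori infinite) class $\class{K}$ of structures $\pair{\alg{L}}{F}$ with $F$ an $n$-prime filter. Each such $F$ is a homomorphic preimage of the designated set of $(\DMm{1})^{n}$, so there is a strict homomorphism $h\colon \pair{\alg{L}}{F} \to (\DMm{1})^{n}$; its image $h[\alg{L}]$ is a substructure of $(\DMm{1})^{n}$ and $h$ is a surjective strict homomorphism onto it. Since the class of models of any logic is closed under strict homomorphic images and preimages, $\pair{\alg{L}}{F}$ and $h[\alg{L}]$ validate the same rules, so $\logic{L}$ is already complete with respect to $\set{h[\alg{L}]}{\pair{\alg{L}}{F} \in \class{K}}$, a family of substructures of $(\DMm{1})^{n}$. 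The step (ii) $\Rightarrow$ (iii) is then immediate: $(\DMm{1})^{n}$ is finite, so it has only finitely many substructures, each finite, and the designated set of each is the restriction of the designated set of $(\DMm{1})^{n}$ along the inclusion, hence the preimage of an $n$-prime filter, hence itself $n$-prime.

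For (iii) $\Rightarrow$ (i): completeness with respect to a finite family of finite matrices makes $\logic{L}$ finitary, so it remains to establish the $n$-PCP. I would verify this directly by checking that every matrix $\pair{\alg{L}}{F}$ with $F$ an $n$-prime filter validates the $n$-PCP, which then transfers to the logic they jointly determine. Indeed, suppose $\Gamma, \bigvee_{j \neq i} \varphi_{j} \vdash_{\logic{L}} \psi$ for every $i \in \range{n+1}$, and let $v$ be a valuation into such a matrix with $v[\Gamma] \subseteq F$ and $v(\varphi_{1} \vee \dots \vee \varphi_{n+1}) \in F$. Because $F$ is $n$-prime, $v(\bigvee_{j \neq i} \varphi_{j}) \in F$ for some $i$, and the corresponding premise then forces $v(\psi) \in F$; this is exactly the $n$-PCP.

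The genuinely substantial ingredient here is the preceding Fact, whose proof (passing to a completely meet-irreducible theory $U$ and using the $n$-PCP to show that $U$ is $n$-prime) carries the abstract-algebraic weight; the present argument is essentially an assembly around it, and I regard this reliance as the crux rather than an obstacle. The one step that genuinely needs care is the transfer of the $n$-PCP in (iii) $\Rightarrow$ (i): because the $n$-PCP is a meta-rule rather than a single rule, its preservation under forming the logic of a class is not automatic, and it succeeds precisely because $n$-primeness lets a disjunction of $n+1$ formulas collapse, inside a single matrix and under a single valuation, onto one of the $n+1$ subdisjunctions occurring among the premises. Finally, for the concluding claim, a finitely generated extension of $\BD{1}$ is determined by a finite family of finite structures $\pair{\alg{A}_{i}}{F_{i}}$ which, being models of $\BD{1}$, have each $F_{i}$ a filter on a finite De~Morgan lattice; such a filter is an $n_{i}$-prime filter for some $n_{i}$ (dually to the fact that every upset of a finite lattice is an $n$-filter for some $n$), so setting $n \assign \max_{i} n_{i}$ renders every $F_{i}$ simultaneously $n$-prime and yields (iii), hence (i), for this $n$.
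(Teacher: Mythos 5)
Your proposal is correct and follows essentially the same route as the paper, which obtains the theorem by assembling exactly the ingredients you use: the Fact that a finitary extension of $\BD{1}$ has the $n$-PCP iff it is complete with respect to matrices with $n$-prime filters, the characterization of $n$-prime filters as strict homomorphic preimages of the designated set of $(\DMm{1})^{n}$ (so each such matrix is logically equivalent to a substructure of the finite structure $(\DMm{1})^{n}$), and, for the final clause, the observation that every filter on a finite De~Morgan lattice is $n$-prime for some $n$. Your explicit valuation-based verification of the $n$-PCP in (iii)~$\Rightarrow$~(i) merely spells out what the paper calls the straightforward direction of that Fact.
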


\begin{theorem}[Finitely generated extensions of $\BDinfty$ with the PCP] \label{thm: bdinfty with pcp}
  The following are equivalent for each extension $\logic{L}$ of $\BDinfty$:
\begin{enumerate}[(i)]
\item $\logic{L}$ is a finitary extension of $\BD{n}$ with the PCP,
\item $\logic{L}$ is complete with respect to some set of substructures of $\DMm{n}$,
\item $\logic{L}$ is complete with respect to some finite set of finite structures of the form $\pair{\alg{L}}{F}$ where $\alg{L}$ is a De~Morgan lattice and $F$ is a prime $n$-filter of $\alg{L}$.
\end{enumerate}
  Some such $n$ exists for each finitely generated extension of $\BDinfty$ with the PCP.
\end{theorem}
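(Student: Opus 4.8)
The plan is to establish the cycle of implications (i)~$\Rightarrow$~(ii)~$\Rightarrow$~(iii)~$\Rightarrow$~(i), leaning throughout on two results already in hand: the Fact characterizing the PCP in terms of prime-upset structures (stated just before the theorem), and the identification of prime $n$-filters with the homomorphic preimages of the designated set $\nonemptydm{n}$ of $\DMm{n}$ (the unparenthesized reading of Theorem~\ref{thm: dm preimages}). The directions (ii)~$\Rightarrow$~(iii) and (iii)~$\Rightarrow$~(i) are essentially bookkeeping; the substance lies in (i)~$\Rightarrow$~(ii).

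For (i)~$\Rightarrow$~(ii), I would first invoke the Fact characterizing the PCP: since $\logic{L}$ is a finitary extension of $\BDinfty$ enjoying the PCP, it is complete with respect to some class $\class{K}$ of structures $\pair{\alg{A}}{F}$ with $\alg{A}$ a De~Morgan lattice and $F$ a prime upset. Completeness means $\logic{L} = \Log(\class{K})$, so every member of $\class{K}$ is a model of $\logic{L}$, hence a model of $\BD{n}$ because $\logic{L}$ extends $\BD{n}$; consequently each $F$ validates $n$-adjunction and is therefore an $n$-filter, so in fact a \emph{prime} $n$-filter. By Theorem~\ref{thm: dm preimages} there is then a strict homomorphism $h\colon \pair{\alg{A}}{F} \to \DMm{n}$. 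Restricting its codomain to the image $h[\pair{\alg{A}}{F}]$, a substructure of $\DMm{n}$, I would verify the key lemma that a \emph{surjective} strict homomorphism of matrices preserves the logic: a failing valuation into the image lifts along $h$ (by surjectivity) to a failing valuation into $\pair{\alg{A}}{F}$, while composing with $h$ pushes failures forward, both steps using strictness ($F = h^{-1}[G]$) to transport membership in the designated set. Thus $\pair{\alg{A}}{F}$ and its image determine the same consequence relation, and $\logic{L}$ is complete with respect to the family of these images, all of which are substructures of $\DMm{n}$ (of which there are only finitely many, since $\DMm{n}$ is finite).

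For (ii)~$\Rightarrow$~(iii), I would note that $\DMm{n} = (\DMm{1})^{\otimes n}$ is finite, so it has only finitely many substructures, each finite; and for any substructure $\pair{\alg{B}}{G}$ the inclusion into $\DMm{n}$ is a strict homomorphism, whence $G = \nonemptydm{n} \cap \alg{B}$ is a prime $n$-filter on the finite De~Morgan lattice $\alg{B}$ by Theorem~\ref{thm: dm preimages}. For (iii)~$\Rightarrow$~(i), a logic complete with respect to a finite set of finite matrices is finitary; each designated set is an $n$-filter, so every generating matrix models $\BD{n}$ and hence $\logic{L}$ extends $\BD{n}$; and each designated set is a prime upset, so the Fact characterizing the PCP (its easy direction) yields the PCP. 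Finally, for the concluding sentence I would take a finitely generated extension $\logic{L} = \Log(\class{M})$ with $\class{M}$ a finite set of finite De~Morgan lattice matrices (available after reducing the generators, since $\logic{L}$ extends $\BDinfty$); each designated set, being an upset of a finite distributive lattice, is an $n_i$-filter for some $n_i$, so with $n \assign \max_i n_i$ every generator models $\BD{n}$, making $\logic{L}$ a finitary extension of $\BD{n}$ that has the PCP by hypothesis---that is, (i) holds for this $n$.

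I expect the main obstacle to be the surjective-strict-homomorphism lemma used in (i)~$\Rightarrow$~(ii): one must check that passing to the image leaves the logic unchanged, which needs the lifting of valuations through a surjection (so that failures in the image reflect back into the original structure) together with the strictness condition to keep designation aligned in both directions. The one further point requiring care is the routine but essential observation that completeness with respect to $\class{K}$ forces every member of $\class{K}$ to be a model of $\logic{L}$, since this is precisely what allows the extension-of-$\BD{n}$ hypothesis to upgrade the prime upsets supplied by the Fact into prime $n$-filters.
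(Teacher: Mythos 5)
Your proof is correct and takes essentially the same route as the paper, which likewise combines the Fact characterizing the PCP with Theorem~\ref{thm: dm preimages}: prime upsets of models of $\BD{n}$ are prime $n$-filters, each such matrix is replaced by its strict homomorphic image inside $\DMm{n}$, and the final claim about the existence of $n$ follows because every upset of a finite distributive lattice is an $n$-filter for some $n$. The surjective-strict-homomorphism lemma you verify explicitly is precisely what the paper leaves tacit when it asserts that $\pair{\alg{A}_{i}}{F_{i}}$ ``determines the same logic as the substructure of $\DMm{n}$ given by the image of $h$''.
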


  It may well turn out that finitarity is not required here. However, proving this would require a finer analysis of the lattice of extensions of $\BDinfty$.

  These theorems organize the finitely generated extensions of $\BD{1}$ and the finitely generated extensions of $\BDinfty$ with the PCP into two hierarchies. We now explain how to extract a finite axiomatization for these logics.

  Consider a logic $\logic{L}$ determined by a finite set of finite structures $\class{K}$ which lies at the $n$-th level of one of these hierarchies. Then $\logic{L}$ is complete with respect to some set of substructures of a given finite structure, either $\DMm{n}$ or $(\DMm{1})^{n}$. If such a substructure $\pair{\alg{A}}{F}$ is a model of $\logic{L}$, there is a finitary semantic construction witnessing this (the Leibniz reduct of $\pair{\alg{A}}{F}$ is a strict homomorphic image of a substructure of some finite product of structures in $\class{K}$, per~\cite{dellunde+jansana96}). If, on the other hand, $\pair{\alg{A}}{F}$ is not a model of~$\logic{L}$, then there is a finitary rule $(\rho)$ which holds in $\logic{L}$ but not in $\pair{\alg{A}}{F}$. Applying this alternative to each of the finitely many substructures $\pair{\alg{A}}{F}$ will yield a finite set of finitary rules. The logic $\logic{L}$ is then the smallest logic at the given level of the hierarchy which satisfies these rules.

  The logic $\logic{L}$ is thus the smallest extension of $\BDinfty$ which enjoys the $n$-PCP and validates a certain finite set of finitary rules, or the smallest extension of $\BD{n}$ which enjoys the PCP and validates a certain finite set of finitary rules. This is essentially a Gentzen-style axiomatization of $\logic{L}$. In the latter case, it can further be transformed into a Hilbert-style axiomatization.

  Given a finitary rule $\Gamma \vdash \varphi$, we define its \emph{disjunctive variant} as the rule
\begin{align*}
  \set{\gamma \vee x}{\gamma \in \Gamma} \vdash \varphi \vee x, \text{ where $x$ does not occur in $\Gamma \vdash \varphi$}.
\end{align*}
  The following argument was already used by Font~\cite{font97} and Shramko~\cite{shramko19fde}.

\begin{fact}
  The smallest extension of $\BDinfty$ with the PCP which validates a set of finitary rules~$R$ is the extension of $\BDinfty$ by the disjunctive variants of the rules~$R$.
\end{fact}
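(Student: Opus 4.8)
The plan is to prove that the two logics coincide by establishing mutual inclusion of their consequence relations. Write $\logic{L}$ for the extension of $\BDinfty$ by the disjunctive variants of $R$ and $\logic{M}$ for the smallest extension of $\BDinfty$ with the PCP that validates $R$, and abbreviate $\Gamma \vee \chi \assign \set{\gamma \vee \chi}{\gamma \in \Gamma}$. The backbone of the argument is the observation that, for any extension of $\BDinfty$, the PCP is equivalent to the \emph{distribution property}: $\Gamma \vdash \varphi$ implies $\Gamma \vee \chi \vdash \varphi \vee \chi$ for every formula $\chi$. One direction follows by iterating the PCP (using $\gamma \vdash_{\BDinfty} \gamma \vee \chi$, $\chi \vdash_{\BDinfty} \varphi \vee \chi$, and $\varphi \vdash_{\BDinfty} \varphi \vee \chi$); the converse applies distribution twice, once with $\chi$ the second disjunct and once with $\chi$ the conclusion, together with the idempotence and commutativity of $\vee$. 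I will also use that $\BDinfty$ itself has the PCP, which is immediate from the description of its consequence relation in Theorem~\ref{thm: consequence in bd}: if no formula in $\Gamma$ lies below $\psi$ in every De~Morgan lattice, then both $\varphi_{1} \leq \psi$ and $\varphi_{2} \leq \psi$ must hold, whence $\varphi_{1} \vee \varphi_{2} \leq \psi$.

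For the inclusion $\logic{L} \leq \logic{M}$, I would show that $\logic{M}$ validates every disjunctive variant: since $\logic{M}$ validates $R$ and enjoys the PCP, applying the distribution property to a rule $\Gamma \vdash \varphi$ of $R$ with $\chi$ the fresh variable $x$ yields exactly $\Gamma \vee x \vdash_{\logic{M}} \varphi \vee x$. As $\logic{L}$ is axiomatized over $\BDinfty$ by these variants, this gives $\mathord{\vdash_{\logic{L}}} \subseteq \mathord{\vdash_{\logic{M}}}$. For the reverse inclusion it suffices, by minimality of $\logic{M}$, to check that $\logic{L}$ validates $R$ and has the PCP. Validation of $R$ is easy: substituting $x \assign \varphi$ into the disjunctive variant of $\Gamma \vdash \varphi$ gives $\Gamma \vee \varphi \vdash_{\logic{L}} \varphi \vee \varphi$, and since $\gamma \vdash_{\BDinfty} \gamma \vee \varphi$ and $\varphi \vee \varphi \vdash_{\BDinfty} \varphi$, we recover $\Gamma \vdash_{\logic{L}} \varphi$.

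The crux is showing that $\logic{L}$ has the PCP, which by the equivalence above reduces to verifying the distribution property for $\logic{L}$. I would prove this by induction on a Hilbert-style derivation of $\varphi$ from $\Gamma$ in $\logic{L}$, establishing at each line $\varphi_{k}$ that $\Gamma \vee \chi \vdash_{\logic{L}} \varphi_{k} \vee \chi$. The base case (a premise from $\Gamma$) is trivial, and a step justified by a rule of $\BDinfty$ is handled by the distribution property of $\BDinfty$ applied to that rule instance. The one delicate case is a step justified by a disjunctive variant $\set{\gamma \vee y}{\gamma \in \Gamma_{0}} \vdash \varphi_{0} \vee y$ applied under a substitution $\sigma$. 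Here the induction hypothesis supplies the premises $(\sigma(\gamma) \vee \sigma(y)) \vee \chi$, which by associativity of $\vee$ are interderivable with $\sigma(\gamma) \vee (\sigma(y) \vee \chi)$; these are precisely the premises of the \emph{same} disjunctive variant under the modified substitution $\sigma'$ that sends $y$ to $\sigma(y) \vee \chi$ and agrees with $\sigma$ elsewhere. Because $y$ does not occur in $\Gamma_{0} \vdash \varphi_{0}$, the substitution $\sigma'$ leaves $\sigma(\gamma)$ and $\sigma(\varphi_{0})$ untouched, so applying the rule yields $\sigma(\varphi_{0}) \vee (\sigma(y) \vee \chi)$, which reassociates to the desired $(\sigma(\varphi_{0}) \vee \sigma(y)) \vee \chi$. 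This inductive step is the heart of the argument and the step I expect to be the main obstacle; the freshness of the disjoined variable in the definition of a disjunctive variant is exactly what licenses the absorption of $\chi$ into $y$, and the remainder is routine bookkeeping with the lattice laws for $\vee$. Combining the two inclusions yields $\logic{L} = \logic{M}$.
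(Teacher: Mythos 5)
Your proof is correct and follows essentially the same route as the paper: you establish that the logic axiomatized by the disjunctive variants is closed under forming disjunctive variants (your ``distribution property'') by induction over derivations, and then derive the PCP from this via the same two instantiations (disjoining with $\varphi_{2}$ and then with $\psi$) used in the paper. Your only additions are welcome ones: you spell out the inductive step the paper leaves implicit --- in particular the re-substitution $\sigma'(y) \assign \sigma(y) \vee \chi$, licensed by the freshness of the disjoined variable --- and you explicitly verify that the disjunctive variants recover the original rules $R$ by substituting the conclusion for the fresh variable.
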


\begin{proof}
  Since $\logic{L}$ has the PCP, the disjunctive variant of each rule in $R$ also holds in $\logic{L}$. Conversely, let $\logic{L}$ be the logic axiomatized by the disjunctive variants of the rules in $R$. By induction over the complexity of proof, we can show that if $\Gamma \vdash \varphi$ is valid in $\logic{L}$, then so is its disjunctive variant. Now suppose that $\Gamma, \varphi_{1} \vdash_{\logic{L}} \psi$ and $\Gamma, \varphi_{2} \vdash_{\logic{L}} \psi$. Then $\set{\gamma \vee \varphi_{2}}{\gamma \in \Gamma}, \varphi_{1} \vee \varphi_{2} \vdash_{\logic{L}} \psi \vee \varphi_{2} \vdash_{\logic{L}} \varphi_{2} \vee \psi$ and $\set{\gamma \vee \psi}{\gamma \in \Gamma}, \varphi_{2} \vee \psi \vdash_{\logic{L}} \psi \vee \psi \vdash_{\logic{L}} \psi$. But $\gamma \vdash_{\logic{L}} \gamma \vee \varphi_{2}$ and $\gamma \vdash_{\logic{L}} \gamma \vee \psi$, therefore $\Gamma, \varphi_{1} \vee \varphi_{2} \vdash_{\logic{L}} \varphi_{2} \vee \psi$ and $\Gamma, \varphi_{2} \vee \psi \vdash_{\logic{L}} \psi$. It follows that $\Gamma, \varphi_{1} \vee \varphi_{2} \vdash_{\logic{L}} \psi$.
\end{proof}

  We therefore obtain a constructive proof of the following theorem.

\begin{theorem}[Finite basis theorem for finitely generated logics of prime upsets]
  Each finitely generated extension of $\BDinfty$ with the PCP is has a finite Hilbert-style axiomatization.
\end{theorem}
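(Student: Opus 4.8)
The plan is to carry out the informal algorithm described above and check that it terminates in a finite Hilbert-style axiomatization. Let $\logic{L}$ be a finitely generated extension of $\BDinfty$ with the PCP. By Theorem~\ref{thm: bdinfty with pcp} there is some $n$ for which $\logic{L}$ is a finitary extension of $\BD{n}$ with the PCP that is complete with respect to some set of substructures of $\DMm{n}$. The decisive finiteness input is that the finite algebra $\DM{n} = (\DM{1})^{n}$ has only finitely many subalgebras, so $\DMm{n}$ has only finitely many substructures, each obtained by restricting the designated set of $\DMm{n}$ to a subalgebra.

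First I would build a finite rule set $R$ separating $\logic{L}$ from the non-models among these substructures. For each substructure $\structure{S}$ of $\DMm{n}$ that is \emph{not} a model of $\logic{L}$, some rule valid in $\logic{L}$ fails in $\structure{S}$; because $\logic{L}$ is finitary, the failing valuation witnesses the failure of a \emph{finitary} rule $\rho_{\structure{S}}$ valid in $\logic{L}$. Collecting one such $\rho_{\structure{S}}$ over the finitely many non-model substructures yields a finite set $R$ of finitary rules, all valid in $\logic{L}$.

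The heart of the proof is to identify $\logic{L}$ with the extension $\logic{L}'$ of $\BDinfty$ obtained by adding the disjunctive variants of the finite rule set $R \cup \{\text{$n$-adjunction}\}$. By the preceding Fact on disjunctive variants, $\logic{L}'$ is exactly the smallest extension of $\BDinfty$ with the PCP that validates $R$ together with the $n$-adjunction rule, equivalently the smallest extension of $\BD{n}$ with the PCP validating $R$; in particular $\logic{L}'$ is finitary, enjoys the PCP, and extends $\BD{n}$. The inclusion $\logic{L}' \leq \logic{L}$ is immediate, since $\logic{L}$ extends $\BD{n}$, has the PCP, and validates every rule of $R$ by construction. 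For the reverse inclusion I would apply Theorem~\ref{thm: bdinfty with pcp} to $\logic{L}'$: this logic is complete with respect to some set $\class{T}$ of substructures of $\DMm{n}$, each of which is a model of $\logic{L}'$ and therefore validates $R$. No member of $\class{T}$ can fail to be a model of $\logic{L}$, for such a structure would be one of the $\structure{S}$ above on which $\rho_{\structure{S}} \in R$ fails. Hence every member of $\class{T}$ is a model of $\logic{L}$, so any rule valid in $\logic{L}$ holds throughout $\class{T}$ and is therefore valid in $\logic{L}'$; thus $\logic{L} \leq \logic{L}'$ and $\logic{L} = \logic{L}'$.

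Finally, the axiomatization is read off directly: $\logic{L} = \logic{L}'$ is the extension of $\BDinfty$ by the disjunctive variants of the finitely many rules in $R \cup \{\text{$n$-adjunction}\}$, and since $\BDinfty$ itself has a finite Hilbert-style axiomatization by Shramko, so does $\logic{L}$. I expect the main obstacle to be the reverse inclusion $\logic{L} \leq \logic{L}'$: one must verify carefully that $\logic{L}'$ really sits at the $n$-th level of the hierarchy (finitary, with the PCP, and extending $\BD{n}$, so that Theorem~\ref{thm: bdinfty with pcp} applies to it) and that its completeness structures cannot slip past the separating rules collected in $R$.
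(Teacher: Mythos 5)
Your proposal is correct and takes essentially the same route as the paper: locate $\logic{L}$ at level $n$ via Theorem~\ref{thm: bdinfty with pcp}, use the finitarity of $\logic{L}$ to separate the finitely many non-model substructures of $\DMm{n}$ by a finite set $R$ of finitary rules, characterize $\logic{L}$ as the smallest extension of $\BD{n}$ with the PCP validating $R$, and convert this into a finite Hilbert-style axiomatization via the fact on disjunctive variants together with Shramko's finite axiomatization of $\BDinfty$. Your explicit verification of the reverse inclusion $\logic{L} \leq \logic{L}'$ (reapplying Theorem~\ref{thm: bdinfty with pcp} to $\logic{L}'$ and noting that its completeness structures must validate $R$) merely fills in a step the paper leaves as a sketch.
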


\begin{theorem}[Finite basis theorem for finitely generated logics of filters]
  Each finitely generated extension $\logic{L}$ of $\BD{1}$ with the $n$-PCP is the smallest among extensions of $\BD{1}$ with the $n$-PCP which validate a certain finite set of finitary rules.
\end{theorem}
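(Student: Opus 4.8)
The plan is to formalize the construction already sketched in the discussion preceding the theorem: extract the promised finite set of rules from the finitely many substructures of $(\DMm{1})^{n}$, and then verify the minimality claim against the semantic characterization of the relevant level of the hierarchy. First I would invoke Theorem~\ref{thm: bd1 with n-pcp}: since $\logic{L}$ is finitely generated, hence finitary, and enjoys the $n$-PCP, it is complete with respect to some set of substructures of the finite structure $(\DMm{1})^{n}$. The crucial finiteness input is that $(\DMm{1})^{n}$ has only finitely many substructures. For each substructure $M = \pair{\alg{A}}{F}$ of $(\DMm{1})^{n}$ which is \emph{not} a model of $\logic{L}$, I would pick a rule valid in $\logic{L}$ but failing in $M$; using the finitarity of $\logic{L}$, this witness may be taken finitary, since a valuation refuting the rule on the finite structure $M$ still refutes the finite subrule $\Gamma_{0} \vdash \varphi$ obtained by cutting the premises down to a finite subset whose consequence is already witnessed in $\logic{L}$. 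Collecting one rule $\rho_{M}$ for each such non-model $M$ yields a finite set $R$ of finitary rules, each valid in $\logic{L}$.

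Next I would record that $\logic{L}$ itself belongs to the class $\mathcal{C}$ of extensions of $\BD{1}$ with the $n$-PCP that validate $R$: this is immediate, as $\logic{L}$ extends $\BD{1}$ and enjoys the $n$-PCP by hypothesis and validates $R$ by construction. It then suffices to show that $\logic{L} \leq \logic{L}'$ for every $\logic{L}' \in \mathcal{C}$, which together with membership identifies $\logic{L}$ as the least element of $\mathcal{C}$.

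The substance of the argument is the inequality $\logic{L} \leq \logic{L}'$, and here lies the main obstacle: an arbitrary $\logic{L}' \in \mathcal{C}$ need not be finitary, so Theorem~\ref{thm: bd1 with n-pcp} does not apply to it directly. I would circumvent this by passing to the finitary companion $\logic{L}'_{f}$ (agreeing with $\logic{L}'$ on finite premise sets). One checks routinely that $\logic{L}'_{f}$ again extends $\BD{1}$ and validates the finitary rules $R$; the delicate point is that it \emph{retains} the $n$-PCP. Given finite witnesses $\Gamma_{i}, \bigvee_{j \neq i} \varphi_{j} \vdash_{\logic{L}'} \psi$ for $i \in \range{n+1}$, their finite union $\Gamma_{0}$ still satisfies all $n+1$ premises, so the $n$-PCP of $\logic{L}'$ yields $\Gamma_{0}, \bigvee_{j} \varphi_{j} \vdash_{\logic{L}'} \psi$, whence $\Gamma, \bigvee_{j} \varphi_{j} \vdash_{\logic{L}'_{f}} \psi$. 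Thus $\logic{L}'_{f}$ is a finitary extension of $\BD{1}$ with the $n$-PCP, and Theorem~\ref{thm: bd1 with n-pcp} makes it complete with respect to the set $\mathcal{S}'$ of all substructures of $(\DMm{1})^{n}$ that are its models.

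Finally I would close the loop. Any $M \in \mathcal{S}'$ is a model of $\logic{L}'_{f}$, hence validates $R$; were $M$ not a model of $\logic{L}$, it would be one of the non-models for which $R$ contains the rule $\rho_{M}$ failing in $M$, a contradiction. So every $M \in \mathcal{S}'$ is a model of $\logic{L}$, meaning every rule valid in $\logic{L}$ holds throughout $\mathcal{S}'$; by completeness of $\logic{L}'_{f}$ with respect to $\mathcal{S}'$ this gives $\logic{L} \leq \logic{L}'_{f} \leq \logic{L}'$. Combined with $\logic{L} \in \mathcal{C}$, this exhibits $\logic{L}$ as the smallest extension of $\BD{1}$ with the $n$-PCP validating $R$, as required. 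Everything beyond the preservation of the $n$-PCP under the finitary companion reduces to the finiteness of the substructure lattice of $(\DMm{1})^{n}$ and the semantic characterization recorded in Theorem~\ref{thm: bd1 with n-pcp}.
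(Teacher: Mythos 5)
Your proposal is correct and takes essentially the same approach as the paper, whose proof is the discussion preceding the theorem: by Theorem~\ref{thm: bd1 with n-pcp} the logic $\logic{L}$ is complete with respect to a set of substructures of the finite structure $(\DMm{1})^{n}$, a finitary separating rule is chosen for each of the finitely many substructures that fail to be models of $\logic{L}$, and $\logic{L}$ is then the smallest logic at that level of the hierarchy validating the resulting finite set of rules. Your extra step of passing to the finitary companion $\logic{L}'_{f}$ and verifying that it inherits the $n$-PCP is a correct (and welcome) elaboration of a point the paper leaves implicit, since its sketch compares $\logic{L}$ only with logics at the given level of the hierarchy, i.e.\ finitary ones, whereas the theorem as stated quantifies over arbitrary extensions with the $n$-PCP.
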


  In the other case, what we obtain is a Gentzen-style axiomatization. Given a set of finitary rules $R$, consider the sequent system over sequents of the form $\Gamma \seq \varphi$ where $\Gamma$ is a set of formulas and $\varphi$ is a formula with the following rules and axioms:
\begin{itemize}
\item axioms for all substitution instances of some rule in $R$,
\item the Identity axiom: $\varphi \seq \varphi$ for each $\varphi$,
\item the Weakening rule: from $\Gamma \seq \varphi$ infer $\Gamma, \Delta \seq \varphi$,
\item the finitary Cut rule: from $\Gamma \seq \varphi$ and $\varphi, \Delta \seq \psi$ infer $\Gamma, \Delta \seq \psi$,
\item the $n$-PCP rule:
\begin{align*}
  \text{from $\Gamma, \bigvee_{j \neq 1} \! \varphi_{j} \seq \psi$ and~\dots~and $\Gamma, \!\!\! \bigvee_{j \neq n+1} \!\!\!\! \varphi_{j} \seq \psi$ infer $\Gamma, \varphi_{1} \vee \dots \vee \varphi_{n+1} \seq \psi$.}
\end{align*}
\end{itemize}
  Let us call this sequent system the $n$-PCP sequent calculus with the axioms $R$.

\begin{fact} \label{fact: gentzen}
  Let $\logic{L}$ be the smallest extension of $\BDinfty$ with $n$-PCP which validates a set of finitary rules $R$. Then ${\Gamma \vdash_{\logic{L}} \varphi}$ if and only if $\Gamma \seq \varphi$ is provable in the $n$-PCP sequent calculus with the axioms~$R$.
\end{fact}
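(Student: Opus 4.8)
The plan is to read provability in the calculus as a consequence relation and to identify it with $\logic{L}$ by two inclusions, each exploiting one description of $\logic{L}$ as a least closure. Write $\Gamma \vdash_{\mathrm{G}} \varphi$ to mean that $\Gamma \seq \varphi$ is provable in the $n$-PCP sequent calculus with the axioms $R$. I take for granted, as we may, that a finite finitary basis for $\BDinfty$ is included among the rules $R$: adjoining one leaves $\logic{L}$ unchanged, since $\logic{L}$ already extends $\BDinfty$, and it is what lets the calculus reach all of $\logic{L}$. With this convention ``extension of $\BDinfty$'' is absorbed into ``validates $R$''.

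First I would check that $\vdash_{\mathrm{G}}$ is a substitution-invariant finitary consequence relation. Reflexivity is the Identity axiom, monotonicity is Weakening, and structurality follows by induction on the height of derivations, using that the non-structural axioms are taken to be \emph{all} substitution instances of rules in $R$ while Identity, Weakening, Cut, and the $n$-PCP rule are schematic. Finitarity holds because each derivation is a finite object and so uses only finitely many antecedent formulas essentially. The one point needing an argument is unrestricted transitivity: the calculus supplies only the single-formula finitary Cut. Given $\Gamma \vdash_{\mathrm{G}} \delta$ for every $\delta \in \Delta$ together with $\Delta \vdash_{\mathrm{G}} \varphi$, finitarity lets me fix a finite $\Delta_{0} = \{ \delta_{1}, \dots, \delta_{k} \} \subseteq \Delta$ with $\Delta_{0} \vdash_{\mathrm{G}} \varphi$, and then $k$ successive applications of finitary Cut (antecedents being sets, so order is immaterial) eliminate $\delta_{1}, \dots, \delta_{k}$ in turn and yield $\Gamma \vdash_{\mathrm{G}} \varphi$.

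Next I would record that $\vdash_{\mathrm{G}}$ has exactly the closure properties defining $\logic{L}$: it validates each rule of $R$, since every such rule is an axiom; it extends $\BDinfty$, since a basis sits inside $R$; and it enjoys the $n$-PCP, since the $n$-PCP rule is precisely the metarule closing a consequence relation under $n$-proof by cases. As $\logic{L}$ is by definition the \emph{smallest} extension of $\BDinfty$ with the $n$-PCP validating $R$, minimality gives that every rule valid in $\logic{L}$ is provable, i.e.\ the completeness direction. For soundness I would observe conversely that $\vdash_{\logic{L}}$ is itself closed under all axioms and rules of the calculus: the $R$-axioms hold because $\logic{L}$ validates $R$ and is structural, Identity, Weakening, and Cut hold because $\logic{L}$ is a consequence relation, and the $n$-PCP rule holds by the $n$-PCP of $\logic{L}$. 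A routine induction on derivation height then shows $\Gamma \vdash_{\mathrm{G}} \varphi$ implies $\Gamma \vdash_{\logic{L}} \varphi$. The two inclusions give $\vdash_{\mathrm{G}} \, = \, \vdash_{\logic{L}}$.

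I expect the only genuinely delicate point to be the passage from the single-formula finitary Cut to unrestricted transitivity, which is exactly where finitarity of $\vdash_{\mathrm{G}}$ is indispensable; everything else is bookkeeping once one adopts the viewpoint that provability is a consequence relation and that both $\logic{L}$ and $\vdash_{\mathrm{G}}$ are characterized as the least closures under the same list of conditions (noting that this least closure is well defined because intersections of consequence relations validating $R$ and closed under the $n$-PCP rule again have these properties).
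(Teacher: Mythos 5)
Your proof is correct and takes essentially the same route as the paper's own one-line argument, which likewise observes that the provable sequents form a logic with the $n$-PCP validating $R$ (so $\vdash_{\logic{L}}$ is contained in calculus provability by minimality) and that $\vdash_{\logic{L}}$ is closed under every axiom and rule of the calculus (giving the converse by induction on derivations); you have simply filled in the bookkeeping, including the reduction of full transitivity to iterated single-formula Cuts. Your explicit convention that $R$ include a finite basis for $\BDinfty$ is faithful to the paper's tacit reading (cf.\ its later reference to ``the $2$-PCP sequent calculus for $\BDplain$'') and is in fact needed for the statement to be literally true, since otherwise the calculus need not prove even $x \wedge y \seq x$.
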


\begin{proof}
  It suffices to observe that the sequents provable in this calculus form a logic with the $n$-PCP which validates each rule in $R$, and conversely if the premises of a rule of this calculus are valid in $\logic{L}$, then so it the conclusion.
\end{proof}

\subsection{Finitary extensions of $\BD{1}$ with the $2$-PCP}
\label{subsec: bd 1 extensions}

  We now explicitly describe the first two levels in the hierarchy of finitely \mbox{generated} extensions of Belnap–Dunn logic $\BDplain \assign \BD{1}$, i.e.\ finitary extensions of $\BDplain$ with the $2$-PCP, or equivalently logics determined by the some family of substructures of $\DMm{1}$.

  There are two three-valued substructures which determine the strong Kleene logic~$\Kplain \assign \K{1}$ and the Logic of Paradox~$\LPplain \assign \LP{1}$, one two-valued substructure which determines classical logic $\CLplain \assign \CL{1}$, and two singleton substructures which determine the trivial logic $\TRIV$ and the almost trivial logic $\ATRIV$. In the trivial logic $\Gamma \vdash \varphi$ holds always, while in the almost trivial logic $\Gamma \vdash \varphi$ holds if and only if $\Gamma$ is non-empty. Intersecting this logic with any logic with theorems will result in a theoremless version of the logic. For example, $\CLplain \cap \ATRIV$ is the almost classical logic, where $\Gamma \vdash \varphi$ holds if and only if $\Gamma$ is non-empty and the rule holds classically. The first level of the hierarchy therefore consists of the intersections of the logics determined by these substructures, namely:
\begin{align*}
  \BDplain, \LPplain \cap \Kplain, \Kplain, \LPplain, \CLplain, \LPplain \cap \ATRIV, \CLplain \cap \ATRIV, \ATRIV, \TRIV.
\end{align*}
  The logic $\KOplain \assign \LPplain \cap \Kplain$ is known under various names, as discussed in footnote~\ref{foot: kalman}.

\begin{figure}
\caption{Proper substructures of $(\DMm{1})^{2}$}
\label{fig: sub1}
\vskip 15pt
\begin{center}
\begin{tikzpicture}[scale=0.75, dot/.style={inner sep=2.5pt,outer sep=2.5pt}, solid/.style={circle,fill,inner sep=2pt,outer sep=2pt}, empty/.style={circle,draw,inner sep=2pt,outer sep=2pt}]
  \node (f) at (0,0) [empty] {};
  \node (n) at (-1,1) [empty] {};
  \node (b) at (1,1) [solid] {};
  \node (t) at (0,2) [solid] {};
  \draw[-] (f) -- (n) -- (t);
  \draw[-] (f) -- (b) --(t);
  \node (a) at (0,-1) {$\DMm{1}$};
\end{tikzpicture}
\qquad
\begin{tikzpicture}[scale=0.75, dot/.style={inner sep=2.5pt,outer sep=2.5pt}, solid/.style={circle,fill,inner sep=2pt,outer sep=2pt}, empty/.style={circle,draw,inner sep=2pt,outer sep=2pt}]
  \node (f) at (0,0) [empty] {};
  \node (b) at (0,1) [solid] {};
  \node (t) at (0,2) [solid] {};
  \draw[-] (f) -- (b) -- (t);
  \node (a) at (0,-1) {$\Pm{1}$};
\end{tikzpicture}
\qquad
\begin{tikzpicture}[scale=0.75, dot/.style={inner sep=2.5pt,outer sep=2.5pt}, solid/.style={circle,fill,inner sep=2pt,outer sep=2pt}, empty/.style={circle,draw,inner sep=2pt,outer sep=2pt}]
  \node (f) at (0,0) [empty] {};
  \node (n) at (0,1) [empty] {};
  \node (t) at (0,2) [solid] {};
  \draw[-] (f) -- (n) -- (t);
  \node (a) at (0,-1) {$\Km{1}$};
\end{tikzpicture}
\qquad
\begin{tikzpicture}[scale=0.75, dot/.style={inner sep=2.5pt,outer sep=2.5pt}, solid/.style={circle,fill,inner sep=2pt,outer sep=2pt}, empty/.style={circle,draw,inner sep=2pt,outer sep=2pt}]
  \node (f) at (0,0) [empty] {};
  \node (t) at (0,2) [solid] {};
  \draw[-] (f) -- (t);
  \node (a) at (0,-1) {$\BAm{1}$};
\end{tikzpicture}
\qquad
\begin{tikzpicture}[scale=0.75, dot/.style={inner sep=2.5pt,outer sep=2.5pt}, solid/.style={circle,fill,inner sep=2pt,outer sep=2pt}, empty/.style={circle,draw,inner sep=2pt,outer sep=2pt}]
  \node (f) at (0,1) [empty] {};
  \node (a) at (0,-1) {$\Amatrix$};
\end{tikzpicture}
\end{center}
\vskip 10pt
\begin{center}
\begin{tikzpicture}[scale=0.75, dot/.style={inner sep=2.5pt,outer sep=2.5pt}, solid/.style={circle,fill,inner sep=2pt,outer sep=2pt}, empty/.style={circle,draw,inner sep=2pt,outer sep=2pt}]
  \node (0) at (0,0) [empty] {};
  \node (11) at (-1,1) [empty] {};
  \node (12) at (1,1) [empty] {};
  \node (21) at (-2,2) [empty] {};
  \node (22) at (0,2) [empty] {};
  \node (23) at (2,2) [empty] {};
  \node (31) at (-1,3) [empty] {};
  \node (32) at (1,3) [empty] {};
  \node (41) at (0,4) [solid] {};
  \draw[-] (0) -- (11) -- (21);
  \draw[-] (12) -- (22) -- (31);
  \draw[-] (23) -- (32) -- (41);
  \draw[-] (21) -- (31) -- (41);
  \draw[-] (11) -- (22) -- (32);
  \draw[-] (0) -- (12) -- (23);
  \node (a) at (0,-1) {$\Mnine$};
\end{tikzpicture}
\qquad
\begin{tikzpicture}[scale=0.75, dot/.style={inner sep=2.5pt,outer sep=2.5pt}, solid/.style={circle,fill,inner sep=2pt,outer sep=2pt}, empty/.style={circle,draw,inner sep=2pt,outer sep=2pt}]
  \node (0) at (0,0) [empty] {};
  \node (11) at (-1,1) [empty] {};
  \node (12) at (1,1) [empty] {};
  \node (21) at (-2,2) [empty] {};
  \node (22) at (0,2) [empty] {};
  \node (31) at (-1,3) [empty] {};
  \node (32) at (1,3) [empty] {};
  \node (41) at (0,4) [solid] {};
  \draw[-] (0) -- (11) -- (21);
  \draw[-] (12) -- (22) -- (31);
  \draw[-] (32) -- (41);
  \draw[-] (21) -- (31) -- (41);
  \draw[-] (11) -- (22) -- (32);
  \draw[-] (0) -- (12);
  \node (a) at (0,-1) {$\Meight$};
\end{tikzpicture}
\qquad
\begin{tikzpicture}[scale=0.75, dot/.style={inner sep=2.5pt,outer sep=2.5pt}, solid/.style={circle,fill,inner sep=2pt,outer sep=2pt}, empty/.style={circle,draw,inner sep=2pt,outer sep=2pt}]
  \node (0) at (0,0) [empty] {};
  \node (11) at (-1,1) [empty] {};
  \node (12) at (1,1) [empty] {};
  \node (22) at (0,2) [empty] {};
  \node (31) at (-1,3) [empty] {};
  \node (32) at (1,3) [empty] {};
  \node (41) at (0,4) [solid] {};
  \draw[-] (0) -- (11);
  \draw[-] (12) -- (22) -- (31);
  \draw[-] (32) -- (41);
  \draw[-] (31) -- (41);
  \draw[-] (11) -- (22) -- (32);
  \draw[-] (0) -- (12);
  \node (a) at (0,-1) {$\Mseven$};
\end{tikzpicture}
\qquad
\begin{tikzpicture}[scale=0.75, dot/.style={inner sep=2.5pt,outer sep=2.5pt}, solid/.style={circle,fill,inner sep=2pt,outer sep=2pt}, empty/.style={circle,draw,inner sep=2pt,outer sep=2pt}]
  \node (f) at (0,0) [empty] {};
  \node (n) at (-1,1) [empty] {};
  \node (b) at (1,1) [empty] {};
  \node (t) at (0,2) [solid] {};
  \draw[-] (f) -- (n) -- (t);
  \draw[-] (f) -- (b) --(t);
  \node (a) at (0,-1) {$\Mfour$};
\end{tikzpicture}
\end{center}
\vskip 10pt
\begin{center}
\begin{tikzpicture}[scale=0.75, dot/.style={inner sep=2.5pt,outer sep=2.5pt}, solid/.style={circle,fill,inner sep=2pt,outer sep=2pt}, empty/.style={circle,draw,inner sep=2pt,outer sep=2pt}]
  \node (0) at (0,0) [empty] {};
  \node (11) at (-1,1) [empty] {};
  \node (12) at (1,1) [empty] {};
  \node (21) at (-2,2) [solid] {};
  \node (22) at (0,2) [empty] {};
  \node (23) at (2,2) [empty] {};
  \node (31) at (-1,3) [solid] {};
  \node (32) at (1,3) [empty] {};
  \node (41) at (0,4) [solid] {};
  \draw[-] (0) -- (11) -- (21);
  \draw[-] (12) -- (22) -- (31);
  \draw[-] (23) -- (32) -- (41);
  \draw[-] (21) -- (31) -- (41);
  \draw[-] (11) -- (22) -- (32);
  \draw[-] (0) -- (12) -- (23);
  \node (a) at (0,-1) {$\Nnine$};
\end{tikzpicture}
\qquad
\begin{tikzpicture}[scale=0.75, dot/.style={inner sep=2.5pt,outer sep=2.5pt}, solid/.style={circle,fill,inner sep=2pt,outer sep=2pt}, empty/.style={circle,draw,inner sep=2pt,outer sep=2pt}]
  \node (0) at (0,0) [empty] {};
  \node (11) at (-1,1) [empty] {};
  \node (12) at (1,1) [empty] {};
  \node (21) at (-2,2) [solid] {};
  \node (22) at (0,2) [empty] {};
  \node (31) at (-1,3) [solid] {};
  \node (32) at (1,3) [empty] {};
  \node (41) at (0,4) [solid] {};
  \draw[-] (0) -- (11) -- (21);
  \draw[-] (12) -- (22) -- (31);
  \draw[-] (32) -- (41);
  \draw[-] (21) -- (31) -- (41);
  \draw[-] (11) -- (22) -- (32);
  \draw[-] (0) -- (12);
  \node (a) at (0,-1) {$\Neight$};
\end{tikzpicture}
\qquad
\begin{tikzpicture}[scale=0.75, dot/.style={inner sep=2.5pt,outer sep=2.5pt}, solid/.style={circle,fill,inner sep=2pt,outer sep=2pt}, empty/.style={circle,draw,inner sep=2pt,outer sep=2pt}]
  \node (0) at (0,0) [empty] {};
  \node (11) at (-1,1) [empty] {};
  \node (12) at (1,1) [empty] {};
  \node (22) at (0,2) [empty] {};
  \node (31) at (-1,3) [solid] {};
  \node (32) at (1,3) [empty] {};
  \node (41) at (0,4) [solid] {};
  \draw[-] (0) -- (11);
  \draw[-] (12) -- (22) -- (31);
  \draw[-] (32) -- (41);
  \draw[-] (31) -- (41);
  \draw[-] (11) -- (22) -- (32);
  \draw[-] (0) -- (12);
  \node (a) at (0,-1) {$\Nseven$};
\end{tikzpicture}
\end{center}
\end{figure}

  (Expanding the signature of our logic by the constants $\True$ and $\False$ would have the effect of removing the singleton structures from the list of sub\-structures of $\DMm{1}$, or equivalently removing the almost trivial logic $\ATRIV$ and its intersections with $\LPplain$ and $\CLplain$ from the above list of logics.)

  The next level of the hierarchy consists of the finitary extensions of $\BDplain$ with the $2$-PCP, or equivalently of logics determined by some family of substructures of~$(\DMm{1})^{2}$. As luck would have it, all of these logics were already considered in~\cite{prenosil18} and their relative positions in the lattice of extensions of $\BDplain$ are known. In the following, we shall therefore make free use of facts established in~\cite{prenosil18}.

  In addition to the logics already contained in the first level, the second level of the hierarchy contains the Exactly True Logic~$\ETL$ of Pietz \& Rivieccio~\cite{pietz+rivieccio13}, which extends $\BDplain$ by the disjunctive syllogism $x, \neg x \vee y \vdash y$. This logic is complete with respect to the structure $\Mfour$. It also contains the logic $\ECQomega$, which extends $\BDplain$ by the infinite set of rules
\begin{align*}
  (x_{1} \wedge \neg x_{1}) \vee \dots \vee (x_{n} \wedge \neg x_{n}) \vdash y,
\end{align*}
  and the logic~$\Kminus$, which extends $\BDplain$ by the infinite set of rules
\begin{align*}
  (x_{1} \wedge \neg x_{1}) \vee \dots \vee (x_{n} \wedge \neg x_{n}) \vee y, \neg y \vee z \vdash z.
\end{align*}
  The logic $\ECQomega$ is complete with respect to the structure $\DMm{1} \times \BAm{1}$ and the logic $\Kminus$ is complete with respect to the structure $\Meight$ shown in Figure~\ref{fig: sub1}. (The logic $\Kminus$ is denoted in this way because it is a lower cover of $\Kplain$ in the lattice of extensions of $\BDplain$, while the notation $\ECQomega$ stands for \emph{ex contradictione quodlibet}.) We note that all $14$ extensions of $\LPplain \cap \Kminus$ are represented at this leve ofo the hierarchy, i.e.\ each extension of $\LPplain \cap \Kminus$ has the $2$-PCP.

\begin{theorem}[Finitary extensions of $\BD{1}$ with the $2$-PCP]
  The finitary extensions of $\BDplain$ which enjoy the $2$-PCP are precisely the logics shown in Figure~\ref{fig: bd1 2pcp}.
\end{theorem}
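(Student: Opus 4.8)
The plan is to invoke Theorem~\ref{thm: bd1 with n-pcp} in the case $n = 2$, which tells us that a finitary extension of $\BDplain$ enjoys the $2$-PCP exactly when it is complete with respect to some set of substructures of $(\DMm{1})^{2}$. Since a logic complete with respect to a family of matrices is the intersection of the logics of its members, the extensions we seek are precisely the finite meets, taken in the lattice of extensions of $\BDplain$, of the logics of individual substructures of $(\DMm{1})^{2}$. The proof thus reduces to the finite combinatorial task of enumerating these substructures and computing all distinct intersections of their logics.

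First I would catalogue the substructures of $(\DMm{1})^{2}$. The underlying algebra $\DM{1}^{2}$ has sixteen elements, and each subalgebra inherits the designated set induced by $(\DMm{1})^{2}$, namely the pairs both of whose coordinates lie in $\{\True, \Both\}$. Besides the first-level substructures isomorphic to $\DMm{1}$, $\Pm{1}$, $\Km{1}$, $\BAm{1}$, and the singleton $\Amatrix$, the new proper substructures are the matrices $\Mfour$, $\Mseven$, $\Meight$, $\Mnine$, whose only designated element is the top, together with $\Nseven$, $\Neight$, $\Nnine$, which designate additional elements; all of these are displayed in Figure~\ref{fig: sub1}. I would confirm exhaustiveness by checking that every subalgebra of $\DM{1}^{2}$, equipped with its induced designated set, is isomorphic to one of these or to $(\DMm{1})^{2}$ itself.

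Next I would identify the logic of each substructure with a known super-Belnap logic, drawing on the classification in~\cite{prenosil18}, where precisely these matrices were already studied. There $\Mfour$ determines the Exactly True Logic $\ETL$, $\Meight$ determines $\Kminus$, and $\DMm{1} \times \BAm{1}$ determines $\ECQomega$, while the remaining substructures yield logics likewise catalogued there. Because~\cite{prenosil18} also records the relative positions of these logics within the lattice of extensions of $\BDplain$, I can form the meet-closure of the set of substructure logics, collapse coincidences, and arrive at exactly the logics depicted in Figure~\ref{fig: bd1 2pcp}.

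The main obstacle lies in the bookkeeping behind the second and third steps: confirming that no subalgebra of $\DM{1}^{2}$ has been overlooked, and then correctly determining which intersections of the associated logics coincide. This is delicate because distinct, even non-isomorphic, matrices may determine the same logic, so the meet-semilattice they generate need not be in obvious bijection with the set of substructures. Fortunately the essential computations have already been carried out in~\cite{prenosil18}; the argument therefore amounts to locating our substructures within that classification and reading off the resulting hierarchy of logics.
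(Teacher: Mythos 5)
Your overall strategy is the same as the paper's: reduce via Theorem~\ref{thm: bd1 with n-pcp} with $n=2$ to enumerating the substructures of $(\DMm{1})^{2}$, identify their logics using~\cite{prenosil18}, and close under intersections. However, there is a genuine gap in your enumeration. You claim that every proper substructure of $(\DMm{1})^{2}$ is isomorphic to one of $\DMm{1}$, $\Pm{1}$, $\Km{1}$, $\BAm{1}$, $\Amatrix$, $\Mfour$, $\Mseven$, $\Meight$, $\Mnine$, $\Nseven$, $\Neight$, $\Nnine$, and that an exhaustiveness check would confirm this. That claim is false: the paper's proof states that each substructure of $(\DMm{1})^{2}$ is isomorphic either to one of the structures in Figure~\ref{fig: sub1} \emph{or to a binary direct product} of $\BAm{1}$, $\Km{1}$, $\Pm{1}$, $\DMm{1}$, and the products are genuinely new structures. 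For instance, the subalgebra $\{\True,\False\} \times \DM{1}$ with its induced designated set is $\BAm{1} \times \DMm{1}$, an eight-element structure absent from your catalogue; likewise $\BAm{1} \times \BAm{1}$ is a four-element substructure not isomorphic to $\Mfour$, since its negation exchanges the two middle elements whereas the negation of $\Mfour$ fixes them.

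The omission is material rather than cosmetic, because several logics in Figure~\ref{fig: bd1 2pcp} arise \emph{only} from product substructures: $\ECQomega$ is the logic of $\BAm{1} \times \DMm{1}$ (and of $\Km{1} \times \DMm{1}$), $\LPplain \cup \ECQomega$ that of $\BAm{1} \times \Pm{1}$, and $\KOplain \cup \ECQomega$ that of $\Km{1} \times \Pm{1}$. None of these can be recovered as intersections of the logics your list supplies ($\BDplain$, $\Kplain$, $\LPplain$, $\CLplain$, $\ATRIV$, $\ETL$, $\Kminus$, $\TRIV$): for example, the only logics in that list extending $\ECQomega$ are $\Kminus$, $\Kplain$, $\CLplain$, and $\ATRIV$ (note $\ETL$ and $\ECQomega$ are incomparable), and their meet is $\Kminus \neq \ECQomega$. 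Your third paragraph tacitly concedes the point by invoking $\DMm{1} \times \BAm{1}$ for $\ECQomega$, contradicting the exhaustiveness asserted in your second paragraph. The repair is exactly what the paper does: add the nine (up to isomorphism) proper binary products of $\BAm{1}$, $\Km{1}$, $\Pm{1}$, $\DMm{1}$ to the catalogue (the duality of Cornish \& Fowler~\cite{cornish+fowler77} is the practical tool for verifying completeness of the enumeration), then collapse modulo logical equivalence ($\Nseven$ and $\Mseven$ are equivalent to $\Km{1}$; $\Neight$, $\Nnine$, and $\Pm{1} \times \DMm{1}$ determine $\BDplain$; $\Mnine$ is equivalent to $\Mfour$), and finally compute the intersections as recorded in~\cite{prenosil18}. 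With that correction the rest of your argument goes through as you describe.
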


\begin{proof}
  Each such logic is determined by some set of substructures of~$(\DMm{1})^{2}$. Because the structure $(\DMm{1})^{2}$ itself determines the logic $\BDplain$, it suffices to consider its proper substructures. We first determine these up to isomorphism. Each sub\-structure of $(\DMm{1})^{2}$ is isomorphic to either a binary product of $\BAm{1}$, $\Km{1}$, $\Pm{1}$, $\DMm{1}$ or to one of the structures in Figure~\ref{fig: sub1}. The solid dots indicate the designated elements. In each structure the De~Morgan involution is the reflection across the horizontal axis of symmetry. (The duality theory of Cornish \& Fowler~\cite{cornish+fowler77} for De~Morgan algebras is helpful in determining these substructures.)

  Secondly, we observe that some of these structures are logically equivalent. Let $\structure{A} \leqhs \structure{B}$ abbreviate the claim that the structure $\structure{A}$ is a strict homomorphic image of a substructure of $\structure{B}$. In particular, $\structure{A} \leqhs \structure{B}$ implies that the logic determined by $\structure{A}$ extends the logic determined by $\structure{B}$.

  The structure $\Km{1}$ is a strict homo\-morphic image of $\Nseven$, so $\Nseven$ and $\Km{1}$ are logically equivalent. In addition, $\DMm{1} \leqhs \Neight \leqhs \Nnine \leqhs (\DMm{1})^{2}$, so $\Neight$ and $\Nnine$ determine the logic $\BDplain$. We therefore do not need to consider the structures $\Nseven$--$\Nnine$. Moreover, $\Km{1} \leqhs \Mseven \leqhs \Km{1} \times \Km{1}$ and $\Mfour \leqhs \Mnine \leqhs \Mfour \times \Mfour$, so $\Mseven$ and $\Km{1}$ are logically equivalent, as are $\Mnine$ and $\Mfour$. We therefore only need to consider $\Mfour$, $\Meight$, and the binary products of $\BAm{1}$, $\Km{1}$, $\Pm{1}$, $\DMm{1}$.

  Thirdly, we identify the logics determined by these structures. This was already done in~\cite{prenosil18}: in addition to the logics determined by substructures of $\DMm{1}$, we obtain the logics $\ETL$ (for $\Mfour$), $\Kminus$ (for $\Meight$), $\ECQomega$ (for $\BAm{1} \times \DMm{1}$, as well as for $\Km{1} \times \DMm{1}$), $\LPplain \cup \ECQomega$ (for $\BAm{1} \times \Pm{1}$), $\KOplain \cup \ECQomega$ (for $\Km{1} \times \Pm{1}$). The structure $\Pm{1} \times \DMm{1}$ determines $\BDplain$. Figure~\ref{fig: sub1} now shows all intersections of these logics, as determined in~\cite{prenosil18}.
\end{proof}

\begin{figure}
\caption{Extensions of $\BDplain$ with the $2$-PCP}
\label{fig: bd1 2pcp}
\vskip 15pt
\begin{center}
\begin{tikzpicture}[scale=0.7625, dot/.style={inner sep=2.5pt,outer sep=2.5pt}]
  \node (BD) at (0, -0.5) [dot] {\small $\BDplain$};
  \node (atom) at (0, 1) [dot] {\small $\LPplain \cap \ECQomega \cap \ETL$};
  \node (LPECQ) at (-3, 2.5) [dot] {\small $\LPplain \cap \ECQomega$};
  \node (LPETL) at (0, 2.5) [dot] {\small $\LPplain \cap \ETL$};
  \node (ECQETL) at (3, 2.5) [dot] {\small $\ECQomega \cap \ETL$};
  \node (LPKm) at (-3, 4) [dot] {\small $\LPplain \cap \Kminus$};
  \node (ECQ) at (0, 4) [dot] {\small $\ECQomega$};
  \node (LPECQKm) at (0, 5.5) [dot] {\small $(\LPplain \cup \ECQomega) \cap \Kminus$};
  \node (LPECQETL) at (3, 4) [dot] {\small $(\LPplain \cup \ECQomega) \cap \ETL$};
  \node (ETL) at (5.4, 5.2) [dot] {\small $\ETL$};
  \node (Km) at (2.4, 6.7) [dot] {\small $\Kminus$}; 
  \node (KO) at (-5.4, 5.2) [dot] {\small $\KOplain$};
  \node (KOECQ) at (-2.4, 6.7) [dot] {\small $\KOplain \cup \ECQomega$};
  \node (K) at (0, 7.9) [dot] {\small $\Kplain$};
  \node (LP) at (-10.2, 7.6) [dot] {\small $\LPplain$};
  \node (LPcupECQ) at (-7.2, 9.1) [dot] {\small $\LPplain \cup \ECQomega$};
  \node (CL) at (-4.8, 10.3) [dot] {\small $\CLplain$};
  \node (TRIV) at (-2.4, 11.5) [dot] {\small $\TRIV$};
  \node (ALP) at (-7.8, 6.4) [dot] {\small $\LPplain \cap \ATRIV$};
  \node (ALPcupECQ) at (-4.8, 7.9) [dot] {\small $(\LPplain \cap \ATRIV) \cup \ECQomega$};
  \node (ACL) at (-2.4, 9.1) [dot] {\small $\CLplain \cap \ATRIV$};
  \node (ATRIV) at (0, 10.3) [dot] {\small $\ATRIV$};
  \draw[-] (BD) -- (atom) edge (LPECQ) edge (LPETL) edge (ECQETL);
  \draw[-] (LPETL) edge (LPKm) edge (LPECQETL);
  \draw[-] (ECQ) edge (LPECQ) edge (ECQETL) edge (LPECQKm);
  \draw[-] (LPECQETL) edge (LPECQKm) edge (ETL) edge (ECQETL);
  \draw[-] (LPKm) edge (KO) edge (LPECQKm) edge (LPECQ);
  \draw[-] (Km) edge (ETL) edge (LPECQKm) edge (K);
  \draw[-] (KOECQ) edge (KO) edge (LPECQKm) edge (K) edge (ALPcupECQ);
  \draw[-] (CL) edge (LPcupECQ) edge (ACL) edge (TRIV);
  \draw[-] (ALP) edge (KO) edge (ALPcupECQ);
  \draw[-] (LP) edge (ALP) edge (LPcupECQ);
  \draw[-] (LPcupECQ) edge (ALPcupECQ);
  \draw[-] (ACL) edge (ALPcupECQ) edge (K) edge (ATRIV);
  \draw[-] (ATRIV) edge (TRIV);
\end{tikzpicture}
\end{center}
\end{figure}

  It is worth noting that the lattice of all extensions of $\BDplain$ with the $2$-PCP is distributive. We do not know whether this is a coincidence or part of a pattern.

  Several of the logics in Figure~\ref{fig: bd1 2pcp} are not finitely axiomatizable by a Hilbert-style calculus. Nonetheless, as described above, they are all complete with respect to a finite sequent calculus. For example, to obtain a sequent calculus for $\ECQomega$ it suffices to add the sequent axiom schema $(x_{1} \wedge \neg x_{1}) \vee (x_{2} \wedge \neg x_{2}) \seq y$ to the $2$-PCP sequent calculus for $\BDplain$, because $\ECQomega$ is the smallest extension of $\BDplain$ with the $2$-PCP which validates this rule. Replacing this axiom by $x \wedge \neg x \seq y$ yields a sequent calculus for $\ECQomega \cap \ETL$, and replacing it by $(x_{1} \wedge \neg x_{1}) \vee y, \neg y \vee z \seq z$ yields a sequent calculus for $\Kminus$.

\subsection{Finitary extensions of $\BDinfty$ with the PCP}
\label{subsec: bd infty with pcp}

  Similarly, we can describe the finitely generated extensions of $\BDinfty$ with the PCP. The first level of this hierarchy of finitely generated extensions of $\BDinfty$ with the PCP, consisting of the finitary extensions of $\BD{1}$ with the PCP, was already described in the previous subsection. The second level consists of logics the finitary extensions of $\BD{2}$ with the PCP, or equivalently logics determined by some family of substructures of~$\DMm{2}$.

\begin{figure}
\caption{Some of the substructures of $\DMm{2}$}
\label{fig: sub2}
\vskip 15pt
\begin{center}
\begin{tikzpicture}[scale=0.75, dot/.style={inner sep=2.5pt,outer sep=2.5pt}, solid/.style={circle,fill,inner sep=2pt,outer sep=2pt}, empty/.style={circle,draw,inner sep=2pt,outer sep=2pt}]
  \node (0) at (0,0) [empty] {};
  \node (11) at (-1,1) [empty] {};
  \node (12) at (1,1) [empty] {};
  \node (21) at (-2,2) [solid] {};
  \node (22) at (0,2) [empty] {};
  \node (23) at (2,2) [solid] {};
  \node (31) at (-1,3) [solid] {};
  \node (32) at (1,3) [solid] {};
  \node (41) at (0,4) [solid] {};
  \draw[-] (0) -- (11) -- (21);
  \draw[-] (12) -- (22) -- (31);
  \draw[-] (23) -- (32) -- (41);
  \draw[-] (21) -- (31) -- (41);
  \draw[-] (11) -- (22) -- (32);
  \draw[-] (0) -- (12) -- (23);
  \node (a) at (0,-1) {$\Qnine$};
\end{tikzpicture}
\qquad
\begin{tikzpicture}[scale=0.75, dot/.style={inner sep=2.5pt,outer sep=2.5pt}, solid/.style={circle,fill,inner sep=2pt,outer sep=2pt}, empty/.style={circle,draw,inner sep=2pt,outer sep=2pt}]
  \node (0) at (0,0) [empty] {};
  \node (11) at (-1,1) [empty] {};
  \node (12) at (1,1) [empty] {};
  \node (21) at (-2,2) [solid] {};
  \node (22) at (0,2) [empty] {};
  \node (31) at (-1,3) [solid] {};
  \node (32) at (1,3) [solid] {};
  \node (41) at (0,4) [solid] {};
  \draw[-] (0) -- (11) -- (21);
  \draw[-] (12) -- (22) -- (31);
  \draw[-] (32) -- (41);
  \draw[-] (21) -- (31) -- (41);
  \draw[-] (11) -- (22) -- (32);
  \draw[-] (0) -- (12);
  \node (a) at (0,-1) {$\Qeight$};
\end{tikzpicture}
\qquad
\begin{tikzpicture}[scale=0.75, dot/.style={inner sep=2.5pt,outer sep=2.5pt}, solid/.style={circle,fill,inner sep=2pt,outer sep=2pt}, empty/.style={circle,draw,inner sep=2pt,outer sep=2pt}]
  \node (0) at (0,0) [empty] {};
  \node (11) at (-1,1) [empty] {};
  \node (12) at (1,1) [empty] {};
  \node (22) at (0,2) [empty] {};
  \node (31) at (-1,3) [solid] {};
  \node (32) at (1,3) [solid] {};
  \node (41) at (0,4) [solid] {};
  \draw[-] (0) -- (11);
  \draw[-] (12) -- (22) -- (31);
  \draw[-] (32) -- (41);
  \draw[-] (31) -- (41);
  \draw[-] (11) -- (22) -- (32);
  \draw[-] (0) -- (12);
  \node (a) at (0,-1) {$\Qseven$};
\end{tikzpicture}
\qquad
\begin{tikzpicture}[scale=0.75, dot/.style={inner sep=2.5pt,outer sep=2.5pt}, solid/.style={circle,fill,inner sep=2pt,outer sep=2pt}, empty/.style={circle,draw,inner sep=2pt,outer sep=2pt}]
  \node (f) at (0,0) [empty] {};
  \node (n) at (-1,1) [solid] {};
  \node (b) at (1,1) [solid] {};
  \node (t) at (0,2) [solid] {};
  \draw[-] (f) -- (n) -- (t);
  \draw[-] (f) -- (b) --(t);
  \node (a) at (0,-1) {$\Qfour$};
\end{tikzpicture}
\end{center}
\end{figure}

\begin{theorem}[Finitary extensions of $\BD{2}$ with the PCP]
  The lattice of finitary extensions of $\BD{2}$ with the PCP is iso\-morphic to the lattice of downsets of the poset shown in in Figure~\ref{fig: si dmas}.
\end{theorem}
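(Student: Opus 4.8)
The plan is to prove this by a Birkhoff-style duality between the lattice of logics and the lattice of downsets of a poset of subdirectly irreducible structures, in the same spirit as the analysis of the earlier levels. First I would invoke Theorem~\ref{thm: bdinfty with pcp}: a finitary extension of $\BD{2}$ enjoys the PCP if and only if it is complete with respect to some family of substructures of $\DMm{2}$. Since $\DMm{2}$ is finite, it has only finitely many substructures up to isomorphism, hence finitely many up to logical equivalence. Let $P$ be the poset whose elements are the logical-equivalence classes of those substructures of $\DMm{2}$ which are subdirectly irreducible, ordered by $\leqhs$. The first concrete task is to enumerate these substructures and to determine the relation $\leqhs$ between them, for which the duality theory of Cornish \& Fowler~\cite{cornish+fowler77} is the natural tool, exactly as in the treatment of $(\DMm{1})^{2}$; this produces the poset of Figure~\ref{fig: si dmas}.

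The core of the argument is then to set up the correspondence sending a logic $\logic{L}$ to the set $\{ [\structure{A}] \in P : \structure{A} \text{ is a model of } \logic{L} \}$ and to show it is a lattice (anti-)isomorphism onto the finitary extensions of $\BD{2}$ with the PCP. I would first observe that this set is always a downset of $P$: if $\structure{A} \leqhs \structure{B}$ then $\Log(\structure{A})$ extends $\Log(\structure{B})$, so whenever $\structure{B}$ is a model of $\logic{L}$ then so is $\structure{A}$. The inverse map sends a downset $D$ to the logic $\bigcap_{[\structure{A}] \in D} \Log(\structure{A})$ determined by the corresponding family. One round trip is immediate from Theorem~\ref{thm: bdinfty with pcp}: each such $\logic{L}$ is complete with respect to its substructure models, so passing to its model set and back returns $\logic{L}$.

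The main obstacle is the other round trip, namely that the subdirectly irreducible substructure models of $\bigcap_i \Log(\structure{A}_i)$ are exactly those lying $\leqhs$-below some $\structure{A}_i$; equivalently, that each $\Log(\structure{A})$ with $[\structure{A}] \in P$ is meet-prime, in the sense that if it extends a finite intersection $\bigcap_i \Log(\structure{A}_i)$ then it already extends some $\Log(\structure{A}_i)$. Here I would use the Dellunde--Jansana description~\cite{dellunde+jansana96} already invoked in Section~\ref{sec: extensions}: if $\structure{A}$ is a model of the logic determined by $\{ \structure{A}_i \}$, then $\structure{A}$ (being reduced as a substructure of $\DMm{2}$) is a strict homomorphic image of a substructure of a finite product of the $\structure{A}_i$, that is, $\structure{A} \leqhs \prod_i \structure{A}_i$. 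A J\'{o}nsson-style descent then lets me pass from the product to a single factor, using the subdirect irreducibility of $\structure{A}$ together with the congruence-distributivity of De~Morgan lattices: since $\structure{A}$ is subdirectly irreducible, the congruence witnessing the strict homomorphic image is meet-irreducible, which forces $\structure{A} \leqhs \structure{A}_i$ for some single $i$. This yields meet-primeness and hence injectivity of the correspondence.

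Finally, I would verify that the correspondence exchanges the lattice operations as expected, with meets of logics corresponding to unions of downsets and joins of logics to intersections of downsets, so that after orienting the poset of Figure~\ref{fig: si dmas} appropriately the map is an isomorphism of lattices. The genuine difficulty is concentrated in the two places flagged above: the explicit Cornish--Fowler enumeration of the subdirectly irreducible substructures of $\DMm{2}$ and their $\leqhs$-order, and the J\'{o}nsson-style descent that secures meet-primeness of each $\Log(\structure{A})$. Once these are in hand, the isomorphism with the lattice of downsets of Figure~\ref{fig: si dmas} follows formally.
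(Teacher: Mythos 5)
Your outline agrees with the paper's frame at the outer layer: by Theorem~\ref{thm: bdinfty with pcp} the logics in question are exactly those determined by families of substructures of $\DMm{2}$, one enumerates these substructures up to logical equivalence (the paper also leans on the Cornish--Fowler duality here), determines the $\leqhs$-order of Figure~\ref{fig: si dmas}, and then matches logics with downsets. But the step you rightly flag as the main obstacle --- meet-primeness of each $\Log(\structure{A})$ --- is where your argument genuinely fails, because the J\'{o}nsson-style descent does not transfer from algebras to logical matrices. If $\pair{\alg{B}}{G}$ is a submatrix of a finite product $\prod_i \structure{A}_i$, the kernels $\eta_i$ of the projections are \emph{not} compatible with $G$ (an element designated in coordinate $j$ need not be designated in the others), so the strictness kernel $\theta$ being meet-irreducible among compatible congruences gives no meet-irreducibility in $\Con \alg{B}$, and the distributivity computation $\theta = \bigcap_i (\theta \vee \eta_i)$ runs through congruences that are not compatible. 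Worse, even when $\eta_j \subseteq \theta$ for some $j$, the induced surjection $\pi_j[\alg{B}] \to \alg{A}$ is strict only with respect to the set $H$ of images of elements of $G$, which is in general a \emph{proper} subset of $F_j \cap \pi_j[\alg{B}]$; you obtain $\structure{A} \leqhs \pair{\pi_j[\alg{B}]}{H}$, not $\structure{A} \leqhs \structure{A}_j$. This is not a repairable technicality: the logic of a direct product of matrices is typically strictly stronger than the intersection of the logics of its factors --- e.g.\ $\Log(\BAm{1} \times \DMm{1}) = \ECQomega \supsetneq \BDplain = \Log(\BAm{1}) \cap \Log(\DMm{1})$, as recalled in Subsection~\ref{subsec: bd 1 extensions} --- whereas the variety generated by $\alg{A} \times \alg{B}$ equals that generated by $\{\alg{A}, \alg{B}\}$, which is exactly what makes J\'{o}nsson's lemma work for algebras. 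A further symptom of the mismatch is your restriction of the poset to ``subdirectly irreducible'' substructures: most entries of Figure~\ref{fig: si dmas} (the dual products, and $\Qseven$--$\Qnine$) have algebra reducts that are not subdirectly irreducible, and there is no matrix-theoretic SI notion under which your descent machinery would apply to them.

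The paper proves the injectivity you need by brute syntactic force instead: for each structure $\structure{M}$ in Figure~\ref{fig: si dmas} it exhibits a \emph{separating rule} (Table~\ref{tab: separating rules}) which fails in $\structure{M}$ but holds in every structure of the figure not lying $\leqhs$-above $\structure{M}$. This immediately yields your meet-primeness statement --- if no $\structure{A}_i$ lies above $\structure{M}$, the separating rule for $\structure{M}$ holds in $\bigcap_i \Log(\structure{A}_i)$ but not in $\Log(\structure{M})$ --- so distinct downsets determine distinct logics, and as a bonus the disjunctive variants of the separating rules furnish finite Hilbert-style axiomatizations. Verifying the nineteen rules of Table~\ref{tab: separating rules} is, as the paper admits, time-consuming but mechanical (helped by the observations about which rules transfer to dual products); it is the irreducible combinatorial core of the theorem, and no general lemma of the kind you propose can replace it. With your descent step deleted, your proposal reduces to the paper's skeleton minus this essential verification, so as it stands the theorem is not proved.
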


\begin{proof}
  We again start by determining these substructures up to logical equivalence. The structure $\DMm{2}$ itself determines the logic $\BD{2}$. The sub\-structures (or sets of substructures) where the designated elements form a filter will yield one of the logics $\BDplain$, $\KOplain$, $\Kplain$, $\LPplain$, $\CLplain$, $\ATRIV$, or $\TRIV$. These logics are generated by $\DMm{1}$, $\{ \Pm{1}, \Km{1} \}$, $\Km{1}$, $\Pm{1}$, $\BAm{1}$, and $\Amatrix$.

  We therefore only need to describe the proper substructures of $\DMm{2}$ where the designated elements do not form a filter. These come in two types: the binary \emph{dual} products of the structures $\BAm{1}$, $\Km{1}$, $\Pm{1}$, $\DMm{1}$, and the four substructures of the structure $\Qnine$ shown in of Figure~\ref{fig: sub2}.

  Secondly, we determine the $\HsS$-order on these substructures, where ${\structure{M} \leqhs \structure{N}}$ abbreviates the claim that $\structure{M}$ is a strict homomorphic image of a sub\-structure of~$\structure{N}$. This yields the diagram in Figure~\ref{fig: si dmas}. More precisely, at this point we only need to check that if $\structure{M}$ lies below $\structure{N}$ in this diagram, then $\structure{M} \leqhs \structure{N}$.

  Thirdly, we show that for each structure $\structure{M}$ there is a rule $(\rho)$, which we call a \emph{separating rule} for $\structure{M}$, such that $(\rho)$ fails in $\structure{M}$ but it holds in every structure in Figure~\ref{fig: si dmas} which does not lie above $\structure{M}$. Thus two distinct families of structures from Figure~\ref{fig: si dmas} which are downward closed in the given order determine distinct logics, since they can be distinguished by one of the separating rules. To axiomatize the logic of such a downward closed family of substructures of $\DMm{2}$ relative to $\BD{2}$, it therefore suffices to throw in the disjunctive variant of the separating rule for each of the (minimal) structures in Figure~\ref{fig: si dmas} which lie outside of this family.

  The separating rules for the structures in Figure~\ref{fig: si dmas} are listed in Table~\ref{tab: separating rules}. Verifying that these are indeed separating rules is a time-consuming but mechanical affair. Readers who wish to do this on their own may find the following two observations helpful: if $\Gamma \vdash \varphi$ fails in $\structure{M}$ and $\Delta \vdash \varphi$ fails in $\structure{N}$, then $\Gamma, \Delta \vdash \varphi$ fails in $\structure{M} \otimes \structure{N}$. On the other hand, if $\Gamma \vdash \varphi$ holds in~$\structure{M}$ and no $\gamma \in \Gamma$ can be designated in $\structure{N}$, then $\Gamma \vdash \varphi$ holds in~$\structure{M} \otimes \structure{N}$.
\end{proof}

  We do not know whether the fact that the lattice of extensions of $\BD{2}$ with the PCP turns out to be distributive is a coincidence or not.

\begin{figure}
\caption{The $\HsS$-order on the substructures of $\DMm{2}$}
\label{fig: si dmas}
\vskip 15pt
\begin{center}
\begin{tikzpicture}[scale=1, dot/.style={inner sep=2.5pt,outer sep=2.5pt}]
  \node (B2) at (0,0) [dot] {$\BAm{1}$};
  \node (B2B2) at (0,1) [dot] {$\BAm{1} \otimes \BAm{1}$};
  \node (K3) at (-3.5,1) [dot] {$\Km{1}$};
  \node (P3) at (3.5,1) [dot] {$\Pm{1}$};
  \node (B2K3) at (-3.5,3) [dot] {$\BAm{1} \otimes \Km{1}$};
  \node (B2P3) at (3.5,3) [dot] {$\BAm{1} \otimes \Pm{1}$};
  \node (DM4) at (0,2) [dot] {$\DMm{1}$};
  \node (B2DM4) at (0,5) [dot] {$\BAm{1} \otimes \DMm{1}$};
  \node (K3P3) at (0,6) [dot] {$\Km{1} \otimes \Pm{1}$};
  \node (K3K3) at (-3.5,5) [dot] {$\Km{1} \otimes \Km{1}$};
  \node (P3P3) at (3.5,5) [dot] {$\Pm{1} \otimes \Pm{1}$};
  \node (K3DM4) at (-3.5,7) [dot] {$\Km{1} \otimes \DMm{1}$};
  \node (P3DM4) at (3.5,7) [dot] {$\Pm{1} \otimes \DMm{1}$};
  \node (DM4DM4) at (0,8) [dot] {$\DMm{1} \otimes \DMm{1}$};
  \node (M4) at (1.166, 3) [dot] {$\Qfour$};
  \node (M7) at (-1.166,2.333) [dot] {$\Qseven$};
  \node (M8) at (0, 3) [dot] {$\Qeight$};
  \node (M9) at (0, 4) [dot] {$\Qnine$};
  \node (A1) at (-3.5, 0) [dot] {$\Amatrix$};
  \draw[-] (B2) -- (K3) -- (B2K3) edge (B2DM4) edge (K3P3) -- (K3K3) -- (K3DM4) -- (DM4DM4);
  \draw[-] (B2) -- (P3) -- (B2P3) edge (B2DM4) edge (K3P3) -- (P3P3) -- (P3DM4) -- (DM4DM4);
  \draw[-] (B2) -- (B2B2) edge (B2K3) edge (B2P3);
  \draw[-] (K3P3) edge (K3DM4) edge (P3DM4);
  \draw[-] (A1) -- (K3) -- (DM4);
  \draw[-] (P3) edge (DM4) edge (M4);
  \draw[-] (DM4) -- (K3DM4);
  \draw[-] (B2DM4) edge (K3DM4) edge (P3DM4);
  \draw[-] (DM4) -- (M8);
  \draw[-] (M4) edge (M9);
  \draw[-] (M8) edge (M9);
  \draw[-] (M9) edge[bend right=60] (DM4DM4);
  \draw[-] (K3) -- (M7) -- (M8);
  \draw[-] (M7) edge (K3K3);
\end{tikzpicture}
\end{center}
\end{figure}

\begin{table}
\caption{Separating rules for substructures of $\DMm{2}$}
\label{tab: separating rules}
\begin{framed}
\begin{align*}
  & \BAm{1}: x \vdash y \\
  & \Amatrix: \emptyset \vdash x \vee \neg x \\
  & \Km{1}: x \vdash y \vee \neg y \\
  & \Pm{1}: x \wedge \neg x \vdash y \\
  & \DMm{1}: x \wedge \neg x \vdash y \vee \neg y \\
  & \Qfour: x \wedge \neg x, y \wedge \neg y \vdash (x \vee \neg x) \wedge (y \vee \neg y) \\
  & \Qseven: x, y \vdash \neg x \vee \neg y \vee (x \wedge y) \\
  & \Qeight: x \wedge \neg x, y \vdash (x \wedge y) \vee \neg y \\
  & \Qnine: x \wedge \neg x \wedge z, y \wedge \neg y \vdash (x \vee \neg x \vee z) \wedge (y \vee \neg y \vee \neg z) \\
  & \BAm{1} \otimes \BAm{1}: x, \neg x \vdash x \wedge \neg x \\
  & \BAm{1} \otimes \Pm{1}: x \wedge \neg x, y, \neg y \vdash y \wedge \neg y \\
  & \BAm{1} \otimes \Km{1}: x, \neg x, y \vee \neg y\vdash x \wedge (\neg x \vee y \vee \neg y) \\
  & \BAm{1} \otimes \DMm{1}: x \wedge y, x \vee \neg x, y \wedge \neg y \vdash (x \vee \neg x) \wedge y \wedge \neg y \\
  & \Km{1} \otimes \Km{1}: x \wedge z, y \wedge \neg z \vdash (x \wedge y) \vee \neg (x \wedge y) \\
  & \Km{1} \otimes \Pm{1}: x, \neg x \wedge y \wedge \neg y \vdash x \wedge (y \vee \neg y) \\
  & \Km{1} \otimes \DMm{1}: x \wedge z, y \wedge \neg y \wedge \neg z \vdash (x \vee \neg x) \wedge (y \vee \neg y) \\
  & \Pm{1} \otimes \Pm{1}: x \wedge y, x \wedge \neg x, y \wedge \neg y \vdash x \wedge \neg x \wedge y \wedge \neg y \\
  & \Pm{1} \otimes \DMm{1}: x \wedge \neg x \wedge y, y \wedge \neg y \wedge z \wedge \neg z \vdash (x \vee \neg x) \wedge z \\
  & \DMm{1} \otimes \DMm{1}: x \wedge \neg x \wedge z \wedge \neg z, y \wedge \neg y \wedge z \wedge \neg z \vdash (x \vee \neg x) \wedge (y \vee \neg y)
\end{align*}
\end{framed}
\end{table}

    Our list of separating rules provides us with a finite Hilbert-style axiomatization of each extension of $\BD{2}$ with the~PCP. Consider for example the logic determined by the structure $\pair{\DM{1}}{\{ \True, \Neither, \Both \}}$ (labelled $\Qfour$ in Figure~\ref{fig: sub2}). The logic determined by this structure is the logic of ``anything but falsehood'' introduced by Shramko~\cite{shramko19abf} as the dual of the logic of ``exact truth'' introduced by Pietz \& Rivieccio~\cite{pietz+rivieccio13}. More precisely, Shramko \mbox{considers} the \textsc{Fmla}-\textsc{Set} consequence relation of this structure and axiomatizes it in his paper, while we wish to study its \textsc{Set}-\textsc{Fmla} consequence relation. (Shramko's axiomatization is essentially dual to the axiomatization of the \textsc{Set}-\textsc{Fmla} fragment of Exactly True Logic. However, such a duality cannot be applied to obtain a \textsc{Set}-\textsc{Fmla} axiomatization.)

\begin{theorem}[Completeness for Shramko's logic of anything but falsehood] \label{thm: completeness for abf}
  The logic of the structure $\pair{\DM{1}}{\{ \True, \Neither, \Both \}}$ is the extension of $\BDinfty$ by the $2$-adjunction rule, the law of the excluded middle $\emptyset \vdash x \vee \neg x$, and the rule $x \vee y, \neg x \vee y \vdash (x \wedge \neg x) \vee y$.
\end{theorem}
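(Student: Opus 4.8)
The plan is to place the target structure $\Qfour = \pair{\DM{1}}{\{\True, \Neither, \Both\}}$ inside the hierarchy of the preceding subsection and run the general axiomatization algorithm on it. Since $\Qfour$ is a substructure of $\DMm{2}$ whose designated set is prime (it is the restriction of the prime set $\nonemptydm{2}$), its logic $\Log(\Qfour)$ is by Theorem~\ref{thm: bdinfty with pcp} a finitary extension of $\BD{2}$ with the PCP, and it is complete with respect to the family of substructures of $\DMm{2}$ lying below $\Qfour$, that is, with respect to the downset of $\Qfour$ in the $\HsS$-order of Figure~\ref{fig: si dmas}. The task therefore reduces to three steps: reading off this downset, identifying the minimal substructures lying outside it, and translating their separating rules into an axiomatization by means of the disjunctive-variant Fact.

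For the first step, Figure~\ref{fig: si dmas} shows that the only structures below $\Qfour$ are $\Qfour$ itself, $\Pm{1}$, and $\BAm{1}$; in particular $\Amatrix \not\leqhs \Qfour$, because $\Qfour$ has no non-designated fixpoint of $\neg$ (both $\Neither$ and $\Both$ are designated), so no substructure or strict quotient of $\Qfour$ can produce the non-designated singleton $\Amatrix$. For the second step, I would check that every remaining node of Figure~\ref{fig: si dmas} dominates one of the two incomparable structures $\Amatrix$ and $\BAm{1} \otimes \BAm{1}$: each of the non-product structures $\Km{1}$, $\DMm{1}$, $\Qseven$, $\Qeight$, $\Qnine$ lies above $\Amatrix$, while every binary dual product lies above $\BAm{1} \otimes \BAm{1}$ (which is itself minimal, having only $\BAm{1}$ below it). Hence $\Amatrix$ and $\BAm{1} \otimes \BAm{1}$ are precisely the minimal structures outside the downset of $\Qfour$, and by Table~\ref{tab: separating rules} their separating rules are the law of the excluded middle $\emptyset \vdash x \vee \neg x$ and the rule $x, \neg x \vdash x \wedge \neg x$. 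A short direct computation confirms soundness, i.e.\ that both rules hold throughout $\{\Qfour, \Pm{1}, \BAm{1}\}$ while failing in $\Amatrix$ (at the fixpoint $\Neither$) and in $\BAm{1} \otimes \BAm{1}$ (at an atom $x$, where $x$ and $\neg x$ are designated but $x \wedge \neg x$ is not).

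For the third step, I invoke the Fact characterizing the smallest extension of $\BDinfty$ with the PCP that validates a given set of rules; its proof applies verbatim with $\BD{2}$ in place of $\BDinfty$, since $\BD{2}$ itself enjoys the PCP (being determined by $\DMm{2}$, whose designated set is prime). This presents $\Log(\Qfour)$ as the extension of $\BD{2}$ by the disjunctive variants of the two separating rules. By definition the disjunctive variant of $x, \neg x \vdash x \wedge \neg x$ is $x \vee y, \neg x \vee y \vdash (x \wedge \neg x) \vee y$ with $y$ fresh, which is exactly the third rule in the statement. The disjunctive variant of the law of the excluded middle, namely $\emptyset \vdash (x \vee \neg x) \vee y$, is interderivable with the law of the excluded middle over $\BDinfty$: substituting $x \vee \neg x$ for $y$ and using idempotency of $\vee$ recovers $\emptyset \vdash x \vee \neg x$. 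Since $\BD{2}$ is by definition $\BDinfty$ extended by the $2$-adjunction rule, collecting the three generators yields precisely the claimed axiomatization.

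The main obstacle, and the only genuinely non-automatic part, is the second step: one must be certain that $\Amatrix$ and $\BAm{1} \otimes \BAm{1}$ exhaust the minimal structures outside the downset of $\Qfour$, since overlooking a further minimal structure would leave the axiomatization incomplete (whereas including a redundant one would merely be superfluous). This rests entirely on the correctness of the $\HsS$-order of Figure~\ref{fig: si dmas} and on the catalogue of substructures of $\DMm{2}$ established in the proof of the preceding theorem; granting those, the argument is a finite inspection of the diagram together with the mechanical soundness checks of the two separating rules.
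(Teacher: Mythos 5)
Your proof is correct and follows essentially the same route as the paper's: read off from Figure~\ref{fig: si dmas} that the minimal structures outside the downset of $\Qfour$ are $\Amatrix$ and $\BAm{1} \otimes \BAm{1}$, take their separating rules $\emptyset \vdash x \vee \neg x$ and $x, \neg x \vdash x \wedge \neg x$ from Table~\ref{tab: separating rules}, and pass to disjunctive variants over $\BD{2}$. Your only divergences are supplying verifications the paper leaves implicit (the soundness checks, the interderivability of $\emptyset \vdash (x \vee \neg x) \vee y$ with the law of the excluded middle, and the transfer of the disjunctive-variant Fact from $\BDinfty$ to $\BD{2}$, which indeed has the PCP), and silently correcting the paper's slip of writing $\CL{2}$ where $\BD{2}$ is meant.
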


\begin{proof}
  The minimal structures in the poset in Figure~\ref{fig: si dmas} which do not lie below $\Qfour$ are $\Amatrix$ and $\BAm{1} \otimes \BAm{1}$. Their separating rules are $\emptyset \vdash x \vee \neg x$ and $x, \neg x \vdash x \wedge \neg x$. Consequently, the logic of $\Qfour$ is the smallest extension of $\CL{2}$ with the PCP which validates these two rules. But the disjunctive variants of these rules are precisely $\emptyset \vdash x \vee \neg x$ and $x \vee y, \neg x \vee y \vdash (x \wedge \neg x) \vee y$.
\end{proof}

\section{Metalogical properties of logics of upsets of De~Morgan lattices}

  In this final section, we make a few rudimentary observations concerning the classification of logics of upsets of De~Morgan lattices within the Leibniz and Frege hierarchies of abstract algebraic logic (see the textbook~\cite{font16} for an introduction to these hierarchies). This will partly extend the existing classification of super-Belnap logics, i.e.\ extensions of $\BD{1}$~\cite{albuquerque+prenosil+rivieccio17,prenosil22}.

  Two prominent classes in the Leibniz hierarchy are the classes of protoalgebraic and truth-equational logics. A logic $\logic{L}$ is \emph{protoalgebraic} if it has a \emph{protoimplication set}, i.e.\ a set of formulas in two variables $\Delta(x, y)$ such that
\begin{align*}
  \emptyset \vdash_{\logic{L}} \delta(x, x) \text{ for each } \delta(x, y) \in \Delta(x, y), & & x, \Delta(x, y) \vdash_{\logic{L}} y.
\end{align*}

\begin{theorem}[Protoalgebraic extensions of $\BDinfty$] \label{thm: protoalgebraic}
  The only protoalgebraic extension of $\BDinfty$ other than the trivial and almost trivial logic is classical logic $\CL{1}$.
\end{theorem}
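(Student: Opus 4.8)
The plan is to verify that $\TRIV$, $\ATRIV$, and $\CL{1}$ are protoalgebraic and then that these are the only ones. For the first direction, $\TRIV$ and $\ATRIV$ are protoalgebraic with the empty protoimplication set $\Delta(x,y) = \emptyset$, since $x \vdash y$ holds in both (the premise set $\{x\}$ is non-empty), while $\CL{1}$ is protoalgebraic with $\Delta(x,y) = \{\neg x \vee y\}$: the law of the excluded middle gives $\emptyset \vdash_{\CL{1}} \neg x \vee x$ and the disjunctive syllogism gives $x, \neg x \vee y \vdash_{\CL{1}} y$.

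For the converse, suppose $\logic{L}$ is a protoalgebraic extension of $\BDinfty$ with protoimplication set $\Delta$. If $\Delta = \emptyset$, then $x \vdash_{\logic{L}} y$, so every non-empty premise set proves everything; by structurality the existence of a single theorem then collapses $\logic{L}$ to $\TRIV$ (from $\emptyset \vdash_{\logic{L}} \theta$ and $\theta \vdash_{\logic{L}} \varphi$ one gets $\emptyset \vdash_{\logic{L}} \varphi$), and otherwise $\logic{L} = \ATRIV$. So assume $\Delta \neq \emptyset$ and fix $\delta \in \Delta$; then $\delta(x,x)$ is a theorem. Since every one-variable term of the De~Morgan signature equals one of $x$, $\neg x$, $x \wedge \neg x$, $x \vee \neg x$ (push negations inward; the free De~Morgan lattice on one generator has exactly these four elements), $\delta(x,x)$ is one of them. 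If it is $x$ or $\neg x$, then $\emptyset \vdash_{\logic{L}} x$ (in the second case apply the substitution $x \mapsto \neg x$), and substitution yields $\logic{L} = \TRIV$; if it is $x \wedge \neg x$, then, as $x \wedge \neg x \leq x$ holds in every De~Morgan lattice, Theorem~\ref{thm: consequence in bd}(i) and a cut again give $\emptyset \vdash_{\logic{L}} x$, so $\logic{L} = \TRIV$. Thus, discarding the trivial case, every $\delta \in \Delta$ satisfies $\delta(x,x) = x \vee \neg x$, and in particular $\logic{L}$ validates $\emptyset \vdash_{\logic{L}} x \vee \neg x$.

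The heart of the argument is to show that the second protoimplication condition $x, \Delta(x,y) \vdash_{\logic{L}} y$, together with $\delta(x,x) = x \vee \neg x$, rules out every reduced model containing a designated fixpoint of negation. The key computation is a normal-form argument: write $\delta$ as a monotone lattice term $t(x, \neg x, y, \neg y)$. In $\Pm{1}$ and in $\Qfour = \pair{\DM{1}}{\{\True, \Neither, \Both\}}$ the only non-designated value is the bottom $\False$; were $\delta(\Both, \False) = \False$, then every disjunct of the disjunctive normal form of $t$ would have to contain the literal $y$ (the only argument evaluating to $\False$ there), whence $t \leq y$ and so, on the diagonal $y := x$, we would get $x \vee \neg x = \delta(x,x) \leq x$, which fails in $\DM{1}$. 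Hence $\delta(\Both, \False) \in F$ for every $\delta$, and the instance with premises $\Both$ and $\{\delta(\Both, \False)\}_{\delta \in \Delta}$ and conclusion $\False$ shows that $x, \Delta(x,y) \vdash_{\logic{L}} y$ fails in both matrices. So neither $\Pm{1}$ nor $\Qfour$ is a model of $\logic{L}$. Because $\logic{L}$ validates the excluded middle, every negation fixpoint of a model is designated (as $c = \neg c$ gives $c = c \vee \neg c \in F$), so this excludes all paracomplete and paraconsistent reduced models; moreover a fixpoint-free non-Boolean reduced candidate (e.g.\ an even Kleene chain) is not reduced, since the congruence collapsing the chain onto $\BAm{1}$ is compatible with the designated upset. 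The only surviving reduced models are Boolean matrices, forcing $\logic{L} = \CL{1}$.

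The main obstacle is making the exclusion of negation fixpoints uniform across \emph{all} reduced models of $\logic{L}$, rather than only over the three- and four-element matrices $\Pm{1}$ and $\Qfour$ treated above. I expect to dispatch this either by a prime-implicant analysis of the terms $\delta$ showing directly that a designated fixpoint $c = \neg c$ lying above a non-designated element always yields $\delta(c,b) \in F$ for all $\delta$ and a suitable $b$, or, more economically, by first establishing that $\logic{L}$ validates adjunction (so that $\logic{L}$ is a super-Belnap logic, i.e.\ an extension of $\BD{1}$) and then invoking the known classification that classical logic is the only protoalgebraic super-Belnap logic~\cite{albuquerque+prenosil+rivieccio17}. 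The reduction to the super-Belnap setting is attractive precisely because the failure of $x, \Delta(x,y) \vdash_{\logic{L}} y$ in $\Qfour$ already signals that no reduced model of $\logic{L}$ can have a non-filter designated set.
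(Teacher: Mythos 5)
Your executed steps are sound, and they follow a genuinely more semantic route than the paper's: the handling of $\Delta = \emptyset$, the computation in the free one-generated De~Morgan lattice forcing $\delta(x,x) \equiv x \vee \neg x$ for every $\delta \in \Delta$ (hence $\emptyset \vdash_{\logic{L}} x \vee \neg x$), and the disjunctive-normal-form evaluation at $(x,y) = (\Both, \False)$ showing that $x, \Delta(x,y) \vdash y$ fails in $\Pm{1}$ and in $\Qfour$ are all correct. But the proof stops exactly where the real work begins, as you yourself flag. Three exclusions remain unproved. (a) Arbitrary models with a designated negation fixpoint: here your sketch can in fact be completed, since if $c = \neg c \in F$ and $b \notin F$, then $m \assign b \wedge \neg b \wedge c$ satisfies $m \leq c \leq \neg m$ and $\{m, c, \neg m\}$ is a submatrix isomorphic to $\Pm{1}$, and model classes are closed under submatrices. (b) Fixpoint-free non-Boolean reduced models: the even-chain remark is an example, not an argument. (c) Most seriously, Boolean matrices with non-filter upsets such as $\BAm{2}$: these satisfy the excluded middle, are reduced, and contain no negation fixpoints, so nothing you proved touches them, and since $\BAm{2}$ is not a submatrix of $\Qfour$ (or vice versa), the failure of the protoimplication rule in $\Qfour$ ``signals'' nothing about it. Consequently your conclusion ``the only surviving reduced models are Boolean matrices, forcing $\logic{L} = \CL{1}$'' is a non sequitur as stated: $\BAm{2}$ is a Boolean matrix whose logic is $\CL{2} \neq \CL{1}$. (Your prime-implicant idea does extend to $\BAm{n}$: evaluating at $x \assign$ an atom, $y \assign 0$, any disjunct making $\delta$ undesignated must contain the literal $y$ or both $x$ and $\neg x$, and if all disjuncts are of this kind then $\delta(x,x) \leq x \vee (x \wedge \neg x) = x$, contradicting $\delta(x,x) \equiv x \vee \neg x$; but this is not in your write-up, and it would still leave case (b) open.)

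The paper sidesteps the case analysis over models entirely. It first observes $\logic{L} \leq \CL{1}$, replaces an arbitrary protoimplication set by the canonical formula $(\neg x \vee y) \wedge (\neg x \vee x) \wedge (\neg y \vee y)$, and then, by a chain of substitutions ($y \mapsto x \wedge y$, then $x \mapsto \neg x \vee x$, then $y \mapsto \neg y \vee y$), syntactically derives the adjunction rule $x, y \vdash_{\logic{L}} x \wedge y$, so that $\logic{L}$ extends $\BD{1}$ and the known classification of protoalgebraic super-Belnap logics from~\cite{albuquerque+prenosil+rivieccio17} finishes the proof. That derivation of adjunction is precisely the content of your second proposed escape route and is the technical heart of the argument; deferring it (and not completing the first route either) leaves the proposal with a genuine gap rather than a complete alternative proof.
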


\begin{proof}
  Let $\logic{L}$ be a protoalgebraic extension of $\BDinfty$ other than the trivial or almost trivial logic. Then $\logic{L} \leq \CL{1}$ and $\logic{L}$ has a protoimplication set $\Delta(x, y)$. Up to logical equivalence (interderivability), $\BDinfty$ and its extensions only have finitely many formulas. Thus there is, up to logical equivalence in $\BDinfty$, a logically strongest set of formulas $\Delta(x, y)$ such that $x, \Delta(x, y) \vdash_{\CL{1}} y$, namely
\begin{align*}
  \Delta(x, y) \assign \{ (\neg x \vee y) \wedge (\neg x \vee x) \wedge (\neg y \vee y) \}.
\end{align*}
  On the other hand, if $\logic{L}$ has a theorem, then one such theorem must be $\neg x \vee x$. Thus
\begin{align*}
  & x, (\neg x \vee y) \wedge (\neg x \vee x) \wedge (\neg y \vee y) \vdash_{\logic{L}} y, & & \emptyset \vdash_{\logic{L}} \neg x \vee x.
\end{align*}
  Substituting $x \wedge y$ for $y$ yields $x, (\neg x \vee x) \wedge (\neg x \vee y) \wedge (\neg x \vee \neg y \vee x) \wedge (\neg x \vee \neg y \vee y) \vdash_{\logic{L}} x \wedge y$, therefore
\begin{align*}
  x, \neg x \vee (x \wedge y) \vdash_{\logic{L}} x \wedge y.
\end{align*}
  Substituting $\neg x \vee x$ for $x$ in this rule yields $\neg x \vee x, (x \wedge \neg x) \vee (x \wedge y) \vee (\neg x \wedge y) \vdash_{\logic{L}} (\neg x \vee x) \wedge y$, hence $(x \wedge \neg x) \vee ((\neg x \vee x) \wedge y) \vdash_{\logic{L}} (\neg x \vee x) \wedge y$. Further substituting $\neg y \vee y$ for $y$ yields $(x \wedge \neg x) \vee ((\neg x \vee x) \wedge (\neg y \vee y)) \vdash_{\logic{L}} (\neg x \vee x) \wedge (\neg y \vee y)$. But $\emptyset \vdash_{\logic{L}} (x \wedge \neg x) \vee ((x \vee \neg x) \wedge (y \vee \neg y))$. This is because, taking $t(x, y)$ to be the right-hand side of this rule, the inequality $\neg t(x, y) \leq t(x, y)$ holds in all De~Morgan lattices, hence $t(x, y) = \neg t(x, y) \vee t(x, y)$ in all De~Morgan lattices. But we know that $\emptyset \vdash_{\logic{L}} \neg x \vee x$, in particular $\emptyset \vdash_{\logic{L}} \neg t(x,y) \vee t(x, y)$. We thus obtain the rule
\begin{align*}
  \emptyset \vdash_{\logic{L}} (\neg x \vee x) \wedge (\neg y \vee y).
\end{align*}
  It follows that $\emptyset \vdash_{\logic{L}} \neg x \vee (x \wedge (y \vee \neg y))$. Combining this with the substitution instance $x, \neg x \vee (x \wedge (\neg y \vee y) \vdash_{\logic{L}} x \wedge (\neg y \vee y)$ of the rule $x, \neg x \vee (x \wedge y) \vdash_{\logic{L}} (x \wedge y)$ yields $x \vdash_{\logic{L}} x \wedge (\neg y \vee y)$, hence $y \vdash_{\logic{L}} \neg x \vee (x \wedge y)$. Combining this again with the rule $x, \neg x \vee (x \wedge y) \vdash_{\logic{L}} x \wedge y$ yields the adjunction rule $x, y \vdash_{\logic{L}} x \wedge y$. Thus $\logic{L}$ is an extension of $\BD{1}$. But it was proved in~\cite{albuquerque+prenosil+rivieccio17} that, with the two stated exceptions, the only protoalgebraic extension of $\BD{1}$ is $\CL{1}$. (The simplest proof of this relies on the observation that, with the two exceptions, if $\logic{L} \nleq \CL{1}$, then $\logic{L} \leq \K{1}$ or $\logic{L} \leq \LP{1} \vee \ECQomega$, but neither of these two logics is protoalgebraic.)
\end{proof}

  The above theorem remains valid if we expand $\BDinfty$ by a constant $\True$ representing the top element and restrict to non-empty upsets. (In that case, we only need to further justify that $\emptyset \vdash_{\logic{L}} x \vee \neg x$. This involves showing that if $\emptyset \vdash_{\logic{L}} \Delta (x, x)$ but $\emptyset \nvdash_{\logic{L}} x \vee \neg x$, then $\Delta(x, y)$ must be equivalent in $\BDinfty$ to $\True$. But $x, \True \nvdash_{\logic{L}} y$.)

  The definition of a truth-equational logic involves the \emph{Leibniz congruence} of a structure $\pair{\alg{A}}{F}$, denoted $\Omega^{\alg{A}} F$, which is the largest congruence $\theta$ on $\alg{A}$ compatible with $F$ in the sense that $a \in F$ and $\pair{a}{b} \in \theta$ implies $b \in F$. A logic $\logic{L}$ is called \emph{truth-equational} if there is a set of equations in one variable $\Eps(x)$ such that for each model $\pair{\alg{A}}{F}$ of $\logic{L}$ we have
\begin{align*}
  a \in F & \iff \pair{\delta(a)}{\epsilon(a)} \in \Omega^{\alg{A}} F \text{ for each equation } \delta(x) \approx \epsilon(x) \in \Eps(x). 
\end{align*}
  That is, ``truth'' in models of $\logic{L}$ (the set $F$) is defined in terms of ``equations'' (pairs of elements identified by the Leibniz congruence of $F$).

\begin{lemma}[Leibniz relations of upsets of De~Morgan lattices]
  Let $F$ be an upset of a De~Morgan lattice $\alg{L}$. Then $\pair{a}{b} \in \Omega^{\alg{A}} F$ if and only if for each $c, d \in \alg{L}$:
\begin{align*}
  (a \wedge c) \vee d \in F & \iff (b \wedge c) \vee d \in F, \\
  (\neg a \wedge c) \vee d \in F & \iff (\neg b \wedge c) \vee d \in F.
\end{align*}
\end{lemma}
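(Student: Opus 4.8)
The plan is to prove both inclusions between $\Omega^{\alg{L}} F$ and the relation $\theta$ defined by declaring $\pair{a}{b} \in \theta$ exactly when the two displayed biconditionals hold for all $c, d \in \alg{L}$. The forward inclusion $\Omega^{\alg{L}} F \subseteq \theta$ is immediate: if $\pair{a}{b} \in \Omega^{\alg{L}} F$ then, since $\Omega^{\alg{L}} F$ is a congruence, applying the unary polynomial $z \mapsto (z \wedge c) \vee d$ gives $\pair{(a \wedge c) \vee d}{(b \wedge c) \vee d} \in \Omega^{\alg{L}} F$, and compatibility of $\Omega^{\alg{L}} F$ with $F$ turns this into the first biconditional; applying the same polynomial to $\pair{\neg a}{\neg b} \in \Omega^{\alg{L}} F$ (negation being a congruence operation) yields the second.

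For the reverse inclusion $\theta \subseteq \Omega^{\alg{L}} F$, I would show that $\theta$ is itself a congruence compatible with $F$; since $\Omega^{\alg{L}} F$ is by definition the largest congruence compatible with $F$, this gives $\theta \subseteq \Omega^{\alg{L}} F$. Reflexivity and symmetry of $\theta$ are trivial, and transitivity follows by chaining the biconditionals. Compatibility with $F$ is the one point where the absence of lattice bounds must be handled with care: given $\pair{a}{b} \in \theta$, I instantiate the first biconditional at $c \assign a \vee b$ and $d \assign a \wedge b$, so that $(a \wedge c) \vee d = a$ and $(b \wedge c) \vee d = b$ by absorption, whence $a \in F$ if and only if $b \in F$.

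It remains to check that $\theta$ respects the operations, and this is the main (though still routine) calculation. For negation, the two biconditionals defining $\pair{\neg a}{\neg b} \in \theta$ are, using $\neg \neg a = a$, exactly the two biconditionals defining $\pair{a}{b} \in \theta$ with their roles swapped, so $\theta$ is invariant under $\neg$. For meet, given $\pair{a}{b} \in \theta$ and $e \in \alg{L}$, the first biconditional for $\pair{a \wedge e}{b \wedge e}$ follows from the first biconditional for $\pair{a}{b}$ by replacing the parameter $c$ with $e \wedge c$; the second follows after rewriting $\neg(a \wedge e) \wedge c = (\neg a \wedge c) \vee (\neg e \wedge c)$ via the De~Morgan law and distributivity and then absorbing the summand $\neg e \wedge c$ into the parameter $d$. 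Compatibility with join comes for free from $x \vee e = \neg(\neg x \wedge \neg e)$ together with the already-established invariance under $\neg$ and $\wedge$. The only delicate point throughout is the bookkeeping in the meet case, where distributivity is what lets the extra summand be moved into $d$; everything else is bound-free absorption and De~Morgan duality.
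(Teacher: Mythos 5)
Your proposal is correct and takes essentially the same approach as the paper, whose proof is exactly the two-line observation that the displayed conditions are necessary for any congruence compatible with $F$ and conversely themselves define a congruence compatible with $F$, hence contained in the largest such, $\Omega^{\alg{A}} F$. You have merely filled in the routine details the paper leaves implicit, and done so correctly, including the two points that genuinely need care in the unbounded setting: the instantiation $c \assign a \vee b$, $d \assign a \wedge b$ to get compatibility with $F$ without lattice bounds, and the use of distributivity to absorb the summand $\neg e \wedge c$ into the parameter $d$ in the meet case.
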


\begin{proof}
  These conditions must be satisfied by each congruence compatible with $F$. Conversely, these conditions define a congruence which is compatible with $F$.
\end{proof}

  If a logic $\logic{L}$ is truth-equational with a set of defining equations $\Eps(x)$, then the same holds for every extension of $\logic{L}$. In particular, excluding the trivial and almost trivial logic, if an extension of $\BDinfty$ is truth-equational with a set of defining equations $\Eps(x)$, then so is classical logic. This constrains the possible sets of equations $\Eps(x)$. Indeed, the reader can verify that the only possible set of equations, up to equivalence in all De~Morgan lattices, is $\Eps(x) \assign \{ x \vee \neg x \approx x \}$. If we include the constant for the top element in our signature, this allows for the further equation $x \approx 1$. Logics truth-equational with respect to this defining equation are precisely the extensions of the Exactly True logic determined by $\pair{\DM{1}}{ \{ \True \} }$.

\begin{theorem}[Truth-equational extensions of $\BDinfty$]
  An extension $\logic{L}$ of $\BDinfty$ is truth-equational with defining equation $x \vee \neg x \approx x$ if and only if it validates the rules $\emptyset \vdash x \vee \neg x$ and $x, ((x \vee \neg x) \wedge y) \vee z \vdash (x \wedge y) \vee z$.
\end{theorem}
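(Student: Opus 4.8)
The plan is to reduce the definition of truth-equationality with the single defining equation $x \vee \neg x \approx x$ to a concrete membership condition via the preceding lemma on Leibniz relations. By the definition of truth-equationality, $\logic{L}$ is truth-equational with this equation precisely when every model $\pair{\alg{A}}{F}$ of $\logic{L}$ satisfies
\[
  a \in F \iff \pair{a \vee \neg a}{a} \in \Omega^{\alg{A}} F \quad \text{for all } a \in \alg{A}.
\]
Since a logic is complete with respect to its class of models, both implications of the theorem amount to transferring between this semantic condition and the validity of the two displayed rules in every model of $\logic{L}$. I would treat the two directions separately: the forward direction exploits that $\Omega^{\alg{A}} F$ is a congruence compatible with $F$, while the backward direction unpacks the membership $\pair{a \vee \neg a}{a} \in \Omega^{\alg{A}} F$ through the Leibniz-relation lemma.

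For the forward direction, assume $\logic{L}$ is truth-equational with $x \vee \neg x \approx x$ and fix a model $\pair{\alg{A}}{F}$. The key observation is that the term $x \vee \neg x$ satisfies the defining equation identically: writing $b \assign a \vee \neg a$, we have $\neg b = a \wedge \neg a \leq a \vee \neg a = b$, so $b \vee \neg b = b$, whence $\pair{b \vee \neg b}{b} = \pair{b}{b} \in \Omega^{\alg{A}} F$ by reflexivity; truth-equationality then forces $a \vee \neg a = b \in F$, giving the law of the excluded middle. For the second rule, suppose $a \in F$ and $((a \vee \neg a) \wedge b) \vee c \in F$. Truth-equationality yields $\pair{a \vee \neg a}{a} \in \Omega^{\alg{A}} F$, and since $\Omega^{\alg{A}} F$ is a congruence we may apply $\wedge\, b$ and then $\vee\, c$ to obtain $\pair{((a \vee \neg a) \wedge b) \vee c}{(a \wedge b) \vee c} \in \Omega^{\alg{A}} F$; compatibility of $\Omega^{\alg{A}} F$ with $F$ then delivers $(a \wedge b) \vee c \in F$.

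For the backward direction, assume both rules are valid and fix a model $\pair{\alg{A}}{F}$, in which both rules consequently hold. I must verify the biconditional above. For the implication from right to left, validity of $\emptyset \vdash x \vee \neg x$ gives $a \vee \neg a \in F$; if moreover $\pair{a \vee \neg a}{a} \in \Omega^{\alg{A}} F$, compatibility immediately yields $a \in F$. For the converse, assume $a \in F$ and apply the Leibniz-relation lemma to the pair $\pair{a \vee \neg a}{a}$, using $\neg(a \vee \neg a) = a \wedge \neg a$; this reduces the claim to showing, for all $c, d$, that $((a \vee \neg a) \wedge c) \vee d \in F \iff (a \wedge c) \vee d \in F$ together with the analogous biconditional relating $((a \wedge \neg a) \wedge c) \vee d$ and $(\neg a \wedge c) \vee d$. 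Two of the four resulting implications are immediate from $F$ being an upset, since $a \wedge c \leq (a \vee \neg a) \wedge c$ and $a \wedge \neg a \wedge c \leq \neg a \wedge c$. The remaining two are instances of the second rule applied with $a \in F$: the instantiation $x \assign a$, $y \assign c$, $z \assign d$ settles the first biconditional, while $x \assign a$, $y \assign \neg a \wedge c$, $z \assign d$ settles the second, the latter using the De~Morgan identity $(a \vee \neg a) \wedge (\neg a \wedge c) = \neg a \wedge c$, so that the rule's premise $((a \vee \neg a) \wedge (\neg a \wedge c)) \vee d$ coincides with the hypothesis $(\neg a \wedge c) \vee d$ and its conclusion with the desired $(a \wedge \neg a \wedge c) \vee d$.

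I expect the main obstacle to be finding the correct instantiations of the second rule in the backward direction, in particular recognizing that matching the template $((x \vee \neg x) \wedge y) \vee z$ against $(\neg a \wedge c) \vee d$ forces the substitution $y \assign \neg a \wedge c$ and relies on the absorption-style identity $(a \vee \neg a) \wedge (\neg a \wedge c) = \neg a \wedge c$. Once this matching is in place, the rest is routine: the easy halves of the two biconditionals follow from $F$ being an upset, and the final passage from $\pair{a \vee \neg a}{a} \in \Omega^{\alg{A}} F$ to $a \in F$ is just the combination of the excluded middle with the compatibility of $\Omega^{\alg{A}} F$.
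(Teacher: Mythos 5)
Your proof is correct and takes essentially the same route as the paper's: both directions pass between truth-equationality and the two rules via the Leibniz-relation lemma, with the converse inclusions supplied by $F$ being an upset (the paper cites the valid rule $(x \wedge y) \vee z \vdash ((x \vee \neg x) \wedge y) \vee z$ for this). You even spell out a step the paper leaves implicit, namely the negation condition of the lemma, handled by the instantiation $y \assign \neg a \wedge c$ together with the identity $(a \vee \neg a) \wedge \neg a \wedge c = \neg a \wedge c$.
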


\begin{proof}
  If $\logic{L}$ is truth-equational with the given defining equation and $\pair{\alg{A}}{F}$ is a model of $\logic{L}$, then $a \vee \neg a \in F$ for each $a \in \alg{A}$, since $(a \vee \neg a) \vee \neg (a \vee \neg a) = a \vee \neg a$. Moreover, if $a \in F$, then $\pair{a \vee \neg a}{a} \in \Omega^{\alg{A}} F$, hence $((a \vee \neg a) \wedge b) \vee c \in F$ implies $(a \wedge b) \vee c \in F$. The two given rules are thus valid. Conversely, if the second rule is valid, then $a \in F$ implies $\pair{a \vee \neg a}{a} \in \Omega^{\alg{A}} F$ for each $a \in \alg{A}$, since the converse rule $(x \wedge y) \vee z \vdash ((x \vee \neg x) \wedge y) \vee z$ holds in each extension of $\BDinfty$. On the other hand, if $\pair{a \vee \neg a}{a} \in \Omega^{\alg{A}} F$ and the first rule is valid, then $a \vee \neg a \in F$, hence $a \in F$.
\end{proof}

  Other prominent classes of logics in the Leibniz hierarchy, such as equivalential or algebraizable logics, are stronger than protoalgebraicity, therefore this concludes our classification of extensions of $\BDinfty$ in the Leibniz hierarchy.

  The basic class in the Frege hierarchy is the class of selfextensional logics. A logic $\logic{L}$ is called \emph{selfextensional} if the interderivability relation $\varphi \dashv \vdash_{\logic{L}} \psi$ is a congruence on the algebra of formulas. That is, $\varphi \dashv \vdash_{\logic{L}} \psi$ implies $\chi(\varphi) \dashv \vdash_{\logic{L}} \chi(\psi)$, where $\chi(\varphi)$ is the result of uniformly substituting $\varphi$ for one of the variables of $\chi$. Equivalently, interderivability on $\logic{L}$ coincides with equational validity in some class of algebras. In our case, an extension $\logic{L}$ of $\BDinfty$ (other than the trivial and almost trivial logic) is selfextensional if and only if interderivability in $\logic{L}$ coincides with equational validity in De~Morgan lattices, or in Kleene lattices, or in Boolean lattices, since these are the only non-trivial varieties of De~Morgan lattices.

  In particular, one can observe that all logics in the three intervals $[\BDinfty, \BD{1}]$, $[\KOinfty, \KO{1}]$, and $[\CLinfty, \CL{1}]$ are selfextensional, since they share the same inter\-derivability relation. However, unlike protoalgebraic and truth-equational extensions, the selfextensional extensions of $\BDinfty$ seem difficult to pin down precisely. We content ourselves with observing that not every such extension falls under one of the three obvious cases mentioned above.

\begin{fact}
  The extension of $\BDinfty$ by the rule $x \wedge \neg x \vdash y \vee \neg y$ is selfextensional.
\end{fact}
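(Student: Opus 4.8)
The plan is to identify the interderivability relation of the logic $\logic{L} \assign \BDinfty + (x \wedge \neg x \vdash y \vee \neg y)$ explicitly and show that it coincides with equational equivalence in De~Morgan lattices, which is a congruence on the algebra of formulas; by definition this is exactly selfextensionality. Since $\logic{L}$ extends $\BDinfty$, which by Theorem~\ref{thm: consequence in bd} satisfies $\gamma \dashv\vdash_{\BDinfty} \varphi$ iff $\gamma = \varphi$ in all De~Morgan lattices, the inclusion from De~Morgan equivalence into $\dashv\vdash_{\logic{L}}$ is immediate, and only the reverse inclusion needs work.

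First I would record that $\logic{L}$ is presented entirely by single-premise rules: $\BDinfty$ is determined by the rules $\gamma \vdash \varphi$ with $\gamma \leq \varphi$ valid in all De~Morgan lattices (Theorem~\ref{thm: consequence in bd}) and has no theorems, and the adjoined rule $x \wedge \neg x \vdash y \vee \neg y$ is again single-premise. Hence any derivation witnessing $\varphi \vdash_{\logic{L}} \psi$ is a finite chain $\varphi = \chi_0, \chi_1, \dots, \chi_m = \psi$ in which each step is either a De~Morgan inequality $\chi_i \leq \chi_{i+1}$ or a ``jump'' of the form $a \wedge \neg a \vdash b \vee \neg b$. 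The crucial observation is that such a chain can contain \emph{at most one} jump: between two jumps one would need terms with $b \vee \neg b \leq a \wedge \neg a$ in all De~Morgan lattices, which is impossible, since already in the three-element Kleene chain $\{\True, \Neither, \False\}$ one has $a \wedge \neg a \leq \Neither \leq b \vee \neg b$ for every assignment, and $b \vee \neg b = \Neither$ fails for $b \mapsto \True$.

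Consequently $\varphi \vdash_{\logic{L}} \psi$ holds if and only if either $\varphi \leq \psi$ in all De~Morgan lattices, or else $\varphi \leq a \wedge \neg a$ and $b \vee \neg b \leq \psi$ for some terms $a, b$; and the two existential conditions simplify, in all De~Morgan lattices, to $\varphi \leq \neg\varphi$ and $\neg\psi \leq \psi$ respectively (take $a \assign \varphi$ and $b \assign \psi$ for one direction, and note $\varphi \leq a \wedge \neg a$ forces $\varphi \leq \neg\varphi$ for the other). I would then analyse $\varphi \dashv\vdash_{\logic{L}} \psi$. If $\varphi \not\leq \psi$ in some De~Morgan lattice, then $\varphi \vdash_{\logic{L}} \psi$ must proceed through the jump clause, giving $\varphi \leq \neg\varphi$ in all De~Morgan lattices; and the reverse consequence $\psi \vdash_{\logic{L}} \varphi$, whether it is a plain inequality $\psi \leq \varphi$ (whence $\neg\varphi \leq \neg\psi \leq \psi \leq \varphi$) or proceeds by a jump (whence $\neg\varphi \leq \varphi$ directly), yields $\neg\varphi \leq \varphi$. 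Together these give $\varphi = \neg\varphi$ in all De~Morgan lattices, which is absurd since in the two-element Boolean algebra $\BA{1}$ no element equals its own negation. Hence both directions of any interderivability are jump-free, forcing $\varphi = \psi$ in all De~Morgan lattices.

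The main obstacle is the bookkeeping in the middle step: pinning down single-premise consequence as chains, proving the ``at most one jump'' lemma, and reducing the existential quantifiers over the jump terms to the closed conditions $\varphi \leq \neg\varphi$ and $\neg\psi \leq \psi$. Once this asymmetry is isolated --- contradictions can only ever sit \emph{below} and excluded-middle formulas only ever \emph{above} in a jump --- the collapse of any jump-using interderivability to the impossible identity $\varphi = \neg\varphi$ is routine, and selfextensionality follows because $\dashv\vdash_{\logic{L}}$ has turned out to equal the congruence of De~Morgan equivalence, so that $\logic{L}$ in fact shares its interderivability relation with $\BDinfty$.
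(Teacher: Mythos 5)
Your proposal is correct and follows essentially the same route as the paper: both rest on the explicit characterization of $\vdash_{\logic{L}}$ as ``either a $\BDinfty$-consequence or a single jump through a pair $\alpha \wedge \neg \alpha$, $\beta \vee \neg \beta$'' and then rule out any jump-using interderivability by deriving $\varphi \approx \neg\varphi$ in all De~Morgan lattices, which fails already in the two-element Boolean algebra. Your chain analysis with the ``at most one jump'' lemma, and the reduction of the existential witnesses to the closed conditions $\varphi \leq \neg\varphi$ and $\neg\psi \leq \psi$, are just a more explicit (and perfectly sound) rendering of the step the paper compresses into ``it suffices to check that this equivalence indeed defines a logic.''
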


\begin{proof}
  In this extension $\logic{L}$, we can show that $\Gamma \vdash_{\logic{L}} \varphi$ if and only if for some $\gamma \in \Gamma$ either $\gamma \vdash_{\BDinfty} \varphi$ or there are $\alpha$ and $\beta$ such that $\gamma \vdash_{\BDinfty} \alpha \wedge \neg \alpha$ and $\beta \vee \neg \beta \vdash_{\BDinfty} \varphi$. (It suffices to check that this equivalence indeed defines a logic.) Thus if $\varphi \dashv \vdash_{\logic{L}} \psi$, then $\varphi \dashv \vdash_{\BDinfty} \psi$, since otherwise without loss of generality $\varphi \vdash_{\BDinfty} \alpha \wedge \neg \alpha$ and $\beta \vee \neg \beta \vdash_{\logic{L}} \psi$ and either $\psi \vdash_{\BDinfty} \varphi$ or there are $\alpha'$ and $\beta'$ such that $\psi \vdash_{\BDinfty} \alpha' \wedge \neg \alpha'$ and $\beta' \vee \neg \beta' \vdash_{\BDinfty} \psi$. In either case we have formulas $\gamma$ and $\delta$ such that $\gamma \vee \neg \gamma \vdash_{\BDinfty} \delta \wedge \neg \delta$. But then the inequalities $\gamma \leq \delta \leq \gamma$ and $\neg \gamma \leq \delta \leq \neg \gamma$ hold in all De~Morgan lattices, which is impossible: the equality $\gamma \approx \neg \gamma$ cannot be satisfied in the two-element Boolean algebra.
\end{proof}

  Finally, we identify some splittings of the lattice of extensions of $\BDinfty$, with the aim of providing constraints on logics where conjunction and disjunction satisfy certain minimal conditions.

\begin{fact}
  Let $\logic{L} \geq \BDinfty$. Then $\logic{L} \leq \BD{1}$ or $x \wedge \neg x \vdash_{\logic{L}} y \vee \neg y$, but not both.
\end{fact}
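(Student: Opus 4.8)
The plan is to reduce the statement to a single implication and then settle that implication by a substitution argument. I would first dispose of the \emph{not both} clause. By Theorem~\ref{thm: completeness for n-filters} (with $n = 1$) the logic $\BD{1}$ is complete with respect to $\DMm{1}$, so a rule is $\BD{1}$-valid if and only if it holds in $\DMm{1}$; consequently $\logic{L} \leq \BD{1}$ if and only if $\DMm{1}$ is a model of $\logic{L}$. Writing $\rho$ for the rule $x \wedge \neg x \vdash y \vee \neg y$ and evaluating it at $x \assign \Both$, $y \assign \Neither$ shows that $\rho$ \emph{fails} in $\DMm{1}$: the premise takes the designated value $\Both$ while the conclusion takes the undesignated value $\Neither$. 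Hence if $\logic{L} \leq \BD{1}$, then $\rho$ cannot be $\logic{L}$-valid, as it is not even $\BD{1}$-valid. This same reformulation reduces the \emph{at least one} clause to the implication: if $\logic{L} \not\leq \BD{1}$, i.e.\ if some $\logic{L}$-valid rule $\Gamma \vdash \varphi$ fails in $\DMm{1}$, then $\rho$ is $\logic{L}$-valid.

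To prove this implication, suppose $\Gamma \vdash \varphi$ is valid in $\logic{L}$ but fails in $\DMm{1}$ under a valuation $v$ into $\DM{1}$, so that $v(\gamma) \in \{ \True, \Both \}$ for each $\gamma \in \Gamma$ while $v(\varphi) \in \{ \Neither, \False \}$. Writing $u \assign x \wedge \neg x$ and $w \assign y \vee \neg y$, I would define a substitution $\sigma$ into the two variables $x, y$ by
\begin{align*}
  \sigma(p) \assign u \vee \neg w \text{ if } v(p) = \True, && \sigma(p) \assign x \text{ if } v(p) = \Both, \\
  \sigma(p) \assign w \wedge \neg u \text{ if } v(p) = \False, && \sigma(p) \assign y \text{ if } v(p) = \Neither,
\end{align*}
each of which again takes the value $v(p)$ at $x \assign \Both$, $y \assign \Neither$. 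The point of this choice is the following two-sided bound, to be proved by induction on the complexity of an arbitrary term $\tau$ and holding in every De~Morgan lattice: if $v(\tau) = \Both$ then $u \leq \sigma(\tau) \leq \neg u$; if $v(\tau) = \Neither$ then $\neg w \leq \sigma(\tau) \leq w$; if $v(\tau) = \True$ then $u \vee \neg w \leq \sigma(\tau)$; and if $v(\tau) = \False$ then $\sigma(\tau) \leq w \wedge \neg u$.

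The base case is immediate from the De~Morgan inequalities $u \leq \neg u$ and $\neg w \leq w$, and the induction step is a routine verification using the meet and join tables of $\DM{1}$ together with the fact that each of these bounds is closed under the relevant operation. The crucial feature, which I expect to be the main obstacle, is that the negation fixpoints $\Both$ and $\Neither$ must receive the two-sided intervals $[u, \neg u]$ and $[\neg w, w]$: these are exactly the intervals preserved by $\neg$, since $\neg u$ and $\neg w$ are the negations of the interval endpoints, and hitting on precisely these bounds is what makes the induction close under negation. Granting the invariant, every premise satisfies $u \leq \sigma(\gamma)$ and the conclusion satisfies $\sigma(\varphi) \leq w$ in all De~Morgan lattices, so by Theorem~\ref{thm: consequence in bd} these hold in $\BDinfty$ and a fortiori in $\logic{L}$. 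Therefore $u \vdash_{\logic{L}} \sigma(\gamma)$ for each $\gamma \in \Gamma$; since the substitution instance $\sigma[\Gamma] \vdash_{\logic{L}} \sigma(\varphi)$ is valid in $\logic{L}$, transitivity (cut) of $\vdash_{\logic{L}}$ yields $u \vdash_{\logic{L}} \sigma(\varphi)$, and a further cut with $\sigma(\varphi) \vdash_{\logic{L}} w$ gives $u \vdash_{\logic{L}} w$, which is precisely $\rho$. This establishes the implication and hence the whole statement.
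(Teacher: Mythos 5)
Your proposal is correct, but it takes a genuinely different route from the paper's. You prove the contrapositive syntactically: assuming $\logic{L} \nleq \BD{1}$, you use the completeness of $\BD{1}$ with respect to $\DMm{1}$ (Theorem~\ref{thm: completeness for n-filters} with $n=1$ -- note that your reduction needs completeness \emph{simpliciter}, not merely as a finitary logic, which that theorem indeed supplies) to obtain an $\logic{L}$-valid rule refuted in $\DMm{1}$, and then a Jankov-style substitution encoding the four values of $\DM{1}$ by terms in $x, y$ transports the refutation into an explicit derivation of $x \wedge \neg x \vdash_{\logic{L}} y \vee \neg y$. Your interval invariant is exactly right: the two-sided intervals $[u, \neg u]$ and $[\neg w, w]$ at the negation fixpoints $\Both$ and $\Neither$ are, as you observe, forced by closure under negation, and the one-sided bounds for $\True$ and $\False$ suffice because in every meet and join case of the tables of $\DM{1}$ the needed two-sided constraint is supplied by a $\Both$- or $\Neither$-operand (I verified all cases close). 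Your final cut argument is also sound for infinite or empty $\Gamma$, since structurality and cut hold for arbitrary premise sets, and Theorem~\ref{thm: consequence in bd}(i) correctly converts your lattice inequalities $u \leq \sigma(\gamma)$ and $\sigma(\varphi) \leq w$ into $\BDinfty$-derivabilities. The paper instead argues semantically in the direct direction: from a model of $\logic{L}$ refuting $x \wedge \neg x \vdash y \vee \neg y$ it passes to a $2$-generated model, realizes it as a strict homomorphic image of $\pair{\FreeDMLat{x, y}}{G}$ with $x \wedge \neg x \in G$ and $y \vee \neg y \notin G$, restricts to the substructure generated by $x \wedge \neg x$ and $y \vee \neg y$, and exploits the known structure of that subalgebra (every element lies above $x \wedge \neg x$ or below $y \vee \neg y$, so the upset is uniquely determined) to conclude that $\DMm{1}$ is a strict homomorphic image of a model of $\logic{L}$, hence itself a model, giving $\logic{L} \leq \BD{1}$. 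What each buys: the paper's proof is shorter given the structure of the free De~Morgan lattice on two generators, avoids induction, and uses only soundness of $\BD{1}$ with respect to $\DMm{1}$; yours exhibits an explicit syntactic witness for the splitting and is a reusable template -- the analogous splitting facts in the same section (for $\ATRIV$, $\K{1}$, $\LP{1}$) could be obtained by the same substitution scheme with suitably chosen encoding terms.
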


\begin{proof}
  The rule $x \wedge \neg x \vdash y \vee \neg y$ fails in $\BD{1}$, so the two disjuncts cannot both hold. If $x \wedge \neg x \nvdash_{\logic{L}} y \vee \neg y$, then this rule fails in some model of $\logic{L}$ of the form $\pair{\alg{A}}{F}$ where $\alg{A}$ is a De~Morgan lattice and $F$ is an upset of $\alg{A}$. Restricting to the appropriate substructure of this model, we may take $\alg{A}$ to be $2$-generated. It follows that the model $\pair{\alg{A}}{F}$ of $\logic{L}$ is a strict homomorphic image of some model $\pair{\FreeDMLat{x, y}}{G}$ of $\logic{L}$, where $\FreeDMLat{x, y}$ is the free De~Morgan lattice generated by $\{ x, y \}$ and $G$ is an upset of this algebra such that $x \wedge \neg x \in G$ but $y \vee \neg y \notin G$. Restricting again to the substructure generated by $x \wedge \neg x$ and $y \vee \neg y$, we obtain a model $\pair{\alg{B}}{H}$ of $\logic{L}$ generated by $x \wedge \neg x \in H$ and $y \vee \neg y \notin H$.  For each $b \in \alg{B}$ we have either $x \wedge \neg x \leq b$ or $b \leq y \vee \neg y$, therefore the upset $H$ is uniquely determined by $x \wedge \neg x \in H$ and $y \vee \neg y \notin H$ (see Figure~5 of~\cite{prenosil21}). The structure $\DMm{1}$ is then a strict homomorphic image of $\pair{\alg{B}}{H}$. Therefore $\DMm{1}$ is a model of $\logic{L}$ and $\logic{L} \leq \BD{1}$.
\end{proof}

\begin{fact}
  Let $\logic{L} \geq \BDinfty$. Then $\logic{L} \leq \ATRIV$ or $\emptyset \vdash_{\logic{L}} x \vee \neg x$, but not both.
\end{fact}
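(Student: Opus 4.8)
The plan is to follow the template of the preceding fact, but the argument turns out to be shorter because we can land directly on the almost trivial logic. First I would dispatch the ``not both'' clause: $\ATRIV$ is theoremless, so $\emptyset \vdash_{\ATRIV} x \vee \neg x$ fails, and therefore $\logic{L} \leq \ATRIV$ already forces $\emptyset \nvdash_{\logic{L}} x \vee \neg x$. Hence the two alternatives cannot hold simultaneously.

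For the main direction, assume $\emptyset \nvdash_{\logic{L}} x \vee \neg x$. Since $\logic{L}$ is an extension of $\BDinfty$, it is complete with respect to its models of the form $\pair{\alg{A}}{F}$ with $\alg{A}$ a De~Morgan lattice and $F$ an upset, so this rule already fails in some such model; equivalently, there is an element $a \in \alg{A}$ with $a \vee \neg a \notin F$. I would then restrict to the submatrix $\pair{\alg{A}'}{F'}$, where $\alg{A}'$ is the De~Morgan subalgebra of $\alg{A}$ generated by $a$ and $F' \assign F \cap A'$. Because the class of models of $\logic{L}$ is closed under submatrices, $\pair{\alg{A}'}{F'}$ is again a model of $\logic{L}$.

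The crux is that this restriction collapses the designated set. The subalgebra $\alg{A}'$ is a homomorphic image of the free De~Morgan lattice on one generator, in which $x \vee \neg x$ is the greatest element; hence $a \vee \neg a$ is the greatest element of $\alg{A}'$, and every $b \in A'$ satisfies $b \leq a \vee \neg a$. If $F'$ contained some $b$, then $a \vee \neg a \in F$ would follow from $F$ being an upset, contradicting $a \vee \neg a \notin F$. Thus $F' = \emptyset$, and we have exhibited a model $\pair{\alg{A}'}{\emptyset}$ of $\logic{L}$ with empty designated set.

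To conclude, it suffices to note that any matrix with a non-empty algebra and empty designated set determines exactly $\ATRIV$: for such a matrix $\Gamma \vdash \varphi$ holds precisely when $\Gamma$ is non-empty, since no valuation designates any formula. Every rule valid in $\logic{L}$ therefore holds in $\pair{\alg{A}'}{\emptyset}$ and so is valid in $\ATRIV$, which gives $\logic{L} \leq \ATRIV$. The only genuinely careful point is the collapse of the designated set in the one-generated subalgebra; once the right submatrix is isolated, everything else is immediate.
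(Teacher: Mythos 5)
Your proof is correct and takes essentially the same route as the paper: from $\emptyset \nvdash_{\logic{L}} x \vee \neg x$ the paper likewise produces a model of $\logic{L}$ with empty designated set (there, the two-element Boolean chain with no designated elements) and concludes $\logic{L} \leq \ATRIV$ since any such matrix determines exactly $\ATRIV$. Your one-generated-subalgebra argument, using that $a \vee \neg a$ is the top of the subalgebra generated by $a$ so that the designated set collapses to $\emptyset$, just spells out the justification the paper's one-line proof leaves implicit.
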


\begin{proof}
  The rule $\emptyset \vdash_{\logic{L}} x \vee \neg x$ fails in $\ATRIV$, so the two disjuncts cannot both hold. If $\emptyset \nvdash_{\logic{L}} x \vee \neg x$, then the two-element Boolean chain with no designated elements is a model of $\logic{L}$, in which case $\logic{L} \leq \ATRIV$.
\end{proof}

\begin{fact}
  Let $\logic{L} \geq \BDinfty$. Then $\logic{L} \leq \K{1}$ or $x \vdash_{\logic{L}} y \vee \neg y$, but not both.
\end{fact}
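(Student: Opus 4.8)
The plan is to follow the template of the two preceding splitting facts. The ``not both'' half is immediate: the separating rule $x \vdash y \vee \neg y$ fails in $\Km{1}$ (take $x \mapsto \True$ and $y \mapsto \Neither$, so that $y \vee \neg y = \Neither$ is undesignated), and since $\K{1}$ is complete with respect to $\Km{1}$ this rule fails in $\K{1}$; hence it cannot hold in any $\logic{L} \leq \K{1}$. For the substantive direction I would assume $x \nvdash_{\logic{L}} y \vee \neg y$ and manufacture $\Km{1}$ as a model of $\logic{L}$, from which $\logic{L} \leq \K{1}$ follows by completeness of $\K{1}$ with respect to $\Km{1}$.

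Concretely, the failure of the rule yields a model $\pair{\alg{A}}{F}$ of $\logic{L}$ with $\alg{A}$ a De~Morgan lattice and $F$ an upset, together with elements $a \in F$ and $b \in \alg{A}$ such that $b \vee \neg b \notin F$. I would then pass to the two elements $c \assign b \vee \neg b$ and $a' \assign a \vee c$. Here $\neg c = b \wedge \neg b \leq c$, while $a' \geq c$ and $a' \in F$; applying $\neg$ to $a' \geq c$ gives $\neg a' \leq \neg c$, so the four elements sit in the chain $a' \geq c \geq \neg c \geq \neg a'$. Consequently the De~Morgan subalgebra $\alg{B}$ generated by $a'$ and $c$ has underlying set $\{ a', c, \neg c, \neg a' \}$ (a De~Morgan chain of length at most four), and its inherited designated set $H \assign F \cap \alg{B}$ is forced to be exactly $\{ a' \}$: indeed $c \notin F$ and $F$ is an upset, so from $\neg a' \leq \neg c \leq c \notin F$ we get $\neg c, \neg a' \notin F$ as well.

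Finally I would define the map $h \colon \alg{B} \to \Km{1}$ by $a' \mapsto \True$, $c, \neg c \mapsto \Neither$, $\neg a' \mapsto \False$ and check that it is a strict surjective homomorphism of structures: it is monotone onto a chain (hence a lattice homomorphism), it commutes with $\neg$ because $\Neither$ is the negation fixpoint of $\Km{1}$, and it is strict because $h^{-1}[\{ \True \}] = \{ a' \} = H$. Since $\pair{\alg{B}}{H}$ is a substructure of the model $\pair{\alg{A}}{F}$ and $\Mod(\logic{L})$ is closed under substructures and under strict surjective homomorphic images (the same closure property invoked in the two preceding facts), it follows that $\Km{1} \in \Mod(\logic{L})$, whence $\logic{L} \leq \K{1}$. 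The only real subtlety---the analogue here of the appeal to the free structure in the first fact---is the choice of the right witnesses $c = b \vee \neg b$ and $a' = a \vee c$: replacing the given elements by these forces all four relevant elements into a single chain, which is precisely what makes the designated set rigid and the collapse onto the fixpoint $\Neither$ legitimate. The degenerate case $c = \neg c$ simply produces $\Km{1}$ (up to isomorphism) directly.
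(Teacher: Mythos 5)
Your proof is correct, and it is actually more complete than what the paper records for this statement. The paper's own proof of this fact covers only the ``not both'' half, exactly as you do (the rule $x \vdash y \vee \neg y$ fails in $\Km{1}$, with respect to which $\K{1}$ is complete); the substantive direction --- that failure of the rule forces $\Km{1} \in \Mod(\logic{L})$ and hence $\logic{L} \leq \K{1}$ --- is merely asserted, appearing (seemingly misplaced) in the statement of the following fact, with the reader evidently expected to adapt the template of the first splitting fact. That template is genuinely different from yours: it passes to a $2$-generated countermodel, realizes it as a strict homomorphic image of a model carried by the free De~Morgan lattice $\FreeDMLat{x, y}$, restricts to the substructure generated by the two witness terms, and argues that the designated upset there is uniquely determined (citing a figure of~\cite{prenosil21}) before collapsing onto the target matrix. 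Your route is more elementary and self-contained: replacing the witnesses by $c \assign b \vee \neg b$ and $a' \assign a \vee c$ forces the chain $\neg a' \leq \neg c \leq c \leq a'$ (via $b \wedge \neg b \leq b \vee \neg b$), so the generated subalgebra is a De~Morgan chain of at most four elements whose inherited designated set is exactly $\{ a' \}$, and the monotone collapse of $c, \neg c$ onto the fixpoint $\Neither$ is a strict surjective homomorphism onto $\Km{1}$; closure of $\Mod(\logic{L})$ under submatrices and strict surjective homomorphic images --- closure properties the paper itself invokes in the first splitting fact --- then finishes. The small verifications all check out: a countermodel of the form $\pair{\alg{A}}{F}$ with $\alg{A}$ a De~Morgan lattice and $F$ an upset exists because reduced models of extensions of $\BDinfty$ have this shape; $a' \neq c$ automatically since $a' \in F$ while $c \notin F$, so $h$ is surjective; a monotone map between chains is a lattice homomorphism; and the degenerate case $c = \neg c$ is handled. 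The free-algebra template generalizes more readily (it is what one needs for structures like $\DMm{1}$ that are not images of chains), while your construction buys a short, fully explicit proof of precisely the step the paper leaves implicit here.
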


\begin{proof}
  The rule $x \vdash y \vee \neg y$ fails in $\K{1}$, so the two disjuncts cannot both hold.
\end{proof}

\begin{fact}
  Let $\logic{L} \geq \BDinfty$. Then $\logic{L} \leq \LP{1}$ or $x \wedge \neg x \vdash_{\logic{L}} y$, but not both. If $x \nvdash_{\logic{L}} y \vee \neg y$, then $\Km{1}$ is a model of $\logic{L}$, so $\logic{L} \leq \K{1}$.
\end{fact}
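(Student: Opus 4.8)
The statement packages two splittings together, so the plan is to treat them separately, in each case manufacturing the relevant three-element structure as a strict homomorphic image of a substructure of a countermodel. Throughout I would use that every extension of $\BDinfty$ is complete with respect to models $\pair{\alg{A}}{F}$ with $\alg{A}$ a De~Morgan lattice and $F$ an upset, that $\LP{1}$ is determined by $\Pm{1}$ and $\K{1}$ by $\Km{1}$, and that the class of models of $\logic{L}$ is closed under substructures and strict homomorphic images, so that $\structure{M} \leqhs \structure{N}$ and $\structure{N} \models \logic{L}$ yield $\structure{M} \models \logic{L}$.

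For the first splitting the easy direction is ``not both'': the rule $x \wedge \neg x \vdash y$ fails in $\Pm{1}$ (take $x \assign \Both$, $y \assign \False$), so the two disjuncts cannot both hold, since $\LP{1}$ would validate the rule whenever $\logic{L} \leq \LP{1}$ does. For the substantive direction, suppose $x \wedge \neg x \nvdash_{\logic{L}} y$, so the rule fails in some model $\pair{\alg{A}}{F}$: there are $a, b_{0}$ with $c \assign a \wedge \neg a \in F$ and $b_{0} \notin F$. The first key move is to replace $b_{0}$ by $b \assign c \wedge b_{0}$; since $F$ is an upset and $b \leq b_{0} \notin F$ we still have $b \notin F$, but now $b \leq c$. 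Using $\neg c = a \vee \neg a \geq a \wedge \neg a = c$ one obtains the chain $b \leq c \leq \neg c \leq \neg b$, so the subalgebra $\alg{B}$ generated by $\{ b, c \}$ is exactly this four-element De~Morgan chain, and $H \assign F \cap B$ is forced to be $\{ c, \neg c, \neg b \}$ (everything but $b$). I would then check that $b \mapsto \False$, $c, \neg c \mapsto \Both$, $\neg b \mapsto \True$ is a strict surjective homomorphism $\pair{\alg{B}}{H} \to \Pm{1}$ (monotone maps between chains are lattice homomorphisms, and the assignment respects $\neg$), whence $\Pm{1} \models \logic{L}$ and $\logic{L} \leq \LP{1}$.

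The second sentence is the De~Morgan dual of this argument, and it simultaneously supplies the missing direction of the preceding splitting fact for $\K{1}$. Assuming $x \nvdash_{\logic{L}} y \vee \neg y$, I would pick a countermodel with $a_{0} \in F$ and $d \assign b \vee \neg b \notin F$, noting $\neg d = b \wedge \neg b \leq d$. Dually to before, replace $a_{0}$ by $a \assign a_{0} \vee d$, so that $a \in F$, $a \geq d$, and one obtains the chain $\neg a \leq \neg d \leq d \leq a$. The subalgebra generated by $\{ a, d \}$ is this four-element chain, on which the designated set is forced to be the singleton $\{ a \}$, and $\neg a \mapsto \False$, $\neg d, d \mapsto \Neither$, $a \mapsto \True$ is a strict surjective homomorphism onto $\Km{1}$. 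Thus $\Km{1} \models \logic{L}$, giving $\logic{L} \leq \K{1}$.

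I expect the only real obstacle to be the bookkeeping that forces the generated subalgebra down to a four-element chain whose designated set is a principal up-set (respectively the top singleton): this is precisely what the meet/join adjustment of the witness ($b \assign c \wedge b_{0}$, $a \assign a_{0} \vee d$) accomplishes, and once the chain is in place the collapse onto $\Pm{1}$ (respectively $\Km{1}$) writes itself. A minor point to verify is that coincidences among $b, c, \neg c, \neg b$ (for instance $c = \neg c$) do not spoil well-definedness of the homomorphism, which they do not, since any identified elements receive equal values under the assignment.
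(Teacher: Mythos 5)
Your proof is correct, and it in fact supplies the details that the paper omits: the paper's own proof of this fact consists of two bare assertions, namely that $x \wedge \neg x \vdash y$ fails in $\LP{1}$ (so both disjuncts cannot hold) and that a failure of $x \wedge \neg x \vdash_{\logic{L}} y$ makes $\Pm{1}$ a model of $\logic{L}$, with no construction given at all (and the $\Km{1}$ claim is not argued separately). Your overall strategy --- realize the three-element structure as a strict homomorphic image of a substructure of a countermodel --- is exactly the one the paper deploys where it does argue in detail, namely for the analogous splitting of $\BD{1}$ by the rule $x \wedge \neg x \vdash y \vee \neg y$; but there the paper routes through the free De~Morgan lattice on two generators and a known description of its upsets (Figure~5 of~\cite{prenosil21}), whereas your witness adjustment $b \assign (a \wedge \neg a) \wedge b_{0}$ (dually $a \assign a_{0} \vee (b \vee \neg b)$) stays inside the countermodel and collapses everything onto the four-element chain $b \leq a \wedge \neg a \leq a \vee \neg a \leq \neg b$ (dually $\neg a \leq \neg d \leq d \leq a$), which is shorter and entirely self-contained; this works smoothly here, unlike in the $\BD{1}$ splitting, because only one of the two witnesses needs to be positioned relative to the other. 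All the small verifications check out: $b \notin F$ and $b \neq a \wedge \neg a$ since the upset $F$ separates them (so the chain has its endpoints distinct and the designated set is forced, the dual argument forcing the singleton $\{a\}$); monotone maps between chains are lattice homomorphisms; the kernel of your collapsing map merges at most the middle pair $c, \neg c$ (resp.\ $d, \neg d$), which is a $\neg$-compatible lattice congruence whose quotient is the Priest (resp.\ Kleene) chain, and the possible coincidence $c = \neg c$ (resp.\ $d = \neg d$) sends both elements to the fixpoint $\Both$ (resp.\ $\Neither$), so the map is well defined and strict; and since models of $\logic{L}$ are closed under submatrices and strict surjective images, $\Pm{1}$ (resp.\ $\Km{1}$) is a model of $\logic{L}$, giving $\logic{L} \leq \LP{1}$ (resp.\ $\logic{L} \leq \K{1}$) by the completeness of $\LP{1}$ and $\K{1}$ with respect to these structures. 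One presentational remark: your appeal to completeness with respect to De~Morgan-lattice models should be routed, as the paper does, through \emph{reduced} models of extensions of $\BDinfty$ (the Lindenbaum matrices themselves are not De~Morgan lattices), but since you invoke precisely that fact, no gap results.
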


\begin{proof}
  The rule $x \wedge \neg x \vdash y$ fails in $\LP{1}$, so the two disjuncts cannot both hold. If $x \wedge \neg x \nvdash_{\logic{L}} y$, then $\Pm{1}$ is a model of $\logic{L}$, so $\logic{L} \leq \LP{1}$.
\end{proof}

  Let us say that an extension $\logic{L}$ of $\BDinfty$ satisfies the \emph{weak proof by cases property (weak PCP)} if
\begin{align*}
  \varphi_{1} \vee \varphi_{2} \vdash_{\logic{L}} \psi \text{ if } \varphi_{1} \vdash_{\logic{L}} \psi \text{ and } \varphi_{2} \vdash_{\logic{L}} \psi.
\end{align*}
  It satisfies the \emph{weak conjunction property (weak CP)} if
\begin{align*}
  \varphi \vdash_{\logic{L}} \psi_{1} \wedge \psi_{2} \text{ if } \varphi \vdash_{\logic{L}} \psi_{1} \text{ and } \varphi \vdash_{\logic{L}} \psi_{2}.
\end{align*}

\begin{theorem}[Extensions of $\BDinfty$ with the weak CP and PCP]
  An extension of $\BDinfty$ satisfies both the weak CP and the weak PCP if and only if it lies in one of the disjoint intervals $[\BDinfty, \BD{1}]$, $[\KOinfty, \KO{1}]$, $[\Kinfty, \K{1}]$, $[\LPinfty \cap \ATRIV, \LP{1}]$, $[\CLinfty, \CL{1}]$, or $\{ \ATRIV, \TRIV \}$.
\end{theorem}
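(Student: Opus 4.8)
The plan is to observe that both meta-rules are purely \textsc{Fmla}-\textsc{Fmla} statements, so everything reduces to understanding the single-premise derivability preorder $\varphi \preceq_{\logic{L}} \psi :\iff \varphi \vdash_{\logic{L}} \psi$. Since $\varphi_{i} \vdash_{\logic{L}} \varphi_{1} \vee \varphi_{2}$ and $\psi_{1} \wedge \psi_{2} \vdash_{\logic{L}} \psi_{i}$ hold in every extension of $\BDinfty$, the weak PCP says exactly that $\vee$ computes joins in $\preceq_{\logic{L}}$ and the weak CP that $\wedge$ computes meets. I would first note that the single-premise fragment is constant on each listed interval: the $n$-adjunction and adjunction rules have two premises, and intersecting with $\ATRIV$ only affects theorems, so $\varphi \vdash_{\logic{L}} \psi$ agrees at both endpoints (hence throughout) with the corresponding fragment of $\BD{1}$, $\KO{1}$, $\K{1}$, $\LP{1}$, $\CL{1}$, or $\TRIV$. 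For the easy direction this lets me verify weak CP and weak PCP in a single generating matrix per interval: $\DMm{1}$, $\{\Pm{1},\Km{1}\}$, $\Km{1}$, $\Pm{1}$, $\BAm{1}$, $\Amatrix$, whose designated sets are prime filters. For a prime filter $F$ one has $v(\varphi_{1} \vee \varphi_{2}) \in F$ iff $v(\varphi_{1}) \in F$ or $v(\varphi_{2}) \in F$, and since $F$ is a filter $v(\psi_{1} \wedge \psi_{2}) \in F$ iff both conjuncts are in $F$; both meta-rules follow, and they transfer to intersections of such matrices.

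For the forward direction I would feed $\logic{L}$ through the four splitting facts, organizing the analysis by the switches $x \wedge \neg x \vdash_{\logic{L}} y \vee \neg y$, $x \vdash_{\logic{L}} y \vee \neg y$, and $x \wedge \neg x \vdash_{\logic{L}} y$ (the last two each entail the first, since $x \wedge \neg x \vdash x$). If the first switch is off then $\logic{L} \leq \BD{1}$, placing $\logic{L}$ in $[\BDinfty,\BD{1}]$ with nothing to prove. The remaining easy cases are where exactly one of the last two switches is on: here I would \emph{manufacture} the defining rule of the interval bottom from the switch rule using weak CP and weak PCP. For instance, from $x \wedge \neg x \vdash_{\logic{L}} y \vee \neg y$ one gets $(x \wedge \neg x) \wedge z \vdash_{\logic{L}} (y \vee \neg y) \wedge z$ by weak CP (using $(x\wedge\neg x)\wedge z \vdash y\vee\neg y$ and $(x\wedge\neg x)\wedge z \vdash z$), then the full Kalman rule by weak PCP, so $\logic{L} \geq \KOinfty$; similarly $x \wedge \neg x \vdash_{\logic{L}} y$ yields the explosion rule $(x \wedge \neg x) \vee y \vdash_{\logic{L}} y$ by weak PCP (so $\logic{L} \geq \Kinfty$), and $x \vdash_{\logic{L}} y \vee \neg y$ yields $x \vdash_{\logic{L}} x \wedge (y \vee \neg y)$ by weak CP (so $\logic{L} \geq \LPinfty \cap \ATRIV$). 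The matching upper bounds $\logic{L} \leq \KO{1}$, $\logic{L} \leq \K{1}$, $\logic{L} \leq \LP{1}$ come directly from the off switches, giving intervals $2$, $3$, $4$.

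The main obstacle is the case where both $x \vdash_{\logic{L}} y \vee \neg y$ and $x \wedge \neg x \vdash_{\logic{L}} y$ hold, which must split into the classical interval and the pair $\{\ATRIV,\TRIV\}$. Here I would argue through the quotient lattice $\FmAlg(\Var)/{\dashv\vdash_{\logic{L}}}$, which is a distributive lattice (weak CP and weak PCP) extending the De~Morgan order. By structurality the two switches identify all $\chi \vee \neg \chi$ with a top $\top$ and all $\chi \wedge \neg \chi$ with a bottom $\bot$, so each $[\neg p]$ is a complement of $[p]$ and the quotient is a Boolean algebra, i.e.\ a quotient of the free Boolean algebra on $\Var$. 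A substitution-invariance argument then forces a dichotomy: if $\varphi \preceq_{\logic{L}} \psi$ ever holds while $\varphi \not\leq \psi$ in Boolean algebras, choose a two-valued assignment with $\varphi = 1$, $\psi = 0$ and substitute each variable by $x \vee \neg x$ or $x \wedge \neg x$ accordingly; the images reduce to $\top$ and $\bot$, whence $\top \preceq_{\logic{L}} \bot$ collapses the quotient. So the single-premise fragment is either exactly the Boolean order or the total relation. In the total case $x \vdash_{\logic{L}} y$, and the excluded-middle splitting ($\logic{L} \leq \ATRIV$ or $\emptyset \vdash_{\logic{L}} x \vee \neg x$) sorts $\logic{L}$ into $\{\ATRIV,\TRIV\}$. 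In the Boolean case $\logic{L}$ already validates the completeness and explosion rules derived above; invoking the splitting once more to supply the law of the excluded middle gives $\logic{L} \geq \CLinfty$, while the upper bound $\logic{L} \leq \CL{1}$ follows because any model witnessing $x \not\vdash_{\logic{L}} y$ carries a classical upset, hence is a homomorphic preimage of a nonempty upset of a Boolean algebra, forcing $\BAm{1}$ to be a model of $\logic{L}$.

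The delicate point I expect to fight with is precisely this last dichotomy: ruling out intermediate Boolean quotients relies essentially on substitution-invariance, and the final bookkeeping of \emph{theorems} (via the excluded-middle splitting) is what separates the theoremful interval $[\CLinfty,\CL{1}]$ from the collapsed pair $\{\ATRIV,\TRIV\}$. Getting the upper bound $\logic{L} \leq \CL{1}$ cleanly from the classicality of the reduced models, rather than merely from the shape of the \textsc{Fmla}-\textsc{Fmla} fragment, is the step that most needs care.
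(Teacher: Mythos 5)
Your backward direction and your first two forward cases track the paper's own proof in substance: the paper runs exactly the same switch analysis through the four splitting facts, and its terse ``so'' steps are precisely your weak-CP/weak-PCP manufacturing of the Kalman rule from $x \wedge \neg x \vdash_{\logic{L}} y \vee \neg y$, of $(x \wedge \neg x) \vee y \vdash_{\logic{L}} y$ from $x \wedge \neg x \vdash_{\logic{L}} y$, and of $x \vdash_{\logic{L}} x \wedge (y \vee \neg y)$ from $x \vdash_{\logic{L}} y \vee \neg y$. Where you genuinely diverge is the both-switches case: the paper finishes in one line (``$x \vdash_{\logic{L}} y \vee \neg y$, so $\CLinfty \leq \logic{L}$''), whereas you build the Lindenbaum quotient, show it is Boolean, and prove an order-versus-total dichotomy by substitution. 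That dichotomy argument is correct (well-definedness of $\neg$ on the quotient follows from uniqueness of complements in a bounded distributive lattice), but it is also dispensable: your own split on whether $x \vdash_{\logic{L}} y$, together with the two rules already derived, does the same work, and your model-theoretic upper bound (a model witnessing $x \nvdash_{\logic{L}} y$ carries a non-total classical upset, whence $\BAm{1}$ is a strict homomorphic image of a submatrix of a model, so $\logic{L} \leq \CL{1}$) supplies a step the paper leaves entirely implicit.

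The genuine gap is the step you flagged yourself, and it cannot be repaired as written: in your Boolean case the splitting fact yields ``$\logic{L} \leq \ATRIV$ or $\emptyset \vdash_{\logic{L}} x \vee \neg x$,'' and nothing forces the second disjunct. Concretely, $\CLinfty \cap \ATRIV$ (likewise the almost classical logic $\CLplain \cap \ATRIV$) validates both switches, has the Boolean single-premise fragment, and enjoys the weak CP and weak PCP --- both properties concern only single-premise rules and are preserved under intersections --- yet it has no theorems, so $\CLinfty \nleq \logic{L}$ and $\logic{L}$ lands in none of the listed intervals. Note, however, that this is not a defect of your proposal alone: the paper's proof makes the identical unjustified leap, and these logics appear to be counterexamples to the statement as printed; the classical interval should read $[\CLinfty \cap \ATRIV, \CL{1}]$, exactly as in the paper's subsequent theorem on selfextensional extensions with the weak CP or PCP. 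With that emendation your argument closes cleanly: the derived rules $x \vdash_{\logic{L}} x \wedge (y \vee \neg y)$ and $(x \wedge \neg x) \vee y \vdash_{\logic{L}} y$ give $\CLinfty \cap \ATRIV \leq \logic{L}$ (and any theorem $\tau$ of $\logic{L}$ automatically supplies the excluded middle, since $\tau \vdash_{\logic{L}} x \vee \neg x$), while your upper bound gives $\logic{L} \leq \CL{1}$ or, in the total case, $\logic{L} \in \{\ATRIV, \TRIV\}$ --- at which point the quotient dichotomy is no longer needed at all.
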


\begin{proof}
  Let $\logic{L}$ be an extension of $\BDinfty$. If $\logic{L} \nleq \BD{1}$, then $x \wedge \neg x \vdash_{\logic{L}} y \vee \neg y$, so $((x \wedge \neg x) \wedge z) \vee u \vdash_{\logic{L}} ((y \vee \neg y) \wedge z) \vee u$ and $\KOinfty \leq \logic{L}$. Suppose therefore that $\logic{L}$ is an extension of $\KOinfty$. If $\logic{L} \nleq \KO{1} = \LP{1} \cap \K{1}$, then either $\logic{L} \nleq \LP{1}$ or $\logic{L} \nleq \K{1}$. In the former case, $x \wedge \neg x \vdash_{\logic{L}} y$, so $(x \wedge x) \vee y \vdash_{\logic{L}} y$ and $\Kinfty \leq \logic{L}$. In the latter case, $x \vdash_{\logic{L}} y \vee \neg y$, so $\LPinfty \cap \ATRIV \leq \logic{L}$. If $\Kinfty \leq \logic{L} \nleq \K{1}$, then $x \vdash_{\logic{L}} y \vee \neg y$, so $\CLinfty \leq \logic{L}$. Likewise, if $\LPinfty \cap \ATRIV \leq \logic{L} \nleq \LP{1}$, then $x \wedge \neg x \vdash_{\logic{L}} y$, so $\CLinfty \leq \logic{L}$.
\end{proof}

\begin{theorem}[Selfextensional extensions of $\BDinfty$ with the weak CP or PCP]
  An extension of $\BDinfty$ which satisfies either the weak CP or the weak PCP is self\-extensional if and only if it lies in one of the disjoint intervals $[\BDinfty, \BD{1}]$, $[\KOinfty, \KO{1}]$, $[\CLinfty \cap \ATRIV, \CL{1}]$, or $\{ \ATRIV, \TRIV \}$.
\end{theorem}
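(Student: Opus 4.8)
The plan is to run both directions through the organizing fact, established just before the statement, that a selfextensional extension $\logic{L}$ of $\BDinfty$ other than $\TRIV$ and $\ATRIV$ has interderivability $\dashv\vdash_{\logic{L}}$ equal to equational validity in exactly one of the three nontrivial varieties of De~Morgan lattices. For the \emph{if} direction I would observe that each of the four families consists of logics sharing one interderivability relation: the endpoints of $[\BDinfty,\BD{1}]$, $[\KOinfty,\KO{1}]$ and $[\CLinfty\cap\ATRIV,\CL{1}]$ have $\dashv\vdash$ equal to equality in De~Morgan, Kleene and Boolean lattices respectively (using that intersecting with $\ATRIV$ removes only theorems and leaves the single-premise consequence, hence $\dashv\vdash$, unchanged), while $\{\ATRIV,\TRIV\}$ carries the total relation. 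Since $\logic{L}$ is squeezed between the endpoints its $\dashv\vdash_{\logic{L}}$ coincides with theirs, and being the kernel of a map onto a free algebra it is a congruence, so $\logic{L}$ is selfextensional; the trivial and almost trivial cases are immediate.

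For the \emph{only if} direction I would split on the variety, handling the lower bounds uniformly. The key tool is that whenever $a\leq b$ holds in the relevant variety, $a$ and $a\wedge b$ are interderivable in $\logic{L}$, so $a\vdash_{\logic{L}}a\wedge b\vdash_{\BDinfty}b$ and hence $a\vdash_{\logic{L}}b$. Because $\KOinfty$ and $\CLinfty\cap\ATRIV$ reduce to their single-premise fragments (Kleene order and Boolean order), this at once gives $\logic{L}\geq\KOinfty$ in the Kleene case and $\logic{L}\geq\CLinfty\cap\ATRIV$ in the Boolean case, while $\logic{L}\geq\BDinfty$ is given. No appeal to weak CP or PCP is needed for any lower bound.

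The weak CP/PCP hypothesis enters only in the upper bounds for the De~Morgan and Kleene cases, and the maneuver is uniform: invoke the splitting facts (e.g.\ $\logic{L}\leq\BD{1}$ or $x\wedge\neg x\vdash_{\logic{L}}y\vee\neg y$, and the analogues for $\LP{1}$ and $\K{1}$) and exclude the offending consequence. If $x\wedge\neg x\vdash_{\logic{L}}y\vee\neg y$ held, weak CP would give $x\wedge\neg x\dashv\vdash_{\logic{L}}(x\wedge\neg x)\wedge(y\vee\neg y)$ and weak PCP would give $(x\wedge\neg x)\vee(y\vee\neg y)\dashv\vdash_{\logic{L}}y\vee\neg y$; either forces $x\wedge\neg x\leq y\vee\neg y$ in all De~Morgan lattices, which fails in $\DM{1}$, contradicting that $\dashv\vdash_{\logic{L}}$ is De~Morgan equality. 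Hence $\logic{L}\leq\BD{1}$. The same step applied to $x\wedge\neg x\vdash y$ and to $x\vdash y\vee\neg y$ against Kleene equality yields $\logic{L}\leq\LP{1}$ and $\logic{L}\leq\K{1}$, so $\logic{L}\leq\LP{1}\cap\K{1}=\KO{1}$.

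The delicate point is the upper bound $\logic{L}\leq\CL{1}$ in the Boolean case, which I would derive directly from selfextensionality rather than from weak CP/PCP. In any reduced model $\pair{\alg{A}}{F}$ the algebra $\alg{A}$ is a De~Morgan lattice; feeding the Boolean validities $(x\wedge\neg x\wedge z)\vee u\dashv\vdash_{\logic{L}}u$ and $((x\vee\neg x)\wedge z)\vee u\dashv\vdash_{\logic{L}}z\vee u$ into the lemma on Leibniz relations of upsets shows that both its clauses fail to separate $a\wedge\neg a$ from $b\wedge\neg b$, so $\pair{a\wedge\neg a}{b\wedge\neg b}\in\Omega^{\alg{A}}F=\Delta$; thus $a\wedge\neg a$ is a constant least element and, dually, $a\vee\neg a$ a greatest element, making $\alg{A}$ Boolean. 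Since $\logic{L}\neq\ATRIV$ its interderivability is not total, so some rule fails in a model with $F\neq\emptyset$, giving a reduced model $\pair{\alg{B}}{F}$ with $\alg{B}$ Boolean and $F\neq\emptyset$; its two-element subalgebra $\{\bot,\top\}$ with designated set $\{\top\}$ is a substructure isomorphic to $\BAm{1}$, so $\BAm{1}$ is a model of $\logic{L}$ and $\logic{L}\leq\CL{1}$. The main obstacle I anticipate is exactly this Leibniz computation: verifying both clauses of the lemma cleanly, and confirming the division of labour, namely that weak CP or PCP is genuinely dispensable in the Boolean case yet indispensable in the De~Morgan and Kleene cases, where without it one can build selfextensional logics with the correct interderivability that escape the stated interval.
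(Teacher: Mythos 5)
Your proposal is correct, but it routes the argument differently from the paper. The paper makes one global use of the hypothesis: under the weak CP one has $\varphi \vdash_{\logic{L}} \psi$ iff $\varphi \dashv\vdash_{\logic{L}} \varphi \wedge \psi$ (under the weak PCP, iff $\varphi \vee \psi \dashv\vdash_{\logic{L}} \psi$), so the entire \textsc{Fmla}--\textsc{Fmla} fragment of $\logic{L}$ is recoverable from its interderivability relation and must coincide with that of $\BDinfty$, $\KOinfty$, $\CLinfty$, or $\ATRIV$; it then walks down the splitting chain ($\logic{L} \nleq \BD{1}$ forces $x \wedge \neg x \vdash_{\logic{L}} y \vee \neg y$, hence $\KOinfty \leq \logic{L}$ because $\KOinfty$ is axiomatized by its \textsc{Fmla}--\textsc{Fmla} fragment, and so on), the hypothesis powering the lower bounds. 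You invert this division of labour: you split on the variety realizing $\dashv\vdash_{\logic{L}}$, obtain all lower bounds hypothesis-free via the trivial direction ($a \approx a \wedge b$ in the variety gives $a \dashv\vdash_{\logic{L}} a \wedge b$, hence $a \vdash_{\logic{L}} b$), use weak CP/PCP only to refute the splitting rules in the De~Morgan and Kleene cases, and replace the paper's terse final step (``if $\logic{L} \nleq \CL{1}$, then $\ATRIV \leq \logic{L}$'') by a semantic argument through the Leibniz lemma showing reduced models are Boolean matrices, whence $\BAm{1}$ is a submatrix of one of them. This buys a precise accounting of where the hypothesis is needed and a genuine slight strengthening --- every selfextensional extension with Boolean interderivability lies in $[\CLinfty \cap \ATRIV, \CL{1}]$ with no weak CP/PCP assumed --- while the paper's route is shorter and purely syntactic; your suspicion that the hypothesis cannot simply be dropped is confirmed by the paper's own Fact that $\BDinfty$ extended by $x \wedge \neg x \vdash y \vee \neg y$ is selfextensional with De~Morgan interderivability yet lies in none of the four intervals (it is not below $\BD{1}$ and, by the description of its consequence relation, does not contain $\KOinfty$). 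One small point to patch in your Boolean extraction: before identifying the substructure on $\{\bot, \top\}$ with $\BAm{1}$ you need $F$ proper as well as non-empty, which is automatic, since $x \vdash_{\logic{L}} y$ must fail (otherwise substitution invariance gives $x \dashv\vdash_{\logic{L}} y$, contradicting Boolean interderivability), and a valuation witnessing its failure makes $F$ simultaneously non-empty and proper, properties preserved by passing to the Leibniz reduct.
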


\begin{proof}
  The \textsc{Fmla}--\textsc{Fmla} fragment of a logic with the weak CP which validates the rule $x \wedge y \vdash x$ is uniquely determined by its interderivability relation, since it validates $\varphi \vdash \psi$ if and only if it validates $\varphi \dashv \vdash \varphi \wedge \psi$. The same holds for logics with the weak PCP which validate the rule $x \vdash x \vee y$. The interderivability relation in a selfextensional extension $\logic{L}$ of $\BDinfty$ coincides with equational validity in some variety of De~Morgan lattices, i.e.\ in De~Morgan lattices, Kleene lattices, Boolean algebras, or in the trivial variety. Equivalently, interderivability in $\logic{L}$ coincides with interderivability in $\BDinfty$, $\KOinfty$, $\CLinfty$, or $\ATRIV$. Because these four logics also satisfy the weak CP (or the weak PCP), the \textsc{Fmla}--\textsc{Fmla} fragment of $\logic{L}$ coincides with the \textsc{Fmla}--\textsc{Fmla} fragment of one of these four logics. If $\logic{L} \nleq \BD{1}$, then $x \wedge \neg x \vdash_{\logic{L}} y \vee \neg y$, hence $\logic{L}$ must extend the \textsc{Fmla}--\textsc{Fmla} fragment of $\KOinfty$. Given that $\KOinfty$ is axiomatized by its \textsc{Fmla}--\textsc{Fmla} fragment, this simply means that $\KOinfty \leq \logic{L}$. If $\logic{L} \nleq \KO{1}$, then either $\logic{L} \nleq \K{1}$ or $\logic{L} \nleq \LP{1}$, hence either $x \vdash_{\logic{L}} y \vee \neg y$ or $x \wedge \neg x \vdash_{\logic{L}} y$. In either case this means that $\logic{L}$ must extend the \textsc{Fmla}--\textsc{Fmla} fragment of $\CLinfty \cap \ATRIV$, which again simply means that $\CLinfty \cap \ATRIV \leq \logic{L}$. Finally, if $\logic{L} \nleq \CL{1}$, then $\ATRIV \leq \logic{L}$, hence $\logic{L} = \ATRIV$ or $\logic{L} = \TRIV$.
\end{proof}


\end{document}